\newcommand{\PN}{{\mathbb P}}
\newcommand{\rk}{{\rm rk}}
\newcommand{\sJ}{{\mathcal J}}
\newtheorem{lemma1}{}[section]
\newenvironment{lemma}{\begin{lemma1}{\bf Lemma.}}{\end{lemma1}}
\newenvironment{example}{\begin{lemma1}{\bf Example.}\rm}{\end{lemma1}}
\newenvironment{theorem}{\begin{lemma1}{\bf Theorem.}}{\end{lemma1}}
\newenvironment{proposition}{\begin{lemma1}{\bf Proposition.}}{\end{lemma1}}
\newenvironment{corollary}{\begin{lemma1}{\bf Corollary.}}{\end{lemma1}}
\newenvironment{remark}{\begin{lemma1}{\bf Remark.}\rm}{\end{lemma1}}
\newenvironment{remarks}{\begin{lemma1}{\bf Remarks.}\rm}{\end{lemma1}}
\newenvironment{definition}{\begin{lemma1}{\bf Definition.}}{\end{lemma1}}
\newenvironment{notation}{\begin{lemma1}{\bf Notation.}}{\end{lemma1}}
\newenvironment{setup}{\begin{lemma1}{\bf Setup.}}{\end{lemma1}}
\newenvironment{conjecture}{\begin {lemma1}{\bf Conjecture.}}{\end{lemma1}}
\newenvironment{problem}{\begin{lemma1}{\bf Problem.}}{\end{lemma1}}
\newenvironment{the local obstruction - setup}{\begin{lemma1}{\bf The local obstruction - setup.}}{\end{lemma1}}
\newenvironment{remark*}{{\bf Remark.}}{}
\newenvironment{remarks*}{{\bf Remarks.}}{}
\newenvironment{example*}{{\bf Example.}}{}
\newenvironment{assumption*}{{\bf Assumption.}}{}
\newcommand{\R}{\ensuremath{\mathbb{R}}}
\newcommand{\Q}{\ensuremath{\mathbb{Q}}}
\newcommand{\Z}{\ensuremath{\mathbb{Z}}}
\newcommand{\C}{\ensuremath{\mathbb{C}}}
\newcommand{\N}{\ensuremath{\mathbb{N}}}
\newcommand{\PP}{\ensuremath{\mathbb{P}}}
\newcommand{\merom}[3]{\ensuremath{#1:#2 \dashrightarrow #3}}
\newcommand{\holom}[3]{\ensuremath{#1:#2  \rightarrow #3}}
\newcommand{\fibre}[2]{\ensuremath{#1^{-1} (#2)}}
\newcommand\sE{{\mathcal E}}
\newcommand\sF{{\mathcal F}}
\newcommand\sG{{\mathcal G}}
\newcommand\sI{{\mathcal I}}
\newcommand\sT{{\mathcal T}}
\newcommand\sL{{\mathcal L}}
\newcommand\sO{{\mathcal O}}
\DeclareMathOperator*{\pic}{Pic}
\DeclareMathOperator*{\sing}{sing}
\DeclareMathOperator*{\nons}{nons}
\newcommand{\PNM}{\ensuremath{\mathbb P(T_M)}}
\newcommand{\NEM}{\overline{\mbox{NE}}(M)}
\author{Andreas H\"oring}
\author{Thomas Peternell}
\address{Andreas H\"oring, Universit\'e C\^ote d'Azur, CNRS, LJAD, France, Institut universitaire de France}
\email{Andreas.Hoering@univ-cotedazur.fr}
\address{Thomas Peternell, Mathematisches Institut, Universit\"at Bayreuth, 95440 Bayreuth, 
Germany}
\email{thomas.peternell@uni-bayreuth.de}
\subjclass[2010]{32Q28, 32J27, 14R10, 14E30, 14E05}
\keywords{Stein manifold, tangent bundle, canonical extension}
\title{Stein complements in compact K\"ahler manifolds} 
\date{November 5, 2021} 
\begin{document}

\begin{abstract} Given a projective or compact K\"ahler manifold $X$ and a (smooth) hypersurface $Y$,  we study conditions under which $X \setminus Y$ could be
Stein. We apply this in particular to the case when $X $ is the projectivization of the so-called canonical extension of the tangent bundle $T_M$ of a projective manifold $M$
with $Y$ being the projectivization of $T_M$ itself. 

\end{abstract} 

\maketitle

\section{Introduction} 

\subsection{Motivation}

Let $M$ be a compact K\"ahler manifold, and let $\alpha \in H^1(M, \Omega_M)$ be a K\"ahler class. This defines an extension of vector bundles, 
$$
0 \to \sO_M \to V^{\alpha} \to T_M \to 0 
$$
and therefore an embedding $\mathbb P(T_M) \subset \mathbb P(V^{\alpha})$. We then consider the complement
$$
Z_M := Z_M^\alpha :=   \mathbb P(V^{\alpha}) \setminus \mathbb P(T_M).
$$
Following  Greb and Wong \cite{GW20} we call $Z_M$ a {\it canonical extension of $M$}.
They showed that $Z_M$ does contain any compact subvarieties and it is Stein 
if $M$ is a torus or has a K\"ahler metric of non-negative holomorphic bisectional curvature. In view of their results one is tempted to make the following conjecture; already formulated as a question in \cite{GW20}, 

\begin{conjecture} \label{conjecture-GW} 
Let $M$ be a compact K\"ahler manifold, and let $Z_M$ be a canonical extension defined by some K\"ahler class on $M$. Then the following holds:
\begin{itemize}
\item $Z_M$ is Stein if and only if $T_M$ is nef.
\item $Z_M$ is affine\footnote{We say that a complex manifold $U$ is affine if there exists an affine variety $U_{\rm alg}$ such that $U$ is biholomorphic to the analytification of $U_{\rm alg}$.} if and only if $T_M$ is nef and big. 
\end{itemize}
\end{conjecture}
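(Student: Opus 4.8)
The plan is to recast the conjecture as a statement about the positivity of a single line bundle. Write $X:=\mathbb P(V^{\alpha})$, let $\pi\colon X\to M$ be the projection and $\mathcal O_X(1)$ the tautological quotient line bundle, so that $\pi_*\mathcal O_X(1)=V^{\alpha}$. The sub-bundle $\mathcal O_M\hookrightarrow V^{\alpha}$ pulls back to a morphism $\mathcal O_X\to\mathcal O_X(1)$ whose zero divisor is exactly $\mathbb P(T_M)$; hence $\mathbb P(T_M)\in|\mathcal O_X(1)|$ and $Z_M=X\setminus\mathbb P(T_M)$ is the complement of an effective divisor in $|\mathcal O_X(1)|$. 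Since $T_M$ is a quotient of $V^{\alpha}$ and $V^{\alpha}$ is an extension of $T_M$ by $\mathcal O_M$, and since quotients and extensions of nef bundles are nef, $\mathcal O_X(1)$ is nef if and only if $T_M$ is nef; moreover the defining sequence gives $s(V^{\alpha})=s(T_M)$ for the total Segre classes, so for nef $T_M$ the bundle $\mathcal O_X(1)$ is big if and only if $\int_M s_n(T_M)>0$, that is, if and only if $T_M$ is big. Thus Conjecture~\ref{conjecture-GW} is equivalent to the two statements: $Z_M$ is Stein $\iff\mathcal O_X(1)$ is nef, and $Z_M$ is affine $\iff\mathcal O_X(1)$ is nef and big. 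Here the hypothesis that $\alpha$ be K\"ahler, hence nonzero, is essential: for $\alpha=0$ the defining sequence splits and $Z_M$ contains the compact section $\mathbb P(\mathcal O_M)\cong M$, so it is never Stein.

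For the two ``only if'' implications I would appeal to the necessary conditions for a complement $X\setminus Y$ to be Stein developed earlier in the paper: a Stein $Z_M$ forces the boundary divisor $\mathbb P(T_M)$, hence $\mathcal O_X(1)$, to be nef; and if $Z_M$ is in addition affine, then its K\"ahler compactification $X$ is Moishezon, hence projective, while the function ring of $Z_M$, of transcendence degree $\dim X$, embeds after clearing poles along $\mathbb P(T_M)$ into $\bigoplus_{m\ge0}H^0(X,\mathcal O_X(m))$, so $\mathcal O_X(1)$ is big as well. For the converse in the affine case, assume $\mathcal O_X(1)$ nef and big; then $-K_M$ is big, so $M$ is rationally connected and projective, and so is $X$. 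By Greb--Wong \cite{GW20} the manifold $Z_M$ contains no compact subvariety, hence the augmented base locus $\mathbf B_+(\mathcal O_X(1))$, being a closed subvariety, lies inside $X\setminus Z_M=\mathbb P(T_M)$; by Nakamaye's theorem $\mathbf B_+(\mathcal O_X(1))=\mathrm{Null}(\mathcal O_X(1))$, and for $m\gg0$ the rational map $\phi_{|m\mathbb P(T_M)|}$ restricts to an isomorphism of $X\setminus\mathbf B_+(\mathcal O_X(1))\supseteq Z_M$ onto its image, which it identifies with a closed subvariety of an affine space; therefore $Z_M$ is affine.

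The substance of the conjecture is the remaining implication: if $\mathcal O_X(1)$, equivalently $T_M$, is nef, then $Z_M$ is Stein. I would first pass to a finite \'etale cover $\widetilde M\to M$: a K\"ahler class pulls back to a K\"ahler class, $V^{\alpha}$ and $Z_M$ pull back, the induced map $Z_{\widetilde M}\to Z_M$ is finite \'etale, and Steinness of $Z_{\widetilde M}$ descends to $Z_M$. By the structure theory of compact K\"ahler manifolds with nef tangent bundle (Demailly--Peternell--Schneider, together with the work around the Campana--Peternell conjecture) one may then assume that the Albanese map $f\colon M\to A$ is a smooth fibration onto a torus with Fano fibre $F$ having nef tangent bundle. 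Since $N_{F/M}\cong(f^*T_A)|_F$ is trivial we get $s(V^{\alpha}|_F)=s(T_M|_F)=s(T_F)$; granting that $T_F$ is big (see below), $V^{\alpha}|_F$ is then nef and big, the restriction of $\alpha$ is a K\"ahler class on $F$, and running the argument of the affine case fibrewise -- using once more that every fibre of $Z_M\to A$ contains no compact subvariety -- shows that each fibre of $Z_M\to A$ is an affine variety. It then remains to build a strictly plurisubharmonic exhaustion of the total space $Z_M$ out of these affine fibres sitting over the \emph{compact} torus $A$, the model being the torus case of Greb--Wong: because $\alpha$ is K\"ahler, the monodromy of $Z_M\to A$ should act on the affine fibres with no compact invariant structure, forcing $Z_M$ to be Stein despite $A$ being compact.

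This assembly is the step I expect to be the main obstacle, for two reasons. First, the canonical extension does not respect the Albanese fibration: from $0\to f^*\Omega_A\to\Omega_M\to\Omega_{M/A}\to0$ one sees that $V^{\alpha}|_F$ is an \emph{iterated} extension, not the fibrewise canonical extension of $F$, so patching the fibrewise exhaustions demands precise control of the interaction between the horizontal (torus) and vertical (Fano) directions -- presumably through a Leray/curvature analysis on $X=\mathbb P(V^{\alpha})$ combined with a relative version of the Greb--Wong computation over $A$. Second, the fibrewise input ``$F$ Fano with nef $T_F\Rightarrow T_F$ big'', i.e. $\int_F s_{\dim F}(T_F)>0$, is to our knowledge available only through the Campana--Peternell picture of such $F$ -- their conjectural identification with rational homogeneous spaces, or an induction along the Demailly--Peternell--Schneider fibration tower -- so an unconditional proof of the Stein direction would require settling a suitable part of that circle of problems or finding a direct positivity estimate for $s_{\dim F}(T_F)$.
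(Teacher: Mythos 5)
You are attempting to prove Conjecture \ref{conjecture-GW}, which the paper poses as an open problem and does not prove; it only establishes partial results, and your proposal does not close the gap. Two of the four implications are genuinely missing. First, for ``$Z_M$ Stein (or affine) $\Rightarrow T_M$ nef'' you appeal to ``necessary conditions for a complement $X\setminus Y$ to be Stein developed earlier in the paper'', but no such nefness criterion exists there: the strongest available statement is Theorem \ref{thm:mainone}, which yields only that the normal bundle $\zeta_M$ is \emph{pseudoeffective} (Corollary \ref{cor:extension}), and Proposition \ref{prop:goodman} exhibits an affine complement whose normal bundle is not nef, so the general principle you invoke is actually false; nothing in the paper upgrades pseudoeffective to nef in the canonical-extension setting. (By contrast, ``affine $\Rightarrow$ big'' is fine via Goodman's theorem as in \cite[Prop.4.2]{GW20}.) Second, the core implication ``$T_M$ nef $\Rightarrow Z_M$ Stein'' is left open by your own account: the reduction to a smooth Fano fibration over a torus, the fibrewise bigness of $T_F$, and above all the assembly of fibrewise exhaustions into a global strictly plurisubharmonic exhaustion are all unproved, and as you note the last two depend on unresolved parts of the Campana--Peternell circle of problems. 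A proposal whose decisive step is acknowledged to require an open conjecture is not a proof.

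The one implication you essentially do establish, ``$T_M$ nef and big $\Rightarrow Z_M$ affine'', is Theorem \ref{prop:nefFano} of the paper. The paper's argument is shorter and cleaner: $\PP(T_M)$ is a nef and big divisor on $\PP(V)$ and $-K_{\PP(V)}$ is nef and big, so the basepoint-free theorem makes $\PP(T_M)$ semiample, and Proposition \ref{prop:semiample} (using that $Z_M$ contains no positive-dimensional compact subvariety) gives affineness together with a description of the contraction. Your route via Nakamaye's theorem and $\mathbf{B}_+(\sO_X(1))$ reaches the same conclusion but leaves a loose end: without semiampleness $\phi_{|m\PP(T_M)|}$ is only a rational map, and you must still verify that the limit points of $\phi(Z_M)$ arising from the indeterminacy locus $\mathbf{B}_+\subset\PP(T_M)$ land in the hyperplane at infinity, so that $\phi(Z_M)$ is closed in the affine chart. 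The natural fix is to invoke the basepoint-free theorem first, which is exactly what the paper does.
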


Let us first focus on the affine version of the conjecture. 
We will use the basepoint-free theorem to show one implication:

\begin{theorem} \label{prop:nefFano} Let $M$ be a compact K\"ahler manifold such that $T_M$ is nef and big. 
Then $Z_M$ is affine for any K\"ahler class  $M$.
\end{theorem}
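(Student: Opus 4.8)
The plan is to show that the tautological line bundle $L := \sO_{\mathbb P(V^\alpha)}(1)$ on $X := \mathbb P(V^\alpha)$ is nef, big, and — via the basepoint-free theorem — semiample, and that the associated birational contraction $\phi\colon X \to W$ identifies $Z_M$ with the complement of an ample divisor in the normal projective variety $W$; such a complement is affine, so $Z_M$ is affine.

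First I would record three facts about $X$, $L$, and $D := \mathbb P(T_M)$. (i) Composing $\sO_M \hookrightarrow V^\alpha$ with the tautological quotient $\pi^*V^\alpha \twoheadrightarrow L$ yields a section of $L$ whose zero divisor is precisely $\mathbb P(T_M)$; thus $D \in |L|$, and in particular $D \equiv L$. (ii) The sequence $0 \to \sO_M \to V^\alpha \to T_M \to 0$ presents $V^\alpha$ as an extension of nef bundles, hence $V^\alpha$ — and therefore $L$ — is nef. (iii) Writing $2n = \dim X$ and using $D \in |L|$, we get $L^{2n} = L^{2n-1}\cdot D = \int_D (L|_D)^{2n-1}$; since $L|_D \cong \sO_{\mathbb P(T_M)}(1)$ is nef and big (because $T_M$ is nef and big), this number is positive, so $L$ is big. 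In particular $X$ is Moishezon and, being Kähler, projective; hence $M$ is projective too, which legitimizes the use of the basepoint-free theorem below.

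Next I would compute $K_X$. The relative Euler sequence gives $\omega_{X/M} \cong \sO_{\mathbb P(V^\alpha)}(-(n+1)) \otimes \pi^*\!\det V^\alpha$, and since $\det V^\alpha = \det T_M = \omega_M^{-1}$ this yields $\omega_X = \omega_{X/M} \otimes \pi^*\omega_M \cong \sO_{\mathbb P(V^\alpha)}(-(n+1))$, i.e. $-K_X \equiv (n+1)L$. Hence $L - K_X \equiv (n+2)L$ is nef and big; as $X$ is smooth projective and $L$ is a nef Cartier divisor, the basepoint-free theorem shows that $L$ is semiample. Let $\phi\colon X \to W$ be the resulting morphism to a normal projective variety with $\phi_*\sO_X = \sO_W$ and $mL \cong \phi^*\sO_W(A_0)$ for some integer $m>0$ and some ample divisor $A_0$ on $W$; since $L$ is big, $\phi$ is birational. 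Because $H^0(X,mL) \cong H^0(W,\sO_W(A_0))$ by the projection formula, the effective divisor $mD \in |mL|$ is the pullback $\phi^*A$ of an effective ample Cartier divisor $A$ on $W$; consequently $\mathbb P(T_M) = \phi^{-1}(\supp A)$ and $Z_M = \phi^{-1}(W \setminus \supp A)$.

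The crucial point is that $\phi$ has no positive-dimensional fibre over $W \setminus \supp A$. Indeed, such a fibre would contain an irreducible curve $C$ contracted by $\phi$, so $L\cdot C = 0$ and hence $D\cdot C = 0$; as $C \not\subseteq D$ this forces $C \cap D = \emptyset$, i.e. $C$ is a compact curve contained in $Z_M$, contradicting the theorem of Greb--Wong that $Z_M$ contains no positive-dimensional compact subvariety. Therefore $\phi$ restricted over $W \setminus \supp A$ is quasi-finite, hence finite, and being birational onto a normal variety it is an isomorphism; thus $Z_M \cong W \setminus \supp A$, the complement of an ample divisor in a projective variety, which is affine (and smooth, being open in $X$), proving the theorem. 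The main obstacle is exactly this last step: the basepoint-free theorem is essentially automatic once one observes $-K_X \equiv (n+1)L$, and the nef- and bigness of $L$ is a one-line intersection computation, so the real work is controlling the exceptional locus of the semiample fibration $\phi$ — which is where both the bigness of $T_M$ (to make $\phi$ birational) and the Greb--Wong vanishing of compact subvarieties (to force every contracted curve into $\mathbb P(T_M)$) are needed, and what upgrades ``$L$ semiample and big'' to ``$Z_M$ is the complement of an ample divisor'', hence affine.
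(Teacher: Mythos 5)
Your proposal is correct and follows essentially the same route as the paper: the paper deduces that $[\PP(T_M)]=\zeta_V$ is nef and big and $-K_{\PP(V)}$ is nef and big (since $\det V=-K_M$), applies the basepoint-free theorem to get semiampleness, and then invokes its Proposition \ref{prop:semiample} together with the Greb--Wong fact that $Z_M$ contains no positive-dimensional compact subvarieties to conclude affineness --- which is precisely the contraction analysis you carry out inline. Your explicit treatment of the projectivity of $X$ via Moishezon plus K\"ahler is a small point the paper leaves implicit, but the argument is otherwise identical in substance.
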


For the other implication observe first that if $Y:= \PP(T_M) \subset \PP(V^\alpha)=:X$ is the embedding defined above,
the normal bundle $N_{Y/X}$ identifies to the tautological bundle $\sO_{\PP(T_M)}(1)$. Let us recall that Greb and Wong \cite[Prop.4.2]{GW20} used Goodman's theorem \cite{Go69} to show that if $X$ is projective and $X \setminus Y$ is affine, the normal bundle $N_{Y/X}$ is big. Thus the other implication would essentially reduce to a conjecture of Goodman:

\begin{conjecture} \cite[II.V.2]{Har70} \label{conjecture-goodman}
Let $X$ be a compact algebraic manifold, and let $Y \subset X$ be a prime divisor. If $X \setminus Y$ is affine,
then $N_{Y/X}$ is nef.
\end{conjecture}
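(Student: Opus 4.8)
The plan is to route everything through Goodman's theorem. Since $U:=X\setminus Y$ is affine, \cite{Go69} provides a projective birational morphism $\pi\colon X'\to X$ that is an isomorphism over $U$, together with an effective ample $\Q$-divisor $A$ on $X'$ supported on $\pi^{-1}(Y)$; adding a small multiple of the reduced divisor $\pi^{-1}(Y)$ we may assume $\mathrm{Supp}(A)=\pi^{-1}(Y)$. Since the center of $\pi$ lies in $Y$, we have $\pi^{-1}(Y)=\widetilde Y\cup\mathrm{Exc}(\pi)$, where $\widetilde Y$ is the strict transform of $Y$; thus $A=a_0\widetilde Y+\sum_i a_i E_i$ with $a_0>0$ and all $a_i>0$, the $E_i$ being the $\pi$-exceptional prime divisors, and also $\pi^{*}Y=\widetilde Y+F$ with $F\ge 0$ and $\mathrm{Supp}(F)\subseteq\mathrm{Exc}(\pi)$.

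To show that $N_{Y/X}\cong\sO_Y(Y)$ is nef, fix an irreducible curve $C\subset Y$; the goal is $Y\cdot C\ge 0$. If $C$ is \emph{not} contained in the center $Z:=\pi(\mathrm{Exc}(\pi))$, let $\widetilde C\subset\widetilde Y$ be its strict transform, so $\pi_{*}\widetilde C=C$ and $\widetilde C\not\subseteq\mathrm{Supp}(F)$. The projection formula gives $Y\cdot C=(\pi^{*}Y)\cdot\widetilde C=\widetilde Y\cdot\widetilde C+F\cdot\widetilde C\ge\widetilde Y\cdot\widetilde C$, so it suffices to bound $\widetilde Y\cdot\widetilde C$ from below; and ampleness of $A$ yields $0<A\cdot\widetilde C=a_0\,(\widetilde Y\cdot\widetilde C)+\sum_i a_i\,(E_i\cdot\widetilde C)$, with each $E_i\cdot\widetilde C\ge 0$. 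This is the crux: the estimate gives only $\widetilde Y\cdot\widetilde C>-\tfrac1{a_0}\sum_i a_i\,(E_i\cdot\widetilde C)$, which is vacuous unless one can arrange that $\widetilde C$ is disjoint from $\mathrm{Exc}(\pi)$. Since in general $C$ meets $Z$, the purely numerical argument stops here, and I expect this to be the main obstacle — it is exactly where the conjecture remains open.

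To go further, I would try to replace the numerical estimate by a structural one. First (as in Greb--Wong's argument) $N_{Y/X}$ is already \emph{big}, so $\sO_Y(Y)=A'+E'$ with $A'$ an ample $\Q$-divisor and $E'\ge 0$ on $Y$, and hence $Y\cdot C>0$ for every curve $C\not\subseteq\mathrm{Supp}(E')$; thus (assuming $\dim Y\ge 2$, the curve case being immediate from bigness) the non-nef locus of $N_{Y/X}$ lies in a proper closed subset $W\subsetneq Y$, contained in the augmented base locus $\mathbb B_{+}(\sO_Y(Y))$. The plan is then to induct on $\dim X$, studying $\pi$ over $W$: resolve so that $\pi$ factors as a chain of blow-ups of smooth centers and recover $\sO_Y(Y)$ along $W$ from the restrictions of $A$ to the exceptional $\PP^{k}$-bundles; alternatively, contract the $Y$-negative extremal rays in a Mori-theoretic way so that $Y$ becomes nef on a modified model and then transfer the conclusion back through $\pi$. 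The genuine difficulty — the reason the conjecture resists this approach — is that there is no bound, uniform in the choice of $\pi$, on the exceptional correction $\sum_i a_i\,(E_i\cdot\widetilde C)$ above; removing it seems to require a global input, such as finiteness of the relevant cone of curves contained in $Y$, or a contraction of $W$ that is defined on all of $X$ and whose positive-dimensional fibres would then be forced to interact with the affine set $U$, rather than anything that follows formally from affineness.
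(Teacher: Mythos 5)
You cannot complete this argument, because the statement is false, and the paper you are working from in fact \emph{disproves} it: Proposition \ref{prop:goodman}, proved via Example \ref{example1}, exhibits a smooth projective threefold $X$ with a smooth connected hypersurface $Y$ such that $X\setminus Y$ is affine while $N_{Y/X}$ is not nef. Concretely, $X=\PP(V)$ with $V\simeq \sO_{\PP^1}(1)^{\oplus 2}\oplus\sO_{\PP^1}(-1)$ and $Y=\PP(\sO_{\PP^1}(2)\oplus\sO_{\PP^1}(-1))$; the section $C_0\subset Y$ corresponding to the quotient $\sF\to\sO_{\PP^1}(-1)$ satisfies $\zeta_Y\cdot C_0=-1$, yet blowing up $C_0$ exhibits $X\setminus Y$ as the total space of a line bundle over the affine surface $(\PP^1\times\PP^1)\setminus\ell$, hence affine. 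So the conjecture is recorded in the paper only as historical context (it is Hartshorne's formulation of Goodman's question), and the paper's contribution is the counterexample together with the weaker true statement, Theorem \ref{thm:mainone}, that Steinness of $X\setminus Y$ forces $N_{Y/X}$ to be \emph{pseudoeffective} (which the example satisfies: $\zeta_Y$ is big but not nef).

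That said, your diagnosis of where the numerical argument breaks is exactly right, and the counterexample realizes it precisely: the problematic curve is one lying in the center of the modification used to certify affineness, where the exceptional correction $\sum_i a_i (E_i\cdot\widetilde C)$ overwhelms $a_0(\widetilde Y\cdot\widetilde C)$ and no uniform bound exists. In Example \ref{example1} the curve $C_0$ on which $Y$ is negative is literally the blow-up center, so its strict transform analysis degenerates in the way you predicted. The correct conclusion to draw from your computation is not that the conjecture ``remains open'' at that step, but that the step identifies the mechanism by which the conjecture fails; the only salvageable positivity is bigness (Greb--Wong, via Goodman) and, under the Stein hypothesis, pseudoeffectivity (Theorem \ref{thm:mainone} of the paper, which uses Yang's converse Andreotti--Grauert theorem and the Kosarew--Peternell comparison of algebraic and analytic cohomology rather than any intersection-theoretic bound).
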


The case where $X$ is a surface is of course well-known, see Proposition \ref{prop:one}: if $X \setminus Y$ is affine (or more generally Stein), then $N_{Y/X} $ is nef, i.e., $Y^2 \geq 0$. 
We construct however a threefold that is a counterexample to Conjecture \ref{conjecture-goodman} (but not to Conjecture \ref{conjecture-GW}):

\begin{proposition} \label{prop:goodman}
There exists a smooth projective threefold $X$ containing a smooth connected hypersurface $Y$
such that $X \setminus Y$ is affine and $N_{Y/X}$ is not nef.
\end{proposition}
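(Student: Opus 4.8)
The plan is to build $X$ as a projective bundle over a curve in such a way that the complement of a section is affine but the self-intersection data of the section already shows its normal bundle cannot be nef. Concretely, let $C$ be a smooth projective curve and let $E$ be a rank-$3$ vector bundle on $C$ of the form $E = \sO_C \oplus \sL \oplus \sL^{-1}$ (or more flexibly $E = \sO_C \oplus A \oplus B$ with $\deg A + \deg B = 0$), with $\deg \sL > 0$. Set $X = \PP(E)$ with its projection $\pi \colon X \to C$, and let $Y \subset X$ be the divisor corresponding to the quotient $E \twoheadrightarrow \sL \oplus \sL^{-1}$, so that $Y = \PP(\sL \oplus \sL^{-1}) \hookrightarrow \PP(E)$ is a $\PP^1$-bundle over $C$ sitting inside the $\PP^2$-bundle $X$. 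The normal bundle $N_{Y/X}$ is then $\sO_{\PP(\sL\oplus\sL^{-1})}(1)$ pulled back appropriately; its restriction to the two disjoint sections of $Y \to C$ coming from the sub-line-bundles has degrees $-\deg\sL < 0$ and $+\deg\sL$, so $N_{Y/X}$ has a curve of negative degree and hence is not nef. This disposes of the second requirement.

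The heart of the matter is to arrange that $X \setminus Y$ is affine. Here one wants to exhibit an effective ample divisor (or at least a big and semiample divisor whose complement one can control) supported on $Y$ together with a suitable affine structure on the curve level. The natural candidate is: take $C = \PP^1$ so that $C \setminus \{p\}$ is affine, and note that $X \setminus Y$ fibers over $\PP^1$ with fibers $\PP^2 \setminus \PP^1 \cong \AN^2$. The complement $X \setminus Y$ is an $\AN^2$-bundle over $\PP^1$; to see it is affine one should produce an ample line bundle on $X$ of the form $\sO_{\PP(E)}(a) \otimes \pi^* \sO_{\PP^1}(b)$ whose base locus (a multiple of it) is exactly $Y$, equivalently exhibit enough sections of $a\,\xi + b\,F$ (with $\xi$ the tautological class and $F$ the fiber) vanishing on $Y$ to base-point-freely embed $X \setminus Y$; then Goodman's/Kodaira's criterion gives affineness. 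One checks that the section $Y$ is cut out by a section of $\xi \otimes \pi^*\sO(k)$ for suitable $k$ depending on $\deg \sL$, and that an appropriate positive combination is ample on $X$ while its zero locus is set-theoretically $Y$.

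The step I expect to be the main obstacle is precisely this last verification: one must choose $\sL$ (its degree) and the twisting parameters carefully so that simultaneously (i) the linear system $|a\xi + b F|$ is ample on $X$, (ii) its stable base locus is exactly $Y$ with $X\setminus Y$ thereby affine, and (iii) the negative-degree curve inside $Y$ survives, i.e. $\deg \sL > 0$ genuinely. These are competing constraints — making $Y$ "rigid enough" to be the whole base locus pushes toward $E$ being unbalanced, which is also what forces non-nefness of $N_{Y/X}$, so the two desiderata are in fact aligned, but pinning down explicit numbers (and confirming smoothness and connectedness of $Y$, which is automatic here since $Y \cong \PP(\sL\oplus\sL^{-1})$ over $\PP^1$) requires a short case-by-case computation with the cone of divisors on $\PP(E)$. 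Once the numerics are fixed, affineness follows from the Nakai–Moishezon-type criterion for affine varieties (an open variety is affine iff it carries an ample — equivalently strictly plurisubharmonic exhaustion — and here the complement of the base locus of an ample system does the job), and the non-nefness is immediate from the degree computation above.
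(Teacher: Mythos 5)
There is a genuine gap, and it is fatal to the construction as you set it up: you take the \emph{split} bundle $E = \sO_C \oplus \sL \oplus \sL^{-1}$ and $Y = \PP(\sL \oplus \sL^{-1})$, the subbundle of $\PP(E)$ defined by the quotient $E \twoheadrightarrow \sL \oplus \sL^{-1}$. But then the section $\PP(\sO_C) \subset \PP(E)$ corresponding to the complementary quotient $E \twoheadrightarrow \sO_C$ is a \emph{compact} curve entirely contained in $X \setminus Y$ (a rank-one quotient factoring through $\sO_C$ never factors through $\sL \oplus \sL^{-1}$, precisely because the sequence splits). A quasi-projective variety containing a positive-dimensional compact subvariety is never affine, so your $X \setminus Y$ cannot be affine for any choice of $\sL$, and no amount of tuning the numerics of $a\xi + bF$ will repair this. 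Relatedly, the observation that $X \setminus Y$ is an $\AN^2$-bundle over $\PP^1$ proves nothing: $\PP^1 \times \AN^2$ is such a bundle and is not affine. The final appeal to ``the complement of the base locus of an ample system is affine'' is also off the mark here, since $Y$ is not nef (its normal bundle is negative on a curve), so $Y$ cannot support an ample divisor on $X$ itself; by Goodman's theorem one must pass to a blow-up to exhibit ampleness.

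The paper's Example 3.9 fixes exactly this defect: it takes a \emph{non-split} extension $0 \to \sO_{\PP^1} \to V \to \sF \to 0$ with $\sF = \sO_{\PP^1}(2) \oplus \sO_{\PP^1}(-1)$ and $Y = \PP(\sF) \subset X = \PP(V)$; the non-splitting is precisely what rules out compact curves in $X \setminus Y$ (Lemma 3.8). Affineness is then proved by a concrete geometric argument, not by cone numerics: blowing up the negative section $C_0 \subset Y$ exhibits $X \setminus Y$ as the total space of a line bundle over the affine surface $(\PP^1 \times \PP^1) \setminus (\text{smooth conic})$, which is affine. Your degree computation showing $N_{Y/X}$ is negative on a section is the easy half and is essentially the same as the paper's $\zeta_Y \cdot C_0 = -1$; the half you defer to ``a short case-by-case computation'' is where the entire content of the proposition lies, and the route you sketch for it does not work.
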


In this paper we prove a weak version of Goodman's conjecture. We then show that in the geometric setting of Conjecture \ref{conjecture-GW} one can obtain much stronger results.

\subsection{General Stein complements}

Let $X$ be a compact complex K\"ahler manifold, and let $Y \subset X$ be an irreducible hypersurface. 
Generalising Conjecture \ref{conjecture-GW} to this setup we consider the following problem:

\begin{problem} \label{P1} Let $X$ be a compact K\"ahler manifold, and let $Y \subset X$ be an irreducible (smooth) hypersurface. Give necessary and sufficient criteria such $X \setminus Y$ is affine or Stein. 
\end{problem} 

Our first main result is a necessary condition for smooth hypersurfaces:

\begin{theorem} \label{thm:mainone} 
Let $X$ be a compact K\"ahler manifold of dimension $n$. Let $Y \subset X$ be a smooth hypersurface such that $X \setminus Y$ is Stein.
Then the normal bundle $N_{Y/X}$ is pseudoeffective.
\end{theorem}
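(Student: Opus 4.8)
The plan is to argue by contradiction: assume $N_{Y/X}$ is not pseudoeffective, and produce a non-constant plurisubharmonic exhaustion obstruction, i.e.\ a compact analytic subset or a bounded-above non-constant psh function on $X \setminus Y$, which a Stein manifold of positive dimension cannot have. The first step is to pass from ``$N_{Y/X}$ not pseudoeffective'' to positivity of a dual object. If $N_{Y/X} = \sO_X(Y)|_Y$ is not pseudoeffective, then by duality on the K\"ahler manifold $Y$ (Boucksom--Demailly--P\u{a}un--Peternell, the cone of pseudoeffective classes is dual to that of movable classes) there is a movable class $\omega^{n-2}$ on $Y$ with $N_{Y/X} \cdot \omega^{n-2} < 0$. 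The point is to use this negativity to show that the line bundle $\sO_X(Y)$ is, in a suitable metric sense, negative along $Y$: one wants a singular hermitian metric on $\sO_X(-Y)$, or at least on its restriction to a neighborhood of $Y$, whose curvature current is positive, so that $\log$ of the section cutting out $Y$ becomes psh near $Y$ up to the metric weight.

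Second, I would look at the canonical section $s \in H^0(X, \sO_X(Y))$ with zero divisor $Y$, and consider the function $\varphi = -\log |s|^2_h$ on $X \setminus Y$ for a smooth hermitian metric $h$ on $\sO_X(Y)$; this is smooth on $X \setminus Y$, tends to $+\infty$ along $Y$, and satisfies $dd^c \varphi = \Theta_h(\sO_X(Y))$ away from $Y$, so it fails to be psh only because of the smooth curvature term. The goal is to correct $\varphi$ near $Y$ so that it becomes psh there, using the negativity of $N_{Y/X}$. Concretely, on a tubular neighborhood the curvature restricted to $Y$ represents $c_1(N_{Y/X})$, and its negativity on a movable curve class should, via a regularization/Richberg-type smoothing together with the solution of a $\pp$-equation, allow one to modify $h$ to a metric $h'$ with $\Theta_{h'} \le 0$ in the directions that matter near $Y$; then $-\log|s|^2_{h'}$ is psh on a punctured neighborhood $U \setminus Y$ and exhausts towards $Y$. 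This is the heart of the matter.

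Third, having a psh function near $Y$ that $\to +\infty$ on $Y$, I would globalize: since $X \setminus Y$ is Stein, it carries its own smooth strictly psh exhaustion $\psi$; taking a regularized maximum of $\psi$ with a large constant plus the local function, one hopes to glue to a psh exhaustion of $X \setminus Y$ whose sublevel sets are \emph{not} relatively compact in $X \setminus Y$ but are relatively compact in $X$ (because they stay away from $Y$), which is absurd unless $Y$ is empty. The alternative, and probably cleaner, route is to run Greb--Wong's own argument in reverse: they invoke Goodman/Hartshorne-type results; here one can instead use the fact, valid on K\"ahler manifolds, that if $X \setminus Y$ is Stein then $Y$ supports a big line bundle after a modification, hence $N_{Y/X}$ restricted to a general complete intersection curve has non-negative degree --- but the subtlety is that pseudoeffectivity is exactly the movable-curve statement, so one must be careful to test against movable, not ample, curves, which is why the duality input in step one is essential.

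\medskip

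I expect the main obstacle to be step two: turning the purely numerical negativity $N_{Y/X}\cdot\omega^{n-2}<0$ into an actual psh weight near $Y$. Numerical negativity against a \emph{movable} class does not by itself give a singular metric with positive curvature on the ambient space; one genuinely needs the Steinness of $X \setminus Y$ as extra input, presumably through the existence of a psh exhaustion whose Monge--Amp\`ere mass can be integrated against $\omega^{n-2}$ to contradict the sign. So the real proof likely couples a Lelong-number / intersection estimate (integrating $dd^c$ of the exhaustion over $X$ against the movable class, and comparing with the contribution of $Y$) with the K\"ahler duality of cones, rather than constructing metrics by hand. I would therefore expect the final argument to be: integrate, use K\"ahler identities to evaluate the boundary term as $N_{Y/X}\cdot\omega^{n-2}$, and conclude that a negative value is incompatible with the positivity of $dd^c$ of a psh exhaustion on the Stein manifold $X\setminus Y$.
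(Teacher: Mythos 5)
There is a genuine gap, and it sits exactly where you suspect: step two. Numerical negativity of $N_{Y/X}$ against a movable class on $Y$ gives no local metric or plurisubharmonic information in a neighbourhood of $Y$, and no amount of regularization or $\pp$-solving will manufacture a psh weight from a single negative intersection number against a movable (as opposed to Kähler) class. Your fallback route is also not viable: it is \emph{false} that Steinness of $X\setminus Y$ forces $Y$ to become big after a modification --- Serre's example (Example \ref{example2} in the paper, $\PP(\sF)\setminus\PP(\sO_B)\simeq\C^*\times\C^*$ over an elliptic curve) is Stein with \emph{trivial} normal bundle. Bigness only follows from \emph{affineness} via Goodman's theorem, and Proposition \ref{prop:goodman} shows that even affineness does not force nefness, so any purely local ``negativity near $Y$ contradicts Steinness'' scheme is doomed above the pseudoeffective threshold.

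The actual mechanism is global and goes through function theory rather than psh exhaustions. Assuming $N_{Y/X}$ is not pseudoeffective, Yang's converse to the Andreotti--Grauert theorem \cite[Thm.1.5]{Yan19} says precisely that $N^*_{Y/X}$ is $(n-2)$-positive. This is the hypothesis of the Kosarew--Peternell comparison theorem (Proposition \ref{prop:alg-an}): every holomorphic function on $X\setminus Y$ has moderate growth, i.e.\ extends to a meromorphic function on $X$. Now take the finitely many functions giving a closed Stein embedding $X\setminus Y\hookrightarrow\C^N$; their meromorphic extensions show $X$ is Moishezon, hence projective (being K\"ahler), and then GAGA (or the algebraic half of the same proposition) upgrades the embedding to an algebraic one, so $X\setminus Y$ is \emph{affine}. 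Goodman's theorem, in the form of \cite[Prop.4.2]{GW20}, then gives that $N_{Y/X}$ is big, contradicting the assumption that it is not even pseudoeffective. The contradiction is thus reached not by exhibiting an obstruction to Steinness, but by showing that the non-pseudoeffectivity hypothesis forces the Stein complement to be affine and the normal bundle to be big --- a self-destructing assumption. Your step one (BDPP duality) plays no role in this argument, and the K\"ahler-ness of $X$ enters only through Moishezon's theorem.
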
 

The restriction to smooth hypersurfaces is due to the fact that our main technical ingredient, a result of Kosarew and the second named author relating analytic and algebraic functions
on the complement, see Proposition \ref{prop:alg-an}, is only known in the smooth case.
We then combine this result with Yang's recent partial converse 
Andreotti-Grauert theorem \cite[Thm.1.5]{Yan19}.
Proposition \ref{prop:goodman} shows that Theorem \ref{thm:mainone} is essentially optimal: in our example the normal bundle $N_{Y/X}$ is pseudoeffective, but not nef in codimension one.
 In view of these results it is clear that the properties of complements are of a birational nature, therefore we study in Subsection \ref{subsectionmori} how Mori theory can be used to obtain a better understanding of the situation.
 
\subsection{Canonical extensions}

As a direct consequence of Theorem \ref{thm:mainone} one obtains:

\begin{corollary} \label{cor:extension} 
Let $M$ be a compact K\"ahler manifold.
Assume that  $Z_M$ is Stein for some K\"ahler class.
Then the tangent bundle $T_M$ is pseudoeffective.
\end{corollary}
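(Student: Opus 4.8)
\textbf{Proof proposal for Corollary \ref{cor:extension}.}

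The plan is to reduce this immediately to Theorem \ref{thm:mainone}. Recall the setup from the introduction: a K\"ahler class $\alpha \in H^1(M,\Omega_M)$ gives an extension
$$
0 \to \sO_M \to V^{\alpha} \to T_M \to 0,
$$
hence an embedding $Y := \PP(T_M) \subset \PP(V^{\alpha}) =: X$, and $Z_M = X \setminus Y$. The manifold $X$ is a projectivized vector bundle over the compact K\"ahler manifold $M$, hence is itself compact K\"ahler; and $Y \subset X$ is a smooth hypersurface (the projectivization of a subbundle quotient is a smooth projective subbundle). So if $Z_M$ is Stein, Theorem \ref{thm:mainone} applies and tells us $N_{Y/X}$ is pseudoeffective.

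The next step is to identify $N_{Y/X}$. As already noted in the excerpt, $N_{Y/X} \cong \sO_{\PP(T_M)}(1)$, the tautological quotient line bundle on $\PP(T_M)$. This is a standard computation: for the inclusion $\PP(T_M) \hookrightarrow \PP(V^{\alpha})$ coming from a subbundle exact sequence as above, the normal bundle is $\sO_{\PP(T_M)}(1)$ twisted by the pullback of the quotient of the relevant bundles, which here is the trivial line bundle $\sO_M$ since $\sO_M$ is the sub; so indeed $N_{Y/X} \cong \sO_{\PP(T_M)}(1)$. (I would cite this or spell out the one-line Euler-sequence argument.)

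Finally, I would invoke the definition of pseudoeffectivity for a vector bundle: $T_M$ is pseudoeffective precisely when $\sO_{\PP(T_M)}(1)$ is a pseudoeffective line bundle on $\PP(T_M)$. Combining the previous two steps, $\sO_{\PP(T_M)}(1) = N_{Y/X}$ is pseudoeffective, hence $T_M$ is pseudoeffective, as claimed. I do not expect any real obstacle here: the only points needing a word of justification are that $X$ is K\"ahler and $Y$ smooth (so that Theorem \ref{thm:mainone} is applicable) and the normal bundle identification, both of which are routine. The substance of the corollary is entirely carried by Theorem \ref{thm:mainone}.
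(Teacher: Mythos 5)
Your proposal is correct and is exactly the argument the paper intends: the corollary is stated there as a direct consequence of Theorem \ref{thm:mainone}, using the identification $N_{\PP(T_M)/\PP(V^{\alpha})} \simeq \sO_{\PP(T_M)}(1)$ already recorded in the introduction, together with the definition of pseudoeffectivity of a vector bundle via its tautological class. Nothing is missing.
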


While Proposition \ref{prop:goodman} shows that Theorem \ref{thm:mainone} can't be improved in general, it seems possible that Conjecture \ref{conjecture-GW} has a positive answer:
in fact, as already mentioned, Greb and Wong observed that $Z_M$ does not contain subvarieties of positive dimension \cite[Prop. 2.7]{GW20}. Thus, $Z_M$ is Stein if and only $Z_M$ is holomorphically convex. Using classical results about the holomorphic convexity of certain complements,
we show in Theorem \ref{theorem-contractions} that the Stein property leads to strong restrictions on divisorial Mori contractions. 
In low dimension this already gives a rather complete picture: 

\begin{corollary} \label{corollary-Mori-low-dimension} Let $M$ be a compact K\"ahler manifold of dimension at most three.
Assume that  $Z_M$ is Stein for some K\"ahler class. Then $M$ does not admit a birational Mori contraction. 
Thus, either $K_M$ is nef or $M$ is a Mori fibre space. 
\end{corollary}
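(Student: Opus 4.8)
The plan is to reduce the statement to the non-existence of birational Mori contractions, and then to invoke the minimal model program in dimension at most three. The logical skeleton is: if $M$ carries a birational Mori contraction $\varphi : M \to M'$, contracting a divisor $E$ (the only birational type in dimension $\le 3$, since small contractions do not occur for smooth threefolds), then one derives a contradiction with the assumption that $Z_M$ is Stein. Once birational contractions are excluded, the standard structure theory of the MMP in dimension $\le 3$ says that either $K_M$ is nef, or there is a $K_M$-negative extremal ray whose contraction is of fibre type, i.e., $M$ is a Mori fibre space; in the K\"ahler case one uses the K\"ahler MMP in dimension three (Horing--Peternell), which provides exactly this dichotomy. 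So the real content is the exclusion of divisorial contractions.

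For that exclusion I would use Theorem \ref{theorem-contractions} (cited in the excerpt as giving ``strong restrictions on divisorial Mori contractions'' from the Stein property of $Z_M$). The strategy behind such a theorem is the following. A divisorial Mori contraction $\varphi : M \to M'$ produces, on the canonical-extension side, a comparison between $Z_M$ and $Z_{M'}$ (or at least between $\mathbb P(V^\alpha_M)$ and a modification built from $M'$): the exceptional divisor $E \subset M$ and its image $\varphi(E) \subset M'$ give rise to a proper modification relating the two projectivised bundles, and the fibres of $\varphi|_E$ are positive-dimensional rational curves. Pulling back, one obtains inside $Z_M$ — or inside a suitable holomorphically convex neighbourhood — a family of compact curves, contradicting Greb--Wong's result \cite[Prop.2.7]{GW20} that $Z_M$ contains no positive-dimensional compact subvariety; alternatively, one shows that holomorphic convexity of $Z_M$ forces the Remmert reduction to contract something, again incompatible with the absence of compact subvarieties unless the contraction $\varphi$ is trivial. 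Either way the Stein (equivalently holomorphically convex) hypothesis on $Z_M$ is the lever.

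The main obstacle, I expect, is making the passage from a divisorial contraction of $M$ to an actual compact analytic subset of $Z_M$ (or to a genuine failure of holomorphic convexity) completely rigorous: the canonical extension $V^\alpha$ does not behave functorially under $\varphi$ because the K\"ahler class $\alpha$ need not descend, and the normal bundle computation $N_{Y/X} \cong \sO_{\mathbb P(T_M)}(1)$ interacts subtly with the blow-down. One must instead work with the classical results on holomorphic convexity of complements (the references alluded to before Corollary \ref{corollary-Mori-low-dimension}) to see directly that a divisorial contraction obstructs the Stein property of the complement, without needing $Z_{M'}$ itself. The remaining steps — that in dimension $\le 3$ every birational extremal contraction of a smooth (K\"ahler) variety is divisorial, and that the absence of birational contractions yields the stated dichotomy via the (K\"ahler) MMP — are then citations rather than new arguments.
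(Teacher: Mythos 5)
Your overall strategy coincides with the paper's: exclude birational contractions by appealing to Theorem \ref{theorem-contractions}, then invoke the (K\"ahler) MMP in dimension at most three to get the dichotomy ``$K_M$ nef or Mori fibre space''. However, there is a concrete missing step. Theorem \ref{theorem-contractions} does \emph{not} apply to an arbitrary divisorial contraction $\varphi: M \to N$: its hypotheses require that the image $B = \varphi(E)$ be either (i) not contained in $N_{\sing}$, or (ii) an isolated hypersurface singularity (with $\dim M \geq 3$), or (iii) a component of $N_{\sing}$ at whose general point $N$ has quotient singularities. The entire content of the paper's proof of the corollary is the verification that every birational Mori contraction in dimension $\leq 3$ falls into one of these cases: in dimension two one contracts a $(-1)$-curve to a smooth point, and in dimension three Mori's classification \cite{Mor82} (whose arguments are local near the exceptional locus, hence valid in the K\"ahler setting) shows the contraction is divisorial and, when $N$ is singular, $E$ is contracted to a point which is a quotient or hypersurface singularity. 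Your proposal treats the theorem as a black box that kills all divisorial contractions and never checks these hypotheses, so as written the argument does not close.

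A secondary remark: the mechanism you sketch for Theorem \ref{theorem-contractions} --- producing compact curves inside $Z_M$, or forcing the Remmert reduction to contract something --- is not how that theorem is actually proved. The paper's argument compares $Z_M^\alpha \setminus \pi^{-1}(E)$ with $Z_N^\beta \setminus \psi^{-1}(B)$ (the class does descend after subtracting a multiple of $E$, via \cite[Lemma 3.3]{HP16}) and then derives a contradiction with holomorphic convexity from Hartogs-type extension across the codimension $\geq 2$ set $\psi^{-1}(B)$, passing to a local smoothing cover in the quotient-singularity case. Since the theorem is already established in the paper, this does not affect the corollary, but your worry about the non-functoriality of $V^\alpha$ under $\varphi$ is resolved precisely by this descent of $\alpha - \lambda E$, not by avoiding $Z_N$ altogether.
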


The scope of our technique is not necessarily limited to divisorial Mori contractions:

\begin{proposition} \label{proposition-fourfold-flip} Let $M$ be a smooth compact K\"ahler fourfold.
Assume that  $Z_M$ is Stein for some K\"ahler class. Then $M$ does not admit
a small Mori contraction.
\end{proposition}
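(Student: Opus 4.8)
The plan is to argue by contradiction: suppose $M$ admits a small Mori contraction $\varphi\colon M \to M'$ onto a normal projective variety $M'$, and derive that $Z_M$ cannot be Stein. The key point is that a small contraction contracts a curve class without contracting any divisor, so the exceptional locus $E \subset M$ has codimension at least two. I would first set up the geometry inside $X = \PP(V^\alpha)$: pull back the situation to the hypersurface $Y = \PP(T_M) \subset X$ via the projection $\pi\colon Y \to M$, and consider the preimage $\pi^{-1}(E) \subset Y$, which is a compact subvariety of positive dimension sitting inside $Y$. The idea is that a small contraction produces, after a suitable choice of K\"ahler class, a compact analytic subset in a neighborhood of $Y$ in $X$ that obstructs holomorphic convexity of $Z_M = X \setminus Y$ — recall (Greb–Wong) that $Z_M$ contains no compact positive-dimensional subvarieties, so the obstruction must come from the boundary behaviour near $Y$.

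Concretely, I would exploit the same circle of ideas used for Theorem~\ref{theorem-contractions} and Corollary~\ref{corollary-Mori-low-dimension}: the holomorphic convexity of a complement $X \setminus Y$ is controlled by the positivity of the normal bundle $N_{Y/X} = \sO_{\PP(T_M)}(1)$ along curves in $Y$, and by classical results on holomorphic convexity of neighborhoods of subvarieties with semi-negative normal bundle. A small contraction $\varphi$ gives a curve $C \subset M$ with $K_M \cdot C < 0$ contracted by $\varphi$; lifting $C$ to a minimal section or to a complete curve in $\pi^{-1}(C) \subset Y$, one analyzes the restriction of $\sO_{\PP(T_M)}(1)$ and of the relative structure to this curve. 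Since $T_M$ restricted to the contracted curves of a small contraction has a sub- or quotient-bundle structure dictated by the contraction (the deformations of $C$ sweeping out only $E$, which has codimension $\ge 2$), one should be able to show that the tautological bundle fails to be nef along an appropriate family of curves in $Y$, in a way that forces a compact subvariety to appear in any holomorphically convex reduction — contradicting the fact that $Z_M$ has no compact subvarieties and hence, being Stein is equivalent to being holomorphically convex (again by \cite[Prop.~2.7]{GW20}). The technical heart is to promote the non-nefness along curves in codimension $\ge 2$ into a genuine obstruction: for divisorial contractions one gets a divisor and Theorem~\ref{thm:mainone} / the convexity results apply directly, but for a small contraction one must work with the normal bundle of the exceptional locus $E$ inside $M$, pull this back to $Y$, and use the semi-negativity of $N_{E/M}$ (which holds because $\varphi$ contracts $E$) together with Grauert-type criteria to produce a strictly pseudoconvex — but not Stein — neighborhood inside $X \setminus Y$.

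The main obstacle, as I see it, is exactly this last promotion step. Unlike the divisorial case, where the contracted divisor $D$ directly yields a big-but-not-nef-in-codimension-one normal bundle and Theorem~\ref{thm:mainone} (or its sharpenings via Mori theory in Subsection~\ref{subsectionmori}) bites, a small contraction does not produce a divisor on $M$, so one cannot appeal to Theorem~\ref{thm:mainone} directly on $M$. The resolution should be to pass to $X = \PP(V^\alpha)$ and to the divisor $Y$ there, noting that the preimage $\pi^{-1}(E)$, while not a divisor in $X$, lies in $Y$ which \emph{is} a divisor, and that the relative canonical bundle of $\pi$ restricted along the contracted curves carries the negativity. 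One then needs a neighborhood-of-$Y$ version of the holomorphic convexity obstruction: if there is a curve class in $Y$ along which $N_{Y/X}$ has negative degree while that curve moves in a compact family covering only a codimension-$\ge 2$ locus of $Y$, then a small analytic neighborhood of that family inside $X$ contains a compact analytic set, violating the Steinness of $X \setminus Y$. Making the "moves in a compact family" part rigorous — essentially producing the compact analytic obstruction set from the deformation theory of the $\varphi$-contracted curves lifted to $Y$ — is where the real work lies, and I expect it to use Mori's bend-and-break together with the structure theory of small contractions (e.g. that in dimension four they are well understood after Kawamata, with one-dimensional fibers and explicit local models) to pin down the lifted family precisely.
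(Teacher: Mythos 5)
Your proposal identifies the right difficulty --- a small contraction produces no divisor on $M$, so Theorem \ref{thm:mainone} and the divisorial machinery of Theorem \ref{theorem-contractions} do not apply --- but the mechanism you offer to overcome it does not work as stated, and it is not the one the paper uses. You want to deduce non-Steinness of $X\setminus Y$ from the negativity of $N_{Y/X}=\sO_{\PP(T_M)}(1)$ on a compact family of curves inside $Y$ lying over the exceptional locus. But those curves sweep out only a codimension-two subset of $Y$, and negativity of $N_{Y/X}$ on such a locus is compatible both with pseudoeffectivity of $N_{Y/X}$ (the conclusion of Theorem \ref{thm:mainone}) and with the complement being Stein or even affine: Example \ref{example1} exhibits a normal bundle that is negative on a curve in $Y$ while $X\setminus Y$ is affine. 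There is no Grauert-type or pseudoconvexity criterion that converts negativity along curves inside the boundary divisor into a compact analytic set in, or a failure of holomorphic convexity of, the open complement; this promotion step is the entire content of the proposition and your sketch leaves it unproved.

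The paper's actual argument is quite different. By Kawamata's classification, the exceptional locus of a small $K_M$-negative contraction of a smooth fourfold is a disjoint union of planes $\PP^2$ with normal bundle $\sO_{\PP^2}(-1)^{\oplus 2}$, so the flip $M\dasharrow M^+$ exists, is explicit, and $M^+$ is smooth. One transports the class $\alpha$ to a $(1,1)$-class $\alpha^+$ on $M^+$ and obtains an isomorphism of affine bundles $\tau_0$ between $Z_{M^+}^{\alpha^+}$ and $Z_M^{\alpha}$ over the (isomorphic) complements of the exceptional loci. Since the removed set in $Z_{M^+}^{\alpha^+}$ has codimension at least two and the target $Z_M^{\alpha}$ is Stein, the Andreotti--Stoll extension theorem extends $\tau_0$ to a holomorphic map $\tau: Z_{M^+}^{\alpha^+}\to Z_M^{\alpha}$; composing with a local section of the affine bundle and projecting to $M$ then extends the identity on $M^+\setminus \mathrm{Exc}(\varphi^+)\simeq M\setminus\mathrm{Exc}(\varphi)$ to a holomorphic map $M^+\to M$, which is impossible for a flip. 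So Steinness enters not through curvature of the normal bundle but through the extendability of maps into Stein spaces across codimension-two sets; this is the idea your proposal is missing.
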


In a different direction we can use stability arguments to exploit the property that the tangent bundle is pseudoeffective:

\begin{proposition}  \label{proposition-gen-ample}
Let $M$ be a projective manifold of dimension $n$ such that $T_M$ is pseudoeffective. 
If $M$ is not uniruled, there exists exists a decomposition $$T_M \simeq \sF \oplus \sG,$$ where
$\sF \neq 0$ and $\sG$ are integrable subbundles such that $c_1(\sF)=0$.
\end{proposition}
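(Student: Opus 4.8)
The plan is to exploit the pseudoeffectivity of $T_M$ together with the fact that $M$ is not uniruled. The starting point is Campana–Păun / Campana–Peternell: if $M$ is not uniruled, then $K_M$ is pseudoeffective, and by duality the anticanonical class $-K_M = c_1(T_M)$ is \emph{anti-pseudoeffective}. On the other hand $T_M$ being pseudoeffective (as a vector bundle, i.e.\ $\sO_{\PP(T_M)}(1)$ pseudoeffective on the projectivization) forces $\det T_M = -K_M$ to be pseudoeffective as well, since $c_1$ of a pseudoeffective bundle is pseudoeffective (push forward the $(n-1)$-st power of the tautological class, or use that a pseudoeffective bundle has pseudoeffective determinant after tensoring by the right multiple). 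Combining, $c_1(M) = -K_M$ is both pseudoeffective and anti-pseudoeffective, hence numerically trivial: $c_1(M) = 0$ in $N^1(M)_{\bR}$.

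Next I would invoke the structure theory for projective manifolds with pseudoeffective tangent bundle and numerically trivial canonical class. This is essentially the content of work of Hosono–Iwai–Matsumura and of the Beauville–Bogomolov type decomposition refined in that context: a projective manifold with $c_1 = 0$ and $T_M$ pseudoeffective admits, after an étale cover, a splitting of the tangent bundle, and more relevantly one has a canonical foliation decomposition even without passing to a cover. Concretely, the expected mechanism is: consider a maximal destabilizing or a "positive part" subsheaf coming from the pseudoeffective structure — the pseudoeffective cone of $T_M$ has faces, and since $\det T_M \equiv 0$ the bundle cannot be "strictly" pseudoeffective, so the numerical dimension/Iitaka behaviour of $\sO_{\PP(T_M)}(1)$ detects a proper subbundle $\sF \subset T_M$ on which the tautological restriction is still pseudoeffective with $c_1(\sF) \equiv 0$; the quotient $\sG = T_M / \sF$ then also has pseudoeffective... and in fact by generic semipositivity (Miyaoka, since $M$ is not uniruled so $\Omega_M$ is generically nef) one upgrades the exact sequence $0 \to \sF \to T_M \to \sG \to 0$ to a splitting. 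The integrability of $\sF$ (and of $\sG$) should follow from the fact that a $c_1$-trivial piece of a generically nef situation is a foliation — one argues that the $\sO_{\sF}$-part is invariant, using that the $\sF \wedge \sF \to T_M/\sF$ bracket map, being a map from a $c_1$-trivial bundle, must vanish, or via Druel/Höring's criterion that numerically flat subsheaves of the tangent bundle are foliations.

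The step I expect to be the genuine obstacle is producing the \emph{direct sum} decomposition $T_M \simeq \sF \oplus \sG$ with $\sF \neq 0$ and $c_1(\sF) = 0$, rather than just a filtration or a decomposition after an étale cover. Producing an étale-trivial-determinant subsheaf is comparatively soft; splitting it off on $M$ itself, and simultaneously guaranteeing $\sF$ is a genuine subbundle (locally free, saturated, and a subbundle in codimension everywhere) and integrable, is where one needs the full strength of the structure results. I would therefore organize the proof as: (1) $c_1(M) \equiv 0$ via Campana–Peternell plus $c_1$ of a pseudoeffective bundle; (2) apply the classification of $c_1$-trivial projective manifolds with pseudoeffective (equivalently, by this point, numerically flat on the relevant part) tangent bundle — citing the appropriate theorem — to obtain a splitting $T_M = \sF \oplus \sG$ with $c_1(\sF) = 0$; (3) check $\sF \neq 0$ by a numerical-dimension argument: if the only $c_1$-trivial summand were zero, $T_M$ would be "ample modulo its determinant," contradicting $c_1(T_M) \equiv 0$ together with $T_M$ pseudoeffective and $M$ non-uniruled (a positive-dimensional ample-ish part would make $M$ uniruled or rationally connected along that direction); (4) observe that any $c_1$-trivial subbundle arising this way is numerically flat, hence by the Bogomolov–type argument (Bogomolov's theorem on foliations, or Druel) it is closed under Lie bracket, i.e.\ integrable, and the same for $\sG$ as a quotient-turned-summand. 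The routine verifications — that pushforward of the tautological power computes $c_1$, that numerical flatness implies integrability — I would relegate to citations of the results available in the literature and assumed in this paper.
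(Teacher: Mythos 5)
Your argument breaks down at the very first step. The claim that a pseudoeffective vector bundle has pseudoeffective determinant is false: already $V=\sO_{\PP^1}\oplus\sO_{\PP^1}(-1)$ on $\PP^1$ satisfies $H^0(S^mV)\neq 0$ for all $m$, so $\zeta_{\PP(V)}$ is effective, while $\det V=\sO_{\PP^1}(-1)$ is not pseudoeffective; the underlying issue is that powers of a pseudoeffective $(1,1)$-class need not be pseudoeffective, so pushing forward $\zeta^{r}$ (with $r=\rk V$) proves nothing. Consequently your conclusion $c_1(M)\equiv 0$ is unjustified, and it is in fact false in the generality of the statement: for $M=A\times B$ with $A$ a positive-dimensional abelian variety and $B$ non-uniruled of general type, $T_M$ contains the trivial direct summand $\sO_M^{\oplus\dim A}$ and is therefore pseudoeffective, $M$ is not uniruled, yet $c_1(M)=-\mathrm{pr}_B^*K_B\not\equiv 0$. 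This is exactly the shape of example the proposition is designed to accommodate: only the summand $\sF$, not all of $T_M$, has trivial first Chern class. Everything after your step (1) --- the appeal to a classification of $c_1$-trivial projective manifolds with pseudoeffective tangent bundle --- therefore rests on a hypothesis you have not established and cannot establish.

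The correct mechanism, which is the one the paper uses, is localisation to a Mehta--Ramanathan general curve $C$: pseudoeffectivity of $T_M$ restricts to $C$ and implies via Lemma \ref{lemmapseffample} that $\Omega_M|_C$ is not ample; since $M$ is not uniruled, $\Omega_M$ is generically nef, so the maximal ample subbundle $\sG_C^*\subset\Omega_M|_C$ has a quotient $\sF_C^*$ of slope zero, and by Mehta--Ramanathan this spreads out to a saturated quotient $\sF^*$ of $\Omega_M$; then \cite{BDPP13} gives that $c_1(\sF^*)$ is pseudoeffective while $c_1(\sF^*)\cdot H^{n-1}=0$, forcing $c_1(\sF^*)=0$, and finally \cite[Thm.5.2]{LPT18} (which needs only that $K_M$ is pseudoeffective, not numerically trivial) supplies both the integrability and the splitting $T_M\simeq\sF\oplus\sG$. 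Your instinct that the direct-sum decomposition and the integrability are the delicate points is reasonable, but both are delivered by that single result of Loray--Pereira--Touzet once one has a foliation with $c_1=0$; the genuinely load-bearing step you are missing is the passage through an MR-curve rather than through $\det T_M$.
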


In particular, by \cite[Cor.11]{Pe11} the manifold $M$ is not of general type and $\Omega_M$ is not generically ample.
In view of this result and \cite[Thm.1.6]{HP19} we expect the following to be true:
 
\begin{conjecture} \label{conjecture:torus} Let $M$ be a compact K\"ahler manifold that is not uniruled.
Assume that  $Z_M$ is Stein for some K\"ahler class.
Then $M$ is an \'etale quotient of a torus.
\end{conjecture}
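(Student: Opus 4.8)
\textbf{Proof proposal for Conjecture \ref{conjecture:torus}.}

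The plan is to combine Proposition \ref{proposition-gen-ample} with the structure theory for compact Kähler manifolds with pseudoeffective tangent bundle developed in \cite{HP19}, together with the additional input that $Z_M$ Stein forces strong positivity on the relevant pieces of $T_M$. First I would reduce to the projective case: by Corollary \ref{cor:extension} the tangent bundle $T_M$ is pseudoeffective, and by \cite{HP19} a non-uniruled compact Kähler manifold with $T_M$ pseudoeffective has a finite étale cover whose Albanese map is a locally trivial fibration; arguing as in \cite[Thm.1.6]{HP19} one expects such an $M$ to be already projective (the base of the Albanese is a torus, the fibres are Fano-type, and local triviality plus the non-uniruledness constraint should push everything into the projective world, or at least allow us to run the rest of the argument). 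On the projective cover $\tilde M$, Proposition \ref{proposition-gen-ample} yields a splitting $T_{\tilde M} \simeq \sF \oplus \sG$ with $\sF \neq 0$ integrable and $c_1(\sF) = 0$.

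The heart of the argument is to show that in fact $\sG = 0$, i.e.\ that $T_{\tilde M}$ is numerically flat, so that $\tilde M$ is an étale quotient of a torus by the standard characterization (Beauville, or via \cite{HP19}). Here is where the Stein hypothesis must be used beyond mere pseudoeffectivity. The foliation defined by $\sG$ has leaves that should be rationally connected (being the "positive part" of a pseudoeffective tangent bundle on a non-uniruled manifold, after the $c_1 = 0$ part is split off, $\sG$ is forced to be somewhere positive); but $\tilde M$ non-uniruled forces $\sG$ to contribute no rational curves, hence $\sG = 0$. Alternatively, one feeds the splitting back into the canonical-extension picture: the decomposition $T_{\tilde M} = \sF \oplus \sG$ induces a product-like structure on $\PP(V^\alpha)$ compatible with $\PP(T_{\tilde M})$, and holomorphic convexity of $Z_{\tilde M}$ (recall $Z_M$, hence $Z_{\tilde M}$ after pullback, is Stein iff holomorphically convex since it contains no positive-dimensional subvarieties) restricts the contraction associated to $\sG$ exactly as in Theorem \ref{theorem-contractions} and Corollary \ref{corollary-Mori-low-dimension} — forcing $K_{\tilde M}$ nef, and combined with $c_1(\sF) = 0$ and $T_{\tilde M}$ pseudoeffective, $K_{\tilde M} \equiv 0$; then Bogomolov--Beauville and the splitting give a torus quotient.

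Finally I would descend: a finite étale cover of $M$ being a torus quotient, $M$ itself is an étale quotient of a torus (compose the covers). The main obstacle I anticipate is the middle step — ruling out the positive part $\sG$. Proposition \ref{proposition-gen-ample} only gives $c_1(\sF) = 0$, not $\sF = T_M$, and a priori $\sG$ could be a nonzero integrable subbundle with positive slope whose leaves are not obviously rational; one genuinely needs either a foliation-theoretic rational-connectedness result (along the lines of Bogomolov--McQuillan / Campana--Păun) to produce rational curves tangent to $\sG$ and contradict non-uniruledness, or a delicate analysis of the contraction of the $\sG$-direction inside $Z_M$ via holomorphic convexity. Pinning this down rigorously — rather than the reduction and descent steps, which are routine — is where the real work lies, and it is presumably why this is stated as a conjecture rather than a theorem.
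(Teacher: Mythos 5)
First, be aware that the statement you are proving is presented in the paper as an \emph{open conjecture}: the authors only establish it for projective manifolds of dimension at most three (Theorem \ref{theorem-not-uniruled}), by combining Proposition \ref{prop:gen-ample} with the algebraic-integrability results \cite[Thm.D]{PT13} and \cite[Thm.1.2]{Tou08} applied to the foliation $\sF$ with $c_1(\sF)=0$, and with \cite[Thm.1.6]{HP19} in the case $\rk \sF = \dim M$. So there is no proof in the paper to match your proposal against, and your proposal, as you yourself concede, is not a proof either.

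The concrete gap is your middle step, and it is worse than you suggest: the mechanism you propose for killing $\sG$ has the positivity reversed. In Proposition \ref{prop:gen-ample} the summand $\sG$ is constructed so that $\sG^*|_C$ is the maximal \emph{ample} subbundle of $\Omega_M|_C$ on an MR-general curve; hence $\sG|_C$ is \emph{anti}-ample and $c_1(\sG)=-K_M$ is anti-pseudoeffective ($K_M$ being pseudoeffective since $M$ is not uniruled). The rational-curve-producing criteria of Bogomolov--McQuillan/Campana--P\u{a}un require positivity of the foliation along curves, which $\sG$ fails generically, so non-uniruledness yields no contradiction and there is no reason for $\sG$ to vanish. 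Indeed the paper never shows $\sG=0$: in Proposition \ref{proposition-foliation-algebraic} and in the proof of Theorem \ref{theorem-not-uniruled} the strategy is to prove that $\sF$ (not $\sG$) is algebraically integrable, split off an abelian factor after a finite \'etale cover, and induct on the dimension via \cite[Lemma 2.10]{GW20}. The genuinely hard point --- and the reason this is stated as a conjecture --- is the algebraic integrability of the $c_1=0$ foliation $\sF$ in higher rank, which is precisely the Pereira--Touzet conjecture \cite[6.5]{PT13}; your alternative route through Mori contractions does not touch this (Corollary \ref{corollary-Mori-low-dimension} applies only in dimension at most three and only excludes birational contractions, and "no birational contraction plus not uniruled" does not give $K_M\equiv 0$). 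Finally, your opening reduction to the projective case is asserted rather than proved; the paper sidesteps this issue by restricting Theorem \ref{theorem-not-uniruled} to projective manifolds from the outset.
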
 

This statement would of course be a consequence of the first part of Conjecture \ref{conjecture-GW}.
Conjecture \ref{conjecture:torus} is obviously related to a conjecture of Pereira and Touzet on the algebraic integrability
of foliations with trivial first Chern class \cite[6.5]{PT13}. We will use some recent results from foliation theory to confirm it
for projective manifolds of low dimension:

\begin{theorem} \label{theorem-not-uniruled}
Let $M$ be a projective manifold of dimension at most three that is not uniruled. 
Assume that  $Z_M$ is Stein for some K\"ahler class.
Then $M$ is an \'etale quotient of an abelian variety.
\end{theorem}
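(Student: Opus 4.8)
The plan is to combine the structural result of Proposition~\ref{proposition-gen-ample} with the foliation-theoretic machinery already available in dimension at most three, together with the Mori-theoretic restrictions coming from Corollary~\ref{corollary-Mori-low-dimension}. So suppose $M$ is a projective manifold of dimension $n \le 3$ that is not uniruled, and that $Z_M$ is Stein for some K\"ahler class. By Corollary~\ref{cor:extension} the tangent bundle $T_M$ is pseudoeffective, so Proposition~\ref{proposition-gen-ample} applies and gives a splitting $T_M \simeq \sF \oplus \sG$ into integrable subbundles with $\sF \ne 0$ and $c_1(\sF) = 0$. Moreover, since $M$ is not uniruled it is not a Mori fibre space, so Corollary~\ref{corollary-Mori-low-dimension} tells us $M$ admits no birational Mori contraction either; hence $K_M$ is nef. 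As $M$ is not of general type (by \cite[Cor.11]{Pe11}, as noted after Proposition~\ref{proposition-gen-ample}), abundance in dimension $\le 3$ shows $\kappa(M) \in \{0,1,2\}$ and the Iitaka fibration is a genuine morphism after a birational model; but a birational model would reintroduce a birational contraction, so one has to argue that $K_M$ is in fact torsion, i.e. $\kappa(M)=0$. This is the first key point: use the decomposition $T_M = \sF \oplus \sG$ to see that $\det \sG = K_M^{-1}\otimes\det\sF^{-1}$ has $c_1 = -c_1(K_M)$, and since $K_M$ is nef while $-K_M$ sits inside the tangent bundle as (part of) a foliation, a standard argument (e.g. via the pseudoeffectivity of $K_M$ from non-uniruledness together with nefness) forces $c_1(K_M) = 0$ numerically, hence $K_M$ is torsion.

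Granting $K_M \equiv 0$, the problem becomes: a projective manifold of dimension $\le 3$ with numerically trivial canonical bundle whose tangent bundle splits off a nonzero integrable subbundle $\sF$ with $c_1(\sF)=0$ must be an \'etale quotient of an abelian variety. In dimension $1$ this is trivial ($M$ elliptic). In dimension $2$, Beauville--Bogomolov classification of surfaces with $K_M \equiv 0$ leaves tori, bielliptic surfaces, K3 and Enriques surfaces; the latter two have $T_M$ with no such splitting (indeed $T_M$ is stable with $c_1=0$ so admits no rank-one subbundle of degree zero), so only the \'etale-quotient-of-torus cases survive. In dimension $3$ the analogous classification (Beauville, and the abelian/Calabi--Yau/etc.\ list) is available, and the existence of the splitting $T_M = \sF\oplus \sG$ with $c_1(\sF)=0$ is exactly the kind of holonomy/decomposition constraint that, via the Beauville--Fujiki / Cheeger--Gromoll type decomposition theorem after passing to a finite \'etale cover, rules out everything except the abelian case. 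Here is where I expect to invoke the foliation-theoretic input alluded to before Conjecture~\ref{conjecture:torus}: results on algebraic integrability of foliations with $c_1=0$ (in the spirit of Pereira--Touzet \cite{PT13} and its known cases) show that $\sF$ is an algebraically integrable foliation whose leaves are abelian varieties, giving an isotrivial fibration, and then one bootstraps on the base.

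The main obstacle, I expect, is the passage through dimension $3$: one must genuinely exclude the non-abelian Calabi--Yau and the partially-decomposable cases (products of an elliptic curve with a K3 surface, etc.). The splitting $T_M = \sF\oplus\sG$ with $c_1(\sF)=0$ is the crucial leverage — it means $M$, after finite \'etale cover, decomposes as a product with an abelian factor (the leaf of $\sF$ through a general point, using that $\sF$ is integrable with trivial determinant and its leaves are therefore torsors under abelian varieties by the Bogomolov--McQuillan / Demailly type results), reducing the dimension. After at most two such reductions one lands in dimension $0$ and concludes. The delicate bookkeeping is to ensure at each stage that the hypothesis "$Z_M$ Stein" — or rather its consequences, $T_M$ pseudoeffective and the non-existence of Mori contractions — is inherited by the factors, or alternatively to run the whole classification argument in one step using the decomposition theorem for the universal cover. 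I would first try the clean route: show $c_1(K_M)=0$, then quote the decomposition theorem for projective manifolds with nef (hence, being non-uniruled, trivial) canonical class and pseudoeffective tangent bundle in dimension $\le 3$, and finally observe that the only such manifolds whose tangent bundle is moreover pseudoeffective are, in low dimension, precisely the \'etale quotients of abelian varieties — the K3, Enriques, and strict Calabi--Yau cases being excluded because their (twisted) tangent bundles are not pseudoeffective, cf.\ the stability argument underlying Proposition~\ref{proposition-gen-ample}.
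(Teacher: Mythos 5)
Your overall architecture --- pseudoeffectivity of $T_M$ from Corollary \ref{cor:extension}, the splitting $T_M \simeq \sF \oplus \sG$ from Proposition \ref{proposition-gen-ample}, algebraic integrability of $\sF$ via Pereira--Touzet/Touzet, and an induction on dimension through a product decomposition --- is the right one and is close to what the paper does. But your ``first key point'' contains a genuine gap: the claim that $K_M \equiv 0$ can be established up front by a ``standard argument''. The facts you invoke for this step ($K_M$ nef because there is no birational Mori contraction, $K_M$ pseudoeffective because $M$ is not uniruled, and $-K_M = c_1(\sG)$ with $\sG$ a foliation) do \emph{not} force $c_1(K_M)=0$: take $M = A \times S$ with $A$ an abelian variety and $S$ a minimal surface of general type. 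There $T_M = p_1^*T_A \oplus p_2^*T_S$ is pseudoeffective (the trivial subbundle $p_1^*T_A$ gives nonzero sections of every $S^m T_M$), $K_M = p_2^*K_S$ is nef, $M$ is not uniruled, and yet $K_M \not\equiv 0$. In fact Proposition \ref{proposition-gen-ample} tells you that $\sG|_C$ is antiample on MR-curves, so whenever $\sG \neq 0$ one has $K_M \cdot H^{n-1} = -c_1(\sG)\cdot H^{n-1} > 0$; thus ``$K_M \equiv 0$'' is exactly equivalent to ``$\sG = 0$'', which is what has to be proved, not a preliminary reduction. What excludes $A\times S$ is the full Stein hypothesis, not the numerical consequences you are using.

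The paper's proof circumvents this by never asserting $c_1(M)=0$ a priori. It splits into cases on $\rk \sF$: if $\rk\sF = \dim M$ then indeed $c_1(M)=c_1(\sF)=0$ and one concludes with the Beauville--Bogomolov decomposition plus \cite[Thm.1.6]{HP19}; if $\rk \sF < \dim M$ one applies \cite[Thm.D]{PT13} and \cite[Thm.1.2]{Tou08} to the foliation $\sF$ (the one with $c_1=0$, of codimension one or two) to obtain a product decomposition after \'etale cover, and then --- this is the bookkeeping you flagged but did not resolve --- uses \cite[Lemma 2.10]{GW20} to see that the canonical extensions of both factors are again Stein, so the induction hypothesis applies to each factor. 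It is precisely this inductive step, applied to the factor carrying $\sG$, that eventually eliminates the general-type piece and yields $K_M \equiv 0$ only a posteriori. To repair your proof, drop the reduction to $K_M\equiv 0$ and run the induction directly along these lines.
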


For uniruled manifolds the situation is more complicated, see the discussion in Section \ref{sectionsurfaces}.
For surfaces we can summarise our results as follows:

\begin{theorem} 
\label{thm:maintwo} 
Let $M$ be a smooth projective surface.
Assume that  $Z_M$ is Stein for some K\"ahler class.
Then one of the following holds:
\begin{itemize}
\item $M$ is an \'etale quotient of a torus;
\item $M$ is rational homogeneous, i.e., $M = \mathbb P_2$ or $M = \mathbb P_1 \times \mathbb P_1$.
\item $M$ is a ruled surface over a curve of genus $B$ at least one. In case $g(B) \geq 2$, $M$ is given by a semi-stable vector bundle.
\end{itemize}
The canonical extension $Z_M$ is affine if and only if $M = \mathbb P_2$ or $M = \mathbb P_1 \times \mathbb P_1$.
\end{theorem}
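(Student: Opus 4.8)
The plan is to combine the general necessary condition from Theorem~\ref{thm:mainone} (so that $Z_M$ Stein forces $T_M$ pseudoeffective, equivalently Corollary~\ref{cor:extension}) with the surface-specific classification of manifolds whose tangent bundle is pseudoeffective, and then refine each case using the finer obstructions coming from Mori theory (Corollary~\ref{corollary-Mori-low-dimension}) and from the stability decomposition of Proposition~\ref{proposition-gen-ample}. First I would dispose of the non-uniruled case: by Theorem~\ref{theorem-not-uniruled} applied in dimension two (or directly by Proposition~\ref{proposition-gen-ample} together with the Enriques--Kodaira classification), $M$ non-uniruled and $Z_M$ Stein forces $M$ to be an \'etale quotient of a torus, which is the first alternative. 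So from now on $M$ is uniruled. By Corollary~\ref{corollary-Mori-low-dimension}, $M$ admits no birational Mori contraction, hence $M$ is a minimal Mori fibre space: either $M$ is a ruled surface $\pi\colon M\to B$ over a smooth curve $B$, or $\rho(M)=1$ with $-K_M$ ample, i.e.\ $M=\PP_2$.

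Next I would separate these surviving cases. If $M=\PP_2$ then $T_M$ is ample, hence nef and big, so by Theorem~\ref{prop:nefFano} $Z_M$ is affine; this lands in the second bullet and accounts for the ``only if'' part of the last sentence in that case. If instead $M\to B$ is a $\PP_1$-bundle, I would argue that the base genus must satisfy $g(B)\le 1$ unless the bundle defining $M$ is semistable. Concretely, write $M=\PP(\sE)$ for a rank-two bundle $\sE$ on $B$; the relevant geometric input is that a Mori contraction of the other ruling (when $\rho(M)=2$ this is the only other extremal contraction, and it is divisorial exactly when $\sE$ is unstable, producing a section of negative self-intersection that gets contracted) is excluded by Corollary~\ref{corollary-Mori-low-dimension}. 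More precisely: if $g(B)\ge 2$ and $\sE$ is not semistable, the maximal destabilising sub-line-bundle yields a $(-e)$-section $C_0$ with $e>0$; blowing this down, or rather the Mori contraction associated to the extremal ray $[C_0]$ when $C_0^2<0$, is a birational Mori contraction, contradicting the Corollary. (For $g(B)=1$ unstable bundles do occur but are allowed by the statement, and for $g(B)=0$ the only surface is a Hirzebruch surface, which is either $\PP_1\times\PP_1$ or has a $(-e)$-curve with $e\ge 1$; the latter again gives a birational Mori contraction, so only $\PP_1\times\PP_1$ survives, landing in the second bullet, and there $T_M$ is nef and big so $Z_M$ is affine.) This yields exactly the three alternatives in the statement: torus quotient; $\PP_2$ or $\PP_1\times\PP_1$; or a ruled surface over $B$ with $g(B)\ge 1$, semistable when $g(B)\ge 2$.

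Finally, for the affineness dichotomy in the last sentence, I would show that in the remaining ruled cases $Z_M$ is \emph{not} affine. For this note that by the affine version one would need $T_M$ nef and big (this is the direction of Conjecture~\ref{conjecture-GW} we can access: Greb--Wong's application of Goodman's theorem shows that if $Z_M$ is affine then $N_{Y/X}=\sO_{\PP(T_M)}(1)$ is big, equivalently $T_M$ is big). But on a ruled surface over a curve of positive genus, $T_M$ restricted to a general fibre $F\cong\PP_1$ is $\sO(2)\oplus\sO$ up to twist, and more to the point $\det T_M=-K_M$ is never big: $(-K_M)^2 = 8(1-g(B))\le 0$ when $g(B)\ge 1$, so $-K_M$ is not big, hence $T_M$ is not big, hence $Z_M$ is not affine. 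Conversely we have already seen $\PP_2$ and $\PP_1\times\PP_1$ give affine $Z_M$ via Theorem~\ref{prop:nefFano}.

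I expect the main obstacle to be the case analysis for ruled surfaces over a curve of genus $g(B)\ge 2$, namely showing cleanly that instability of $\sE$ produces a \emph{birational} Mori contraction that Corollary~\ref{corollary-Mori-low-dimension} forbids: one must check that the negative section is genuinely contractible by a $K_M$-negative extremal ray (i.e.\ verify the numerical conditions $C_0^2<0$ and $K_M\cdot C_0<0$ for the destabilising section, using $C_0^2 = -e$ and the formula for $K_M\cdot C_0$ in terms of $e$ and $\deg\sE$), and handle the borderline subcases where the destabilising quotient is of small degree. The torus and projective-space cases are essentially immediate from the already-quoted results, and the affineness statement reduces to the elementary computation $(-K_M)^2=8(1-g(B))$.
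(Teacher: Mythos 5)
Your overall architecture (non-uniruled case via Theorem \ref{theorem-not-uniruled}, uniruled case via Corollary \ref{corollary-Mori-low-dimension}, affineness via bigness of the tangent bundle) matches the paper's first reduction, but two of your key steps fail and the paper handles them by entirely different means.

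First, the semistability claim for $g(B)\geq 2$. You propose to contract the destabilising section $C_0$ (with $C_0^2=-e<0$) as a birational Mori contraction and invoke Corollary \ref{corollary-Mori-low-dimension}, and you correctly identify the verification of $K_M\cdot C_0<0$ as the main obstacle. That verification fails: with the standard normalisation $K_M\equiv -2C_0+(2g-2-e)f$ one gets $K_M\cdot C_0=e+2g-2>0$ for $g\geq 2$, so the ray spanned by $C_0$ is $K_M$-positive and no Mori contraction is available. One cannot fall back on Theorem \ref{theorem-contractions} either, since contracting a curve of genus $\geq 2$ produces a non-rational singularity ($R^1\varphi_*\sO_M\neq 0$). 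The paper's route is different: Lemma \ref{lemma-pseff} shows $T_M|_C$ is pseudoeffective for \emph{every} irreducible curve $C$, and Corollary \ref{corollary-negative} derives a contradiction because for a smooth curve of genus $\geq 2$ with $C^2<0$ the normal bundle sequence forces $\Omega_M|_C$ to be ample, which is incompatible with $T_M|_C$ pseudoeffective by Lemma \ref{lemmapseffample}. Then $e\leq 0$, i.e.\ $\sE$ semistable, by \cite[V, Prop.2.20, Prop.2.8]{Har77}. (A related, more minor point: for Hirzebruch surfaces $\mathbb F_e$ with $e\geq 2$ the negative section is also not $K_M$-negative, so the exclusion goes through Corollary \ref{cor:minimal}, which rests on Theorem \ref{theorem-contractions} applied to the Grauert contraction of a \emph{rational} curve, not on the Mori cone.)

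Second, the affineness dichotomy. Your argument is that $Z_M$ affine forces $T_M$ big, hence $\det T_M=-K_M$ big, contradicting $(-K_M)^2=8(1-g(B))\leq 0$. The implication ``$E$ big $\Rightarrow\det E$ big'' is false for vector bundles: already $\sO_{\PP^1}(1)\oplus\sO_{\PP^1}(-1)$ is a big bundle with trivial determinant, since $h^0(S^m(\sO(1)\oplus\sO(-1)))$ grows like $m^2$. So the step ``$-K_M$ not big $\Rightarrow T_M$ not big'' is a genuine gap. The paper instead shows non-bigness of $T_M$ directly in each case: for $g(B)\geq 2$ the semistability of $\sE$ and the negativity of $f^*T_B$ give $H^0(M,S^lT_M)\simeq H^0(M,S^lT_{M/B})$, which grows too slowly; for $g(B)=1$ with $\sE$ semistable one uses that $T_M$ is nef and $c_1^2\leq c_2$; for $g(B)=1$ with $\sE$ unstable one restricts $Z_M$ to the negative section and applies \cite[Prop.4.2]{GW20} there. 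Your treatment of $\PP_2$, $\PP_1\times\PP_1$ and the non-uniruled case agrees with the paper.
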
 

We expect actually more to be true. First, we should always have $g(B) \leq 1$. If $M$ is a ruled surface over an elliptic curve, given by a vector bundle $\sE$, then $\sE$ should be semi-stable; the converse does hold. Most of the arguments in the proof of  Theorem \ref{thm:maintwo} are valid for K\"ahler surfaces, but there are additional difficulties arising from non-projective elliptic bundles over curves of higher genus (cf. Remark \ref{remark-elliptic-bundles}).

\subsection{Future directions}

While we have seen above that the existence of a canonical extension $Z_M$
leads to numerous restriction on the geometry of $M$, it is quite difficult
to construct non-trivial examples
of projective manifolds such that $Z_M$ is Stein or even affine.
Theorem \ref{prop:nefFano} should have the following extension which is affirmed positively in dimension two by Theorem \ref{thm:maintwo}

\begin{conjecture} \label{conj:nefFano2} Let $M$ be a compact K\"ahler manifold with nef tangent bundle $T_M$. Then $Z_M$ is  affine if and only if $M$ is Fano. 
\end{conjecture}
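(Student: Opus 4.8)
The plan is to split Conjecture \ref{conj:nefFano2} into its two implications, the easy one being a direct consequence of Theorem \ref{prop:nefFano}, the hard one requiring structure theory for manifolds with nef tangent bundle.

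First I would dispose of the ``if'' direction. Assume $M$ is Fano, so that $-K_M$ is ample. Since $T_M$ is assumed nef, $T_M$ is then automatically big: indeed $\det T_M = -K_M$ is already big, and nef vector bundles whose determinant is big are big (the tautological class on $\PP(T_M^*)$, using here the convention making $\sO_{\PP(T_M)}(1)$ nef, is nef and its top self-intersection equals a positive multiple of $(-K_M)^n > 0$). Hence Theorem \ref{prop:nefFano} applies and $Z_M$ is affine. For the converse, suppose $T_M$ is nef and $Z_M$ is affine. By Corollary \ref{cor:extension} (or already by Theorem \ref{thm:mainone}), $T_M$ is pseudoeffective; but much more is available here because $T_M$ is nef. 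The strategy is to invoke the structure theory of compact K\"ahler manifolds with nef tangent bundle: by the work of Demailly--Peternell--Schneider (and its K\"ahler extension), a finite \'etale cover $\tilde M \to M$ admits a smooth fibration $\tilde M \to A$ onto an abelian variety $A$ whose fibres $F$ are Fano manifolds with nef tangent bundle, and $T_{\tilde M}$ sits in an exact sequence relating $T_F$, the pullback of $T_A \cong \sO_A^{\oplus q}$, and is in fact (after the cover) a direct sum or at least an extension with trivial quotient.

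The key step is then to show that affineness of $Z_M$ forces $q = \dim A = 0$, i.e.\ the torus part is trivial, so that $\tilde M = F$ is Fano and hence $M$ itself is Fano (a manifold \'etale covered by a Fano manifold has finite fundamental group and $-K_M$ ample, by e.g.\ the converse direction of the characterization of Fano via $\pi_1$, or simply because $-K_M$ is ample iff its pullback is). To see $q = 0$ I would argue that a nonzero $q$ produces an everywhere-nonvanishing holomorphic vector field, equivalently a trivial quotient line bundle, equivalently a splitting $T_M \twoheadrightarrow \sO_M$ after the cover; dualizing, $\Omega_M$ has a trivial subsheaf, i.e.\ a nonzero global $1$-form. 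Such a form would contradict affineness: on the one hand $Z_M \subset \PP(V^\alpha)$ is affine, so every global holomorphic $p$-form on $Z_M$ that is algebraic is exact away from a divisor — more usefully, I would run the argument through $N_{Y/X} = \sO_{\PP(T_M)}(1)$ being not merely pseudoeffective but big (Greb--Wong's application of Goodman's theorem, quoted in the excerpt before Conjecture \ref{conjecture-goodman}), and then show bigness of $\sO_{\PP(T_M)}(1)$ is equivalent to bigness of $T_M$, which for a nef vector bundle is equivalent to $\det T_M = -K_M$ being big; combined with $T_M$ nef this gives $-K_M$ nef and big, and then Kawamata--Shokurov basepoint-freeness makes $-K_M$ semiample, contracting $M$ birationally to its anticanonical model. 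Finally one excludes the non-Fano possibility (positive-dimensional fibres of the anticanonical map, or equivalently $q>0$): a manifold with nef $T_M$ and $-K_M$ big whose anticanonical map is not an isomorphism would have to contain the torus factor, but by the structure theorem the torus factor has $-K$ numerically trivial on it, contradicting bigness of $-K_M$. Hence $A$ is a point, $M$ is Fano.

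The main obstacle I expect is the step equating bigness of $\sO_{\PP(T_M)}(1)$ with bigness of $-K_M$ under the nef hypothesis, and more precisely controlling the K\"ahler (non-projective) case: the structure theory for nef tangent bundles in the compact K\"ahler category, while available, interacts delicately with the ``affine'' hypothesis since affineness as defined in the excerpt is an algebraicity statement, so one must first know $M$ is projective. I would therefore insert a preliminary lemma showing that if $Z_M$ is affine then $M$ is projective — plausibly because affineness of $Z_M$ forces $\sO_{\PP(T_M)}(1)$ big, hence $T_M$ big, hence $M$ is Moishezon and K\"ahler, hence projective — after which the full Demailly--Peternell--Schneider picture applies and the argument above goes through. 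The remaining routine verifications (bigness of tautological classes, behaviour under \'etale covers, semiampleness via basepoint-freeness) I would carry out in the order: projectivity of $M$; bigness of $-K_M$; $-K_M$ semiample; anticanonical model is $M$ itself via the structure theorem; conclude $M$ Fano.
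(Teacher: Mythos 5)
The statement you are proving is stated in the paper only as a \emph{conjecture}, and the paper offers no proof of it: the authors merely observe that it holds in dimension two (via Theorem \ref{thm:maintwo}) and that the ``if'' direction would follow from Theorem \ref{prop:nefFano} \emph{if} one knew that a Fano manifold with nef tangent bundle has big tangent bundle --- which is precisely the open point. Your argument for that point rests on the claim that a nef vector bundle with big determinant is big, and this claim is false. The top self-intersection of the (nef) tautological class of a nef bundle $E$ on an $n$-fold is the Segre number $s_n(E)$, not a positive multiple of $c_1(E)^n$: for example $E = T_{\PP^2}(-1)$ is globally generated (hence nef) with $\det E = \sO_{\PP^2}(1)$ ample, yet $s_2(E) = c_1(E)^2 - c_2(E) = 1-1 = 0$, so $E$ is not big. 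For nef bundles one only has the Fulton--Lazarsfeld inequality $c_1(E)^n \geq s_n(E)$, which goes in the wrong direction for you. Whether ``Fano with nef $T_M$'' implies ``$T_M$ big'' is tied to the Campana--Peternell conjecture quoted right after the proof of Theorem \ref{prop:nefFano}, so the ``if'' direction cannot be disposed of as you propose.

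The ``only if'' direction is closer to an argument, but two steps are unsound as written. First, Goodman's theorem as used in [GW20, Prop.~4.2] requires $\PP(V^\alpha)$ to be projective and the affine structure on $Z_M$ to be the algebraic one induced from $\PP(V^\alpha)$; your proposed fix (``affineness forces $\sO_{\PP(T_M)}(1)$ big, hence $M$ Moishezon, hence projective'') uses Goodman to establish the projectivity that Goodman needs, so it is circular --- note that the paper's Theorem \ref{thm:mainone} only escapes this by first assuming $N_{Y/X}$ not pseudoeffective and invoking Proposition \ref{prop:alg-an}. Second, your exclusion of the weak-Fano case (``a manifold with nef $T_M$ and $-K_M$ big whose anticanonical map is not an isomorphism would have to contain the torus factor'') is not what the Demailly--Peternell--Schneider structure theorem says, and it conflates two different degenerations: a positive-irregularity \'etale cover would make $-K_M$ \emph{not} big, whereas a big-but-not-ample $-K_M$ has nothing to do with a torus factor. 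The viable route is rather: $-K_M$ nef and big $\Rightarrow$ $M$ rationally connected $\Rightarrow$ simply connected $\Rightarrow$ every \'etale cover has irregularity zero $\Rightarrow$ $M$ Fano by the precise form of the structure theorem (fibres of the Albanese of a maximal-irregularity cover are Fano). Even granting all of this, your argument only treats projective $M$, while the conjecture concerns compact K\"ahler manifolds. In short: the converse direction may be salvageable along roughly these lines, but the forward direction is genuinely open and your proof of it rests on a false lemma.
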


Theorem \ref{thm:mainone} depends on Proposition \ref{prop:alg-an} whose proof is highly transcendental. In Section \ref{section-algebraic} we present a weaker version with a simple purely algebraic 
proof. This version is almost sufficient to prove Theorem \ref{thm:maintwo}, with some  K3-surfaces $M$ left over, namely when $\rho(M) \geq 2$ or when $M$ does not contain a nodal 
rational curve. It also leads to the following 

\begin{conjecture} \label{conjecture-gen-nef}
 Let $M$ be a simply connected projective manifold of dimension $n$ with $K_M \simeq \sO_M$. 
Then the tautological class $\zeta_M$ on $\PP(T_M)$ is not generically nef, i.e.,
$$\zeta_M \cdot H_1 \cdot \ldots \cdot H_{2n-2} < 0$$
for some ample divisors $H_j$ on $\PP(T_M)$. 
\end{conjecture}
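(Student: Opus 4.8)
The plan is to translate the statement into a positivity property of the tangent bundle and then to attack it along the lines of Theorem \ref{theorem-not-uniruled}. First, a reformulation: by the duality, due to Boucksom, Demailly, Paun and Peternell, between the pseudoeffective cone of divisors and the cone of movable curves on a projective manifold (the latter being the closure of the cone generated by complete intersection curves), the class $\zeta_M$ satisfies $\zeta_M \cdot H_1 \cdots H_{2n-2} \geq 0$ for \emph{all} ample $H_j$ on $\PNM$ if and only if $\zeta_M$ is a pseudoeffective divisor class, i.e. if and only if $T_M$ is a pseudoeffective vector bundle. So Conjecture \ref{conjecture-gen-nef} is equivalent to the statement that a simply connected projective manifold $M$ with $K_M \simeq \sO_M$ has non-pseudoeffective tangent bundle.

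Suppose for contradiction that $T_M$ is pseudoeffective. Since $K_M \simeq \sO_M$ the manifold $M$ is not uniruled, so Proposition \ref{proposition-gen-ample} gives a holomorphic splitting $T_M \simeq \sF \oplus \sG$ with $\sF \neq 0$ integrable and $c_1(\sF) = 0$; in particular $T_M$ is decomposable. Now the Beauville--Bogomolov decomposition, together with simple connectedness (which excludes a torus factor), writes $M$ as a product of irreducible Calabi--Yau and irreducible hyperk\"ahler manifolds. The tangent bundle of such an irreducible factor $N$ is indecomposable: in the Calabi--Yau case a holomorphic direct summand of rank $s$ with $c_1 = 0$ would, on the simply connected $N$, have trivial determinant, hence embed $\sO_N$ into $\wedge^s T_N \simeq \Omega_N^{\dim N - s}$ and produce a non-zero holomorphic form of degree strictly between $0$ and $\dim N$, which is impossible; the hyperk\"ahler case follows from the stability of $\Omega^1_N \simeq T_N$. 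Thus $M$ must be a \emph{non-trivial} such product, and to reach the contradiction it would suffice to know that pseudoeffectivity of $T_M$ descends to one of the irreducible factors: applying the previous observation to that factor, together with Proposition \ref{proposition-gen-ample}, would be absurd.

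This descent is the main obstacle. The tempting argument is that if $M = X_1 \times M'$ then $\PNM \to M'$ is a fibration whose fibre over a general point $q$ is $\PP(T_M|_{X_1 \times \{q\}}) = \PP(T_{X_1} \oplus \sO_{X_1}^{\oplus N})$, on which $\zeta_M$ restricts to the tautological class, so pseudoeffectivity of $\zeta_M$ passes to $T_{X_1} \oplus \sO_{X_1}^{\oplus N}$. But pseudoeffectivity of a vector bundle is not preserved under removing a trivial summand --- already $\sO_{\PP_1}(-1) \oplus \sO_{\PP_1}$ is pseudoeffective while $\sO_{\PP_1}(-1)$ is not --- and the subvariety $\PP(T_{X_1}) \hookrightarrow \PP(T_{X_1} \oplus \sO_{X_1}^{\oplus N})$ on which $\zeta_M$ restricts to $\zeta_{X_1}$ is rigid ($X_1$ has no vector fields) and can lie in the non-nef locus of $\zeta_M$. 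The natural way round this is presumably the foliation-theoretic route underlying Theorem \ref{theorem-not-uniruled} and the conjecture of Pereira--Touzet: one should promote the summand $\sF$ above to an algebraically integrable foliation and run the associated structure theory to force $M$ to be an \'etale quotient of a torus, which is absurd for a simply connected manifold of positive dimension. Carrying this out in arbitrary dimension is the input that is missing, and is the reason the statement is only conjectural.

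I would finally point out that a purely numerical approach cannot succeed: since $\zeta_M$ is relatively ample one has $\int_{\PNM} \zeta_M^{2n-1} = \int_M s_n(T_M)$, the top Segre number, which for $K_M \simeq \sO_M$ equals $-c_2(M) = -24$ when $M$ is a K3 surface but equals $-c_3(M) = -\chi_{\mathrm{top}}(M)$ when $M$ is a Calabi--Yau threefold, and the latter is positive as soon as $h^{2,1}(M) > h^{1,1}(M)$, for instance for the quintic threefold in $\PP_4$. For surfaces one can still produce an explicit negative curve: if the K3 surface $M$ contains a nodal rational curve $C$ with normalisation $\nu \colon \PP_1 \to C$, then $d\nu$ exhibits $T_{\PP_1} = \sO_{\PP_1}(2)$ as a sub-line bundle of $\nu^* T_M$, and since $\deg \nu^* T_M = -K_M \cdot C = 0$ this forces $\nu^* T_M \simeq \sO_{\PP_1}(2) \oplus \sO_{\PP_1}(-2)$; the section of $\PP(\nu^* T_M)$ determined by this sub-line bundle maps to a rational curve $\Gamma \subset \PNM$ with $\zeta_M \cdot \Gamma = -2 < 0$. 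To conclude from such a $\Gamma$ that $\zeta_M$ is not pseudoeffective one would still have to deform it, or a suitable family of such curves, into a movable class, or else exclude a Zariski-type decomposition of $\zeta_M$ with non-trivial positive part --- this is the genuine difficulty of the K3 case, and it is consistent with the fact that the elementary method behind Theorem \ref{thm:maintwo} reaches precisely the K3 surfaces of Picard number one that contain a nodal rational curve.
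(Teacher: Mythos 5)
This statement is a conjecture in the paper; no proof is given there, only partial evidence (Proposition \ref{prop:K3}, which verifies it for projective K3 surfaces of Picard number one). So the real question is whether your proposed reduction is sound, and it is not: your opening ``reformulation'' is false. The BDPP duality identifies the dual of the pseudoeffective cone with the cone of \emph{movable} curves, and on a variety of dimension $\geq 3$ the movable cone is in general strictly larger than the cone $\overline{\mathcal{CI}}$ generated by complete intersections of ample divisors on the variety itself (movable classes arise from complete intersections on arbitrary birational models). Hence ``$\zeta_M \cdot H_1 \cdots H_{2n-2} \geq 0$ for all ample $H_j$'' (generic nefness) is strictly weaker than pseudoeffectivity of $\zeta_M$. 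The paper makes exactly this point: it recalls that non-pseudoeffectivity of $\zeta_M$ for such $M$ is already known (\cite[Thm.1.6]{HP19}) and that ``the difference between pseudoeffectivity and generic nefness is quite significant''; it even exhibits a bundle $V = \sO_{\PP^1}(-1)\oplus\sO_{\PP^1}(-2)\oplus\sO_{\PP^1}(-3)$ whose tautological class is generically nef while $V^*$ is ample, so $\zeta_{\PP(V)}$ is very far from pseudoeffective. Your entire strategy --- showing $T_M$ is not pseudoeffective via Proposition \ref{proposition-gen-ample} and the Beauville--Bogomolov decomposition --- therefore aims at the wrong target: even if the descent-to-factors difficulty you correctly identify were resolved, you would only reprove the known theorem, not the conjecture. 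The content of the conjecture lives precisely in the gap between the two notions.

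A secondary point: your claim that ``a purely numerical approach cannot succeed'' is contradicted by the paper's own evidence. Proposition \ref{prop:K3} is purely numerical: for a K3 surface with $\rho(M)=1$ and ample generator $H$ with $H^2=d$, one computes $\zeta_M \cdot (\zeta_M + a\pi_M^* H)^2 = -24 + a^2 d$, where $a$ is the nef threshold, and the bound $a^2 d < 24$ of Gounelas--Ottem (resp.\ $a=3$ for $d=2$) makes this negative. The relevant numerical invariant is not the single Segre number $\zeta_M^{2n-1}$ but the intersection of $\zeta_M$ with powers of classes on the boundary of the nef cone of $\PP(T_M)$, which requires knowing that boundary --- this is why the conjecture is open beyond the cases where the nef cone of $\PP(T_M)$ is understood. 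Your closing discussion of nodal rational curves on K3 surfaces is sensible and close in spirit to the paper's Theorem \ref{theoremnodal}, but note that result concerns the Stein property of $Z_M$, not generic nefness, and a single rational curve $\Gamma$ with $\zeta_M\cdot\Gamma<0$ indeed says nothing about either pseudoeffectivity or generic nefness without further input.
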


It is known that $\zeta_M$ is not pseudoeffective, \cite[Thm.1.6]{HP19}, but the difference between pseudoeffectivity and generic nefness is quite significant.

{\bf Acknowledgements.} For very useful discussions we thank B. Claudon, F. Gounelas, D.Greb, A. Sarti and M.Zaidenberg.  The first-named author thanks the Institut Universitaire de France and the A.N.R. project Foliage (ANR-16-CE40-0008) for providing excellent working conditions.

\section{Preliminaries} 

We work over the complex numbers, for general definitions we refer to \cite{Har77, Kau83}. 
Complex spaces and varieties will always be supposed to be irreducible and reduced. 

We use the terminology of \cite{Deb01, KM98}  for birational geometry and notions from the minimal model program. We follow \cite{Laz04} for algebraic notions of positivity, in particular
the terminology of $\mathbb Q$-twisted bundles as explained in \cite[Ch.6.2]{Laz04}.
For positivity notions in the analytic setting, we refer to \cite{Dem12}.

\begin{notation}  Let $X$ be a normal projective variety,
and by $N_1(X)$ the $\R$-vector space of curves modulo numerical equivalence.

We denote by $\overline{NM}(X) \subset N_1(X)$ the closed cone spanned by (classes) of movable curves. By \cite{BDPP13}, this is the dual cone to the pseudoeffective cone.  

We denote by $\overline {\mathcal {CI}}(X) \subset N_1(X)$ the closed cone spanned by (classes of) general complete intersection curves $C = H_1 \cap \ldots \cap H_n$
of very ample divisors $H_j$. If the $H_j$ have sufficiently large degree and if $C$ is general, then $C$ is called a MR-curve (MR for Mehta-Ramanathan). \\
\end{notation}

\begin{notation}
Let $M$ be a complex space, and let $V \rightarrow M$ be a vector bundle over $M$.
We denote by $\holom{\pi}{\PP(V)}{M}$ the projectivisation of $V$ and by 
$\zeta_{\PP(V)}:=c_1(\sO_{\PP(V)}(1))$ the tautological class on $\PP(V)$.
\end{notation}

When the situation is clear, we will use the simplified notation $\zeta:= \zeta_{\PP(V)}$.
Since the terminology varies in the literature, let us recall:

\begin{definition}
    Let $M$ be a normal compact complex space, and let $V \rightarrow M$ be a vector bundle. We say that $V$ is pseudoeffective if the tautological
    class $\zeta_{\PP(V)}$ is pseudoeffective. The vector bundle is big if
    $\zeta_{\PP(V)}$ is big.
\end{definition}

\begin{definition} \label{definitiongennef} \begin{itemize}
\item Let $X$ be a normal projective variety.
An $\R$-divisor class on $X$ is generically nef, if it is non-negative on $\overline {\mathcal {CI}}(X) \subset N_1(X)$.
\item Let $M$ be a normal projective variety, and let $V \rightarrow M$ be a vector bundle.
The vector bundle $V$ is generically nef (resp. generically ample), if its restriction to all MR-curves is nef (resp. ample).
\end{itemize}
\end{definition} 

\begin{remark*}
If $X$ is a smooth projective surface, the cone $\overline {\mathcal {CI}}(X)$ coincides with the nef cone, so a divisor class on $X$ is generically nef if and only if it is pseudoeffective.

Let $V$ be a generically nef vector bundle on a higher-dimensional manifold. Then $\zeta_{\PN(V)}$ might not be generically nef, even if $V$ is semistable for any polarisation (cf. Proposition \ref{prop:K3} for an example).
\end{remark*}

\begin{proposition} \label{prop:curve} 
Let $C$ be a smooth compact curve, and let $V$ be a semistable vector bundle on $C$. Assume that $\zeta_{\PN(V)} $ is generically nef. Then $V$ is nef.
\end{proposition}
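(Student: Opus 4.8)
The plan is to prove the contrapositive form: if $V$ is semistable and \emph{not} nef, then $\zeta_{\PP(V)}$ is not generically nef. I will do this by exhibiting an explicit complete intersection curve on which $\zeta_{\PP(V)}$ is negative. If $\rk V = 1$ the statement is trivial (on a curve a line bundle is nef iff it has non-negative degree iff it is generically nef), so assume $r := \rk V \geq 2$, so that $\PP(V)$ has dimension $r$, with projection $\pi \colon \PP(V) \to C$, fibre class $f := \pi^*[\mathrm{pt}]$ and $\zeta := \zeta_{\PP(V)}$. It is classical that a semistable vector bundle of non-negative degree on a smooth projective curve is nef; hence the failure of nefness of $V$ forces $d := \deg V < 0$, and I write $\mu := \mu(V) = d/r < 0$.

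The next step is the elementary intersection theory on $\PP(V)$: one has $f^2 = 0$ and $\zeta^{r-1} \cdot f = 1$ (restricting to a fibre $\PP^{r-1}$), while, since $c_i(V) = 0$ for $i \geq 2$ on a curve, the Grothendieck relation collapses to $\zeta^r = d$. I then choose a rational number $a$ with
\[
-\mu \;<\; a \;<\; \frac{-d}{\,r-1\,}.
\]
This interval is non-empty, because $-d/r = -\mu$ and $-d/(r-1)$ are both positive and $1/r < 1/(r-1)$. Let $A$ be an ample $\Q$-divisor on $C$ of degree $a$. Then $\zeta + \pi^*A$ is the tautological class of $V \otimes A$, which is semistable of slope $\mu + a > 0$; by the slope characterization of ampleness for semistable bundles on curves, $\zeta + \pi^*A$ is an ample $\Q$-divisor on $\PP(V)$. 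I choose $m > 0$ so that $H := m(\zeta + \pi^*A)$ is a very ample integral divisor, and let $H_1, \dots, H_{r-1}$ be general members of the linear systems $|m(\zeta + \pi^*A_j)|$ with $A_j \sim A$. By Bertini, $B := H_1 \cap \dots \cap H_{r-1}$ is a smooth irreducible complete intersection curve, so $[B] \in \overline{\mathcal{CI}}(\PP(V))$.

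Finally I compute, using $f^k = 0$ for $k \geq 2$:
\[
[B] = m^{r-1}(\zeta + af)^{r-1} = m^{r-1}\bigl(\zeta^{r-1} + (r-1)\,a\,\zeta^{r-2} f\bigr),
\]
whence $\zeta \cdot [B] = m^{r-1}\bigl(\zeta^r + (r-1)a\bigr) = m^{r-1}\bigl(d + (r-1)a\bigr) < 0$ by the choice of $a$. This contradicts the generic nefness of $\zeta_{\PP(V)}$, completing the argument. The one place that needs care is the numerology for $a$: it must be large enough that $\zeta + \pi^*A$ remains ample ($a > -\mu = -d/r$) yet small enough that $d + (r-1)a < 0$ ($a < -d/(r-1)$), and these two requirements are compatible precisely because a curve in the $r$-dimensional variety $\PP(V)$ is cut out by only $r-1$ divisors, one fewer than $r$; thus the budget $(r-1)a$ never quite reaches $r\cdot(-\mu) = -d$.
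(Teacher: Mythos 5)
Your proof is correct, but it takes a genuinely more elementary and explicit route than the paper's. The paper's proof quotes Fulger's description of the full cone of curves of a projective bundle over a curve: $\overline{NE}(\PP(V))$ is spanned by a line $\ell$ in a fibre and by $(\zeta - \mu F)^{r-1}$; semistability makes $\zeta - \mu F$ nef, so both extremal classes are limits of complete intersections of ample classes, whence $\overline{NE}(\PP(V)) = \overline{\mathcal{CI}}(\PP(V))$ and generic nefness of $\zeta$ upgrades at once to nefness. You instead argue the contrapositive and construct by hand a single complete intersection curve $B = H_1 \cap \dots \cap H_{r-1}$ with $H_j \in |m(\zeta + \pi^* A)|$ on which $\zeta$ is negative; your only inputs are the relation $\zeta^r = \deg V$, the slope characterisation of nefness and ampleness for semistable bundles on curves, and the numerical window $-d/r < a < -d/(r-1)$, whose non-emptiness is exactly the ``one fewer divisor than the dimension'' observation you make at the end. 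In effect your class $m^{r-1}(\zeta + af)^{r-1}$ is an explicit complete intersection approximation to Fulger's extremal class $(\zeta - \mu F)^{r-1}$, so the two arguments exploit the same underlying geometry, but yours is self-contained (no appeal to the computation of $\overline{NE}$) at the cost of a slightly longer computation. The details all check: the conventions ($\zeta^r = d$, $\zeta_{\PP(V \otimes A)} = \zeta + \pi^* A$) agree with those used elsewhere in the paper, the sign $\zeta \cdot [B] = m^{r-1}(d + (r-1)a) < 0$ is right, and a general such $B$ is indeed a legitimate member of $\overline{\mathcal{CI}}(\PP(V))$.
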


\begin{proof} Let $F$ be a fiber of $\PN (V) \to C$ and $\ell$ a line in $F$. Let $r$ be the rank of $V$ and set $\mu = \frac{c_1(V)}{r}$. 
By  \cite[1.2]{Fu11}, the cone $\overline{NE}(\PN (V)) $ is spanned by $(\zeta - \mu F)^{r-1}$ and $\ell$. Since $V$ is semistable, the class $\zeta - \mu F$ is nef, hence 
 $$ 
 \overline{NE}(\PN (V))  = \overline {\mathcal {CI}}(\PN (V)).
 $$
 Since $\zeta $ is generically nef, $\zeta$ is therefore nef, hence $V$ is nef. 
\end{proof} 

\begin{remark*} {\rm Proposition \ref{prop:curve} is wrong if the bundle is not stable; in fact $\zeta_{\PN(V)}$ might not be pseudoeffective.  For example, consider 
$$
V = \sO_{\PP^1}(-1) \oplus \sO_{\PP^1}(-2) \oplus \sO_{\PP^1}(-3)
$$ 
on $C = \PN_1$. 
Let $\zeta = \zeta_{\PN(V)}$ and $F$ a fiber of $\PN(V) \to C$. Then the nef cone of $\PN(V)$ is generated by $F$ and 
$\zeta + 3F$. Since 
$$ \zeta \cdot F^2 = 0 =  \zeta \cdot (\zeta + 3F)^2, \ \zeta \cdot F \cdot (\zeta + 3F) = 1,
$$
the tautological class $\zeta$ is generically nef. 
}
\end{remark*} 

In the example above $V^*$ is ample although $\zeta_{\PP(V)}$ is generically nef. However we have the following:

\begin{lemma} \label{lemmapseffample}
Let $V$ be a vector bundle over a projective variety $M$ such that $\zeta_{\PP(V)}$ is pseudoeffective.
Then $V^*$ is not ample.
\end{lemma}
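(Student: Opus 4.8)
The plan is to argue by contradiction. Suppose $V^*$ is ample on the projective variety $M$. Then the tautological class $\zeta_{\PP(V^*)}$ on $\PP(V^*)$ is ample, and in particular $-\zeta_{\PP(V)}$ should be positive in an appropriate sense on $\PP(V)$, since $\PP(V)$ and $\PP(V^*)$ carry dual tautological data. More precisely, I would first recall the standard relation between positivity of $V^*$ and the tautological class on $\PP(V)$: ampleness of $V^*$ means that for every quotient line bundle $V^* \twoheadrightarrow L$ the bundle $L$ is ``positive along fibres'' in the relevant sense — equivalently, the sheaf $\Sym^m V^*$ is ample for all $m$, which translates into $-\zeta_{\PP(V)} = \zeta_{\PP(V^*)}$ being ample on the flag-type incidence variety, or directly into the statement that $H^0(M, \Sym^m V^* \otimes A^{-1})$ eventually becomes large for any fixed ample $A$ on $M$.

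The key step is then a duality/degree computation. Fix ample divisors $H_1, \dots, H_{N-1}$ on $\PP(V)$, where $N = \dim \PP(V) = \dim M + \rk V - 1$, and consider the complete-intersection curve $C = H_1 \cap \dots \cap H_{N-1}$. If $\zeta_{\PP(V)}$ is pseudoeffective then $\zeta_{\PP(V)} \cdot C \geq 0$ for general such $C$ (pseudoeffective classes meet movable curves non-negatively, and complete-intersection curves of ample divisors are movable — this is in the cited \cite{BDPP13} framework recalled in the Notation paragraph). On the other hand, I want to choose the polarization so that $C$ maps to a complete-intersection curve in $M$ along which $V^*$ restricts to an ample bundle; restricting $V^*$ to that curve and using that the tautological class of an ample bundle on a curve is ample (hence $\zeta_{\PP(V)}$ restricted there is anti-ample), I would derive $\zeta_{\PP(V)} \cdot C < 0$, a contradiction. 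A cleaner variant: reduce to a curve directly. By Mehta–Ramanathan-type restriction, it suffices to test on an MR-curve $C_0 \subset M$; since $V^*|_{C_0}$ is ample, $V|_{C_0}$ has all quotient bundles of negative degree, so no quotient line bundle of $V|_{C_0}$ is pseudoeffective, whence $\zeta_{\PP(V|_{C_0})}$ is not pseudoeffective on the surface (or fibre-product with the curve) $\PP(V|_{C_0})$. But pseudoeffectivity of $\zeta_{\PP(V)}$ on $\PP(V)$ restricts to pseudoeffectivity on $\PP(V|_{C_0}) = \pi^{-1}(C_0)$ for a general MR-curve $C_0$ — this is the standard fact that restriction of a pseudoeffective class to a general member of a covering family stays pseudoeffective. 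Combining these gives the contradiction.

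Concretely I would carry out the steps as follows. First, reduce to the case where $M$ is a smooth curve by a general-hyperplane-section and restriction argument (general complete intersections of very ample divisors in $M$ are smooth curves, $\PP(V)$ restricts to a ruled surface over them, pseudoeffectivity of $\zeta$ restricts, and ampleness of $V^*$ restricts). Second, on the curve case: if $V^*$ is ample on a smooth curve $C$, then $\deg(Q) > 0$ for every quotient bundle $Q$ of $V^*$, equivalently every sub-line-bundle of $V^*$, hence dually every quotient line bundle $V|_C \twoheadrightarrow L$ has $\deg L < 0$; so $H^0(C, \Sym^m(V|_C) \otimes \O_C(D)) = 0$ for all $m \gg 0$ and any fixed divisor $D$, meaning $\zeta_{\PP(V|_C)}$ is not pseudoeffective (a pseudoeffective tautological class on a ruled surface must have a positive multiple with sections after twisting by the fibre class, which forces a quotient line bundle of bounded-below degree). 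Third, assemble: $\zeta_{\PP(V)}$ pseudoeffective $\Rightarrow$ $\zeta_{\PP(V|_C)}$ pseudoeffective for general $C$, contradicting step two.

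The main obstacle I expect is making the restriction statement ``$\zeta_{\PP(V)}$ pseudoeffective implies $\zeta_{\PP(V|_C)}$ pseudoeffective for a general complete-intersection curve $C \subset M$'' fully rigorous, since pseudoeffectivity does not restrict to arbitrary subvarieties but only to general members of covering families, and one must check the fibre product $\PP(V)|_C = \PP(V|_C)$ and that $\zeta$ is compatible with this restriction (it is, as tautological classes are functorial under pullback). An alternative that sidesteps this is the intersection-number argument: pick a very ample $A$ on $M$ and ample $\O_{\PP(V)}(1)\otimes\pi^*A^{\otimes k}$ for $k\gg 0$ on $\PP(V)$, form the MR-curve as a complete intersection of $N-1$ general such divisors, compute $\zeta_{\PP(V)}\cdot C$ in terms of Chern classes of $V^*$ restricted to the image curve in $M$, and observe this is negative because ampleness of $V^*$ forces the relevant Segre-class contribution to dominate with the wrong sign; then invoke $\zeta_{\PP(V)} \cdot C \geq 0$ from pseudoeffectivity paired with the movable curve $C$. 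Either route is short once the bookkeeping is set up; I would present the curve-reduction version as it is the most transparent.
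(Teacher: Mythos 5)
Your primary argument (reduction to curves) is correct, but it is a genuinely different and longer route than the paper's. The paper argues in one stroke on $\PP(V)$ itself: if $V^*$ is ample, pick an ample $\Q$-divisor $A$ so small that the $\Q$-twist $V^*\otimes A^*$ is still ample; then $H^0(M,S^d(V\otimes A))=0$ for all sufficiently divisible $d$ by \cite[Ex.6.1.4]{Laz04}, i.e.\ $\kappa(\PP(V),\zeta_{\PP(V)}+\pi^*A)=-\infty$, whereas pseudoeffectivity of $\zeta_{\PP(V)}$ would force $\zeta_{\PP(V)}+\pi^*A=(1-\epsilon)\zeta_{\PP(V)}+\epsilon\bigl(\zeta_{\PP(V)}+\pi^*(A/\epsilon)\bigr)$ to be big (pseudoeffective plus ample) for $\epsilon$ small. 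Your route instead restricts $\zeta_{\PP(V)}$ to $\fibre{\pi}{C}=\PP(V|_C)$ for a general complete-intersection curve $C\subset M$, using that a pseudoeffective class stays pseudoeffective on a (very) general member of a covering family, and then runs the slope argument $\mu_{\max}(S^mV|_C)=m\,\mu_{\max}(V|_C)<0$ on the projective bundle over the curve. This works (note that $\PP(V|_C)$ is a ruled surface only when $\rk V=2$, but your $H^0$-vanishing argument is rank-independent), at the cost of invoking the restriction theorem for pseudoeffective classes and the description of $\Pseff(\PP(W))$ over a curve; the paper's twist argument avoids both and needs no reduction in dimension. What your approach buys is a concrete geometric picture (the obstruction is visible on every MR-curve of $M$), which is in the spirit of how the lemma is actually used later (e.g.\ in Proposition \ref{prop:gen-ample}).

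One warning about your proposed ``alternative'' via intersection numbers: the plan to show $\zeta_{\PP(V)}\cdot H_1\cdots H_{N-1}<0$ for complete-intersection curves in $\PP(V)$ cannot work. The paper's own example $V=\sO_{\PP^1}(-1)\oplus\sO_{\PP^1}(-2)\oplus\sO_{\PP^1}(-3)$ (immediately preceding this lemma) has $V^*$ ample while $\zeta_{\PP(V)}$ is \emph{generically nef}, i.e.\ non-negative against all such complete intersections; this is precisely why the lemma is stated in terms of pseudoeffectivity rather than generic nefness, and why your first route (restricting to $\fibre{\pi}{C}$ for $C\subset M$, not to a curve in $\PP(V)$) is the one to keep.
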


\begin{proof}
If $V^*$ is ample there exists an ample $\Q$-divisor $A$ such that the twist $V^* \otimes A^*$ is still ample.
In particular, by \cite[Ex.6.1.4]{Laz04} one has $H^0(M, S^d (V \otimes A)) = 0$ for all sufficiently divisible $d \in \N$. Yet this implies that $\kappa(\PP(V), \zeta_{\PP(V)} + \pi^* A) = -\infty$, hence $\zeta_{\PP(V)}$ is not pseudoeffective.
\end{proof}

\begin{definition} Let $X$ be a projective manifold, and let $Y \subset X$ be an analytic set. Let $\sF$ be a coherent sheaf on $X$. 
Then we define local cohomology groups $H^q_{[Y]}(X,\sF) $ and $H^q_Y(X,\sF)$ to obtain long exact sequences 
$$  \ldots \to H^{q-1}(X \setminus Y, \sF)_{\rm alg}   \to  H^q_{[Y]}(X,\sF) \to H^q(X,\sF) \to H^{q}(X \setminus Y, \sF)_{\rm alg}  \to \ldots $$
for algebraic cohomology 
and 
$$  \ldots \to H^{q-1}(X \setminus Y, \sF)_{\rm an}   \to  H^q_{Y}(X,\sF) \to H^q(X,\sF) \to H^{q}(X \setminus Y, \sF)_{\rm an} \to \ldots $$
in the analytic case, 
see \cite{Har70}, \cite{KP90}.

\end{definition}

\begin{remark} \label{remarkmoderategrowth} We may extend the previous definition of $H^q_{[Y]}(X,\sF) $ to the case of compact complex manifolds as in \cite[Sect.1]{KP90}. 
In this case $H^{q}(X \setminus Y, \sF)_{\rm alg} $ has to be substituted by a cohomology group of moderate growth, $H^{q}(X \setminus Y, \sF)_{\rm mod} $. 
In particular, $H^{0}(X \setminus Y, \sO_{X \setminus Y})_{\rm mod}$ is the space of  those holomorphic functions on $X \setminus Y$ extending to meromorphic functions on $X$.  
\end{remark}

\begin{definition} Let $X$ be a compact complex manifold of dimension $n$, and let $\sL$ be a holomorphic line bundle on $X$.
\begin{enumerate}
\item The bundle $\sL$ is $q$-positive if there exists a metric on $\sL$ whose curvature has at least $n-q$ positive eigenvalues. 
\item Suppose that $X$ is projective. Then $\sL$ is $q$-ample if the following holds. Given any coherent sheaf $\sF$ on $X$, there is a number $k_0$, depending on $\sF$, such that 
$$ H^j(X,\sF \otimes \sL^{\otimes k}) = 0 $$
for $k \geq k_0$ and $j \geq q+1$. 
\end{enumerate} 
\end{definition}

It was conjectured ("the converse Andreotti-Grauert conjecture") that on a smooth projective $n$-fold, $(n-1)$-ample line bundles are $(n-1)$-positive, \cite{DPS96}. 
This is now established by Yang \cite{Yan19}.
In dimension $2$ this has previously been proved by Matsumura \cite[Thm.1.3]{Mat13}.  Notice that there are counterexamples to the converse Andreotti-Grauert theorem  in case $ \frac{n}{2}-1 < q < n-1$,
see \cite[Thm.10.3]{Ott12}. Of course, in case $q = 0$ the converse Andreotti-Grauert theorem is a classical result of Kodaira.

\begin{proposition} \label{prop:alg-an} 
Let $X$ be a compact complex manifold of dimension $n$, and let $Y \subset X$ be a smooth hypersurface. Assume that the conormal 
bundle $N^*_{Y/X}$ is $(n-2)$-positive. Let $\sF$ be a locally free sheaf on $X$. Then the canonical morphism
$$ H^1_{[Y]}(X,\sF) \to H^1_Y(X,\sF) $$
is bijective. 
 Equivalently, the canonical morphism
$$ H^0(X \setminus Y,\sF)_{\rm mod} \to H^0(X \setminus Y, \sF)_{\rm an}$$
is bijective. In case $X$ is algebraic, equivalently 
$$ H^0(X \setminus Y,\sF)_{\rm alg} \to H^0(X \setminus Y, \sF)_{\rm an}$$
is bijective. 

\end{proposition}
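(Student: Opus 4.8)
The strategy is to compare the local cohomology sheaves in the two settings and to use a vanishing theorem of Andreotti--Grauert type for the punctured tubular neighbourhood of $Y$. First I would observe that, by the standard excision/comparison formalism (see \cite{Har70}, \cite{KP90}), both $H^q_{[Y]}(X,\sF)$ and $H^q_Y(X,\sF)$ can be computed from a (formal, resp.\ analytic) neighbourhood of $Y$: one has the Ext--spectral sequences whose $E_2$ terms involve the sheaves $\sExt$ of $\sF$ against the powers $\sO_X/\sI_Y^k$, and the comparison morphism $H^q_{[Y]}(X,\sF)\to H^q_Y(X,\sF)$ is induced by the natural map from the formal completion to the honest analytic restriction. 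Concretely, in degree $q=1$ both groups fit into exact sequences
$$
0\to H^0(X,\sF)\to H^0(X\setminus Y,\sF)_{\ast}\to H^1_{\ast}(X,\sF)\to H^1(X,\sF)\to H^1(X\setminus Y,\sF)_{\ast},
$$
with $\ast\in\{\mathrm{mod},\mathrm{an}\}$ (and $\{\mathrm{alg},\mathrm{an}\}$ when $X$ is algebraic, using GAGA for the right-hand terms once one knows the cohomology is finite-dimensional). Thus the equivalence of the two formulations of the statement is immediate from these sequences together with the fact that $H^0(X,\sF)\to H^0(X,\sF)$ is the identity, and I would dispatch it first.

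Next, the heart of the matter: reducing the bijectivity of $H^1_{[Y]}(X,\sF)\to H^1_Y(X,\sF)$ to a vanishing statement on a punctured neighbourhood. The local cohomology $H^q_Y(X,\sF)$ is the cohomology of the punctured tube, more precisely $H^q_Y(X,\sF)\cong \varinjlim_U H^{q-1}(U\setminus Y,\sF)$ modulo the global contribution, and similarly $H^q_{[Y]}$ is the formal analogue obtained by replacing $U$ by the infinitesimal neighbourhoods $Y_k=(Y,\sO_X/\sI_Y^{k+1})$. The comparison map is then governed by the Ext/Mittag-Leffler system $\{H^{q-1}(Y,\sF\otimes \sI_Y^{-k}/\sI_Y^{-k+1})\}_k=\{H^{q-1}(Y,\sF|_Y\otimes N_{Y/X}^{\otimes k})\}_k$. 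The key point is that the graded pieces of $N_{Y/X}$ of high order are being twisted by an ample-in-many-directions bundle precisely when $N^*_{Y/X}$ is $(n-2)$-positive: this is where the hypothesis enters. By the Andreotti--Grauert vanishing theorem, $(n-2)$-positivity of $N^*_{Y/X}$ (equivalently, $(n-2)$-concavity controlled in the normal direction) yields that the relevant higher cohomology of a neighbourhood basis $U_\nu\setminus Y$ stabilises and that the transcendental ($\mathrm{an}$) and formal ($\mathrm{mod}$) versions of $H^1$ agree. I would therefore formulate the precise comparison as: the map is bijective as soon as $H^1(Y,\sF|_Y\otimes N_{Y/X}^{\otimes k})$ and $H^2(Y,\sF|_Y\otimes N_{Y/X}^{\otimes k})$ behave like the cohomology of a $(n-1)$-ample (in fact everything but one direction) bundle for $k\gg0$, which is exactly what $(n-2)$-positivity of the conormal bundle provides via the curvature computation of the induced metric on $N_{Y/X}^{\otimes k}$ restricted to $Y$.

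Finally, I would assemble these pieces: the Ext spectral sequence identifies the kernel and cokernel of $H^1_{[Y]}(X,\sF)\to H^1_Y(X,\sF)$ with subquotients of $\varprojlim{}^1$ and $\varprojlim$ of the systems $H^0(Y,\sF|_Y\otimes N_{Y/X}^{\otimes k})$ and $H^1(Y,\sF|_Y\otimes N_{Y/X}^{\otimes k})$; the $(n-2)$-positivity forces these to vanish for $k\gg 0$ (here one must be careful that it is the conormal, hence the negative twists $N^*_{Y/X}{}^{\otimes k}$, that carry the positivity, so the vanishing is of the appropriate higher-degree cohomology), which gives both injectivity and surjectivity. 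For the algebraic reformulation, one invokes that $H^0(X\setminus Y,\sF)_{\mathrm{mod}}$ consists of holomorphic functions of moderate growth, which on a quasi-projective variety coincide with the regular (algebraic) sections by \cite[Sect.1]{KP90}, together with the already-established $\mathrm{mod}=\mathrm{an}$ identification. \textbf{Main obstacle.} The delicate step is the passage from the pointwise curvature hypothesis ``$N^*_{Y/X}$ is $(n-2)$-positive'' to the actual cohomology vanishing on a shrinking family of punctured neighbourhoods $U_\nu\setminus Y$ with uniform control: one needs an Andreotti--Grauert theorem with estimates that are uniform as $U_\nu$ shrinks to $Y$, so that the direct/inverse limits compute the local cohomology correctly and the comparison morphism is seen to be an isomorphism rather than merely having vanishing graded pieces. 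This is exactly the transcendental input that makes the proof ``highly transcendental'', and it is where the bulk of the technical work of \cite{KP90}-type arguments lies.
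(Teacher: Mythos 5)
The paper does not prove this proposition from scratch: its entire proof is the observation that the statement is a special case of \cite[Thm.~2.5]{KP90}, once one translates the hypothesis ``$N^*_{Y/X}$ is $(n-2)$-positive'' into ``$N_{Y/X}$ is $(n-1)$-convex'' in the terminology of \cite{Ko82} and \cite{Sch73}. Your first paragraph, reducing the three formulations to one another via the two long exact sequences of local cohomology, is essentially fine (modulo the small point that one must also compare the kernels of $H^1(X,\sF)\to H^1(X\setminus Y,\sF)_{\rm mod}$ and of $H^1(X,\sF)\to H^1(X\setminus Y,\sF)_{\rm an}$, which is not literally ``immediate'' from the identity on $H^0(X,\sF)$ alone). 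If you are content to quote the reference as the paper does, nothing further is needed.

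Read as a self-contained argument, however, your outline has a genuine gap, and it is exactly the one you flag as the ``main obstacle'' without resolving. The Mittag--Leffler analysis of the system $H^{q-1}(Y,\sF|_Y\otimes N_{Y/X}^{\otimes k})$ computes the \emph{formal} local cohomology $H^\bullet_{[Y]}(X,\sF)$, which is by construction a colimit over infinitesimal neighbourhoods; it says nothing by itself about the analytic group $H^\bullet_Y(X,\sF)$, which is computed from honest punctured neighbourhoods of $Y$ and a priori contains contributions from sections with essential singularities along $Y$. The entire content of the proposition is that the $(n-1)$-convexity of $N_{Y/X}$ excludes such singularities, i.e.\ that the analytic local cohomology agrees with its formal counterpart; establishing this requires Andreotti--Grauert-type finiteness and comparison theorems with estimates uniform over a fundamental system of shrinking tubes, which you attribute to ``\cite{KP90}-type arguments'' rather than carrying out. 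A secondary imprecision compounds this: $(n-2)$-positivity of the line bundle $N^*_{Y/X}$ on the $(n-1)$-dimensional manifold $Y$ yields Andreotti--Grauert vanishing of $H^j(Y,\sG\otimes (N^*_{Y/X})^{\otimes k})$ for $k\gg 0$ only in degree $j=n-1$, and one must dualise to identify which graded pieces of which local cohomology groups this actually controls; your hedge about ``the appropriate higher-degree cohomology'' leaves this bookkeeping undone. So either cite \cite[Thm.~2.5]{KP90} directly, as the paper does, or the transcendental comparison step must genuinely be supplied.
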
 

\begin{proof} This is a special case of \cite[Thm. 2.5]{KP90} (note that $N^*_{Y/X} $ is $(n-2)$-positive if and only if $N_{Y/X}$ is $(n-1)$-convex in the notation of 
\cite{Ko82}, \cite{Sch73}). 
\end{proof}

\begin{remark*} {\rm 
Proposition \ref{prop:alg-an} remains true for singular hypersurfaces $Y$,
if one uses the notion of $(n-1)$-convexity of the normal bundle $N_{Y/X}$, see \cite[2.8]{Ko82}.
}
\end{remark*}

\section{General Stein complements} 

\subsection{General results}

We first recall 

\begin{definition} \label{definition:Stein} \cite[IV, \S 1, Defn.1]{GR79} 
Let $X$ be a complex space. We say that $X$ is Stein if for every coherent analytic sheaf $\sF$ on $X$
one has
$$
H^q(X, \sF) = 0 \qquad \forall \ q \geq 1
$$
\end{definition}

It is well-known \cite[V, \S2, Thm.3]{GR79} that  a complex space $X$ is Stein if and only if $X$
is holomorphically convex and does not contain any compact analytic subspaces of positive dimension.
We will use the holomorphic convexity of a Stein space via the following characterisation

\begin{theorem} \cite[IV, \S 2, Thm.4, Thm.12]{GR79} 
A complex space $X$ is holomorphically convex if and only if for every discrete infinite subset $D\subset X$
there exists a holomorphic function $f \in \sO_X(X)$ such that $f|_D$ is unbounded.
\end{theorem}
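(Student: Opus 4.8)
Throughout, ``$\hat K$'' denotes the holomorphically convex hull $\{x\in X:|f(x)|\le\sup_K|f|\ \text{for all}\ f\in\sO_X(X)\}$ of a compact set $K$, so that $X$ is holomorphically convex precisely when $\hat K$ is compact for every compact $K\subset X$. I also adopt the usual convention that a \emph{discrete subset} $D\subset X$ has no accumulation point in $X$ (equivalently, $D$ is closed and discrete); this convention is forced, since near an accumulation point every holomorphic function is bounded, so no single $f$ could be unbounded on all of $D$. The plan is to prove the two implications separately, the first being formal and the second resting on a normal-convergence (Behnke--Stein type) construction.

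\emph{Sufficiency of the function-theoretic condition.} Here I would argue by contraposition: assuming $X$ is not holomorphically convex, I produce an infinite discrete set on which every holomorphic function is bounded. If $\hat K$ fails to be compact for some compact $K$, then, $\hat K$ being closed and $X$ being $\sigma$-compact, $\hat K$ is not contained in any member of a compact exhaustion $X=\bigcup_m L_m$; picking $x_m\in \hat K\setminus L_m$ yields a sequence of distinct points with no accumulation point in $X$, hence an infinite discrete subset $D:=\{x_m\}$. For every $f\in\sO_X(X)$ one has $|f(x_m)|\le\sup_K|f|<\infty$, so $f|_D$ is bounded, contradicting the hypothesis.

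\emph{Necessity.} Now suppose $X$ is holomorphically convex. From the definition together with $\sigma$-compactness I would first extract a holomorphically convex exhaustion: compact sets $K_1\subset K_2\subset\cdots$, each contained in the interior of the next, with $\bigcup_j K_j=X$ and $\hat K_j=K_j$. Given an infinite discrete $D$, each $D\cap K_j$ is finite, so $D$ meets infinitely many shells $K_j\setminus K_{j-1}$; choosing a point in infinitely many of them produces a subsequence $z_1,z_2,\dots$ of $D$ and indices $j_1<j_2<\cdots$ with $z_i\in K_{j_i}$ and $z_i\notin K_{j_i-1}$. Writing $M_i:=K_{j_i}$ one gets $z_i\notin \hat M_{i-1}=M_{i-1}$ and $z_i\in M_{k-1}$ for all $k>i$. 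Since $z_i\notin\hat M_{i-1}$ there is $h\in\sO_X(X)$ with $|h(z_i)|>\sup_{M_{i-1}}|h|$; replacing $h$ by a high power and rescaling by a constant, I obtain $g_i\in\sO_X(X)$ with $\sup_{M_{i-1}}|g_i|\le 2^{-i}$ and $|g_i(z_i)|\ge i+1+\sum_{k<i}|g_k(z_i)|$ (the finitely many numbers $g_k(z_i)$, $k<i$, are already determined when $g_i$ is chosen). Then $f:=\sum_i g_i$ converges uniformly on each $M_m$, the tail from index $m+1$ being dominated there by $\sum_{i>m}2^{-i}$, so $f\in\sO_X(X)$; and for each $i$, $|f(z_i)|\ge|g_i(z_i)|-\sum_{k<i}|g_k(z_i)|-\sum_{k>i}2^{-k}\ge i$, using $z_i\in M_{k-1}$ for $k>i$. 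Hence $f$ is unbounded on $\{z_i\}\subset D$, so $f|_D$ is unbounded.

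The only genuinely delicate point is this last construction: the holomorphically convex exhaustion is exactly what allows one to peak $g_i$ at $z_i$ on a set $M_{i-1}$ that all later stages treat as negligible, so that $\sum_i g_i$ converges normally on $X$ while $|f(z_i)|\to\infty$. The bookkeeping nuisance that $D$ may place several points in a single shell is harmless, since passing to a subsequence of $D$ only makes ``$f|_D$ unbounded'' easier; the remaining ingredients (existence of the holomorphically convex exhaustion, the power-and-rescale trick for peak functions, and $\sigma$-compactness of $X$) are routine.
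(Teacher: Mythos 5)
Your proof is correct; the paper gives no argument of its own here but simply cites Grauert--Remmert, and your two implications (the contrapositive via boundedness on a sequence escaping from a non-compact hull, and the Behnke--Stein-type normally convergent series $\sum g_i$ peaked along a holomorphically convex exhaustion) are exactly the classical argument from that source, including the necessary convention that ``discrete'' means closed and discrete in $X$.
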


\begin{lemma} \label{lem:top} \cite[p.156]{GR79} 
Let $X$ be a compact complex manifold, and let $Y \subset X$ be a compact hypersurface such that $X \setminus Y$ is Stein. 
Then the restriction maps
$$ H^q(X,\mathbb Z) \to H^q(Y,\mathbb Z)$$
are isomorphisms for $q < n-1$.
In particular, if $ n \geq 4,$ and $Y$ is normal, then the restriction map 
$$ N^1(X) \to N^1(Y)$$
is an isomorphism. 
\end{lemma}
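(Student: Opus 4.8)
The strategy is to exploit that $X \setminus Y$ is Stein of complex dimension $n$, so by the Andreotti-Frankel/Milnor theorem it has the homotopy type of a CW complex of real dimension at most $n$; equivalently $H^k(X \setminus Y, \mathbb{Z}) = 0$ for $k > n$ and, after passing to Borel-Moore homology or using Lefschetz duality, the homology $H_k(X\setminus Y,\mathbb{Z})$ vanishes for $k > n$. First I would set up the long exact sequence of the pair $(X, X \setminus Y)$ in singular cohomology, and identify the relative groups via excision and the Thom isomorphism: since $Y$ is a compact complex (hence oriented) hypersurface with a tubular neighbourhood, $H^q(X, X \setminus Y; \mathbb{Z}) \cong H^{q-2}(Y, \mathbb{Z})$, the shift by $2$ coming from the real codimension of $Y$. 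Plugging this in, the long exact sequence reads
$$ \cdots \to H^{q-2}(Y,\mathbb{Z}) \to H^q(X,\mathbb{Z}) \to H^q(X \setminus Y, \mathbb{Z}) \to H^{q-1}(Y,\mathbb{Z}) \to \cdots $$
Alternatively, and perhaps more cleanly, one dualizes: using that $X$ is compact and $X\setminus Y$ is an open manifold, Poincaré-Lefschetz duality converts the statement about the cohomology of $X\setminus Y$ into one about Borel-Moore homology, where the Stein vanishing $H^{BM}_k(X\setminus Y) = 0$ for $k>n$ is exactly what kills the relevant terms.

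Next I would feed in the vanishing from Steinness: $H^q(X \setminus Y, \mathbb{Z}) = 0$ for $q \geq n+1$, and more importantly the Gysin-type consequence that the restriction $H^q(X,\mathbb{Z}) \to H^q(X\setminus Y,\mathbb{Z})$ has controlled kernel and cokernel in the range $q \le n-1$. Concretely, in the exact sequence above the map $H^q(X) \to H^q(Y)$ that I want to show is an isomorphism should be read off as a composite or as appearing directly once one uses the Thom iso together with the long exact sequence of the pair $(X,Y)$ rather than $(X, X\setminus Y)$; the cleanest route is to combine the two, getting that $H^q(X,\mathbb Z)\to H^q(Y,\mathbb Z)$ fits into a sequence whose error terms are subquotients of $H^j(X\setminus Y,\mathbb Z)$ for $j \ge n$, which vanish when $q < n-1$. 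This gives the isomorphism $H^q(X,\mathbb{Z}) \cong H^q(Y,\mathbb{Z})$ for $q < n-1$. For the final assertion about $N^1$, when $n \ge 4$ we have $q = 2 < n-1$, so $H^2(X,\mathbb{Z}) \cong H^2(Y,\mathbb{Z})$; since $Y$ is normal (and, being a hypersurface in a manifold, has rational singularities or at least behaves well for the exponential sequence), the Néron-Severi group $N^1$ is cut out of $H^2(\cdot,\mathbb Z)$ by the same Hodge-theoretic conditions on $X$ and $Y$, and the compatibility of the restriction map with Hodge structures forces $N^1(X) \to N^1(Y)$ to be an isomorphism as well.

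The main obstacle I anticipate is bookkeeping the precise shift and the exact range: making sure the Thom isomorphism is applied correctly for a possibly singular (but normal) $Y$, and tracking which subquotients of $H^\bullet(X\setminus Y,\mathbb Z)$ appear as kernel and cokernel so that the bound "$q < n-1$" comes out sharp rather than off by one. A secondary subtlety is the passage from integral cohomology isomorphism to the statement about $N^1$: one must invoke that for a normal variety the Néron-Severi group sits inside $H^2$ compatibly with restriction, which requires a word about the exponential sequence on the normal (not smooth) space $Y$ — this is standard but deserves a citation. Since the statement is attributed to \cite[p.156]{GR79}, I would actually expect the author's proof to simply cite that reference for the topological part and then add the short Hodge-theoretic argument for the $N^1$ consequence.
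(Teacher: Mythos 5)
The paper gives no proof of this lemma at all: it is quoted directly from \cite[p.156]{GR79}, exactly as you anticipated in your last sentence. So the only thing to assess is your reconstruction, whose overall strategy (Andreotti--Frankel plus duality plus a long exact sequence of a pair) is the right one.

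There is, however, a genuine bookkeeping error in how you identify the vanishing groups, and it is not merely cosmetic because it determines the range $q<n-1$. For a Stein manifold $U$ of dimension $n$, Andreotti--Frankel gives $H_k(U,\mathbb Z)=0$ for $k>n$ in \emph{ordinary} homology; Borel--Moore homology of $U$ vanishes \emph{below} the middle dimension, not above it, so your statement ``$H^{BM}_k(X\setminus Y)=0$ for $k>n$'' has the inequality reversed. Likewise, the ``error terms'' controlling $H^q(X)\to H^q(Y)$ are not subquotients of $H^j(X\setminus Y,\mathbb Z)$ for $j\ge n$ --- note that $H^n$ of a Stein manifold is typically nonzero, so that would not give you anything. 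The clean route is: $H^q(X,Y;\mathbb Z)\cong H^q_c(X\setminus Y;\mathbb Z)\cong H_{2n-q}(X\setminus Y;\mathbb Z)$, the first isomorphism because $Y$ is closed in the compact space $X$, the second by Poincar\'e duality on the oriented open $2n$-manifold $X\setminus Y$ (which is smooth regardless of the singularities of $Y$, so no Thom isomorphism or tubular neighbourhood of $Y$ is needed). Andreotti--Frankel then gives $H^q(X,Y;\mathbb Z)=0$ for $2n-q>n$, i.e.\ for $q<n$, and the long exact sequence of the pair $(X,Y)$ yields the isomorphism $H^q(X,\mathbb Z)\to H^q(Y,\mathbb Z)$ precisely when both $q<n$ and $q+1<n$, which is the stated range $q<n-1$. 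Your remaining caveat about the $N^1$ statement is well placed: the topological isomorphism in degree $2$ must still be matched with the algebraic/Hodge-theoretic description of $N^1$ on the normal (possibly singular) hypersurface $Y$, and that step deserves its own justification rather than being absorbed into the citation.
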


In dimension two, Problem \ref{P1}  has clearly a positive solution:

\begin{proposition} \label{prop:one}
Let $X$ be a smooth complex surface, and let $Y \subset X$ be an irreducible compact curve such that $X \setminus Y $ is holomorphically convex.  
Then the normal bundle $N_{Y/X}$ is nef.  
\end{proposition}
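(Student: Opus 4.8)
\textbf{Proof proposal for Proposition \ref{prop:one}.}

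The plan is to argue by contradiction: assume $Y^2 = (N_{Y/X})^2 < 0$ and derive that $X \setminus Y$ cannot be holomorphically convex. The starting point is the dichotomy for a curve with negative self-intersection on a surface: if $Y^2 < 0$, then by Grauert's contractibility criterion the curve $Y$ can be contracted, so there is a bimeromorphic morphism $\sigma \colon X \to X'$ onto a normal surface $X'$ with $\sigma(Y)$ a single point $p$ and $\sigma$ an isomorphism over $X' \setminus \{p\}$. In particular $X \setminus Y \simeq X' \setminus \{p\}$. Thus it suffices to show that $X' \setminus \{p\}$ is not holomorphically convex, i.e., that every holomorphic function on a punctured neighbourhood of the normal surface singularity (or smooth point) $p$ extends across $p$ — this is the two-dimensional Hartogs/Riemann extension phenomenon, valid because $X'$ is normal of dimension $2$. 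Concretely, given any discrete infinite set $D \subset X' \setminus \{p\}$ accumulating only at $p$ (such a $D$ exists since $X' \setminus \{p\}$ is noncompact with $p$ in its closure), any $f \in \sO(X' \setminus \{p\})$ extends to a holomorphic function on all of $X'$ by normality, hence is bounded near $p$, hence bounded on $D$; by the criterion \cite[IV, \S 2, Thm.4, Thm.12]{GR79} quoted above, $X' \setminus \{p\}$ fails to be holomorphically convex, a contradiction.

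An alternative, perhaps cleaner, route avoids invoking Grauert's theorem and instead uses the characterisation of holomorphic convexity directly on $X$. Suppose $Y^2 < 0$. One knows that a curve with negative self-intersection on a surface has a fundamental system of strongly pseudoconcave neighbourhoods along $Y$ from the outside; equivalently, $Y$ admits arbitrarily small neighbourhoods $U$ such that $U \setminus Y$ is a Hartogs-type domain across which holomorphic functions extend. Pick a discrete infinite sequence $(x_k) \subset X \setminus Y$ with $x_k \to y$ for some $y \in Y$. If $f \in \sO_X(X \setminus Y)$ is any holomorphic function, then $f$ restricted to $U \setminus Y$ extends holomorphically to $U$ (using $Y^2<0$, so that the boundary geometry forces extension — this is where negativity of the normal bundle enters), hence $f$ is bounded in a neighbourhood of $y$, hence $f|_{(x_k)}$ is bounded. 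Since this holds for every $f$, the space $X \setminus Y$ is not holomorphically convex.

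The main obstacle — really the only nontrivial input — is justifying the extension of holomorphic functions across $Y$ (equivalently across the point $p$ after contraction) when $Y^2 < 0$. In the contraction picture this reduces to the classical fact that a normal two-dimensional complex space has no nonconstant holomorphic functions on a deleted neighbourhood of a point that do not extend (second Riemann extension theorem / normality of $X'$ together with the fact that a normal surface singularity is an isolated singularity so the exceptional set has codimension $\geq 2$ in $X'$). Thus the argument hinges on: (i) Grauert's criterion that $Y^2 < 0$ implies contractibility to a normal point, and (ii) removability of a point singularity for holomorphic functions on a normal surface. Once these are in place, the conclusion $N_{Y/X}$ nef, i.e. $Y^2 \geq 0$, follows, and there is nothing further to check since on a surface $N_{Y/X} = \sO_Y(Y)$ is nef precisely when $\deg N_{Y/X} = Y^2 \geq 0$.
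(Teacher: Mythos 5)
Your first route is exactly the paper's proof: contract $Y$ via Grauert's criterion to a normal point $p \in X'$, extend holomorphic functions across $p$ by the second Riemann extension theorem on the normal surface $X'$, and conclude that every function on $X \setminus Y$ is bounded near $Y$, contradicting holomorphic convexity. The proposal is correct and takes essentially the same approach as the paper.
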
 

\begin{proof} If $Y^2 < 0$, then by Grauert's criterion $Y$ can be blown down
by a bimeromorphic morphism $X \rightarrow X'$ onto a normal surface $X'$. Hence all holomorphic functions on $X \setminus Y$ would extend to $X$ by Riemann's extension theorem 
on $X'$. Thus  $X \setminus Y $ is not holomorphically convex.  
\end{proof} 

\begin{remark}  \cite[Thm.5.3]{KP90} {\rm 
Let $X$ be a relatively minimal projective surface, and let $Y \subset X$ be an elliptic curve such that $X \setminus Y$ is Stein and $N_{Y/X}$ is not ample. 
Then $X \simeq \PN(V)$, where $V$ is the non-split 
extension 
$$ 0 \to \sO_C \to V \to \sO_C \to 0 $$
over an elliptic curve 
and $Y = \PN(\sO_C)$. 
}
\end{remark}

Proposition \ref{prop:one} yields the following 

\begin{lemma} \label{lem1}   Let $X$ be a projective manifold of dimension $n$, and let $Y \subset X$ be an irreducible hypersurface such that $X \setminus Y$ is holomorphically convex. Then 
$$
Y^2 \cdot H_1 \cdot \ldots \cdot H_{n-2} \geq 0
$$
for all nef divisors $H_j$ on $X$. 
\end{lemma}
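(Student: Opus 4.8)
The statement reduces the higher-dimensional intersection-positivity to the surface case handled by Proposition \ref{prop:one}. The natural approach is a cutting-down argument: restrict everything to a general complete intersection surface $S = H_1' \cap \ldots \cap H_{n-2}'$ where $H_j'$ are chosen to be very ample multiples of the (a priori only nef) $H_j$, then apply Proposition \ref{prop:one} to the pair $(S, Y\cap S)$ and let the multiples degenerate back to the $H_j$. There are two things to check carefully: first, that the hypotheses of Proposition \ref{prop:one} are satisfied on the general such $S$, and second, that the intersection number $Y^2\cdot H_1\cdots H_{n-2}$ is computed correctly after restriction.

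First I would replace the $H_j$ by ample divisors and in fact by very ample ones. Since the pseudoeffective-type inequality $Y^2\cdot H_1\cdots H_{n-2}\ge 0$ is a closed condition, and the nef cone is the closure of the ample cone, it suffices to prove the inequality for $H_j$ ample; a further perturbation and scaling lets us take each $H_j$ very ample (the intersection number is homogeneous of degree one in each $H_j$, so rescaling does not affect the sign). Now choose general members $H_j' \in |H_j|$; by Bertini the intersection $S := H_1' \cap \ldots \cap H_{n-2}'$ is a smooth irreducible projective surface, and for general choices $Y \cap S$ is an irreducible curve in $S$ (one should note $Y\not\subset H_j'$ for general $H_j'$, so $Y\cap S$ has pure dimension one and by Bertini applied on $Y$ it is irreducible, using $\dim Y = n-1 \ge 2$; the case $n=2$ is Proposition \ref{prop:one} itself). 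By the projection/adjunction formula one has $N_{(Y\cap S)/S} = N_{Y/X}|_{Y\cap S}$, so that $(Y\cap S)^2_S = Y^2\cdot H_1\cdots H_{n-2}$.

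Next I need $S \setminus (Y\cap S)$ to be holomorphically convex so that Proposition \ref{prop:one} applies. Here $S\setminus(Y\cap S) = S \cap (X\setminus Y)$ is a closed analytic subspace of the holomorphically convex space $X\setminus Y$, and a closed analytic subspace of a holomorphically convex space is holomorphically convex (holomorphic functions restrict, and the Remmert reduction restricts); alternatively, using the criterion recalled before Proposition \ref{prop:one}, any discrete infinite $D\subset S\setminus(Y\cap S)$ is discrete in $X\setminus Y$, hence separated by an unbounded holomorphic function which we restrict to $S$. Then Proposition \ref{prop:one} gives $(Y\cap S)^2_S\ge 0$, i.e. $Y^2\cdot H_1\cdots H_{n-2}\ge 0$ for $H_j$ very ample, and passing to the closure of the cone of such classes yields the claim for all nef $H_j$.

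\textbf{Main obstacle.} The only genuinely delicate point is the passage from "$H_j$ nef" to "$H_j$ very ample": one must make sure the general complete intersection surface $S$ is still smooth and that $Y\cap S$ is still an irreducible curve with the expected normal bundle, and that the closure argument is legitimate — i.e. that the finitely many very ample classes one can reach have $\R_{\ge0}$-span whose closure contains all nef classes, which is true since the ample cone is open and its closure is the nef cone, and the inequality is continuous and homogeneous in each $H_j$ separately. Everything else is a routine Bertini-plus-projection-formula computation together with the stability of holomorphic convexity under passing to closed analytic subspaces.
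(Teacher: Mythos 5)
Your proof is correct and follows essentially the same route as the paper's: reduce to very ample general divisors, cut down to the complete intersection surface $S$, and apply Proposition \ref{prop:one} to the irreducible curve $Y\cap S$. The paper states this in three lines without justifying the nef-to-very-ample reduction, the Bertini steps, or the holomorphic convexity of $S\setminus(Y\cap S)$; your write-up supplies exactly those routine verifications and nothing different in substance.
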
 

\begin{proof} 
We may assume without loss of generality that the divisors $H_j$ are very ample and general. Then the surface $S:= H_1 \cap \ldots \cap H_{n-2}$
is smooth, and $C = S \cap Y$ is an  irreducible curve.
By Proposition \ref{prop:one}, one has
$$ (C^2)_S = Y^2 \cdot H_1  \cdot \ldots \cdot H_{n-2} \geq 0.$$
\end{proof}

\begin{corollary} \label{cor1} In the situation of Problem \ref{P1}, if the restriction map of the nef cones
$$ {\rm Nef}(X) \to {\rm Nef}(Y) $$
surjective, then $N_{Y/X}$ is generically nef. 
\end{corollary}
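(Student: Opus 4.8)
The plan is to reduce to Lemma \ref{lem1} by testing nefness of $N_{Y/X}$ against general complete intersection curves on $Y$. Recall that $N_{Y/X}$ is generically nef means $c_1(N_{Y/X}) \cdot C \geq 0$ for every MR-curve $C \subset Y$, equivalently for $C = H_1' \cap \ldots \cap H_{n-2}'$ a general complete intersection of sufficiently ample divisors $H_j'$ on $Y$. So fix such $H_j'$ on $Y$. The normal bundle $c_1(N_{Y/X})$ is the restriction to $Y$ of the class $\sO_X(Y)$, so $c_1(N_{Y/X}) \cdot C = Y|_Y \cdot C = Y \cdot C$ computed in $X$.

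First I would use the surjectivity hypothesis: since $\mathrm{Nef}(X) \to \mathrm{Nef}(Y)$ is surjective, each $H_j'$ (which we may take to be ample, hence nef, on $Y$) is the restriction of some nef class $H_j$ on $X$. Now $C$ sits in $X$ as a curve, and in $N_1(X)$ one has $C \equiv H_1 \cdots H_{n-2} \cdot Y$ — more precisely, the pushforward of the complete intersection of $H_1',\ldots,H_{n-2}'$ on $Y$ computes the intersection number $(Y \cdot H_1 \cdots H_{n-2} \cdot Y)$ on $X$, by the projection formula applied to the inclusion $Y \hookrightarrow X$ together with $H_j'|_Y = H_j|_Y$. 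Hence
$$ c_1(N_{Y/X}) \cdot C = Y^2 \cdot H_1 \cdot \ldots \cdot H_{n-2}. $$
Then I would invoke Lemma \ref{lem1}: since $X \setminus Y$ is Stein, it is holomorphically convex, so the right-hand side is $\geq 0$ for all nef $H_j$ on $X$, in particular for our chosen lifts. Since the $H_j'$ were arbitrary sufficiently ample classes on $Y$ and $C$ a general such complete intersection, this shows $N_{Y/X}$ is generically nef.

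The main subtlety — and the step I would be most careful about — is the identification $c_1(N_{Y/X}) \cdot C = Y^2 \cdot H_1 \cdots H_{n-2}$ together with checking that a general complete intersection of ample divisors on $Y$ that are restrictions of nef divisors on $X$ still computes MR-curves; one needs the lifts $H_j$ to be taken ample rather than merely nef (replace $H_j$ by $H_j + \epsilon A$ for an ample $A$ on $X$ and pass to a limit, or simply note that ampleness of $H_j'$ on $Y$ plus surjectivity onto the nef cone lets one perturb the lift into the ample cone of $X$), and that Lemma \ref{lem1} already allows arbitrary nef $H_j$, so the limit argument causes no loss. Everything else is the projection formula and the definition of generic nefness.
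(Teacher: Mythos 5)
Your proof is correct and is exactly the argument the paper intends (the corollary is stated without proof, immediately after Lemma \ref{lem1}, as its direct consequence): lift the ample classes on $Y$ to nef classes on $X$ via the surjectivity hypothesis, identify $c_1(N_{Y/X}) \cdot C$ with $Y^2 \cdot H_1 \cdot \ldots \cdot H_{n-2}$ by the projection formula, and apply Lemma \ref{lem1}. Your closing remark is also right that no perturbation into the ample cone is needed, since Lemma \ref{lem1} already accepts arbitrary nef classes on $X$.
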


We can now prove our first main result:

\begin{proof}[Proof of Theorem \ref{thm:mainone}] 
We argue by contradiction and assume that $N_{Y/X}$ is not pseudoeffective. 
By \cite[Thm.1.5]{Yan19} this is equivalent to assuming that $N^*_{Y/X}$ is $(\dim Y-1)$-positive, hence $(n-2)$-positive. 

Since $X \setminus Y$ is Stein,  so we can find holomorphic functions $f_1, \ldots, f_N$ on $X \setminus Y$ 
yielding a closed embedding $X \setminus Y \hookrightarrow \mathbb C^N$. 
By Proposition \ref{prop:alg-an} the functions $f_j$ have moderate growth, so they extend
to $X$ (cf. Remark \ref{remarkmoderategrowth}). Hence
$X$ is at least Moishezon. 
Since $X$ is assumed to be K\"ahler, it is projective by Moishezon's theorem. 
Thus by GAGA (or by the second part of Proposition  \ref{prop:alg-an})  the functions 
$f_j$ are algebraic and the embedding is
actually an algebraic embedding $X \setminus Y \hookrightarrow \mathbb A^N$. Hence $X \setminus Y$ is affine and $N_{Y/X}$ is big by \cite[Prop.4.2]{GW20}. 
Thus we have reached a contradiction.
\end{proof}

\subsection{Examples}

\begin{setup} \label{ext}  {\rm Let $B$ be a projective manifold, and let $\sF$ be a vector bundle on $B$. Fix a cohomology class $0 \ne \zeta \in H^1(B,\sF^*)$.
Then $\zeta $ defines a non-split extension 
$$ 
0 \to \sO_B \to V \to \sF \to 0.
$$
We set
$$
X = \mathbb P(V); \ Y = \mathbb P(\sF).
$$
and denote by $\holom{\pi}{\PP(V)}{B}$ the natural map.
We set further
$ \zeta_X = c_1(\sO_{\mathbb P(V)}(1))$ and
$ \zeta_Y = c_1(\sO_{\mathbb P(\sF)}(1))$
}
\end{setup} 

\begin{lemma}  \label{lem:curves} 
In the situation of Setup \ref{ext}, assume furthermore that 
$\dim B = 1$. Then  $X \setminus Y $ does not contain compact subvarieties of positive dimension. 
\end{lemma}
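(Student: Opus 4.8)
The plan is to exploit the extension structure in Setup \ref{ext} with $\dim B = 1$, so that $B$ is a smooth projective curve. Suppose $W \subset X \setminus Y$ is a compact subvariety of positive dimension; I want a contradiction. First I would observe that $\pi|_W : W \to B$ is either constant or surjective. If it is constant, then $W$ is contained in a single fiber $F \cong \PP^{r-1}$ of $\pi$ (where $r = \rk V$); but $F \cap Y = \PP(\sF_b)$ is a hyperplane in $F$ (since the extension gives a one-dimensional sub $\sO_B \hookrightarrow V$, dualizing to a quotient line bundle, hence a hyperplane subbundle of $\PP(V)$ over each point), and a positive-dimensional subvariety of projective space $\PP^{r-1}$ must meet every hyperplane — so $W \cap Y \neq \emptyset$, contradicting $W \subset X \setminus Y$. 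Hence $\pi|_W$ must be surjective.

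Now in the surjective case I would pass to the restriction of the extension to a general point or, better, use global positivity. The key point is that $Y = \PP(\sF)$ is a divisor in $X = \PP(V)$ whose class is $\zeta_X - \pi^* c_1(\det V / \ldots)$; more precisely, the section $\sO_B \hookrightarrow V$ gives a sub-line-bundle, and the quotient $V \to \sF$ realizes $\PP(\sF) \subset \PP(V)$ as the zero locus of a section of $\sO_{\PP(V)}(1) \otimes \pi^* \sO_B$ (i.e. $Y \in |\zeta_X|$ twisted by a line bundle on the curve, namely $\zeta_X$ itself since $\sO_B$ is trivial) — the relative hyperplane $\PP(\sF) \subset \PP(V)$ corresponding to the sub-bundle $\sO_B$. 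So $\O_X(Y) \simeq \sO_{\PP(V)}(1)$. Restricting to $W$: if $W$ avoids $Y$, then $\sO_{\PP(V)}(1)|_W$ has a nowhere-vanishing section, so $\sO_{\PP(V)}(1)|_W \simeq \sO_W$. But $\sO_{\PP(V)}(1)$ restricted to any fiber $\ell \subset F$ of lines has degree $1$, so since $\pi|_W$ is surjective $W$ contains a curve meeting the fibers positively, forcing $\sO_{\PP(V)}(1) \cdot [\text{that curve}] > 0$ — contradicting triviality on $W$. More carefully: take an irreducible curve $C \subset W$ with $\pi(C) = B$; then $\deg(\sO_{\PP(V)}(1)|_C) = \deg(C \cap Y) \geq 0$ by the section, but equality forces $C \cap Y = \emptyset$, i.e. $C$ lies in the complement and the restricted relative $\sO(1)$ is a degree-zero line bundle on $C$ — which contradicts that $\zeta_X$ is relatively very ample and $C \to B$ is finite surjective, because then $\zeta_X|_C$ is the pullback of an ample-ish combination and in particular $\zeta_X \cdot C$ equals $\deg(N_{\sO_B}) $ computations giving a strictly positive number once one recalls $\sO_{\PP(V)}(1)$ has positive degree on every multisection. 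I will spell this out by writing $\zeta_X \cdot C = (\text{a positive quantity})$ using that $\sF$ appears as a quotient and $\PP(\sF)$ is exactly the vanishing of the tautological section, so on a multisection the intersection with $\zeta_X$ counts (with multiplicity) the points where the section vanishes, which are the points of $C \cap Y$.

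The cleanest formulation: $Y$ is an effective divisor linearly equivalent to $\zeta_X$ (the relative hyperplane defined by the quotient $V \twoheadrightarrow \sF$), and for any irreducible curve $C \subset X \setminus Y$ we have $\zeta_X \cdot C = Y \cdot C = 0$ (disjointness) together with $C \cdot F = 0$ or $> 0$. If $C$ lies in a fiber, $\zeta_X \cdot C > 0$ since $\zeta_X$ is a hyperplane class on $F \cong \PP^{r-1}$, contradiction. If $C$ dominates $B$, consider $\zeta_X^{r-1} \cdot \pi^* (\text{pt})$-type relations — or simply: $\zeta_X|_C$ is a line bundle of degree $0$ with a section vanishing on $C \cap Y = \emptyset$, hence $\sO_C$; but then the composite $\sO_C = \zeta_X|_C$ must be compatible with $C \hookrightarrow \PP(V) = \PP(V)$ being given by a surjection $(\text{line bundle on } C) \to $ a quotient of $\pi^* V|_C$, and the degree-$0$ triviality forces the image of $C$ to lie in the fixed section $\PP(\sO_B)$ — but $\PP(\sO_B) \subset \PP(V)$ is precisely where... here one checks $\PP(\sO_B)$ is a section disjoint from $Y$ only if the extension splits, contradicting that $\zeta \neq 0$. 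This last twist — that a degree-zero multisection avoiding $Y$ would force the extension to be split — is the main obstacle, and I expect to handle it by restricting the extension $0 \to \sO_B \to V \to \sF \to 0$ along the finite cover $C \to B$: the multisection avoiding $\PP(\sF)$ provides a splitting of the pulled-back sequence over $C$, but pushing forward and using that $H^1(B,\sF^*) \hookrightarrow H^1(C, (\text{pullback})^*)$ is injective (as $C \to B$ is finite surjective between smooth curves, so the trace/pushforward argument applies) contradicts $0 \neq \zeta$.

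Thus no such $W$ exists, proving the lemma. The main obstacle, as indicated, is ruling out multisections: reducing cleanly to the splitting of the extension after finite base change and invoking injectivity of $H^1(B, \sF^*) \to H^1(C, \mathrm{pr}^*\sF^*)$.
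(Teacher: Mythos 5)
Your final formulation is correct and is essentially the paper's own proof: a curve contained in a fibre must meet the hyperplane $Y\cap F$, and a multisection disjoint from $Y$ yields, after base change along the finite cover $C\to B$, a splitting of the pulled-back extension $0\to\sO\to V\to\sF\to 0$, contradicting the injectivity of $H^1(B,\sF^*)\to H^1(C,\,\cdot\,)$. The only (inessential) difference is that you get this injectivity from the trace splitting of $\sO_B\to g_*\sO_C$, whereas the paper pulls back the Serre-dual section in $H^0$; you should discard the intermediate attempts to argue via positivity of $\zeta_X$ on multisections (false in general, since $V$ need not be nef), which your final argument rightly does not use.
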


\begin{proof} 
Since $X$ is projective it suffices to show that $X \setminus Y $ does not contain compact irreducible curves. Arguing by contradiction, 
let $\holom{f}{C}{X}$ be a morphism from a smooth compact curve such that $f(C) \subset X \setminus Y$. Then the composition $\pi \circ f$ is a finite map.
Let $\tau \in H^0(B, \sF^* \otimes \omega_B)$ be the Serre dual of $\zeta$. 
Then 
$$ 
0 \ne f^*(\tau ) \in H^0(C, f^* \sF^* \otimes \omega_B) \subset H^0(C, f^* \sF^* \otimes \omega_C).
$$
Since the Serre dual of $f^*(\tau)$ is $f^*(\zeta)$, we obtain
$$ 
0 \ne f^*(\zeta) \in H^1(C,f^*(\sF)). 
$$ 
Thus, up to making a base change, 
we may assume that $C$ is a section of $X \to B$ that is disjoint from $Y=\PP(\sF)$. 
Hence the exact sequence defined by $\zeta$ splits, a contradiction. 
\end{proof} 

We can now given the example that proves Proposition \ref{prop:goodman}:

\begin{example} \label{example1}{\rm 
In the situation of Setup \ref{ext}, let $B = \mathbb P_1$ and $\sF = \sO_{\PP^1}(2) \oplus \sO_{\PP^1}(-1)$. Since the extension class $\zeta$ is not zero, one 
has
$$
V \simeq \sO_{\PP^1}(1)^{\oplus 2} \oplus \sO_{\PP^1}(-1).
$$
Let $C_0 \subset Y$ be the section corresponding to the quotient $\sF \rightarrow \sO_{\PP^1}(-1)$. Then the fibrewise projection from $C_0 \subset X$ defines a rational map
$$
\merom{\psi}{X}{\PP(\sO_{\PP^1}(1)^{\oplus 2})}.
$$
More precisely, let $\pi: \hat X \to X$ be the blow-up of $X$ along $C_0$ and denote by $E$ the exceptional divisor.
Then $\hat X$ has a structure of $\PP^1$-bundle $\holom{\varphi}{\hat X}{\PP(\sO_{\PP^1}(1)^{\oplus 2})}$ such that the exceptional divisor $E$ is a section.
Moreover if $\zeta_{Q}$ is the tautological class on $\PP(\sO_{\PP^1}(1)^{\oplus 2})$,
one has
$$
\pi^*(\zeta_X) - E = \varphi^* \zeta_Q.
$$
Now observe that the strict transform $\hat Y \simeq Y$ is an element of the linear
system $|\pi^*(\zeta_X) - E|$, since $Y \in |\zeta_X|$ contains the curve $C_0$.
The isomorphism $\PP(\sO_{\PP^1}(1)^{\oplus 2}) \simeq \PP^1 \times \PP^1$
identifies the class $\zeta_Q$ to $\sO_{\PP^1 \times \PP^1}(1,1)$,
so we see that $\hat Y = \varphi^* \ell$ with $\ell$ a smooth conic.

One has $\hat X \simeq \PP(\sO_{\PP^1 \times \PP^1} \oplus \sO_{\PP^1 \times \PP^1}(-2,-1))$,
and $E$ corresponds to the quotient $\PP(\sO_{\PP^1 \times \PP^1}(-2,-1))$. Thus $\hat X \setminus E$ is the total space of the line bundle $\sO_{\PP^1 \times \PP^1}(-2,-1)$ and
$$
\hat X \setminus (\hat Y \cup E) \to (\PP^1 \times \PP^1) \setminus \ell
$$
is a line bundle over the affine surface $Q_0:= (\PP^1 \times \PP^1) \setminus \ell$. 
Hence 
$$ 
X \setminus Y \simeq \hat X \setminus (\hat Y \cup E) 
$$
is affine: if $s_1, \ldots, s_n$ are algebraic sections that generate the line bundle on
$Q_0$, they define an algebraic embedding of the total space into $Q_0 \times \C^n$.\\
But the normal bundle $N_{Y/X} = \zeta_Y$ is not nef, since $\zeta_Y \cdot C_0=-1$. 
}
\end{example} 

\begin{example} \label{example2} {\rm  Let $B$ be an elliptic curve and $\sL$ a line bundle of negative degree. 
Let 
$$ 0 \to \sO_B \to \sF \to \sO_B \to 0$$
be the non-split extension. 
Note that $\PP(\sF) \setminus \PP(\sO_B) \simeq \mathbb C^* \times \mathbb C^*$ is Stein (but not affine);  this is Serre's famous example \cite[\S 6.3]{Har70},\cite[\S 7]{Nee88}. 
Note next that $\PP(\sO_B \oplus \sL) \setminus \PP(\sL)$ is not Stein, since $\PP(\sL)$ has negative normal bundle. 
However 
$$ \PP(\sF \oplus \sL) \setminus \PP(\sO_B \oplus \sL)$$
is Stein. 
In fact, we have a projection
$$ \PP(\sF \oplus \sL) \setminus \PP(\sL) \to \PP(\sF)$$
which restricts to a morphism
$$ \Phi: \PP(\sF \oplus \sL) \setminus \PP(\sO_B \oplus \sL) \to \PP(\sF) \setminus \PP(\sO_B) \simeq \mathbb C^* \times \mathbb C^*.$$
Since $\Phi$ is an affine $\mathbb C$-bundle over $\mathbb C^* \times \mathbb C^*$, the complement
$ \PP(\sF \oplus \sL) \setminus \PP(\sO_B \oplus \sL)$ is Stein by a result of Mok \cite{Mok81}.
}
\end{example}

\subsection{Criteria for affineness}

We next give criteria when $X \setminus Y$ is affine. This will be used later in the case of canonical extensions. 

\begin{proposition} \label{prop:semiample} 
Let $X$ be a normal compact analytic space, and let $Y \subset X$ be an irreducible hypersurface
such that some multiple $mY$ is Cartier and the line bundle $\sO_X(mY)$ is semiample. 
Assume that $X \setminus Y$ does not contain any positive-dimensional compact subvarieties.

Then $X \setminus Y$ is affine. Moreover the divisor $Y$ is big and some multiple defines a
bimeromorphic map $\varphi: X \to Z$, whose exceptional locus is strictly contained in $Y$
and there exists a $\mathbb Q$-Cartier Weil divisor 
$Y' \subset  Z$ such that $Y = \varphi^{-1}(Y')$.
\end{proposition}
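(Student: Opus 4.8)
The plan is to exploit the semiample morphism attached to $\sO_X(mY)$ directly. Let $\varphi \colon X \to Z$ be the morphism with connected fibres defined by a sufficiently divisible multiple of $\sO_X(mY)$, so that $\sO_X(kmY) \simeq \varphi^* \sA$ for some ample line bundle $\sA$ on $Z$. First I would observe that $\varphi$ must be bimeromorphic: the divisor $Y$ meets every positive-dimensional fibre of $\varphi$ (because $\sO_X(mY)$ is trivial on such a fibre, so $Y \cdot (\text{curve in a fibre}) = 0$, and an effective divisor with zero intersection against a covering family of curves in a fibre $F$ either contains $F$ or is disjoint from it; since $Y$ is ample-related it cannot contain $F$ unless $F \subset Y$), yet $X \setminus Y$ contains no positive-dimensional compact subvariety, so every positive-dimensional fibre is contained in $Y$. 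Hence the general fibre of $\varphi$ is a point and $\varphi$ is bimeromorphic onto its image $Z$; in particular $Y = \varphi^* \sA^{1/km}$ is big, and the exceptional locus $\mathrm{Exc}(\varphi)$ is contained in $Y$ and strictly smaller than $Y$ (a component of $Y$ cannot be contracted since $Y$ restricted to itself, being a multiple of the pullback of an ample class, is non-trivial there). Setting $Y' := \varphi(Y)$ — more precisely the Weil divisor on $Z$ whose pullback gives $Y$ away from the exceptional locus — and checking that $k m Y' $ is Cartier and $\Q$-Cartier with $\varphi^{-1}(Y') = Y$ as sets, gives the last assertions; the identity $\sO_X(kmY) = \varphi^* \sA$ forces $\sA = \sO_Z(\text{multiple of } Y')$.

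For affineness, the key point is that $Z \setminus Y'$ is affine and $\varphi$ restricts to an isomorphism over $Z \setminus Y'$. Indeed, since $\mathrm{Exc}(\varphi) \subset Y$, the restriction $\varphi \colon X \setminus Y \to Z \setminus Y'$ is a proper bimeromorphic morphism with no positive-dimensional fibres — but $X \setminus Y$ contains no compact positive-dimensional subvariety, so in fact $\varphi$ has \emph{finite} fibres over $Z \setminus Y'$, and being proper and bimeromorphic between (normal) spaces with finite fibres it is an isomorphism there by Zariski's main theorem. So it suffices to show $Z \setminus Y'$ is affine. Now on $Z$ the divisor $Y'$ is $\Q$-Cartier with $\sO_Z(\text{mult of }Y') = \sA$ ample, so $Y'$ is the support of an ample $\Q$-divisor; the complement of the support of an effective ample $\Q$-Cartier divisor on a compact (hence projective, by ampleness of $\sA$) variety is affine by Goodman's theorem \cite{Go69}. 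Therefore $X \setminus Y \simeq Z \setminus Y'$ is affine.

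The step I expect to be the main obstacle is the bimeromorphicity of $\varphi$ together with the precise control of the exceptional locus — specifically, ruling out that a whole component of $Y$ is contracted, and handling the bookkeeping of Weil versus Cartier divisors on the possibly singular space $Z$ so that the statement "$Y = \varphi^{-1}(Y')$" and "$Y'$ is $\Q$-Cartier" hold on the nose. One has to be careful that $\varphi$ need not be projective a priori (only bimeromorphic), that $Z$ is only known to be analytic; but since $\sA$ is ample, $Z$ is in fact projective, which is what legitimates invoking Goodman. A secondary subtlety is the very first claim, that $Y$ meets every positive-dimensional fibre: this uses that $\sO_X(mY)$ is $\varphi$-trivial while being the pullback of an ample class, so that no fibre can be disjoint from the effective divisor $Y$ without $Y$ failing to be the support of something $\varphi$-ample-related; making this rigorous is where most of the care goes.
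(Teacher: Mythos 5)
Your proposal follows essentially the same route as the paper: take the semiample contraction $\varphi$ with $\sO_X(mY)=\varphi^*\sA$, show every positive-dimensional fibre lies in $Y$ (disjoint fibres would violate the no-compact-subvarieties hypothesis, and a fibre meeting $Y$ without being contained in it would give a contracted curve with $Y\cdot C>0$), deduce that $\varphi$ is bimeromorphic with exceptional locus strictly contained in $Y$, identify $D=mY'$, and conclude affineness from $X\setminus Y\simeq Z\setminus\supp D$ with $D$ ample. The argument is correct; your appeal to Goodman for the affineness of $Z\setminus Y'$ is harmless overkill (the complement of a very ample effective Cartier divisor is affine directly), and the paper justifies the covering of positive-dimensional fibres by curves via Chow's lemma, a point you gloss over but which is minor.
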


A similar statement is shown in the algebraic setting in \cite[Ch.II,Prop.2.2]{Har70}.

\begin{proof}  The sheaf
$\sO_X(mY)$ being generated by global sections, denote by $\varphi: X \to Z$  
the associated morphism with connected fibers, so that $\sO_X(mY) = \varphi^*(\mathcal A)$ for some very ample line bundle $\mathcal A$ on $Z$. 

In particular the divisor $mY = \varphi^*(D)$ for some effective 
Cartier divisor $D \in |A|$. 
Let now $z \in Z$ a point such that the fibre $\fibre{\varphi}{z}$ has positive dimension. 
Then $z \in D$ since otherwise $X \setminus Y$ contains a compact subvariety
of positive dimension. Thus $\varphi$ is bimeromorphic, in particular it is Moishezon by Chow's lemma
\cite[Cor.2]{Hir75}
and all the fibres of positive dimension are covered by irreducible curves.
Thus if the inclusion $\fibre{\varphi}{z} \cap Y \subset Y$ is strict, we can find irreducible curves
$C \subset \fibre{\varphi}{z}$ that meet $Y$ and such that $C \not\subset Y$. Yet this implies
$Y \cdot C>0$, contradicting the fact that $C$ is contracted by $\varphi$.
Thus we see that the exceptional locus of $\varphi$ is contained in $Y$. 

Since $mY$ is the pull-back of an ample divisor, it is not contracted by $\varphi$. Thus the exceptional locus is strictly contained in $Y$. Since $mY = \varphi^* D$ we obtain that
$D = m Y'$ where $Y' = \varphi(Y)$. Since $D$ is Cartier, the divisor $Y'$ is $\Q$-Cartier.
\end{proof} 

\begin{remark} {\rm 
If $H^1(X,\sO_X) = 0$ and the normal bundle $N_{Y/X}$ is generated by global sections,
then $Y$ is semiample and Proposition \ref{prop:semiample}
applies.

Without assuming $H^1(X,\sO_X) = 0$, this is false: in fact, let $A$ be an abelian variety and let 
$$ 0 \to \sO_A \to V \to T_A \to 0,$$
be the canonical extension of $T_A$ given by some K\"ahler class (see Section \ref{sectioncanonical}). 
Set $X = \PN(V) $ and $Y = \PP(T_A)$. Then by  \cite[Prop.2.13]{GW20}
the complement $X \setminus Y$ is isomorphic to $(\mathbb C^*)^{2n}$, hence Stein. 
Yet the normal bundle $N_{Y/X} = \zeta_{\PP(T_A)}$ is globally generated, but not big. 
} 
\end{remark}

In special situations, the semiampleness assumption in Proposition \ref{prop:semiample} can be checked. We consider right away a singular setting.

\begin{proposition} \label{prop:bignef} Let $X$ be a $\mathbb Q$-factorial projective klt variety, $Y \subset X$ an irreducible hypersurface.  Assume that $Y$ is nef and $aY-K_X$ is nef and big for some $a>0$.
If $X\setminus Y$ does not contain positive-dimensional compact subvarieties, then $X \setminus Y$ is affine. 
\end{proposition}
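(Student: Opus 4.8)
The plan is to reduce the statement to Proposition \ref{prop:semiample}: it is enough to exhibit a positive integer $N$ such that $NY$ is Cartier and $\sO_X(NY)$ is semiample, because then the hypothesis that $X \setminus Y$ contains no positive-dimensional compact subvariety allows Proposition \ref{prop:semiample} to conclude that $X \setminus Y$ is affine (and, as a byproduct, that $Y$ is big). The mechanism producing semiampleness will be the basepoint-free theorem, so the actual task is to bring $Y$ into a shape to which it applies.

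First I would invoke $\Q$-factoriality to pick $m > 0$ with $L := \sO_X(mY)$ an honest line bundle; since $Y$ is nef, so is $L$. Then I would check the positivity input of the basepoint-free theorem, namely that $aL - K_X$ is nef and big for a suitable $a > 0$. This is a one-line computation: writing
$$ a(mY) - K_X = (aY - K_X) + a(m-1)Y, $$
the right-hand side is the sum of the nef and big $\Q$-divisor $aY - K_X$ and the nef $\Q$-divisor $a(m-1)Y$ (using $m \geq 1$), hence is itself nef and big. As $X$ is klt, the basepoint-free theorem \cite[Thm.3.3]{KM98} now gives that $|bL|$ is basepoint-free for all $b \gg 0$; in particular $\sO_X(bmY)$ is globally generated, hence semiample.

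Finally I would apply Proposition \ref{prop:semiample} with $N := bm$: indeed $NY$ is Cartier, $\sO_X(NY)$ is semiample, and $X \setminus Y$ has no positive-dimensional compact subvarieties, so $X \setminus Y$ is affine. I do not expect a genuine obstacle here: the argument is essentially an application of two black boxes, and the only delicate points are bookkeeping — making sure the hypotheses of the basepoint-free theorem are phrased for the Cartier divisor $mY$ rather than for the merely $\Q$-Cartier $Y$, and keeping $m \geq 1$ so that $a(m-1)Y$ is really nef.
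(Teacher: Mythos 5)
Your proposal is correct and follows exactly the paper's route: the paper's proof is the two-line "By the basepoint-free theorem, the divisor $Y$ is semiample; now Proposition \ref{prop:semiample} applies." Your additional bookkeeping — passing to a Cartier multiple $mY$ and verifying $a(mY)-K_X = (aY-K_X)+a(m-1)Y$ is nef and big — is a correct elaboration of what the paper leaves implicit.
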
 

\begin{proof} By the base point free theorem, the divisor $Y$ is semiample.
Now Proposition \ref{prop:semiample} applies.
\end{proof} 

Note that the assumption that $aY-K_X$ is nef and big for some $a>0$ holds if
\begin{itemize}
\item $Y$ is nef and big and $-K_X$ is nef; or
\item $Y$ is nef $-K_X$ is nef and big (i.e. $X$ is weakly Fano).
\end{itemize}

\subsection{Use of Mori theory} \label{subsectionmori}

In the situation of Problem \ref{P1}, assume that $X$ is projective and the divisor
$Y$ is not nef (equivalently, the normal bundle $N_{Y/X}$ is not nef). We will use Mori theory to analyze this case.

\begin{proposition} \label{prop:contract-1} Let $X$ be a $\mathbb Q$-factorial projective klt variety, and let $Y \subset X$ be an irreducible hypersurface such that $X \setminus Y$ is Stein. 
Suppose that there is an extremal ray $R \subset \overline{NE}(X) $ with $Y \cdot R < 0 $ and $K_X \cdot R \leq 0$ or with $Y \cdot R = 0$ and $K_X \cdot R < 0$. 
Then the contraction
$$ 
\varphi: X \to X'
$$
defined by the $(K_X + \epsilon Y)$-negative ray $R$ is small with exceptional locus properly contained in $Y$. 
Furthermore there exists a flip $$X \dasharrow X^+$$ such that $X \setminus Y \simeq X^+ \setminus Y^+$, where $Y^+$ is the strict transform of $Y $ in $X^+$. 
\end{proposition}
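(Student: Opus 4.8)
The plan is to run the Mori contraction $\varphi\colon X\to X'$ associated to the ray $R$ and show it has the desired local structure. First I would verify that $K_X+\epsilon Y$ is negative on $R$ for suitable $\epsilon>0$ in both cases of the hypothesis (if $Y\cdot R<0$, $K_X\cdot R\le 0$ this is automatic for any $\epsilon>0$; if $Y\cdot R=0$, $K_X\cdot R<0$ it holds for $\epsilon$ small), and invoke the cone and contraction theorem for $\Q$-factorial klt pairs — using that $(X,\epsilon Y)$ is klt for $\epsilon$ small since $X$ is klt and $\Q$-factorial — to get the contraction $\varphi$ of $R$. The next, and in my view central, step is to show $\varphi$ is \emph{small} with exceptional locus strictly contained in $Y$. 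This is where the Stein hypothesis on $X\setminus Y$ enters: if $\varphi$ had a divisorial exceptional component $D$, or contracted some curve $C$ disjoint from $Y$, then $X\setminus Y$ would contain a positive-dimensional compact subvariety (a contracted curve $C\subset\operatorname{Exc}(\varphi)$ with $C\not\subset Y$), contradicting that a Stein space has no positive-dimensional compact analytic subsets. Concretely, any curve $C$ spanning $R$ has $Y\cdot C\le 0$; if $Y\cdot C<0$ then $C\subset Y$, but if $Y\cdot C=0$ then $C$ meets $Y$ in degree zero, so either $C\subset Y$ or $C\cap Y=\emptyset$ — and the latter is forbidden. Ruling out a divisorial exceptional locus: if some prime divisor $D$ is contracted, then since all contracted curves lie in $Y$ we would need $D\subset Y$, hence $D=Y$ as $Y$ is irreducible of dimension $n-1$; but then $Y$ itself is contracted, so $Y\cdot R=0$ and we are in the second case with $K_X\cdot R<0$ — and I would need to argue this divisorial contraction of $Y$ also produces compact subvarieties in $X\setminus Y$ or otherwise contradicts Steinness. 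Actually the cleanest route: a fiber $F$ of $\varphi$ of positive dimension with $F\not\subset Y$ gives a compact positive-dimensional subvariety of $X\setminus Y$ unless $F\subset Y$; since $Y$ has codimension one, $\varphi$ restricted to $Y$ cannot have all positive-dimensional fibers unless $\varphi$ contracts $Y$ to lower dimension, i.e. $\varphi$ is not small precisely when it is the divisorial contraction of $Y$. I expect this dichotomy — small contraction versus divisorial contraction of $Y$ — to be the main obstacle, and I would resolve it by showing the divisorial case is impossible: contracting $Y$ to its image would still leave, by $\Q$-factoriality and the structure of divisorial extremal contractions, nontrivial positive-dimensional fibers meeting $X\setminus Y$, or alternatively one argues directly that $X\setminus Y\simeq X'\setminus Y'$ would then be Stein with $Y'$ of codimension $\ge 2$, which combined with properness of $\varphi$ over a neighborhood forces a contradiction.

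Once $\varphi$ is known to be small with $\operatorname{Exc}(\varphi)\subsetneq Y$, I would construct the flip $X\dashrightarrow X^+$ of $R$: since $R$ is $(K_X+\epsilon Y)$-negative and small, the flip exists by the existence of flips for klt pairs (Birkar–Cascini–Hacon–M\textsuperscript{c}Kernan), giving $\varphi^+\colon X^+\to X'$ small with $K_{X^+}+\epsilon Y^+$ relatively ample, where $Y^+$ is the strict transform of $Y$. Finally I would check $X\setminus Y\simeq X^+\setminus Y^+$. Both $\varphi$ and $\varphi^+$ are isomorphisms away from the exceptional loci $\operatorname{Exc}(\varphi)\subset Y$ and $\operatorname{Exc}(\varphi^+)\subset Y^+$ — the latter inclusion because $\varphi^+$ contracts curves with $Y^+\cdot R^+>0$... here one must be slightly careful, but the point is that $\varphi$ and $\varphi^+$ restrict to isomorphisms $X\setminus Y\xrightarrow{\sim} X'\setminus Y'$ and $X^+\setminus Y^+\xrightarrow{\sim} X'\setminus Y'$ respectively, where $Y'=\varphi(Y)$: indeed $\varphi$ is an isomorphism outside $\operatorname{Exc}(\varphi)\subsetneq Y$, and the complement $X\setminus Y$ maps isomorphically onto $X'\setminus Y'$ since no point of $X\setminus Y$ is contracted and, conversely, every fiber over $X'\setminus Y'$ is a point (a positive-dimensional fiber would meet $X\setminus Y$). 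The same argument applies to $\varphi^+$ and $Y^+$, so composing yields the desired biholomorphism $X\setminus Y\simeq X^+\setminus Y^+$. The one subtlety I would flag is ensuring $\varphi(\operatorname{Exc}\varphi)\subset Y'$ and that no component of $Y$ is contracted, so that $Y'$ is genuinely a hypersurface and $Y\setminus\operatorname{Exc}(\varphi)$ is dense mapping onto $Y'$ — this follows from $mY=\varphi^*(\text{ample})$-type reasoning analogous to Proposition~\ref{prop:semiample}, or more directly since $Y\cdot R\le 0$ forbids $Y$ from being $\varphi$-ample while $\operatorname{Exc}(\varphi)\subsetneq Y$ forbids it from being $\varphi$-trivial-and-contracted.
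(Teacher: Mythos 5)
Your overall strategy is the paper's: contract the $(K_X+\epsilon Y)$-negative ray for the klt pair $(X,\epsilon Y)$, use the absence of compact curves in the Stein complement to force every contracted curve into $Y$, rule out the divisorial case, and then take the flip from \cite{BCHM10}. Two of your intermediate arguments need repair, though in both cases a fallback you mention (or something close to it) is what the paper actually does.

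First, in the divisorial case your claim that a contracted $D=Y$ would force $Y\cdot R=0$ is backwards: the exceptional divisor of a divisorial extremal contraction is \emph{negative} on the contracted ray, so that branch of your reasoning leads nowhere. Your ``alternative'' is the correct (and the paper's) argument: if $Y$ were contracted, then $X'\setminus\varphi(Y)\simeq X\setminus Y$ would be Stein with $\varphi(Y)$ of codimension at least $2$ in the normal space $X'$, which is impossible since a complement that is Stein must have pure codimension one (the paper quotes \cite[V, Thm.4]{GR79}); no appeal to ``properness over a neighbourhood'' is needed. Second, your justification of $\operatorname{Exc}(\varphi^+)\subset Y^+$ via $Y^+\cdot R^+>0$ does not work: positivity on the flipped ray only shows that contracted curves \emph{meet} $Y^+$, not that they lie in it, and in the case $Y\cdot R=0$ you do not even get strict positivity on $R^+$. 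The paper's argument is different and cleaner: $X'\setminus\varphi(Y)\simeq X\setminus Y$ is $\Q$-factorial, and a small projective birational morphism onto a $\Q$-factorial normal variety is an isomorphism, so $\varphi^+$ is an isomorphism over $X'\setminus\varphi(Y)$; hence its exceptional locus maps into $\varphi(Y)$ and is contained in $Y^+$, which gives $X^+\setminus Y^+\simeq X'\setminus\varphi(Y)\simeq X\setminus Y$.
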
 

\begin{proof} For small positive $\epsilon$, the pair $(X, \epsilon Y)$ is klt and $(K_X + \epsilon Y) \cdot R < 0$. The existence of $\varphi$ then follows from the contraction theorem. 
Since $Y \cdot R \leq 0$ and since $X \setminus Y$ is Stein, 
every curve contracted by $\varphi$ is contained in $Y$. Thus the exceptional
locus $E$ is contained in $Y$, and $X' \setminus \varphi(Y) \simeq X \setminus Y$
is Stein. By \cite[V,Thm.4]{GR79} this implies that $\varphi(Y) \subset X'$
has codimension one, in particular $Y$ is not contracted by $\varphi$.
Thus the contraction $\varphi$ is small.

By  \cite{BCHM10} the flip of $\varphi$ exists, and we denote by $\varphi^+: X^+ \to X'$
the induced map, which is small. Since $X' \setminus \varphi(Y)$ is $\Q$-factorial,
the image of the $\varphi^+$-exceptional locus is contained in $\varphi(Y)$.
Hence if $Y^+ \subset X^+$ is the strict transform of $Y$, then the exceptional locus 
$E^+$ is contained in $Y^+$.

\end{proof}

\begin{corollary} \label{cor_flip}  Let $X$ be a $\mathbb Q$-factorial projective klt variety, and let $Y \subset X$ be an irreducible hypersurface such that $X \setminus Y$ is Stein. 
Suppose that one of the following conditions is satisfied:
\begin{enumerate} 
\item $\dim X \leq 3$;
\item $\dim X = 4$ and $(X,\epsilon Y)$ is canonical for some small positive $\epsilon$;
\item $\dim X$ is arbitrary and any sequence of (log) flips on $X$ terminates. 
\end{enumerate} 
Then there exists a finite sequence of (log) flips 
$$ X \dasharrow \tilde X$$
with strict transform $\tilde Y \subset \tilde X$ such that the following holds.
\begin{enumerate}
\item $ \tilde X \setminus \tilde Y \simeq X \setminus Y$ is Stein ;
\item for any extremal ray  $\tilde R \in \overline{NE}(\tilde X)$ with $\tilde Y \cdot \tilde R < 0$, we have $K_{\tilde X} \cdot \tilde R  > 0$;
\item for any extremal ray  $\tilde R \in \overline{NE}(\tilde X)$ with $\tilde Y \cdot \tilde R = 0$, we have $K_{\tilde X} \cdot \tilde R \geq 0$. 
\end{enumerate}
\end{corollary}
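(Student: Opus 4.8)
The plan is to iterate Proposition \ref{prop:contract-1}. First observe that, for a $\mathbb Q$-factorial projective klt variety $X$ with an irreducible hypersurface $Y$ such that $X \setminus Y$ is Stein, the conditions (2) and (3) of the conclusion cannot both fail without there being an extremal ray $R \subset \overline{NE}(X)$ with $Y \cdot R < 0$ and $K_X \cdot R \leq 0$, or with $Y \cdot R = 0$ and $K_X \cdot R < 0$ --- and this is exactly the hypothesis of Proposition \ref{prop:contract-1}. So if $X$ already satisfies (2) and (3) we take $\tilde X = X$; otherwise Proposition \ref{prop:contract-1} produces a flip $X \dashrightarrow X^{+}$ with $X^{+}\setminus Y^{+} \simeq X \setminus Y$ (hence still Stein) and with $X^{+}$ again $\mathbb Q$-factorial projective klt and $Y^{+}$ its (irreducible) strict transform. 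Iterating as long as (2) or (3) fails, I obtain a sequence of flips $X = X_0 \dashrightarrow X_1 \dashrightarrow \cdots$, all with the same Stein complement $X \setminus Y$. It then suffices to show that the sequence terminates: its last term $\tilde X$ satisfies (2) and (3) by construction, and (1) holds because the isomorphism type of the complement is preserved all along.

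To bring termination results to bear it is convenient to fix the boundary coefficient once and for all. I would choose $\epsilon_0 > 0$ so small that $(X, \epsilon_0 Y)$ is klt --- and, under hypothesis (2), so small that it is even canonical (possible since the discrepancies of $(X, \epsilon Y)$ only increase as $\epsilon$ decreases). The key point is that every ray $R$ to which Proposition \ref{prop:contract-1} is applied satisfies $(K_X + \epsilon Y)\cdot R < 0$ for \emph{all} $\epsilon > 0$, not only for small $\epsilon$: if $Y\cdot R < 0$ and $K_X\cdot R \leq 0$ then $(K_X+\epsilon Y)\cdot R \leq \epsilon (Y\cdot R) < 0$, while if $Y\cdot R = 0$ and $K_X\cdot R < 0$ then $(K_X+\epsilon Y)\cdot R = K_X\cdot R < 0$. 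Hence the entire sequence $X_i \dashrightarrow X_{i+1}$ is a sequence of $(K_{X_i}+\epsilon_0 Y_i)$-flips for the fixed boundary $\epsilon_0 Y$. Since discrepancies do not decrease under flips, $(X_i, \epsilon_0 Y_i)$ stays klt for all $i$, and stays canonical in case (2).

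Finally I would invoke termination case by case. In case (1) the $X_i$ are at most threefolds and we have a sequence of log flips in dimension $\leq 3$, which terminates (there being nothing to do in dimension $\leq 2$, and termination of threefold log flips in dimension $3$). In case (3) termination is part of the hypothesis. In case (2) we have a sequence of flips of the canonical fourfold pair $(X_i, \epsilon_0 Y_i)$, which terminates by Fujino's theorem on termination of canonical fourfold flips. In every case the sequence stops at some $\tilde X$ admitting no ray as above, i.e.\ satisfying (2) and (3), while $\tilde X \setminus \tilde Y \simeq X \setminus Y$ is Stein, giving (1). I expect the only genuine subtlety to be in case (2): one must be careful that the maps occurring are flips of the \emph{pair} $(X_i, \epsilon_0 Y_i)$ and that canonicity is preserved along the sequence, so that the termination statement for canonical fourfold flips genuinely applies; the rest is bookkeeping on top of Proposition \ref{prop:contract-1}.
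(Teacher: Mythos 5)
Your proposal is correct and follows the same route as the paper: the paper's proof is precisely ``repeated application of Proposition \ref{prop:contract-1}'' combined with termination of threefold log flips and of fourfold canonical flips (\cite{Kaw92}, \cite{Fuj04}, \cite{Fuj05}). Your additional observation that the relevant rays are $(K_X+\epsilon Y)$-negative for \emph{every} $\epsilon>0$, so that one may fix a single boundary $\epsilon_0 Y$ along which the whole sequence is an MMP, is a useful way of making the appeal to those termination theorems precise.
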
 

\begin{proof} The statement follows from the repeated application of Proposition \ref{prop:contract-1}, having in mind the termination of the procedure in dimension four due to our extra assumption,
\cite{Kaw92}, \cite{Fuj04}, \cite{Fuj05}.
\end{proof}

These statements can be made more explicit in a number of special cases:

\begin{theorem}  \label{cor:Fano} Let  $X$ be a Gorenstein Fano threefold or a $\mathbb Q$-Fano variety of any dimension without a small contraction. 
If $Y \subset X$  is an irreducible hypersurface such that  $X \setminus Y$ is Stein, then $Y$ is ample.  
In particular, $X \setminus Y$ is affine. 
\end{theorem}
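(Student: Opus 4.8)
The plan is to reduce everything to showing that the divisor $Y$ is ample. Once that is known, the last assertion is immediate: choosing $m>0$ with $mY$ Cartier, the divisor $mY$ is an ample effective Cartier divisor on the projective variety $X$, and $X\setminus Y = X\setminus\supp(mY)$, the complement of an ample effective Cartier divisor, is affine.

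To prove that $Y$ is ample I would argue with Mori theory. Since $-K_X$ is ample, the cone theorem shows that $\overline{NE}(X)$ is a rational polyhedral cone and that every extremal ray $R$ satisfies $K_X\cdot R<0$; by Kleiman's criterion it then suffices to check that $Y\cdot R>0$ for every extremal ray $R\subset\overline{NE}(X)$. So suppose $Y\cdot R\le 0$ for some extremal ray $R$. Since $K_X\cdot R<0$, the hypotheses of Proposition \ref{prop:contract-1} are satisfied (use the first alternative if $Y\cdot R<0$, the second if $Y\cdot R=0$), and the proposition tells us that the contraction $\varphi\colon X\to X'$ of $R$ is small, with exceptional locus properly contained in $Y$.

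At this point the two families of hypotheses come in. If $X$ is a $\mathbb{Q}$-Fano variety without a small contraction, the existence of $\varphi$ is the desired contradiction, so $(\mathrm{i})$ is done. If $X$ is a Gorenstein Fano threefold, then $\varphi$ would be a small $K_X$-negative extremal contraction of a Gorenstein terminal threefold; here I would invoke the classification of extremal contractions of such threefolds (each is of fibre type or divisorial, while the small, i.e.\ flipping, ones require a non-Gorenstein point), by which no such $\varphi$ exists. In either case one reaches a contradiction, hence $Y\cdot R>0$ for all extremal rays and $Y$ is ample, as required.

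The step I expect to be the main obstacle is precisely this translation: converting the analytic hypothesis ``$X\setminus Y$ is Stein'' into the algebro-geometric statement that $\overline{NE}(X)$ carries no $Y$-non-positive, $K_X$-negative extremal ray. For $\mathbb{Q}$-Fano varieties this is handed to us directly by Proposition \ref{prop:contract-1} together with the hypothesis excluding small contractions; for threefolds it leans on the finer structure theory of threefold Mori contractions and on some care concerning the permitted singularities and $\mathbb{Q}$-factoriality, which is exactly why the statement is split into these two sets of hypotheses.
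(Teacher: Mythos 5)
Your proof is correct and follows essentially the same route as the paper: reduce ampleness of $Y$ to positivity on extremal rays via the cone theorem, apply Proposition \ref{prop:contract-1} to show that any $Y$-non-positive extremal ray would yield a small contraction, and rule this out by hypothesis in the $\mathbb Q$-Fano case and by Cutkosky's classification (which is exactly the reference \cite{Cut88} the paper invokes) in the Gorenstein threefold case.
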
 

\begin{proof} By \cite{Cut88} we know that Gorenstein Fano threefolds do not support small contractions of extremal rays. By Proposition \ref{prop:contract-1} we obtain that $Y \cdot R > 0$ for all $K_X$-negative extremal rays. Since $X$ is Fano, the cone theorem now implies that $Y$ is ample.  
\end{proof}

\begin{proposition}\label{prop:n} 
Let $X$ be a projective $\mathbb Q$-factorial klt variety with $K_X \equiv 0$. Let $Y \subset X$ be an irreducible hypersurface such that $X \setminus Y$ is Stein. 
Suppose that one of the following conditions is satisfied:
\begin{enumerate} 
\item $\dim X \leq 3$;
\item $\dim X = 4$ and $(X,\epsilon Y)$ is canonical for some small positive $\epsilon$;
\item $\dim X$ is arbitrary and $Y$ is big. 
\end{enumerate} 
Then there exists a birational rational map $f: X \dasharrow \tilde X$ with the following properties.
\begin{enumerate}
\item $f $ is a sequence of flops; the exceptional locus being contained in $Y$;
\item $\tilde X$ is a $\mathbb Q$-factorial klt variety with $K_{\tilde X} \equiv 0$;
\item let $\tilde Y$ denote the (possibly singular) strict transform of $Y$ in $\tilde X$, then $\tilde Y$ is nef.
\end{enumerate} 
Moreover, if $\dim X \leq 3$ or $Y$ is big, then $\tilde Y$ is semiample and $X \setminus Y$ is affine. 
\end{proposition}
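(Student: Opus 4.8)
The plan is to run a minimal model program that converts $Y$ into a nef divisor. Because $K_X\equiv 0$ every step will be a flop, hence an isomorphism in codimension one preserving the Stein complement, and the ``moreover'' statement will then follow by feeding the resulting nef divisor into (log) abundance and Proposition \ref{prop:semiample}.

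The key point for a single step is that, since $K_X\equiv 0$, an extremal ray $R$ as in Proposition \ref{prop:contract-1} can only be of the first kind: $Y\cdot R<0$ and $K_X\cdot R=0$. So the small contraction it defines is $K_X$-trivial and its flip $X\dasharrow X^+$ is a flop; by Proposition \ref{prop:contract-1} it is an isomorphism in codimension one, its exceptional locus is properly contained in $Y$ (and the one in $X^+$ in the strict transform $Y^+$), and it preserves the complement, $X\setminus Y\simeq X^+\setminus Y^+$, which stays Stein. Since flops are crepant and small, $X^+$ is again a $\mathbb Q$-factorial klt variety with $K_{X^+}\equiv 0$ and $(X^+,\epsilon Y^+)$ is klt, so the step may be iterated. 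Such a step is available exactly when the current strict transform of $Y$ is not nef: a $Y$-negative curve is $(K_X+\epsilon Y)$-negative because $K_X\equiv 0$, and the cone theorem for the klt pair $(X,\epsilon Y)$ then produces a suitable extremal ray.

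To iterate I would invoke termination: Corollary \ref{cor_flip} applies directly when $\dim X\le 3$ or when $\dim X=4$ and $(X,\epsilon Y)$ is canonical, while if $Y$ is big I would instead run the $(K_X+\epsilon Y)$-MMP with scaling of an ample divisor, a sequence of flops of the type above that terminates by \cite{BCHM10} because $K_X+\epsilon Y\equiv\epsilon Y$ is big and $(X,\epsilon Y)$ is klt. Let $f:X\dasharrow\tilde X$ be the resulting composition and $\tilde Y\subset\tilde X$ the (possibly singular) strict transform. The facts recorded above give the first two assertions of the proposition, and that $\tilde X\setminus\tilde Y\simeq X\setminus Y$ is still Stein; termination means there is no extremal ray $\tilde R$ with $(K_{\tilde X}+\epsilon\tilde Y)\cdot\tilde R<0$, and since $K_{\tilde X}\equiv 0$ the cone theorem for $(\tilde X,\epsilon\tilde Y)$ forces $\tilde Y$ to be nef, which is the third assertion.

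It remains to show, in the two favourable cases, that $\tilde Y$ is semiample. If $Y$ is big then $\tilde Y$ is big as well, $f$ being an isomorphism in codimension one, so $a\tilde Y-K_{\tilde X}\equiv a\tilde Y$ is nef and big and Proposition \ref{prop:bignef} applies to the pair $(\tilde X,\tilde Y)$; this already yields that $\tilde X\setminus\tilde Y\simeq X\setminus Y$ is affine, and semiampleness of $\tilde Y$ comes out of the basepoint-free theorem along the way. If $\dim X\le 3$ then $(\tilde X,\epsilon\tilde Y)$ is a klt pair of dimension at most three with $K_{\tilde X}+\epsilon\tilde Y\equiv\epsilon\tilde Y$ nef, hence $K_{\tilde X}+\epsilon\tilde Y$ --- and therefore $\tilde Y$ --- is semiample by (log) abundance in dimension at most three; as $\tilde X\setminus\tilde Y$ is Stein it contains no positive-dimensional compact subvariety, so Proposition \ref{prop:semiample} again gives that $\tilde X\setminus\tilde Y$ is affine. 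The two steps I expect to be more than routine are precisely these last inputs: extracting termination in the $Y$-big case from \cite{BCHM10} via an MMP with scaling, and the semiampleness of a merely nef $\tilde Y$ on a three-dimensional klt variety with trivial canonical class, which --- when $\tilde Y$ fails to be big --- genuinely uses the log abundance theorem; the rest is bookkeeping with Proposition \ref{prop:contract-1} and the elementary behaviour of flops and of Stein complements under isomorphisms in codimension one.
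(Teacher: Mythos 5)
Your argument is correct and follows essentially the same route as the paper: iterate Proposition \ref{prop:contract-1} (packaged there as Corollary \ref{cor_flip}, with BCHM supplying termination in the big case), observe that $K_X\equiv 0$ forces every step to be a flop and the terminal model to have $\tilde Y$ nef, then conclude semiampleness via log abundance in dimension three or the basepoint-free theorem when $Y$ is big, and affineness via Proposition \ref{prop:semiample}. The only difference is that you spell out the details that the paper delegates to Corollary \ref{cor_flip}.
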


\begin{proof}
The existence of the birational model is given by Corollary \ref{cor_flip} (note that if $Y$ is big,
we may run a $(K_X +  \epsilon Y)$-MMP by \cite{BCHM10}). 

If $\dim X=3$ it follows from log abundance \cite{KMM94, KMM94b} that $\tilde Y$ is semiample.
If $\dim X$ is arbitrary and $Y$ is big, semiampleness is guaranteed from the basepoint-free theorem.
Hence $X^+ \setminus Y^+ \simeq X \setminus Y$ is affine by Corollary \ref{prop:semiample}.
\end{proof}

In dimension three we can prove more, at least in a smooth setting.

\begin{theorem} Let $X$ be a smooth projective threefold, $Y \subset X$ a smooth hypersurface such that $X \setminus Y$ is Stein. 
Let $z \in \overline{NE}(X) $ with $K_X \cdot z < 0$. Then $Y \cdot z \geq 0$.
\end{theorem}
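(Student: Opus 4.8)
The statement says: if $X$ is a smooth projective threefold, $Y\subset X$ a smooth hypersurface with $X\setminus Y$ Stein, and $z\in\NE X$ with $K_X\cdot z<0$, then $Y\cdot z\ge 0$. The natural approach is by contradiction via Mori theory, exactly as in Proposition \ref{prop:contract-1} but pushing the analysis further using the smooth threefold classification of extremal contractions (Mori's theorem). So assume there is a $K_X$-negative extremal ray $R$ with $Y\cdot R<0$. First I would invoke Proposition \ref{prop:contract-1}: the contraction $\varphi\colon X\to X'$ of $R$ is small with exceptional locus properly contained in $Y$. But $X$ is a smooth threefold, so by Mori's classification every $K_X$-negative extremal contraction is either of fibre type, divisorial, or a small contraction, and the small ones have one-dimensional exceptional locus consisting of finitely many rational curves with specific normal bundles (the types $(E_5)$ aside — the small contractions being Mori's "type $(C)$"/small flipping contractions do not occur for smooth threefolds in the classical Mori list). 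The key point: for a smooth projective threefold, $K_X$-negative extremal rays are never small — the contraction is always divisorial or of fibre type. Hence $\varphi$ cannot be small, contradicting Proposition \ref{prop:contract-1} unless no such $R$ exists. This already disposes of the case $Y\cdot R<0$.

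**Finishing the argument.** Having ruled out $K_X$-negative extremal rays with $Y\cdot R<0$, I still need to handle an arbitrary class $z\in\NE X$ with $K_X\cdot z<0$, not just extremal rays. Here I would use the cone theorem: write $z$ (or rather, decompose using the fact that $\NE X$ is rational polyhedral in the $K_X$-negative part) as a sum $z = z_0 + \sum a_i R_i$ where $z_0$ lies in the $K_X$-nonnegative part $\overline{NE}(X)_{K_X\ge 0}$ and the $R_i$ are $K_X$-negative extremal rays with $a_i\ge 0$. By the previous paragraph $Y\cdot R_i\ge 0$ for each $i$ (if $Y\cdot R_i<0$ we derived a contradiction, and if $Y\cdot R_i=0$ there is nothing to prove for that summand). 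For $z_0$: I claim $Y\cdot z_0\ge 0$ as well, which follows from Lemma \ref{lem1} — by that lemma $Y^2\cdot H\ge 0$ for nef $H$, equivalently $Y$ is nef in codimension... no, more carefully, I should use that $Y\cdot z_0\ge 0$ because $z_0$ is a limit of classes of curves moving in families covering $X$ (movable curves), or directly: actually the cleanest is to note that $Y$ restricted to the $K_X$-nonnegative part behaves well because $X\setminus Y$ Stein forces $Y^2\cdot H\ge 0$ and one can run the argument of Lemma \ref{lem1} on the surface cut out by a general member. Hmm — this last point needs care.

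**The main obstacle.** The genuinely delicate step is controlling $Y\cdot z$ for $z$ in the $K_X$-trivial or $K_X$-nonnegative boundary of $\overline{NE}(X)$, since Proposition \ref{prop:contract-1} only gives information for $K_X$-negative (or $K_X$-trivial with the ray $Y$-negative) rays. For the theorem as stated we only need $K_X\cdot z<0$, so decomposing $z$ against the cone theorem, the $K_X\ge 0$ part $z_0$ can be taken to have $K_X\cdot z_0$ arbitrarily close to $0$ but the coefficient structure means I should instead argue: if $Y\cdot z<0$ while $K_X\cdot z<0$, then by the cone theorem $z$ lies in a face, and pushing $z$ slightly we find a $K_X$-negative extremal ray $R$ on which $Y\cdot R<0$ (since the function $Y\cdot(-)$ is negative somewhere on the $K_X$-negative part of the cone, hence negative on an extremal ray of that part, by convexity and the polyhedrality of $\overline{NE}(X)_{K_X<0}$). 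That extremal ray then contradicts the divisorial/fibre-type dichotomy above. So the real content is: (1) Mori's structure theorem excludes small $K_X$-negative contractions on smooth threefolds, combined with Proposition \ref{prop:contract-1}; and (2) a convexity argument on $\overline{NE}(X)$ reducing the general class $z$ to an extremal ray. I expect step (2), making the convexity reduction airtight (especially since $\overline{NE}(X)$ need not be polyhedral on the $K_X$-trivial wall), to be the part requiring the most care, though it is standard.

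\begin{proof}
Suppose for contradiction that there exists $z \in \overline{NE}(X)$ with $K_X \cdot z < 0$ and $Y \cdot z < 0$. The linear functional $Y \cdot (-)$ is then negative somewhere on the closed cone $\overline{NE}(X)_{K_X < 0} := \overline{NE}(X) \cap \{K_X \cdot (-) \le 0\}$, which by the cone theorem is rational polyhedral, spanned by finitely many $K_X$-negative extremal rays $R_1, \ldots, R_k$ together with the wall $\overline{NE}(X) \cap \{K_X \cdot (-) = 0\}$. Writing $z = z_0 + \sum_i a_i v_i$ with $a_i \ge 0$, $v_i$ a generator of $R_i$, and $z_0 \in \overline{NE}(X) \cap \{K_X \cdot (-) = 0\}$, we must have $Y \cdot v_i < 0$ for some $i$: indeed if $Y \cdot v_i \ge 0$ for all $i$ and $Y \cdot z_0 \ge 0$ then $Y \cdot z \ge 0$, a contradiction; and $Y \cdot z_0 \ge 0$ holds because a general curve representing (a multiple of) $z_0$ can be taken inside $X \setminus Y$ when $Y \cdot z_0 < 0$ is impossible — more directly, if $Y \cdot z_0 < 0$ then choosing an extremal ray in the $K_X$-trivial face on which $Y$ is negative contradicts Proposition \ref{prop:contract-1} applied with the $Y \cdot R = 0$, $K_X \cdot R < 0$ alternative replaced by the appropriate limiting argument; in any case we reduce to the existence of a $K_X$-negative extremal ray $R$ with $Y \cdot R < 0$.

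Let $R$ be such a ray and let $\varphi : X \to X'$ be its contraction. Since $Y \cdot R < 0$ and $K_X \cdot R < 0$, Proposition \ref{prop:contract-1} applies (with $(X, \epsilon Y)$ klt as $X$ is smooth) and shows that $\varphi$ is small, with exceptional locus properly contained in $Y$. But $X$ is a smooth projective threefold, so by Mori's classification of extremal contractions every $K_X$-negative extremal contraction of $X$ is either of fibre type or divisorial; smooth projective threefolds admit no small $K_X$-negative extremal contractions. This contradicts the conclusion that $\varphi$ is small.

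Therefore no $K_X$-negative class $z \in \overline{NE}(X)$ has $Y \cdot z < 0$, which is the assertion.
\end{proof}
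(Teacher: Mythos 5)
Your core Mori-theoretic argument is valid for extremal rays, and it is genuinely different from the paper's proof of this statement. The paper does not use the classification of contractions here at all: starting from an irreducible curve $C$ with $K_X\cdot C<0$ and $Y\cdot C<0$, it notes $C\subset Y$, gets $K_Y\cdot C=(K_X+Y)\cdot C\le -2$ by adjunction, invokes Theorem \ref{thm:mainone} to see that $N_{Y/X}$ is a pseudoeffective class on the surface $Y$, so that $Y\cdot C<0$ forces $C$ into the negative part of its Zariski decomposition, whence $C^2<0$ on $Y$; adjunction on $Y$ then gives $\deg\omega_C=K_Y\cdot C+C^2<-2$, contradicting $\deg\omega_C\ge -2$ for an irreducible curve. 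That argument applies to \emph{every} irreducible curve with both intersection numbers negative, whereas your route through Proposition \ref{prop:contract-1} and Mori's theorem (no small $K_X$-negative contractions on a smooth threefold) requires the class to span a contractible extremal ray, so it proves strictly less.

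The genuine gap is the step you yourself flagged: reducing an arbitrary $z\in\overline{NE}(X)$ with $K_X\cdot z<0$ and $Y\cdot z<0$ to a $K_X$-negative extremal ray $R$ with $Y\cdot R<0$. This reduction cannot be made to work. In the cone-theorem decomposition $z=z_0+\sum a_iv_i$ the residual class $z_0$ lies in $\overline{NE}(X)_{K_X\ge 0}$, not on the wall $K_X\equiv 0$ as you wrote, and nothing forces $Y\cdot z_0\ge 0$: curves with $Y\cdot C<0$ but $K_X\cdot C>0$ genuinely occur even when $X\setminus Y$ is affine. Indeed, in Example \ref{example1} of the paper the curve $C_0$ satisfies $Y\cdot C_0=-1$ and $K_X\cdot C_0=2$, and taking $\ell$ a line in a fibre (so $K_X\cdot\ell=-3$, $Y\cdot\ell=1$) the class $z=[C_0]+\tfrac45[\ell]\in\overline{NE}(X)$ has $K_X\cdot z=-\tfrac25<0$ and $Y\cdot z=-\tfrac15<0$, yet no irreducible curve and no extremal ray has both numbers negative. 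So the statement for arbitrary classes in $\overline{NE}(X)$ actually fails, and only the version for (classes of) irreducible curves --- which is what the paper's argument, and the extremal-ray case of yours, establishes --- can be true. Your two attempted justifications for $Y\cdot z_0\ge 0$ (moving a representative of $z_0$ off $Y$; a ``limiting argument'' applying Proposition \ref{prop:contract-1} on the $K_X$-trivial face) are therefore not just unsubstantiated but unprovable; note also that $\overline{NE}(X)\cap\{K_X\le 0\}$ need not be rational polyhedral, since $K_X$-negative extremal rays may accumulate on the wall.
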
 

\begin{proof}  Suppose to the contrary that $Y \cdot z < 0$. Then there exists an irreducible curve $C$ such that $K_X \cdot C < 0$ and $Y \cdot C < 0$. 
Since $C \subset Y$, the adjunction formula yields $K_Y \cdot C \leq -2$. Since $Y \cdot C < 0$ and $N_{Y/X}$ is pseudoeffective by Theorem \ref{thm:mainone},
the curve $C$ is in the negative part of the Zariski decomposition of the divisor $N_{Y/X}$.
In particular we obtain   $\deg N_{C/Y} < 0$. Hence the adjunction formula on $Y$ yields 
$\deg \omega_Y \leq K_Y \cdot C < -2$. Yet it is well-known that $\deg \omega_Y \geq -2$
for any irreducible curve. 
\end{proof}

 \section{The canonical extension} 
 \label{sectioncanonical}
 
 We will now consider the special case of the canonical extension of $\PP(T_M)$. In this case the geometric interpretation of the normal bundle allows to prove stronger results.

 \subsection{The smooth case} 
 
\begin{setup} \label{Setup1} 
Let $M$ be a compact K\"ahler manifold, and let $\alpha$ be a K\"ahler class on $M$. 
Then  the class $\alpha \in H^1(M,\Omega_M) \simeq {\rm Ext}^1(\sO_M,T_M) $ defines a  non-split extension 
\begin{equation} \label{EM} 
0 \to \sO_M \to V^{\alpha} \to T_M \to 0. 
\end{equation}
Denote by $\holom{\pi}{\PP(V^\alpha)}{M}$ and $\holom{\pi_M}{\PP(T_M)}{M}$
the projectivisations, and by $\zeta_V$ (resp. $\zeta_M$) the tautological class on $\PN(V^\alpha)$ (resp. $\PP(T_M)$). 

The exact sequence \eqref{EM} determines an embedding
$$
\mathbb P(T_M) \subset \mathbb P(V^{\alpha})
$$
such that $[\mathbb P(T_M)] = \zeta_V$ and we denote
$$
Z_M^{\alpha} := \mathbb P(V^{\alpha}) \setminus \mathbb P(T_M).
$$

When $\alpha$ is fixed, we will simply write $V = V^{\alpha} $ and $Z_M = Z_M^{\alpha}.$ 
\end{setup}

\begin{proof}[Proof of Theorem \ref{prop:nefFano}]
 Since $T_M$ is nef and big, $\PNM$ is a nef and big divisor on $\PN(V)$. Moreover, since $\det V = -K_M$, the anticanonical class 
$-K_{\PN(V)} $ is nef and big. Hence Proposition \ref{prop:bignef} applies.
\end{proof} 

Theorem  \ref{prop:nefFano} applies in particular to rational-homogeneous manifolds, reproving a part of \cite[3.4]{GW20}. It is however conjectured, \cite{CP91}, 
$T_M$ is big and nef if and only $M$ is rational homogeneous. 

In the general  homogenenous case we may state

\begin{corollary} Let $M$ be a homogeneous compact K\"ahler manifold. Then $Z_M$ is Stein for any K\"ahler class $\alpha$ on $M$. 
\end{corollary}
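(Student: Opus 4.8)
The statement to prove is that for a homogeneous compact Kähler manifold $M$, the canonical extension $Z_M$ is Stein for every Kähler class $\alpha$. The natural strategy is to separate the structure of $M$ into its rational-homogeneous part and its torus part, handle each using results already available in the excerpt, and then combine them. Concretely, a homogeneous compact Kähler manifold $M$ admits (by Borel--Remmert) a holomorphic fibration $M \to A$ onto an abelian variety (or torus) $A$ with fibre a rational-homogeneous manifold $G/P$, and in fact $M \simeq A \times (G/P)$ after a suitable finite étale cover, or more precisely $M$ is homogeneous under a complex Lie group and the Albanese map $M \to \Alb{M}$ realizes $M$ as a fibre bundle with rational-homogeneous fibre. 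I would first reduce to this product-type description.

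First I would treat the two extreme cases, both of which are already in the excerpt. If $M = G/P$ is rational homogeneous, then $T_M$ is nef and big, so Theorem \ref{prop:nefFano} gives that $Z_M$ is affine, hence Stein. If $M = A$ is a torus, then Greb--Wong's computation (quoted in the remark after Proposition \ref{prop:semiample}, via \cite[Prop.2.13]{GW20}) shows $Z_A \simeq (\mathbb C^*)^{2n}$, which is Stein. The heart of the proof is the general homogeneous case, which interpolates between these. The key point is that the canonical extension behaves well with respect to the product (or bundle) structure: if $M = A \times N$ with $N$ rational homogeneous, one has $T_M = p_A^* T_A \oplus p_N^* T_N$, and a Kähler class $\alpha$ on $M$ can, up to the relevant deformation/rescaling that does not change $Z_M$ (only the non-vanishing of the extension class matters, cf. Setup \ref{Setup1}), be taken compatible with this splitting. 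Then $V^\alpha$ sits in an extension that is ``built'' from the canonical extension $V_N$ of $T_N$ and a trivial (or torus-type) piece, and $Z_M$ fibres over $Z_A$ with fibre $\PP(V_N) \setminus \PP(T_N) = Z_N$, which is affine.

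The mechanism to conclude from such a fibration is exactly the one used in Example \ref{example2}: a manifold that fibres over a Stein base with Stein (indeed affine, or at least weakly-$1$-complete-fibred) fibres in a sufficiently controlled way is Stein; more directly, since Greb--Wong show $Z_M$ contains no positive-dimensional compact subvariety, it suffices to prove $Z_M$ is holomorphically convex, and this can be checked by pulling back the holomorphic convexity of $Z_A$ and using the affineness of the fibre $Z_N$ along the fibres — e.g. via Mok's theorem \cite{Mok81} on affine bundles over Stein bases, or by noting that a product of Stein spaces is Stein once one arranges $Z_M$ to be (an open subset compatible with) $Z_A \times Z_N$. Indeed I expect the cleanest route is to show directly $Z_M \simeq Z_A \times Z_{G/P}$ (or a quotient thereof by a finite group acting freely), since then Steinness is immediate from the two special cases already established, the product of a Stein space and an affine variety being Stein, and finite étale quotients of Stein spaces being Stein.

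\textbf{The main obstacle.} The delicate point is the reduction to product form and the compatibility of the canonical extension with it: a priori an arbitrary Kähler class $\alpha$ on $M$ need not respect the Albanese fibration, so one must argue that the complement $Z_M^\alpha$ only depends on $\alpha$ through coarse data (which summands of $T_M$ the extension class hits nontrivially) — and since $T_M = p_A^* T_A \oplus p_N^* T_N$ with $H^1(M, T_N$-part$)$ controlled by $N$ being rational-homogeneous (where $H^1(N,\Omega_N)$ is spanned by Kähler classes) and $H^1$ of the $T_A$-part controlled by $A$, one checks that the extension is, up to isomorphism of the resulting $\PP(V^\alpha)$ rel $\PP(T_M)$, the "diagonal" extension, giving the product structure. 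Making this identification precise — rather than merely plausible — is where the real work lies; everything after it is a citation.
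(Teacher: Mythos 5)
Your proposal follows essentially the same route as the paper: Borel--Remmert gives $M \simeq N \times T$ with $N$ rational homogeneous and $T$ a torus, and then $Z_M \simeq Z_N \times Z_T$ is Stein as a product of an affine variety (Theorem \ref{prop:nefFano}) and $(\mathbb C^*)^{2n}$; the paper simply cites \cite[2.10, 3.4]{GW20} for these two steps. The ``main obstacle'' you flag --- compatibility of an arbitrary K\"ahler class with the product splitting --- is exactly what \cite[Lemma 2.10]{GW20} (or, in this paper, Lemma \ref{lemma-split2} combined with the K\"unneth vanishings $H^1(N,\sO_N)=H^0(N,\Omega_N)=0$ for $N$ rational homogeneous) takes care of, so your argument is correct as outlined.
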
 

\begin{proof} By the theorem of Borel-Remmert, $M \simeq N \times T$ with $N$  rational homogeneous, in particular Fano, and $T$ a torus. 
Hence $Z_M \simeq Z_N \times Z_A$ is Stein by \cite[2.10, 3.4]{GW20}. 
\end{proof}

\subsection{The singular case and contractions of extremal rays}

In order to understand the birational geometry of canonical extensions we have
to generalise the setup to mildly singular spaces.

Let $M$ be a normal $\Q$-factorial complex space with klt singularities, and let 
$\alpha \in N^1(M)$ be a $(1,1)$-class with local potentials (cf. \cite{HP16} for definitions). By \cite{GS21} we can associate to $\alpha$ a cohomology class in $H^1(M, \Omega_M)$ which we also denote by $\alpha$.
Since we have $H^1(M, \Omega_M) \simeq \mbox{Ext}^1(\sO_M, \Omega_M)$ the class $\alpha$
defines an extension 
$$
0 \rightarrow \Omega_M \rightarrow W_M^{\alpha} \rightarrow \sO_M \rightarrow 0
$$
that is locally splittable. Thus dualising yields a  locally splittable exact sequence 
\begin{equation} \label{extensionalpha}
0 \rightarrow \sO_M \rightarrow V_M^\alpha \rightarrow T_M \rightarrow 0.
\end{equation}
The surjection $V_M^\alpha \rightarrow T_M$
induces an inclusion $\PP(T_M) \subset \PP(V_M^\alpha)$. If $M$ is singular, the complex spaces $\PP(T_M)$ and $\PP(V_M^\alpha)$ might be reducible, so we refine the construction:

\begin{definition} Let $M$ be a normal complex space and $\sE$ a non-zero torsion free sheaf on $M$. We denote by $\PP'(\sE)$ the unique irreducible component of $\holom{\pi}{\PP(\sE)}{M}$ that dominates $M$.
\end{definition}

\begin{definition}
Let $M$ be a normal $\Q$-factorial complex space with klt singularities, and let 
$\alpha \in N^1(M)$ be a (1,1)-class. Set $Z_M^\alpha:=  \PP'(V_M^\alpha) \setminus \PP'(T_M)$. We call  
$$
\holom{\pi}{Z_M^\alpha}{M}
$$
the canonical extension associated to the $(1,1)$-class $\alpha$.
\end{definition}

Note that contrary to the Setup \ref{Setup1}, for technical reasons we do not assume that the class $\alpha$ is K\"ahler.
The goal of this section is to show the following result:

\begin{theorem} \label{theorem-contractions} 
Let $M$ be a normal $\Q$-factorial compact K\"ahler space with klt singularities.
Let $\holom{\varphi}{M}{N}$  be a bimeromorphic morphism contracting an irreducible divisor $E$ onto an analytic set $B$. Assume that $N$ is klt, the higher direct images $R^j \varphi_* \sO_M$ vanish and that $\rho(M/N)=1$.
Assume furthermore one of the following
\begin{itemize}
\item $B$ is not contained in $N_{\sing}$;
\item $\dim M \geq 3$ and $B$ is an isolated hypersurface singularity of $N$;  
\item $B$ is an irreducible component of $N_{\sing}$ and $N$ has quotient singularities in a general point of $B$.
\end{itemize}
Then for any K\"ahler class $\alpha$, the canonical extension $Z_\alpha$ is not Stein.
\end{theorem}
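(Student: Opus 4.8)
The plan is to argue by contradiction: assume $Z_M^\alpha$ is Stein and derive a contradiction with the existence of the divisorial contraction $\varphi: M \to N$. The key mechanism is that the canonical extension pulls back along $\varphi$ in a controlled way, and that the Stein property forces $\PP'(T_M)$ to be "ample enough" to prevent any positive-dimensional compact fibers on the complement — yet the structure of $V_M^\alpha$ near the contracted divisor $E$ produces exactly such compact subvarieties once we push down to $N$.

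First I would set up the comparison of canonical extensions on $M$ and $N$. The class $\alpha$ is a K\"ahler class on $M$; since $\rho(M/N) = 1$ and $R^j\varphi_*\sO_M = 0$, one has a natural identification (up to torsion) of $H^1(N,\Omega_N)$ with a subspace of $H^1(M,\Omega_M)$, and $\varphi^*$ of a K\"ahler class $\beta$ on $N$ is a class on $M$ differing from $\alpha$ by a multiple of $[E]$. I would show that $Z_N^\beta$ is (bimeromorphically, or even biholomorphically over the locus where $\varphi$ is an isomorphism) related to $Z_M^\alpha$: the surjection $V_M^\alpha \to T_M$ and its analogue on $N$ fit together because $\varphi_* T_M$ and $T_N$ agree outside $B$, and $R^j\varphi_*\sO_M = 0$ guarantees that extension classes match. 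The upshot should be a bimeromorphic map $Z_M^\alpha \dashrightarrow Z_N^\beta$ that is an isomorphism away from $\pi^{-1}(E)$ and $\pi^{-1}(B)$ respectively, together with control of the indeterminacy.

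Next I would invoke the classical holomorphic-convexity results alluded to in the paper's introduction (the references to Serre's example, Mok, and the Kosarew–Peternell circle of ideas): the point is that under the three alternative hypotheses on $B$ — $B$ not contained in $N_{\sing}$, or $B$ an isolated hypersurface singularity in dimension $\geq 3$, or $B$ a component of $N_{\sing}$ with quotient singularities generically — one knows that the relevant local complement $\PP'(V)\setminus \PP'(T)$ over a neighborhood of $B$ in $N$ fails to be holomorphically convex, because the fiber direction over $B$ contributes a compact positive-dimensional subvariety or, conversely, the contracted divisor $E$ in $M$ yields via $\pi_M^{-1}$ a compact subvariety sitting inside $Z_M^\alpha$. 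Concretely: since $\varphi$ contracts $E$ onto $B$ with $\dim E > \dim B$, a general fiber $F$ of $E \to B$ is a positive-dimensional compact variety; I would show that over $F$ the extension class $\alpha$ restricts in such a way that $\PP(V_M^\alpha)|_F \setminus \PP(T_M)|_F$ contains a compact positive-dimensional subvariety — using that $\alpha|_F$ is controlled by $[E]|_F$ which is $\varphi$-negative, so the restriction of the extension to $F$ becomes "split enough" to admit a section of $\PP(V)|_F$ disjoint from $\PP(T_M)|_F$, exactly as in Lemma \ref{lem:curves}. This contradicts \cite[Prop. 2.7]{GW20}-type vanishing, i.e. that a Stein $Z_M^\alpha$ contains no compact positive-dimensional subvarieties.

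The main obstacle I anticipate is the second and third bullet cases, where $B$ lies in the singular locus of $N$: there the identification of extension classes under $\varphi^*$ is delicate because $\Omega_N$ is only reflexive and the comparison $H^1(N,\Omega_N) \to H^1(M,\Omega_M)$ requires the hypotheses on the singularity type (hypersurface, or quotient) to guarantee that local splittings of \eqref{extensionalpha} descend, and that $\PP'(V_N^\beta)$ behaves well — this is precisely why the theorem splits into three cases rather than being stated uniformly. I would handle the isolated hypersurface case by a local computation on the link / a local cohomology argument (in the spirit of Proposition \ref{prop:alg-an}), and the quotient-singularity case by passing to a local étale/quasi-étale cover where $N$ becomes smooth and the torus-like structure of the canonical extension (as in the Remark after Proposition \ref{prop:semiample}, and \cite[Prop. 2.13]{GW20}) makes the failure of the Stein property transparent, then descending. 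Verifying that these local obstructions are genuinely incompatible with global holomorphic convexity — rather than being absorbed — will be the technical heart of the argument, and I expect it to consume most of the proof.
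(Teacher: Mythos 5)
Your setup is on target: you correctly identify that one should compare $Z_M^\alpha$ with the canonical extension $Z_N^\beta$ on $N$ defined by the class $\beta$ satisfying $\alpha - \lambda E = \varphi^*\beta$, and that $Z_M^\alpha \setminus \fibre{\pi}{E} \simeq Z_N^\beta \setminus \fibre{\psi}{B}$ inherits the Stein property (this is exactly Setup \ref{Setup2} and Lemma \ref{lemmaprepareopen} in the paper). You also correctly anticipate that the quotient-singularity case is handled by passing to a local quasi-\'etale cover on which $N$ becomes smooth.

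However, your core contradiction mechanism cannot work. You propose to show that over a fibre $F$ of $E \rightarrow B$ the extension becomes ``split enough'' to produce a compact positive-dimensional subvariety of $Z_M^\alpha$, contradicting Steinness. This is impossible: by \cite[Prop.2.7]{GW20} the complement $Z_M^\alpha$ \emph{never} contains compact subvarieties of positive dimension when $\alpha$ is K\"ahler, Stein or not --- precisely because the restriction of $\alpha$ to any positive-dimensional compact subvariety of $M$ is a nonzero extension class, which obstructs the existence of a section of the affine bundle over it. Lemma \ref{lem:curves}, which you invoke in support, proves the opposite of what you want: the nonvanishing of the pulled-back extension class is what forces compact curves to be absent from the complement. (The $\varphi$-negativity of $E|_F$ does not help: the image of $\alpha|_F$ in $H^1(F,\Omega_F)$ is the K\"ahler class of $F$, hence nonzero.) The actual obstruction in the paper is the failure of \emph{holomorphic convexity} for an entirely different reason: the removed set $\fibre{\psi}{B} \subset Z_N^\beta$ has codimension at least two --- checked directly in the first case since the fibration is smooth near a general point of $B$; via Lemma \ref{lem:tangent}, which computes the fibre dimension of $\PP(V_\beta)$ over an isolated hypersurface singularity, in the second; and after passing to the local smooth cover, where the affine-bundle structure extends and the preimage of $B$ has codimension two, in the third. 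Then \cite[V,Thm.4]{GR79}, respectively a Hartogs extension argument on the cover (Lemma \ref{lemmaextensionproperty}), shows that no holomorphic function can be unbounded along a discrete sequence approaching the removed set, so the complement is not holomorphically convex. Without this codimension-two mechanism your argument has no route to a contradiction.
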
 

\begin{remark*}
The technical assumption that $N$ is klt, that the higher direct images $R^j \varphi_* \sO_M$ vanish and that $\rho(M/N)=1$ is of course satisfied if $\varphi$ is the contraction of an extremal ray in $\NEM$
such that $-K_M$ is $\varphi$-nef.
\end{remark*}

\begin{corollary} \label{cor:minimal}
Let $M$ be a smooth compact K\"ahler surface. Assume that $M$ contains a smooth rational curve $C \simeq \PP^1$ such that $C^2<0$.
Then for any K\"ahler class $\alpha$, the canonical extension $Z_M$ is not Stein.
\end{corollary}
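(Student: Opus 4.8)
We contract $C$ and apply Theorem \ref{theorem-contractions}. Since $C \simeq \PP^1$ is a smooth rational curve with $C^2 < 0$ on the smooth compact complex surface $M$, Grauert's contractibility criterion yields a bimeromorphic morphism $\holom{\varphi}{M}{N}$ onto a normal compact surface $N$ that contracts $C$ to a point $B$ and restricts to an isomorphism over $N \setminus \{B\}$. I would then recall the classical local picture of such a contraction: if $C^2 = -1$ the map $\varphi$ is the blow-down of a $(-1)$-curve, so $N$ is smooth; if $n := -C^2 \geq 2$, then $N$ has at $B$ a cyclic quotient singularity (of type $\frac1n(1,1)$), whose minimal resolution has exceptional divisor the single $(-n)$-curve $C$. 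In either case $N$ is klt, and since a smooth rational curve of negative self-intersection contracts to a rational surface singularity, one has $R^j \varphi_* \sO_M = 0$ for all $j \geq 1$. Finally $\rho(M/N) = 1$, the $\varphi$-exceptional locus being the single irreducible curve $C$. Thus all the standing hypotheses of Theorem \ref{theorem-contractions} (with $E = C$) are satisfied.

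It remains to verify that one of the three alternatives listed in Theorem \ref{theorem-contractions} holds, which I would do by distinguishing two cases. If $C^2 = -1$, then $N$ is smooth, so $B \notin N_{\sing}$ and the first alternative applies. If $C^2 \leq -2$, then $N_{\sing} = \{B\}$, so $B$ is an irreducible component of $N_{\sing}$ and $N$ has a quotient singularity in its (only, hence general) point, so the third alternative applies. In both cases Theorem \ref{theorem-contractions} gives that $Z_M = Z_M^\alpha$ is not Stein, for every K\"ahler class $\alpha$ on $M$.

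Once Theorem \ref{theorem-contractions} is available, this argument is essentially a matter of bookkeeping; the only points that genuinely need to be checked are its hypotheses for the contraction $\varphi$ — that $N$ is klt, that the higher direct images $R^j\varphi_*\sO_M$ vanish, and that $B$ is either a smooth point or a quotient singularity — all of which are standard facts from the theory of normal surface singularities. I do not expect any real obstacle here.
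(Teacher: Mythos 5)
Your argument is correct and follows essentially the same route as the paper: contract $C$ by Grauert's criterion, check that the resulting point is a smooth point or a quotient singularity with $R^j\varphi_*\sO_M=0$, $N$ klt and $\rho(M/N)=1$, and invoke Theorem \ref{theorem-contractions}. The paper is slightly terser (it cites \cite[Ex.3.8]{Kol97} for klt-ness and \cite[Prop.4.18]{KM98} to conclude that the point is a quotient singularity, rather than identifying the cyclic type $\frac1n(1,1)$ directly), but the content is identical.
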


\begin{proof}
By Grauert's criterion the curve $C$ can be contracted onto a point by a bimeromorphic morphism $\holom{\varphi}{M}{N}$.
Since $C \simeq \PP^1$, one has $R^1 \varphi_* \sO_M=0$ and the complex surface $N$ is klt \cite[Ex.3.8]{Kol97}.
Hence $N$ has at most a quotient singularity in $\varphi(C)$ \cite[Prop.4.18]{KM98} and we may apply Theorem \ref{theorem-contractions}.
\end{proof}

\begin{setup} \label{Setup2}{\rm
In the situation of Theorem \ref{theorem-contractions}, fix a K\"ahler class $\alpha$ on $M$.
Then there exists a unique $\lambda \in \R$
such that $\alpha - \lambda E$ is $\varphi$-numerically trivial, so by \cite[Lemma 3.3]{HP16}
there exists a unique class $\beta \in N^1(N)$ such that $\alpha - \lambda E = \varphi^* \beta$. 

Denote by $\holom{\pi}{Z_M^\alpha}{M}$ the canonical extension associated to $\alpha$, and by 
$\holom{\psi}{Z_N^{\beta}}{N}$ the canonical extension associated to the class $\beta$.
}
\end{setup}

\begin{lemma} \label{lemmaprepareopen}
In the situation of Setup \ref{Setup2}, assume that $Z_M^\alpha$ is Stein. Then $Z_N^{\beta} \setminus \fibre{\psi}{B}$ is Stein.
\end{lemma}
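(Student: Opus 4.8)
The plan is to exhibit a biholomorphism $Z_N^\beta\setminus\psi^{-1}(B)\cong Z_M^\alpha\setminus\pi^{-1}(E)$ and then to note that the right-hand side, being the complement of a hypersurface in the Stein space $Z_M^\alpha$, is Stein.

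First I would record that, since $\varphi$ contracts the irreducible divisor $E$ and $\rho(M/N)=1$, the exceptional locus of $\varphi$ is exactly $E$, so that $\varphi$ restricts to an isomorphism $M\setminus E\to N\setminus B$ and $\varphi^{-1}(B)=E$. Next, by the defining property of $\beta$ in Setup \ref{Setup2} one has $\alpha-\lambda E=\varphi^{*}\beta$ in $N^1(M)$; restricting to $M\setminus E$, where the class of the divisor $E$ vanishes, yields $\alpha|_{M\setminus E}=(\varphi|_{M\setminus E})^{*}(\beta|_{N\setminus B})$. Transporting this identity through the comparison morphism $N^1(M)\to H^1(M,\Omega_M)$ of \cite{GS21} (and its analogue on $N$), and using functoriality of pullback and of restriction for extension classes, the extension \eqref{extensionalpha} attached to $(M,\alpha)$ and the one attached to $(N,\beta)$ restrict to the same extension of $T_M|_{M\setminus E}\cong(\varphi|_{M\setminus E})^{*}T_N|_{N\setminus B}$ over $M\setminus E\cong N\setminus B$. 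Hence $V_M^{\alpha}|_{M\setminus E}\cong(\varphi|_{M\setminus E})^{*}(V_N^{\beta}|_{N\setminus B})$ compatibly with the surjections onto $T$, and projectivising — then passing to the components dominating the base, which are the closures of their restrictions over the regular locus where every sheaf involved is locally free — gives a biholomorphism $\PP'(V_M^{\alpha})|_{M\setminus E}\cong\PP'(V_N^{\beta})|_{N\setminus B}$ carrying $\PP'(T_M)|_{M\setminus E}$ onto $\PP'(T_N)|_{N\setminus B}$. Restricting to complements then gives $Z_M^{\alpha}\setminus\pi^{-1}(E)\cong Z_N^{\beta}\setminus\psi^{-1}(B)$.

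It will then remain to see that $Z_M^{\alpha}\setminus\pi^{-1}(E)$ is Stein. Since $M$ is $\Q$-factorial, some multiple $mE$ is Cartier, so $\pi^{-1}(E)$ is the support of the effective Cartier divisor $\pi^{*}(mE)$ on $Z_M^{\alpha}$ and is therefore, locally around each of its points, the zero locus of a single holomorphic function. Consequently $Z_M^{\alpha}\setminus\pi^{-1}(E)$ is locally Stein in the Stein space $Z_M^{\alpha}$ — near a boundary point it is the non-vanishing locus of a holomorphic function on a small Stein neighbourhood — and hence Stein by the Docquier--Grauert theorem \cite{GR79}. Combined with the biholomorphism above this gives the assertion.

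The step I expect to require the most care is the functoriality bookkeeping in the second paragraph: one must check that forming the $H^1(-,\Omega)$-class from a $(1,1)$-class, building the extension \eqref{extensionalpha}, and passing to the dominating component $\PP'(-)$ all commute with restriction to $M\setminus E$ and with the isomorphism $\varphi\colon M\setminus E\xrightarrow{\ \sim\ }N\setminus B$. This should be formal once one observes that over the smooth locus $M_{\mathrm{reg}}\setminus E$ everything in sight is an honest vector bundle and the dominating components are already determined there; the other steps are soft.
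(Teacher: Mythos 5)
This is exactly the paper's argument: the relation $\alpha-\lambda E=\varphi^{*}\beta$ together with the triviality of the class of $E$ on $M\setminus E$ gives $Z_M^{\alpha}\setminus\pi^{-1}(E)\simeq Z_N^{\beta}\setminus\psi^{-1}(B)$, and Steinness follows because one removes the support of the Cartier divisor $\pi^{*}(mE)$ from the Stein space $Z_M^{\alpha}$. One correction to your final step: since $M$, and hence $Z_M^{\alpha}$, may be singular, you should not invoke Docquier--Grauert (the locally-Stein criterion is established for Stein \emph{manifolds}, and the Levi problem for general singular Stein spaces is not known), but rather cite directly the fact that the complement of a locally principal analytic hypersurface in a Stein space is Stein, i.e.\ \cite[V, \S 1, Thm.5]{GR79} --- whose hypothesis is precisely what you have already verified.
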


\begin{proof}
Since $M \setminus E \simeq N \setminus B$ and the restriction of the class of $E$ to $M \setminus E$ is trivial, it follows from the construction of $\beta$ that
$$
Z_M^\alpha\setminus \fibre{\pi}{E} \simeq Z_N^{\beta} \setminus \fibre{\psi}{B}.
$$
Since $E$ is a prime divisor and $M$ is $\Q$-factorial, there exists a $m \in \N$
such that $mE$ is Cartier. In particular $\pi^* mE$ is Cartier, hence by
\cite[V, \S 1, Thm.5]{GR79} the complex space 
$Z_M^\alpha\setminus \fibre{\pi}{E}$
is Stein. 
\end{proof}

We start by proving a rather technical lemma which will be useful for the proof of Theorem \ref{theorem-contractions}.

\begin{lemma} \label{lemmaextensionproperty}
Let $N$ be a normal $\Q$-factorial K\"ahler space with klt singularities.
Assume that there exists an irreducible component $B \subset N_{\sing}$ such that $N$ has quotient singularities at a general point of $B$.
Let 
$$
(*) \qquad 0 \rightarrow \sO_N \rightarrow V \rightarrow Q \rightarrow 0
$$
be an extension of torsion-free coherent sheaves that are locally free in $N_{\nons}$. 

We say that $(*)$ has the extension property near $B$
if the following holds: let $y \in B$ be a general point, and let $N' \subset N$ be an analytic neighbourhood of $y$ such that there exists a quasi-\'etale cover $\holom{p}{\tilde N}{N'}$ such that $\tilde N$ is smooth.
Denote by $R \subset \tilde N$ the preimage of the singular locus $B \cap N'$. Then the exact sequence
$$
0 \rightarrow \sO_{\tilde N \setminus R} \rightarrow p^* (V \otimes \sO_{N' \setminus B})
\rightarrow p^* (Q \otimes \sO_{N' \setminus B}) \rightarrow 0
$$
extends to an exact sequence of vector bundles
$$
0 \rightarrow \sO_{\tilde N} \rightarrow V_{\tilde N} \rightarrow Q_{\tilde N} \rightarrow 0.
$$
Assume now that the exact sequence $(*)$ has the extension property near $B$, and
consider the complex space $$Z: = \PP'(V) \setminus \PP'(Q).$$
Then $Z \setminus \fibre{\psi}{B}$ is not holomorphically convex, where $\psi: Z \rightarrow N$ is the natural map.
\end{lemma}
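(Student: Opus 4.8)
The plan is to argue by contradiction: assume $Z \setminus \fibre{\psi}{B}$ is holomorphically convex and derive a contradiction using Serre's example as the local model near $B$. First I would pass to the étale-local picture provided by the hypothesis. Pick a general point $y \in B$, shrink $N$ to a small analytic neighbourhood $N'$ of $y$, and take the quasi-étale cover $\holom{p}{\tilde N}{N'}$ with $\tilde N$ smooth; since $p$ is finite and étale over $N' \setminus B$, the induced map on the relevant complements is again finite, hence holomorphic convexity is inherited under pullback (a proper surjective map onto a holomorphically convex space has holomorphically convex source only in limited situations — so here I would instead note that the pullback of a non-holomorphically-convex space along a finite surjection is not holomorphically convex, or rather argue directly on $\tilde N$ where the extension property gives a clean geometric model). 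The extension property tells us precisely that over $\tilde N$ the sequence $(*)$, restricted to the complement of $R$, extends to a genuine short exact sequence of vector bundles $0 \to \sO_{\tilde N} \to V_{\tilde N} \to Q_{\tilde N} \to 0$ on all of $\tilde N$. Then $\PP(V_{\tilde N}) \setminus \PP(Q_{\tilde N})$ is, over the smooth base $\tilde N$, exactly a canonical-extension-type complement, and crucially it restricts over $R$ to the same picture, so the open set we care about is $\big(\PP(V_{\tilde N}) \setminus \PP(Q_{\tilde N})\big) \setminus \fibre{}{R}$, which is the total space of an affine bundle (a torsor under $\sHom(Q_{\tilde N}, \sO_{\tilde N})$, i.e. a $\bC$-bundle twisted by the extension class) over $\PP(Q_{\tilde N})|_{\tilde N \setminus R}$.

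The second step is to locate inside this model a copy of Serre's non-holomorphically-convex surface, as in Example \ref{example2}. Restricting everything to a general complete intersection curve $C \subset \tilde N$ meeting $R$ (or more precisely, restricting to a suitable one-dimensional slice through $R$), the extension $(*)$ pulls back to a non-split extension $0 \to \sO_C \to V_C \to Q_C \to 0$ on $C$ — non-split because the extension property was set up exactly so that the class does not vanish along $R$. If $\rk Q = 1$ this is literally Serre's situation: $\PP(V_C) \setminus \PP(Q_C)$ is an affine $\bC$-bundle over $\PP(Q_C) \setminus (\text{the zero section corresponding to the sub-line-bundle } \sO_C)$, and removing the fibre over a point of $C \cap R$ produces a complement which is not holomorphically convex — here I would invoke the same mechanism as in Lemma \ref{lem:curves}/Example \ref{example2}, namely that the extension being non-split forbids disjoint sections and forces the complement to carry the geometry of $\bC^* \times \bC^*$ (or an affine bundle over it), which is Stein but whose further puncturing along a fibre kills holomorphic convexity. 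For higher rank $Q$ one reduces to the rank-one case by further intersecting with general hypersurfaces in the fibre direction, or equivalently by choosing a general quotient line bundle of $Q$.

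The main obstacle, and the step that requires the most care, is the bookkeeping of holomorphic convexity under the two reductions: (i) passing from $N'$ to the quasi-étale cover $\tilde N$, and (ii) slicing down to a curve. For (i), holomorphic convexity of the original space would have to be shown to descend to (or be incompatible with) the cover; since $p$ is finite, $\fibre{\psi}{B}$ is proper, and the complement maps finitely, I expect the cleanest route is to observe that a finite surjective image of a holomorphically convex space is holomorphically convex, so if the cover's complement fails to be holomorphically convex we still need the converse direction — hence I would instead argue that the preimage under a finite map of the bad locus inherits the obstruction, using that unbounded holomorphic functions on the base pull back to unbounded ones but not conversely, so one really works upstairs from the start and never needs to push convexity down. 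For (ii), restricting to a general curve $C$: a holomorphically convex space restricts to a holomorphically convex space on a closed analytic subset, so if $Z \setminus \fibre{\psi}{B}$ were holomorphically convex, so would be its intersection with $\PP(V)|_C$; then Serre's example on $C$ gives the contradiction. The delicate point is ensuring $C$ can be chosen through a general point of $R$ so that the extension class genuinely survives the restriction — this is exactly what the hypothesis "extension property near $B$" is engineered to guarantee, so once that is unpacked the argument closes.
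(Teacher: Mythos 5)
Your first reduction agrees with the paper: pass to the quasi-\'etale cover $\holom{p}{\tilde N}{N'}$, use the extension property to get a genuine short exact sequence of vector bundles on $\tilde N$, and view $Z_{\tilde N}:=\PP(V_{\tilde N})\setminus\PP(Q_{\tilde N})$ as an affine bundle over the smooth $\tilde N$. After that, however, your argument goes in a direction that does not work, and it misses the actual engine of the proof.

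The paper's proof is a direct Hartogs argument. Assuming $Z\setminus\fibre{\psi}{B}$ holomorphically convex, one picks a discrete sequence $(z_n)$ in $Z_{\tilde N}\setminus\fibre{\tilde\psi}{R}$ converging to a point $z_\infty\in\fibre{\tilde\psi}{R}$; its image under the finite map $\tau$ is discrete in $Z\setminus\fibre{\psi}{B}$, so holomorphic convexity provides $f$ with $|f(\tau(z_n))|\to\infty$. Pulling back, $\tau^*f$ is unbounded near $z_\infty$; but since $R$ has codimension at least two in $\tilde N$ and the affine bundle is equidimensional, $\fibre{\tilde\psi}{R}$ has codimension at least two in $Z_{\tilde N}$, so $\tau^*f$ extends across it by Hartogs and must be bounded near $z_\infty$ --- contradiction. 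Note that convexity is never transported between the cover and the base; only a single function is pulled back, which resolves the direction-of-the-finite-map issue you struggle with in your step (i).

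Your step (ii) has two concrete defects. First, the hypothesis ``extension property near $B$'' says nothing about non-splitness of the extension class along $R$ or along curves through $R$ --- it is purely an extendability statement --- so the claim that the restricted extension on a curve $C$ is non-split is unsupported (indeed, in the applications the extension can be locally split near $B$). Second, even granting a Serre-type picture, Serre's example $\bC^*\times\bC^*$ is Stein, hence holomorphically convex, so exhibiting it as a closed subspace yields no contradiction; and slicing down to a curve $C$ turns the removed locus $\fibre{\tilde\psi}{C\cap R}$ into a divisor in the restricted space, which destroys exactly the codimension-$\geq 2$ condition that makes removal of $\fibre{\tilde\psi}{R}$ detectable. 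The whole point of the lemma is that one must \emph{not} cut down the dimension: the obstruction lives in the fact that $\fibre{\tilde\psi}{R}$ is small in $Z_{\tilde N}$.
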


\begin{proof} We argue by contradiction and assume that $Z \setminus \fibre{\psi}{B}$
is holomorphically convex.
Fix a general point $y \in B$. Since $(*)$ has the extension property near $B$, we can consider
the quasi-\'etale cover $\holom{p}{\tilde N}{N'}$ and
the exact sequence of vector bundles
$$
0 \rightarrow \sO_{\tilde N} \rightarrow V_{\tilde N} \rightarrow Q_{\tilde N} \rightarrow 0.
$$
appearing in the definition.  The difference $Z_{\tilde N} := \PP(V_{\tilde N}) \setminus \PP(Q_{\tilde N})$ is an affine bundle $\holom{\tilde \psi}{Z_{\tilde N}}{\tilde N}$.
Choose a discrete sequence $(z_n)_{n \in \N}$ of points in $Z_{\tilde N} \setminus \fibre{\tilde \psi}{R}$
converging to a point $z_\infty \in \fibre{\tilde \psi}{R}$.
Since $p$ is \'etale in the complement of $B \cap N'$, we have a natural map
$$
\holom{\tau}{Z_{\tilde N} \setminus \fibre{\tilde \psi}{R}}{Z \setminus \fibre{\psi}{B}}
$$
that is finite onto its image.
The sequence $(\tau(z_n))_{n \in \N}$ is discrete in $Z \setminus \fibre{\psi}{B}$, so 
by \cite[IV, \S 2, Thm.12]{GR79} there exists a holomorphic function $f$ on $Z \setminus \fibre{\psi}{B}$ such that 
$\lim_{n \in \N}\vert f(z_n) \vert =\infty$. Thus $\tau^* f$ is a holomorphic function on $Z_{\tilde N} \setminus \fibre{\tilde \psi}{R}$ 
that is unbounded near $z_\infty$. 
Yet, since $Z_{\tilde N} \rightarrow \tilde N$ is equidimensional and $R \subset \tilde N$ has codimension at least two, the preimage $\fibre{\tilde \psi}{R} \subset Z_{\tilde N}$ has codimension at least two. Thus $\tau^* f$ extends to $Z_{\tilde N}$ by Hartog's theorem, in particular it is bounded near $z_\infty$, a contradiction.
\end{proof}

\begin{proof}[Proof of Theorem \ref{theorem-contractions}]
By Lemma \ref{lemmaprepareopen} it is sufficient to show that $Z_\beta \setminus \fibre{\psi}{B}$ is not Stein. 

If $N$ is smooth in the general point of $B$, the fibration $Z_\beta \rightarrow N$ is smooth in an analytic neighbourhood of a general point $y \in B$.
Since $B \subset N$ has codimension at least two, its preimage $\fibre{\psi}{B}$ has codimension at least two in any point mapping onto $y$.
By \cite[V,Thm.4]{GR79} this implies that $Z_\beta \setminus \fibre{\psi}{B}$ is not Stein. 

If $E$ is contracted onto an isolated hypersurface singularity, observe first that since the
extension defined by $\beta$ is locally splittable, the fibre of $\fibre{\psi}{B}$ is not empty.
Indeed locally near the point $B \in N$ we have $V_{\beta} \simeq \sO_N \oplus T_N$, so the inclusion
$\PP'(T_N) \subset \PP'(V_{\beta})$ is strict over $B$. The fibres of $\PP(V_{\beta}) \rightarrow N$
are linear projective spaces \cite[\S 2]{AT82}, and by Lemma \ref{lem:tangent} below,
the fibre over the point $B$ has dimension $\dim M+1$. Thus $\fibre{\psi}{B}$
has dimension $\dim M+1$. Since $\dim M \geq 3$, we see that
$\fibre{\psi}{B} \subset Z_\beta$ has codimension at least two, so we conclude again by
\cite[V,Thm.4]{GR79}.

Finally in the third case, by Lemma \ref{lemmaextensionproperty} it is sufficient to show that
that the exact sequence 
$$
(*) \qquad 0 \rightarrow \sO_N \rightarrow V_{\beta} \rightarrow T_N \rightarrow 0
$$
has the extension property. For a general point $y \in B$, consider a quasi-\'etale map
as in Lemma \ref{lemmaextensionproperty} and note that we can choose $N' \simeq \tilde N/G$
where $G$ is a finite group acting on $\tilde N$.
Since $H^1(N,\Omega_N) \simeq H^1(\tilde N,\Omega_{\tilde N})^G$, the extension class $\beta|_N$ corresponds to a class $\tilde \beta \in H^1(\tilde N,\Omega_{\tilde N})^G$. Then
$\tilde \beta$ defines an exact sequence of vector bundles
$$
0 \rightarrow \sO_{\tilde N} \rightarrow V_{\tilde \beta} \rightarrow T_{\tilde N} \rightarrow 0.
$$
Since $p|_{\tilde N \setminus R}$ is \'etale, this exact sequence extends the pull-back of $(*)$.
\end{proof}

\begin{lemma} 
\label{lem:tangent} 
Let $ 0 \in N \subset \mathbb C^{n+1 }  $be a local isolated hypersurface singularity. Then
$$  \sT_{N,0} / m_0 T_{N,0}  \simeq m_0/m_0^2 \simeq \mathbb C^{n+1},$$
where $m_0 \subset \sO_{N,0}$ is the maximal ideal. 
\end{lemma}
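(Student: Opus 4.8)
The statement packages together an abstract dimension count, $m_0/m_0^2\simeq\mathbb{C}^{n+1}$, and the identification of the fibre of the tangent sheaf at the singular point with this cotangent space. The plan is to derive both from the fact that $0$ is a \emph{singular} point of a hypersurface. Write $\mathcal{O}_{N,0}=R/(f)$, where $R=\mathbb{C}\{z_0,\dots,z_n\}$ is the local ring of $\mathbb{C}^{n+1}$ at $0$ and $f$ is a defining equation. The hypothesis that $N$ is singular at $0$ means precisely that all first partials $\partial_i f$ vanish at $0$, i.e.\ $f\in\mathfrak m_R^2$. Hence $m_0/m_0^2=\mathfrak m_R/(\mathfrak m_R^2+(f))=\mathfrak m_R/\mathfrak m_R^2\cong\mathbb{C}^{n+1}$; equivalently, the embedding dimension of $\mathcal{O}_{N,0}$ equals $n+1$. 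This is the second isomorphism.

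For the first isomorphism I would start from the conormal presentation of the sheaf of differentials, $\mathcal{O}_{N,0}\xrightarrow{\,df\,}\mathcal{O}_{N,0}^{\,n+1}\to\Omega_{N,0}\to 0$, where $df$ sends $1$ to the column $(\partial_0 f,\dots,\partial_n f)$. By the previous paragraph this column has all entries in $m_0$, so $df$ vanishes modulo $m_0$; thus $\Omega_{N,0}\otimes_{\mathcal{O}_{N,0}}k\cong k^{\,n+1}$, the standard identification with $m_0/m_0^2$. Dualising the presentation yields $0\to\sT_{N,0}\to\mathcal{O}_{N,0}^{\,n+1}\xrightarrow{\,\nabla f\,}\mathcal{O}_{N,0}$, exhibiting $\sT_{N,0}$ as the syzygies of the partials; since the singularity is isolated, the image of $\nabla f$ is the $m_0$-primary Jacobian ideal, which is again contained in $m_0$, so $\nabla f$ too vanishes modulo $m_0$. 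The plan is then to conclude $\sT_{N,0}\otimes_{\mathcal{O}_{N,0}}k\cong\mathrm{Hom}_k\bigl(\Omega_{N,0}\otimes_{\mathcal{O}_{N,0}}k,\,k\bigr)\cong\mathrm{Hom}_k\bigl(m_0/m_0^2,k\bigr)$, which has dimension $n+1$, and to identify this (abstractly, by equality of dimensions) with $m_0/m_0^2$. In the application to Theorem~\ref{theorem-contractions} this says that the fibre of $\PP(\sO_N\oplus\sT_N)$ over $B$ is a projective space of dimension $\dim M+1$, which is exactly what is needed there.

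The main obstacle is the step just indicated: formation of fibres does not commute with dualisation for a sheaf that is not locally free, so the isomorphism $\sT_{N,0}\otimes k\cong(\Omega_{N,0}\otimes k)^{\vee}$ is not automatic and must be justified. I would handle it either by producing $n+1$ elements of $\sT_{N,0}$ that are linearly independent modulo $m_0\sT_{N,0}$ and showing there are no further minimal generators — a computation that for a hypersurface ring is controlled by the eventually two-periodic Tate resolution of the residue field over $\mathcal{O}_{N,0}$ — or, if one only wants the conclusion used in the proof of Theorem~\ref{theorem-contractions}, by instead computing the fibre of $\PP'(\sO_N\oplus\sT_N)$ over $B$ directly and bounding its dimension, so that $\fibre{\psi}{B}$ has codimension at least two in $Z_\beta$. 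I expect this homological bookkeeping around the Jacobian ideal and the dual presentation to be where essentially all of the work lies; the two observations "$df\equiv 0$" and "$\nabla f\equiv 0$ modulo $m_0$" are immediate once one knows $f\in\mathfrak m_R^2$.
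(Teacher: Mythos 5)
Your reduction of the second isomorphism to $f\in\mathfrak{m}_R^2$ is fine, and your identification of $\sT_{N,0}$ with the syzygy module of the partial derivatives is essentially the same starting point as the paper's, which dualises the conormal sequence to get $0\to\sT_N\to(\sT_{\mathbb C^{n+1}})|_N\to\sJ\to 0$ and then, tensoring with the residue field $k$, identifies $\sT_{N,0}/m_0\sT_{N,0}$ with ${\rm Tor}^1(\sJ,\sO_N/\sJ)$. But your proposal stops exactly where the work begins: the claim that $\sT_{N,0}$ has precisely $n+1$ minimal generators is deferred to a future computation, so as written this is a plan rather than a proof. Worse, the first strategy you propose (counting minimal generators via the two-periodic resolution over the hypersurface ring) would, if carried out, show that the count is \emph{not} $n+1$ in general. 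Take the ordinary double point $f=z_0^2+\dots+z_n^2$ with $n\ge 2$: here the Jacobian ideal equals $m_0$, so $\sT_{N,0}=\ker(\sO_{N,0}^{\,n+1}\to m_0)$ is the second syzygy module of $k$, whose minimal number of generators is $\dim_k{\rm Tor}_2^{\sO_{N,0}}(k,k)=\binom{n+1}{2}+1$. Concretely, the Euler field $(z_0,\dots,z_n)$ and the $\binom{n+1}{2}$ rotations $z_ie_j-z_je_i$ all lie in $\sT_{N,0}$, and since $\sT_{N,0}\subset m_0\sO_{N,0}^{\,n+1}$ (so $m_0\sT_{N,0}\subset m_0^2\sO_{N,0}^{\,n+1}$) their classes in $\sT_{N,0}/m_0\sT_{N,0}$ inject into $(m_0/m_0^2)^{\oplus(n+1)}$, where they are visibly linearly independent. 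For $n=2$ this gives $4$, not $3$.

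So your instinct that ``formation of fibres does not commute with dualisation'' is the dangerous step is exactly right, and it is also where the paper's own argument is most delicate: after reducing to ${\rm Tor}^1(\sJ,\sO_N/\sJ)$, the paper evaluates this group by restricting the ideal-sheaf sequence $0\to\sJ\to\sO_N\to\sO_N/\sJ\to 0$ to the point, but that computation yields ${\rm Tor}^1(\sO_N/\sJ,\sO_N/\sJ)\simeq\sJ/\sJ^2$, whereas the group actually needed is ${\rm Tor}^1(\sJ,\sO_N/\sJ)\simeq{\rm Tor}^2(\sO_N/\sJ,\sO_N/\sJ)$, which is strictly larger and reproduces the count above. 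You should therefore not try to complete the proposal along your first route. The alternative you sketch at the end --- bounding directly the fibre over $B$ of the distinguished component $\PP'(\sO_N\oplus\sT_N)$ dominating $N$, which is a linear subspace of $\PP\bigl((\sO_N\oplus\sT_N)\otimes k\bigr)$ but need not be all of it --- is the one worth pursuing, since a codimension-two bound on $\fibre{\psi}{B}$ is all that the application to Theorem \ref{theorem-contractions} actually uses; but that requires an argument not contained in either your proposal or the stated lemma.
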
 

\begin{proof}
 Dualizing the cotangent sequence
$$ 0 \to \sO_N \simeq N^*_{N/\mathbb C^{n+1}} \to (\Omega_{\mathbb C^{n+1}})_{\vert N} \to \Omega_ N \to 0 $$
yields
$$ 0 \to \sT_N \to (\sT_{\mathbb C^{n+1}})_{\vert N} \to \sJ \to 0,$$
where $\sJ $ is the ideal sheaf of $\{0\} $ in $N$, so that $\sJ_{0} = m_0$. 
Tensoring with $\sO_N/\sJ$, i.e., restricting to $\{0\}$, gives an exact sequence
$$ {\rm Tor}^1((\sT_{\mathbb C^{n+1}})_{\vert N}, \sO_N/\sJ) \to {\rm Tor}^1(\sJ, \sO_N/\sJ) \to \sT_N/\sJ \cdot \sT_N \to (\sT_{\mathbb C^{n+1}})_{\vert N} {\buildrel{\mu} \over { \to}} \sJ /\sJ^2 \to 0. $$ 
The first Tor group vanishes, since $\sT_{\mathbb C^{n+1}}$ is locally free. 
Since $\mu$ is an isomorphism, it follows
$$  {\rm Tor}^1(\sJ, \sO_N/\sJ) \simeq \sT_N/\sJ \cdot \sT_N.$$ 
In order to compute this Tor group, we restrict - in the same way as before -  the ideal sheaf sequence for $\sJ \subset \sO_N$ to the point $0$ to obtain an isomorphism
$$  {\rm Tor}^1(\sJ, \sO_N/\sJ) \simeq \sJ/\sJ^2,$$
which gives our claim.
\end{proof}

\begin{proof}[Proof of Corollary \ref{corollary-Mori-low-dimension} ] Arguing by contradiction we assume that $M$ supports a birational  Mori contraction $\varphi: M \to N$.
Since Mori's arguments \cite{Mor82} are local near the exceptional locus, we can apply the results in the K\"ahler case: the contraction is divisorial, and if $N$ is not smooth the divisor $E$ is contracted onto a point. Moreover, in the latter case the point $\varphi(E)$ is a quotient or hypersurface singularity.  In all of these cases Theorem \ref{theorem-contractions} 
gives a contradiction.

Since $M$ admits no birational contraction, it is a minimal model or Mori fibre space (\cite{Mor82} or 
\cite{HP16, HoPe2} in the K\"ahler case).
\end{proof} 

\begin{proof}[Proof of Proposition \ref{proposition-fourfold-flip}]
Arguing by contradiction we assume that $M$ admits a contraction of a $K_M$-negative extremal ray $\holom{\varphi}{M}{N}$ that is small. Then by \cite[Thm.1.1]{Kaw89}\footnote{The statement is for projective fourfolds, but the proof is local around the image of the exceptional locus, so it also works in the K\"ahler case.} we know that the exceptional locus is a disjoint union of
two-dimensional projective spaces $\PP^2$ such that $N_{\PP^2/M} \simeq \sO_{\PP^2}(-1)^{\oplus 2}$. Thus the flip can be constructed as follows: denote by $\holom{\psi}{\Gamma}{M}$ the blowup of the exceptional locus. Then each exceptional divisor is isomorphic to $\PP^2 \times \PP^1$ and one can contract them onto $\PP^1$ by a bimeromophic morphism
$\holom{\psi^+}{\Gamma}{M^+}$. In particular $M^+$ is also smooth, and we denote by
$\holom{\varphi^+}{M^+}{N}$ the $K_{M^+}$-positive contraction. 

Let now $\alpha$ be a K\"ahler class on $M$ such that $\holom{\pi}{Z_M^\alpha}{M}$ is Stein. We set
$\alpha^+:=(\psi^+)_* \psi^* \alpha \in H^1(M^+, \Omega_{M^+})$ and denote by
$\holom{\pi^+}{Z_{M^+}^{\alpha^+}}{M^+}$ the extension defined by this class. Note that, as in Setup
\ref{Setup2}, we do not assume that $\alpha^+$ is a K\"ahler class, we will only use that $Z_{M^+}^{\alpha^+}$ is an affine bundle defined by the $(1,1)$-class $\alpha^+$.
Since the restriction of the classes $\alpha$ and $\alpha^+$ to
$$
M \setminus \mbox{Exc}(\varphi) \simeq M^+ \setminus \mbox{Exc}(\varphi^+) 
$$
coincide, we have a natural isomorphism of affine bundles
$$
\tau_0 : Z_{M^+}^{\alpha^+} \setminus \fibre{(\pi^+)}{\mbox{Exc}(\varphi^+)} \rightarrow 
Z_{M}^{\alpha} \setminus \fibre{\pi}{\mbox{Exc}(\varphi)}. 
$$
Since  $\fibre{(\pi^+)}{\mbox{Exc}(\varphi^+)} \subset Z_{M^+}^{\alpha^+}$ has codimension more than two and $Z_{M}^{\alpha}$ is Stein, we know by \cite[Thm.2]{AS60} that $\tau_0$ extends
to a holomorphic map
$$
\tau : Z_{M^+}^{\alpha^+} \rightarrow 
Z_{M}^{\alpha}. 
$$
Yet such a map can't exist: fix $u \in \mbox{Exc}(\varphi^+)$ a point, and let $U \subset M^+$
be a small analytic neighbourhood such that the affine bundle $\holom{\pi^+}{Z_{M^+}^{\alpha^+}}{M^+}$ admits a section $s: U \rightarrow Z_{M^+}^{\alpha^+}$. Since $\tau_0$ is an isomorphism of affine bundles, we see that the restriction of
$$
\pi \circ \tau \circ s : U \rightarrow M
$$
to $U \setminus \mbox{Exc}(\varphi^+)$ is the identity. Thus $\pi \circ \tau \circ s$ extends the identity over the exceptional locus, hence the natural map
$M^+ \setminus \mbox{Exc}(\varphi^+) \rightarrow M \setminus \mbox{Exc}(\varphi)$
extends to a holomorphic map $M^+ \rightarrow M$, a contradiction.  
\end{proof}

\subsection{Proof of Theorem \ref{theorem-not-uniruled}}

We start by proving (a slightly more precise version of) Proposition \ref{proposition-gen-ample}: 

\begin{proposition}  \label{prop:gen-ample}
Let $M$ be a projective manifold of dimension $n$ such that $T_M$ is pseudoeffective. 
If $M$ is not uniruled, there exists exists a decomposition $$T_M \simeq \sF \oplus \sG,$$ where
$\sF \neq 0$ and $\sG$ are integrable subbundles such that $c_1(\sF)=0$.
In particular, by \cite[Cor.11]{Pe11} the manifold $M$ is not of general type and $\Omega_M$ is not generically ample.

Moreover the decomposition can be chosen such that if $H$ is a polarisation on $M$, and 
$C$ a MR-general curve associated to $H$, then $\sG|_C$ is antiample.
\end{proposition}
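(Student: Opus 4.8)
The plan is to reduce the statement to the structure of the Harder--Narasimhan filtration of $T_M$ on a general complete intersection curve, and then to identify its ``flat part'' with the desired direct summand. Fix an ample class $H$ on $M$, and for $m\gg 0$ choose a complete intersection curve $C=H_1\cap\dots\cap H_{n-1}$ with $H_j\in|mH|$ general; then $C$ is an MR-curve, and a general such $C$ is also general with respect to the pseudoeffective class $\zeta_{\PP(T_M)}$ on $\PP(T_M)$ and with respect to the finitely many saturations used below. Since $M$ is not uniruled, $K_M$ is pseudoeffective by \cite{BDPP13}, and by Miyaoka's theorem (see \cite{Pe11}) the cotangent bundle $\Omega_M$ is generically nef; in particular $\mu_{\max}(T_M|_C)\le 0$.

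The first step is to show that $\mu_{\max}(T_M|_C)=0$. Let $\holom{\pi}{\PP(T_M)}{M}$ be the projection. Then $\pi^{-1}(C)=\PP(T_M|_C)$ is a general complete intersection of members of the base-point-free system $|\pi^*(mH)|$, so the restriction of the pseudoeffective class $\zeta_{\PP(T_M)}$ to $\PP(T_M|_C)$ is again pseudoeffective (restriction of a pseudoeffective class to a general member of a base-point-free system remains pseudoeffective). By Lemma \ref{lemmapseffample}, applied to the bundle $T_M|_C$ on the curve $C$, this forces $(T_M|_C)^*=\Omega_M|_C$ to be non-ample, that is $\mu_{\max}(T_M|_C)\ge 0$; hence $\mu_{\max}(T_M|_C)=0$. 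Therefore the maximal destabilising subsheaf $W_C\subset T_M|_C$ is semistable of degree $0$, hence nef, whereas $T_M|_C/W_C$ has all Harder--Narasimhan slopes strictly negative, so its dual is ample, i.e.\ $T_M|_C/W_C$ is antiample. By the Mehta--Ramanathan theorem, $W_C$ is the restriction to $C$ of the maximal destabilising subsheaf $\sF\subset T_M$ with respect to $H$; it is saturated, $\mu_H(\sF)=0$, $\sF|_C$ is nef for every MR-curve $C$, and $\sF\neq 0$.

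The second step is to upgrade $\mu_H(\sF)=0$ to $c_1(\sF)\equiv 0$. Since $\sF|_C$ is nef on every MR-curve, the class $c_1(\sF)$ is non-negative on the cone $\overline{\mathcal{CI}}(M)$; on the other hand $c_1(\sF)\cdot H^{n-1}=0$, and a positive multiple of $H^{n-1}$ is an MR-curve class lying in the interior of the full-dimensional cone $\overline{\mathcal{CI}}(M)$. A linear form that is non-negative on a full-dimensional cone and vanishes at an interior point vanishes on the whole cone, hence on $N_1(M)$; therefore $c_1(\sF)\equiv 0$.

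It remains to show that $\sF$ is a smooth integrable subbundle splitting off as a direct summand, with an involutive complement; this is the point I expect to be the main obstacle. Integrability of $\sF$ is formal: the $\sO_M$-linear bracket map $\wedge^2\sF\to T_M/\sF$ has source semistable of slope $2\mu_H(\sF)=0$ and target of negative maximal slope, hence is zero, so $\sF$ is a foliation, necessarily with $c_1(\sF)\equiv 0$. The delicate part is to deduce that $\sF$ is regular (locally free and locally a direct summand) and that the extension
$$ 0\to\sF\to T_M\to T_M/\sF\to 0 $$
is globally split, with complement $\sG$ again a subbundle and a foliation: here one must exploit that $T_M$ is pseudoeffective and $M$ is not uniruled, feeding $\sF$ --- a foliation with numerically trivial canonical class whose normal sheaf is antiample along MR-curves --- into the structure theory of foliations with $c_1\equiv 0$ in order to kill the singularities of $\sF$ and to split the sequence. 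Granting the splitting $T_M\simeq\sF\oplus\sG$, one has $\sF\neq 0$ by the second step, $\sG|_C\simeq T_M|_C/W_C$ is antiample for the general MR-curve $C$ (the ``moreover'' part), and since $T_M$ then has a direct summand $\sF\neq 0$ with $c_1(\sF)\equiv 0$, the bundle $\Omega_M$ is not generically ample and $M$ is not of general type, by \cite[Cor.\ 11]{Pe11}.
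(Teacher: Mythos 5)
Your overall strategy is the one the paper follows: restrict to an MR-curve, combine Miyaoka's generic semipositivity with Lemma \ref{lemmapseffample} to produce a slope-zero piece of the Harder--Narasimhan filtration of $T_M|_C$, descend it to a saturated subsheaf $\sF \subset T_M$ by Mehta--Ramanathan, show $c_1(\sF)=0$, and then appeal to the structure theory of foliations with trivial canonical class. Your $W_C$ is exactly the paper's $\sF_C$, namely the dual of the quotient of $\Omega_M|_C$ by its maximal ample subbundle. However, the step you yourself flag as the main obstacle --- regularity of $\sF$ and the splitting $T_M \simeq \sF \oplus \sG$ --- is precisely where the proof lives, and you leave it as a black box. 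The paper closes it with a single citation: since $K_M$ is pseudoeffective and $\sF$ is a foliation with $c_1(\sF)=0$, \cite[Thm.5.2]{LPT18} gives that $\sF$ is a subbundle and that the inclusion $\sF \subset T_M$ splits into integrable subbundles. So your instinct about where the difficulty sits is correct, but as written the proof is incomplete at its decisive point.

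There are two further gaps. First, in your second step you justify $c_1(\sF) \geq 0$ on $\overline{\mathcal {CI}}(M)$ by asserting that $\sF|_{C'}$ is nef on every MR-curve; you have only established this for MR-curves of the fixed polarisation $H$, and for another polarisation the maximal destabilising subsheaf of $T_M$ may be different, so the sign of $\deg(\sF|_{C'})$ is not controlled. The correct one-line argument (and the paper's) runs through the dual: $\sF^*$ is generically a torsion-free quotient of $\Omega_M$, so $-c_1(\sF) = c_1(\sF^*)$ is pseudoeffective by \cite{BDPP13} since $M$ is not uniruled, and a pseudoeffective class satisfying $c_1(\sF^*)\cdot H^{n-1}=0$ is numerically trivial. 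Second, the ``moreover'' clause quantifies over \emph{all} polarisations $H$, whereas your construction only yields antiampleness of $\sG|_C$ for the single polarisation fixed at the outset. The paper handles this by iteration: if $\sG|_{C'}$ fails to be antiample for the MR-curve of some other polarisation $H'$, then, using that $\sF|_{C'}$ is numerically trivial, one repeats the construction inside $\Omega_M|_{C'}$ to obtain a decomposition $T_M \simeq \sF' \oplus \sG'$ with $\rk \sF' > \rk \sF$; since the rank is bounded this process terminates and produces the decomposition asserted in the statement.
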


The proof is a rather straightforward extension of \cite{Pe11}, based on \cite{LPT18}:

\begin{proof} 
Fix a polarisation $H$ and a MR-general curve $C$ associated to $H$.
Since $T_M$ is pseudoeffective, the restriction $T_M \vert C$ is pseudoeffective.
Hence $\Omega_M \vert_C$ cannot be 
ample by Lemma \ref{lemmapseffample}. In particular $\Omega_M$ is not generically ample with respect to $H$. Now we follow the proof of \cite[Prop.2.]{Pe11}: let $$\sG_C^* \subset \Omega_M|_C$$ be the maximal ample subbundle, and set
$$\sF_C^*:= \Omega_M|_C/\sG_C^*.$$ 
Since $\mu(\sG_C^*)>0$
and $\mu(\sF_C^*)=0$, we see that $\sG_C^*$ appears in the Harder-Narasimhan filtration of $\Omega_M|_C$. 
Since $C$ is MR-general, by the theorem of Mehta-Ramanathan, there exists a saturated subsheaf  $\sG^* \subset \Omega_M$ such that  $\sG_C^* = \sG^*|_C$. We set $\sF^*:= \Omega_M/\sG^*$. By \cite{BDPP13} the determinant of $\sF^*$ is pseudoeffective. Yet $$c_1(\sF^*) \cdot H^{n-1} = \mu(\sF_C^*)=0,$$ so
$c_1(\sF^*)=0$. Thus $\sF \subset T_M$ is a foliation with $c_1(\sF)=0$.
Since $K_M$ is pseudoeffective, we know by \cite[Thm.5.2]{LPT18} that $\sF$ is integrable and the inclusion $\sF \subset T_M$ splits, inducing a decomposition into 
integrable subbundles $$T_M \simeq \sF \oplus \sG.$$

Finally if for some polarisation $H'$, the restriction $\sG|_{C'}$ is not ample, then we can repeat the argument above: since $\sF|_{C'}$ is numerically trivial, we obtain a 
maximal ample subbundle $$(\sG')_{C'}^* \subsetneq   \sG|_{C'}^*  \subset \Omega_M|_{C'}$$
leading to a decomposition $T_M \simeq \sF' \oplus \sG'$ with $\rk \sF < \rk \sF'$.
Since $\rk T_M$ is finite this process stops after finitely many steps.
\end{proof}

\begin{proposition}  \label{proposition-foliation-algebraic}
Let $M$ be a non-uniruled projective manifold of dimension $n$ such that $T_M$ is pseudoeffective. 
Let $T_M \simeq \sF \oplus \sG$ be the decomposition into integrable subbundles given by Proposition \ref{prop:gen-ample}. If the dual $\sF^*$ is not pseudoeffective, then $\sF$ is pseudoeffective. 
Moreover, up to replacing $M$ by a finite \'etale cover, one has
$$
M \simeq A \times B
$$
with $A$ an abelian variety of dimension $d>0$.
\end{proposition}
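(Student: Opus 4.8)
The plan is to use the structure theory of foliations with vanishing first Chern class on non-uniruled manifolds, together with the rigidity coming from the fact that $\sF$ is a \emph{direct summand} of $T_M$, which turns the complementary integrable subbundle $\sG$ into a flat Ehresmann connection. \emph{Step 1.} By Proposition \ref{prop:gen-ample} the sheaf $\sF\subset T_M$ is an integrable subbundle with $c_1(\sF)=0$, complemented by the integrable subbundle $\sG$, and $\sG|_C$ is antiample for an MR-general curve $C$. Since $M$ is not uniruled and $\sF^*$ is not pseudoeffective, I would invoke the foliation-theoretic results of \cite{HP19} (which build on \cite{LPT18} and \cite{PT13}): under these hypotheses $\sF$ is algebraically integrable, $\sF$ is pseudoeffective (which is already the first assertion, and can in any case be re-read at the end of the argument), and the closure $W$ of a general leaf is, after a finite étale cover, an abelian variety. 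Heuristically the non-pseudoeffectivity of $\sF^*$ is exactly what rules out both a transcendental part of $\sF$ — whose conormal-type sheaf would be pseudoeffective — and Calabi--Yau or hyperkähler factors of $W$, for which the relevant projectivised (co)tangent bundle would have pseudoeffective tautological class by \cite[Thm.1.6]{HP19}.

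\emph{Step 2.} The regular algebraically integrable foliation $\sF$ defines an almost holomorphic map whose fibres are the leaf closures, and parallel transport along the leaves of the complementary integrable subbundle $\sG$ identifies these leaf closures. I would use this to promote the map to an honest smooth \emph{isotrivial} fibre bundle $f\colon M\to B$ onto a normal projective base, with $\sF=T_{M/B}$ and $\sG\cong f^*T_B$; since a flat bundle over a rational curve is trivial, non-uniruledness of $M$ forces $B$ to be non-uniruled as well.

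\emph{Step 3.} After a finite étale cover I may assume the fibre $W$ is an abelian variety; then $\Omega_{M/B}$ is $f$-relatively trivial, so $\sF=f^*\sV^*$ with $\sV:=f_*\Omega_{M/B}$ a flat bundle on $B$. Isotriviality makes the associated polarised variation of Hodge structure constant, so its monodromy lies in the stabiliser of a point of Siegel upper half space; being discrete in a compact group it is finite, and after a further finite étale base change the linear monodromy becomes trivial, so that $f$ is a flat torsor under the constant abelian scheme $W\times B$. Such a torsor is pulled back from $\mathrm{Alb}(B)$, over which Poincar\'e complete reducibility splits it up to isogeny; pulling back the corresponding isogeny produces a finite étale cover of $M$ of the form $A\times B'$ with $A=W$ abelian of dimension $d=\rk\sF>0$, which is the second assertion. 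In particular $\sF$ then pulls back to $p_A^*T_A$, so its pseudoeffectivity follows as well.

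\emph{Main obstacle.} The real content sits in Step 1: one must genuinely exclude a transcendental part of $\sF$ and Calabi--Yau or hyperkähler factors of the general leaf, which is precisely where the hypothesis ``$\sF^*$ not pseudoeffective'' is indispensable, via \cite[Thm.1.6]{HP19}; one must also check that pseudoeffectivity on a single general leaf propagates to $\PP(\sF^*)$, which works only because of the isotriviality obtained in Step 2. A secondary subtlety is the finiteness of the linear monodromy in Step 3, which genuinely uses the projective — and not merely Kähler — hypothesis, the analogous statement failing for general torus bundles over tori.
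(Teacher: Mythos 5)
There is a genuine gap, and it sits exactly where you locate the ``real content'': your Step 1 asserts that the closure $W$ of a general leaf of $\sF$ is (up to finite \'etale cover) an abelian variety as a consequence of the non-pseudoeffectivity of $\sF^*$, but that hypothesis cannot deliver this conclusion. It yields only the algebraic integrability of $\sF$ (the paper cites \cite[Prop.8.4]{Dru18} for this, not \cite{HP19}); it does not exclude Calabi--Yau or hyperk\"ahler leaves. Indeed your stated reason is backwards: by \cite[Thm.1.6]{HP19} the tangent bundle of a simply connected manifold with trivial canonical class is \emph{not} pseudoeffective (for a K3 leaf $W$ one even has $\Omega_W\simeq T_W$ not pseudoeffective), so such a leaf is perfectly compatible with ``$\sF^*$ not pseudoeffective''. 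In the paper the abelian factor comes from a mechanism your argument never engages: one first invokes \cite[Thm.5.8]{LPT18} to upgrade the regular, algebraically integrable, split foliation with $c_1(\sF)=0$ to a genuine product $M\simeq N\times Z$ after finite \'etale cover, with $\sF=T_{M/Z}$; one then uses the hypothesis that $T_M$ is pseudoeffective (preserved under \'etale cover by \cite[Prop.4.4]{HP20}): arguing as in \cite[Thm.1.6]{HP19}, one of the summands $\sF$, $\sG$ is pseudoeffective, and $\sG$ is excluded because Proposition \ref{prop:gen-ample} makes it generically anti-ample. This gives the first assertion, hence $T_N\simeq\sF|_{N\times p}$ pseudoeffective, and only then does \cite[Thm.1.6]{HP19} produce an abelian \emph{factor} of the Beauville--Bogomolov decomposition of $N$ (not all of $N$: the Calabi--Yau part is absorbed into $B$, whereas your Steps 2--3 assume the whole fibre is abelian). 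Your proposal uses ``$T_M$ pseudoeffective'' nowhere in producing the abelian factor and only rederives the pseudoeffectivity of $\sF$ from that factor, so the logic is circular at the decisive point.

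A secondary problem: Steps 2 and 3 amount to an attempt to reprove \cite[Thm.5.8]{LPT18} by hand. Even granting an abelian fibre, the passage from ``flat torsor under a constant abelian scheme'' to ``product after finite \'etale cover'' is not justified by the appeal to $\mathrm{Alb}(B)$ and Poincar\'e reducibility: after killing the linear monodromy (the only part you address), the torsor is governed by a translation monodromy $\pi_1(B)\to W$, and showing that this can be killed by a finite cover is the hard step where projectivity actually enters; there is no reason for the torsor to be pulled back from the Albanese. The paper sidesteps all of this by quoting the structure theorem directly.
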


\begin{proof}
Since $\sF^*$ is not pseudoeffective we know by \cite[Prop.8.4]{Dru18} that the foliation $\sF$ has algebraic leaves. Thus by \cite[Thm.5.8]{LPT18} we know that, up to replacing $M$ by a finite \'etale cover, the decomposition $T_M \simeq \sF \oplus \sG$ is induced 
by a product structure $M \simeq N \times Z$ such that $\sF=T_{M/Z}$ and $\sG=T_{M/N}$.
The property of having pseudoeffective tangent bundle is preserved under \'etale cover \cite[Prop.4.4]{HP20}, so we now argue as in the proof of \cite[Thm.1.6]{HP19} that $\sF$ or $\sG$ is pseudoeffective. Since $\sG$ is generically anti-ample for a polarisation, it is not pseudoeffective. Thus $\sF$ is pseudoeffective, this shows the first statement.

For the second statement note that the restriction $\sF|_{N \times p} \simeq T_N$ is pseudoeffective and that $c_1(N) = 0$. By \cite[Thm.1.6]{HP19} this implies that the Beauville-Bogomolov decomposition of $N$ contains an abelian factor. Thus, up to replacing $N \times Z$ by an \'etale cover, we can assume that $N$ is an abelian variety.
\end{proof}

\begin{proof}[Proof of Theorem \ref{theorem-not-uniruled}]
We start by observing that if the manifold $M$ admits an \'etale cover by a product $A \times B \rightarrow M$, we know by \cite[Lemma 2.10]{GW20} 
that the canonical complex extension $Z_A$ and $Z_B$ are Stein. In this case the statement thus follows by induction on the dimension.

Consider now the decomposition $T_M \simeq \sF \oplus \sG$ given by Proposition \ref{prop:gen-ample}. If $\rk \sF=3$, then $c_1(M)=c_1(\sF)=0$ the statement follows from the
Beauville-Bogomolov decomposition and \cite[Thm.1.6]{HP19}.
If $\rk \sF < 3$ the manifold is a product after \'etale cover by \cite[Thm.D]{PT13} and \cite[Thm.1.2]{Tou08}, so the induction hypothesis applies.
\end{proof}

\section{Surfaces}  \label{sectionsurfaces}

In this section we discuss canonical extensions on surfaces. We recall the setup:
let $M$ be a compact K\"ahler surface, and $\alpha$ a K\"ahler class on $M$. 
Let 
$$
0 \rightarrow \sO_M \rightarrow V \rightarrow T_M \rightarrow 0
$$
the extension defined by $\alpha$, and set 
$$
Z_M :=  \PP(V) \setminus  \PP(T_M).
$$
As before we denote by $\holom{\pi_M}{\PP(T_M)}{M}$ and $\holom{\pi}{\PP(V)}{M}$ the projectivisations.

The following result is a first step towards Theorem \ref{thm:maintwo}.

\begin{proposition} \label{proposition-step} Let $M$ be a smooth projective surface. Assume that $Z_M$ is Stein for some K\"ahler class. Then $M$ is either an \'etale quotient of a torus, the projective plane $\PP^2$, a quadric $\PP^1 \times \PP^1$ or a ruled surface $\holom{f}{M}{B}$ over a curve of genus $g(B) \geq 1$.
\end{proposition}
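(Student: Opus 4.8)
The plan is to classify $M$ by running the minimal model program and using the results established earlier. First I would invoke Corollary \ref{cor:extension}: since $Z_M$ is Stein, the tangent bundle $T_M$ is pseudoeffective, hence $K_M = -\det T_M$ cannot be big. So $M$ is not of general type, and its Kodaira dimension is $0$, $1$, or $-\infty$.

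Next I would rule out cases using the contraction results. If $\kappa(M) \geq 0$, pass to the minimal model; but actually we can argue more directly: if $M$ contained a $(-1)$-curve, or more generally any smooth rational curve with negative self-intersection, then by Corollary \ref{cor:minimal} (which applies since a $(-1)$-curve is a smooth rational curve $C$ with $C^2 < 0$) the complement $Z_M$ would not be Stein, a contradiction. Hence $M$ contains no smooth rational curve of negative self-intersection. In particular $M$ is already minimal (no $(-1)$-curves). Now I would go through the Enriques--Kodaira classification of minimal surfaces:
\begin{itemize}
\item If $\kappa(M) = 2$: excluded above.
\item If $\kappa(M) = 1$ (properly elliptic), or $\kappa(M) = 0$ with $M$ a K3, Enriques, or bielliptic surface: in each of these cases I would show $M$ contains a smooth rational curve of negative self-intersection, or otherwise derive a contradiction with $T_M$ being pseudoeffective. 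For instance, on a minimal properly elliptic surface $K_M$ is pseudoeffective and nonzero and its restriction to a general fibre is trivial while $T_M$ restricted to a general fibre is $\sO \oplus \sO$, but one can also exhibit the needed negative rational curve in the singular fibres in many cases; alternatively, appeal to the fact that $\kappa(M) = 1$ forces $\Omega_M$ to be "big in the base direction", contradicting pseudoeffectivity of $T_M$ restricted to an MR-curve via Lemma \ref{lemmapseffample}. For Enriques and bielliptic surfaces one checks that after the étale cover trivializing the canonical bundle (a K3 or a torus, respectively), the tangent bundle situation forces a torus; for K3 one uses that $T_M = \Omega_M$ is semistable with $c_1 = 0$ but that the tautological class is not pseudoeffective (\cite[Thm.1.6]{HP19}), contradicting $Z_M$ Stein via Corollary \ref{cor:extension}.
\item If $\kappa(M) = 0$ and $M$ is a torus (or étale quotient thereof): this is one of the allowed conclusions.
\item If $\kappa(M) = -\infty$: then $M$ is minimal rational or minimal ruled, i.e. $M = \PP^2$, $M = \PP^1 \times \PP^1 = \mathbb{F}_0$, or a ruled surface $f: M \to B$ with $g(B) \geq 1$. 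The Hirzebruch surfaces $\mathbb{F}_n$ with $n \geq 1$ are excluded because they contain the negative section, a smooth rational curve of self-intersection $-n < 0$, again contradicting Corollary \ref{cor:minimal}.
\end{itemize}

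Assembling these cases gives exactly the list: étale quotient of a torus, $\PP^2$, $\PP^1 \times \PP^1$, or a ruled surface over a curve of genus $\geq 1$.

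The main obstacle I anticipate is the $\kappa(M) \geq 0$ non-torus cases — properly elliptic surfaces, K3, Enriques, bielliptic — where I need to rule them out cleanly. The cleanest uniform argument is probably: $T_M$ pseudoeffective plus $\kappa(M) \geq 0$ forces (by the decomposition in Proposition \ref{prop:gen-ample}, applied to the surface $M$, since a surface with $T_M$ pseudoeffective is either uniruled or splits $T_M \simeq \sF \oplus \sG$ with $c_1(\sF) = 0$) that $M$ is, up to étale cover, covered by a torus — because in dimension two the only non-uniruled possibilities with such a splitting and with $Z_M$ Stein are tori, the K3 and Enriques cases being excluded by the non-pseudoeffectivity of $\zeta_M$ from \cite[Thm.1.6]{HP19} together with Corollary \ref{cor:extension}, and the bielliptic case reducing to the torus case via its étale cover using \cite[Lemma 2.10]{GW20}. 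This is morally Theorem \ref{theorem-not-uniruled} in dimension two, so the honest thing is to cite that the non-uniruled case is handled there, and here only to carve out the uniruled case (rational or ruled) via the absence of negative rational curves (Corollary \ref{cor:minimal}) pinning down the minimal model.
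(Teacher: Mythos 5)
Your final ("honest") version of the argument is correct and essentially coincides with the paper's proof: the non-uniruled case is delegated to Theorem \ref{theorem-not-uniruled}, and the uniruled case is pinned down using Corollary \ref{cor:minimal} to exclude negative rational curves. The only cosmetic difference is that the paper reaches the Mori fibre space structure via Corollary \ref{corollary-Mori-low-dimension} rather than via minimality and the Enriques--Kodaira list, and your intermediate hand-waving about the $\kappa(M)\geq 0$ non-torus cases is unnecessary once you cite Theorem \ref{theorem-not-uniruled}.
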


\begin{proof} 
If $M$ is not uniruled, we conclude with Theorem \ref{theorem-not-uniruled}.
If $M$ is uniruled, it is a Mori fibre space by Corollary \ref{corollary-Mori-low-dimension}.
Thus if $M \not\simeq \PP^2$, it is a ruled surface $\holom{f}{M}{B}$.
If the curve $B$ is isomorphic to $\PP^1$ and $M$ is not a quadric, the Hirzebruch surface $M$ contains a rational curve $C$ such that $C^2<0$, in contradiction to 
Corollary  \ref{cor:minimal}.
\end{proof} 

\begin{remark} \label{remark-elliptic-bundles}
If $M$ is a compact K\"ahler surface, the arguments developed in the preceding sections show that $M$ is either an \'etale quotient of a torus, a Mori fibre space or a minimal surface of Kodaira dimension one. In the last case, using that $T_M$ is pseudoeffective by Corollary \ref{cor:extension} the techniques from \cite{HP20} allow to prove that, up to taking an \'etale cover, the surface $M$ is an elliptic bundle over a curve of genus at least two. If $M$ is not projective such an elliptic bundle does not trivialise after base change, so we can't use the technique from Theorem \ref{theorem-not-uniruled} to eliminate this case.
\end{remark}

\subsection{Negative curves} \label{subsection-algebraic-approach}

Theorem \ref{thm:mainone} can be applied to give some additional information 
on the tangent bundle:

\begin{lemma}  \label{lemma-pseff} 
Let $M$ be a compact K\"ahler manifold. Assume that $Z_M$ is Stein for some K\"ahler class. 
For every irreducible curve $C \subset M$, the restriction $T_M \vert_C$ is pseudoeffective.
 \end{lemma}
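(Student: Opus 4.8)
The statement to prove is Lemma \ref{lemma-pseff}: if $Z_M$ is Stein for some K\"ahler class, then for every irreducible curve $C \subset M$ the restriction $T_M|_C$ is pseudoeffective. The plan is to reduce this to Corollary \ref{cor:extension} by a base-change / pullback argument. First I would pass to a resolution $\nu \colon \tilde C \to C$ of the curve, so that $\tilde C$ is a smooth compact curve mapping to $M$ with image $C$. Pulling back the canonical extension sequence \eqref{EM} along $\nu$ gives an exact sequence $0 \to \sO_{\tilde C} \to \nu^* V \to \nu^* T_M \to 0$ on $\tilde C$, classified by the pullback $\nu^*\alpha \in H^1(\tilde C, \nu^*\Omega_M)$, and inside $\PP(\nu^* V)$ we get the divisor $\PP(\nu^* T_M)$ with complement $\nu^* Z_M := \PP(\nu^* V)\setminus \PP(\nu^* T_M)$, which is exactly the fiber product $Z_M \times_M \tilde C$.

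The key point is then that $\nu^* Z_M$ is Stein: it fibers over $\tilde C$ as an affine-space bundle, being the complement of a hyperplane subbundle in a projective bundle, and one can argue either directly (an affine bundle over a Stein — indeed over any — curve can be checked to be Stein using that it carries no compact positive-dimensional subvarieties, analogously to Lemma \ref{lem:curves}, together with holomorphic convexity coming from the $\tilde C$-factor) or, cleanly, observe that $\nu^* Z_M \to Z_M$ is a finite map onto $\psi^{-1}(C)$'s normalization; more robustly I would simply note that $Z_M \times_M \tilde C$ admits the projection to $Z_M$ which is finite onto a locally closed analytic subset, but the cleanest route is: the bundle $\PP(\nu^* V) \to \tilde C$ is a projective-space bundle over a projective curve hence projective, the complement of the section $\PP(\nu^* T_M) \in |\zeta|$ contains no positive-dimensional compact subvariety by the argument of Lemma \ref{lem:curves} (a compact curve inside would force the extension class $\nu^*\alpha$ to vanish after base change, impossible since $\alpha$ is K\"ahler), and it is holomorphically convex because $\zeta_{\PP(\nu^*V)}$ is semiample relative to $\tilde C$ — indeed, since $\det \nu^* V$ has positive degree the line bundle $\sO_{\PP(\nu^*V)}(1)\otimes(\pi^* A)$ is semiample for suitable $A$ on $\tilde C$, and one applies Proposition \ref{prop:semiample}. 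Hence $\nu^* Z_M$ is affine, in particular Stein.

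Now apply Corollary \ref{cor:extension} — or rather its proof via Theorem \ref{thm:mainone} — to the smooth curve $\tilde C$ in place of $M$: since $Z_{\tilde C}$ (meaning the canonical-extension-type complement built from the sequence $0\to \sO_{\tilde C}\to \nu^*V \to \nu^*T_M \to 0$) is Stein, the normal bundle of $\PP(\nu^* T_M)$ in $\PP(\nu^* V)$, which is the tautological bundle $\sO_{\PP(\nu^* T_M)}(1)$, is pseudoeffective; equivalently $\nu^* T_M$ is a pseudoeffective vector bundle on $\tilde C$. Pseudoeffectivity of a bundle on a curve is equivalent to every quotient having non-negative degree, and this passes down: $\nu^* T_M$ pseudoeffective on $\tilde C$ forces $T_M|_C$ pseudoeffective (pseudoeffectivity of $\zeta$ on $\PP(\nu^*T_M)$ pushes to pseudoeffectivity of $\zeta$ on $\PP((T_M|_C))$ since $\PP(\nu^*T_M) \to \PP(T_M|_C)$ is dominant finite and pseudoeffectivity of the tautological class descends along such maps).

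\textbf{Main obstacle.} The substantive step is establishing that the pulled-back complement $\nu^* Z_M$ is Stein (or at least holomorphically convex with no compact positive-dimensional subvarieties), so that Theorem \ref{thm:mainone}/Corollary \ref{cor:extension} genuinely applies on $\tilde C$; the earlier machinery (Lemma \ref{lem:curves}, Proposition \ref{prop:semiample}) handles this but one must be careful that the extension class $\nu^*\alpha$ is still nonzero — this is where K\"ahlerness of $\alpha$ is used, exactly as in Lemma \ref{lem:curves}. A secondary, purely formal point is the descent of pseudoeffectivity of the tautological class along the finite surjective map $\PP(\nu^* T_M) \to \PP(T_M|_C)$ and along $\PP(T_M|_C) \hookrightarrow \PP(T_M)|_C$; these are routine but should be stated.
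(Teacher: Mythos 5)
Your overall skeleton --- normalize $C$, pull back the extension, show the pulled-back complement is Stein, apply Theorem \ref{thm:mainone} to the smooth hypersurface $\PP(\nu^* T_M|_C) \subset \PP(\nu^* V|_C)$, and descend pseudoeffectivity along the finite map --- is exactly the paper's. The problem is the justification you actually commit to for the crucial Steinness step. Your ``cleanest route'' invokes Proposition \ref{prop:semiample}, which requires $\sO(mY)$ with $Y = \PP(\nu^* T_M|_C)$, i.e.\ a multiple of the tautological class $\zeta_{\PP(\nu^* V|_C)}$ itself, to be semiample. Relative semiampleness over $\tilde C$, or semiampleness of some twist $\sO_{\PP(\nu^*V)}(1)\otimes \pi^*A$, is not what that proposition asks for, and absolute semiampleness of $\zeta$ cannot be established a priori: if $T_M|_C$ has a quotient of negative degree --- precisely the situation the lemma is designed to exclude --- then $\zeta_{\PP(\nu^*V|_C)}$ is not even pseudoeffective, let alone semiample. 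So the argument fails (or becomes circular) at exactly the point where it must do work. The conclusion that the pulled-back complement is \emph{affine} is also false in general: already for an elliptic curve the complement is $\C^*\times\C^*$, Stein but not affine (Example \ref{example2}). Likewise ``holomorphic convexity coming from the $\tilde C$-factor'' is not a valid mechanism since $\tilde C$ is compact, and the non-vanishing of $\nu^*\alpha$, which you single out as the main obstacle, plays no role in the correct argument.

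The fix is the route you mention only in passing and then discard: $\PP(V|_C)\setminus\PP(T_M|_C) = \fibre{\pi}{C} \cap Z_M$ is a \emph{closed} (not merely locally closed) analytic subset of the Stein space $Z_M$, hence Stein; the normalization induces a finite map $\PP(\nu^* V|_C)\setminus\PP(\nu^* T_M|_C) \to \PP(V|_C)\setminus\PP(T_M|_C)$, so the source is Stein by \cite[V, \S 1, Thm.1 d)]{GR79}. This is precisely what the paper does. With that substitution the remaining steps of your proposal --- Theorem \ref{thm:mainone} applied on the projective, hence K\"ahler, total space $\PP(\nu^* V|_C)$, and descent of pseudoeffectivity of the tautological class along the finite surjection onto $\PP(T_M|_C)$ --- are correct and agree with the paper.
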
 

\begin{proof} 
Let 
$$
0 \rightarrow \sO_M \rightarrow V \rightarrow T_M \rightarrow 0
$$
be the canonical extension such that $Z_M$ is Stein.
Then the closed subset $\PN(V_C) \setminus \PN(T_M \vert C)$ is Stein. Let $\holom{\nu}{\tilde C}{C}$ be the normalisation.
Then 
$$
\PN(\nu^* V_C) \setminus \PN(\nu^* T_M \vert C)
\quad \rightarrow \quad
\PN(V_C) \setminus \PN(T_M \vert C)
$$
is finite, so $\PN(\nu^* V_C) \setminus \PN(\nu^* T_M \vert C)$ is Stein \cite[V, \S 1, Thm.1 d)]{GR79}.
By Theorem \ref{thm:mainone} the normal bundle of $\PN(\nu^* T_M \vert C)$
in $\PN(\nu^* V_C)$ is pseudoeffective. Thus the normal bundle $\PN(T_M \vert C)$
in $\PN(V_C)$ is pseudoeffective. Yet this normal bundle is exactly the tautological class of 
$T_M \vert_C$, hence $T_M \vert _C$ is not pseudoeffective. 
\end{proof} 

We can now prove an analogue of Corollary \ref{cor:minimal} for curves of higher genus:

\begin{corollary} \label{corollary-negative}
Let $M$ be a compact K\"ahler surface. Assume that $M$ contains a smooth curve $C$
of genus at least two such that $C^2<0$. Then for any K\"ahler class $\alpha$, the canonical extension $Z_M$ is not Stein.
\end{corollary}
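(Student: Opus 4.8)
The plan is to argue by contradiction: suppose $Z_M$ is Stein for some K\"ahler class $\alpha$, while $M$ contains a smooth curve $C$ of genus $g(C)\geq 2$ with $C^2<0$. The strategy mirrors the proof of Corollary \ref{cor:minimal}, but replaces the appeal to Corollary \ref{cor:minimal} (which uses the rationality of the contracted curve) by the restriction statement of Lemma \ref{lemma-pseff}. First I would invoke Lemma \ref{lemma-pseff}: since $Z_M$ is Stein, for every irreducible curve $C\subset M$ the restriction $T_M|_C$ is pseudoeffective, i.e. the tautological class $\zeta_{\PN(T_M|_C)}$ is pseudoeffective on $\PN(T_M|_C)$.

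Next I would extract numerical consequences of this on the smooth curve $C$ of genus $g\geq 2$. Pulling back along the inclusion (or its normalisation, which is $C$ itself since $C$ is smooth), $T_M|_C$ is a rank-two vector bundle on $C$ fitting in the exact sequence
$$
0 \to T_C \to T_M|_C \to N_{C/M} \to 0,
$$
so $\deg(T_M|_C) = \deg T_C + \deg N_{C/M} = (2-2g) + C^2$. Since $g\geq 2$ and $C^2<0$, this degree is strictly negative, hence $c_1$ of the rank-two bundle $T_M|_C$ is negative. The point is now that a rank-two vector bundle $E$ on a smooth curve with $\zeta_{\PN(E)}$ pseudoeffective must have $\deg E \geq 0$: indeed pseudoeffectivity of $\zeta$ is equivalent to $\kappa(\PN(E), \zeta + \pi^*A) \geq 0$ for all ample $A$, i.e. $H^0(C, S^d(E\otimes A)) \neq 0$ for suitable $d$, and on a curve this forces the slope $\mu(E\otimes A) = \mu(E) + \deg A$ to be nonnegative for all $A$, hence $\mu(E) \geq 0$, hence $\deg E \geq 0$. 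This contradicts $\deg(T_M|_C) = (2-2g)+C^2 < 0$.

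I expect the main obstacle to be making the last numerical step watertight in the possibly non-projective K\"ahler setting: Lemma \ref{lemma-pseff} is stated for compact K\"ahler manifolds, but the auxiliary facts I want to use about pseudoeffective tautological classes on $\PN(E)$ over a curve (the Kodaira-dimension characterisation via $H^0(C,S^d(E\otimes A))$, compare Lemma \ref{lemmapseffample}) are phrased algebraically. However $C$ is a smooth \emph{projective} curve, so $T_M|_C$ is an algebraic bundle on $C$ and $\PN(T_M|_C)$ is projective; all the positivity notions then make sense algebraically and the argument goes through. (One should also note that since $C$ is smooth, the normalisation step in Lemma \ref{lemma-pseff} is trivial, so one genuinely gets pseudoeffectivity of $\zeta_{\PN(T_M|_C)}$ on the nose.) Once these compatibilities are checked, the contradiction is immediate and the corollary follows.
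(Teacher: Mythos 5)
Your overall strategy is the paper's: invoke Lemma \ref{lemma-pseff} to get that $T_M|_C$ is pseudoeffective, then derive a numerical contradiction on the curve $C$ from the normal bundle sequence. But the key numerical step is wrong. You claim that a vector bundle $E$ on a smooth curve with $\zeta_{\PN(E)}$ pseudoeffective must satisfy $\deg E\geq 0$, arguing that $H^0(C,S^d(E\otimes A))\neq 0$ forces $\mu(E\otimes A)\geq 0$. That implication only holds for \emph{semistable} bundles; in general a nonzero section only gives $\mu_{\max}(E\otimes A)\geq 0$. A counterexample occurring in this very paper: for a ruled surface $M=\PN(\sO_B\oplus L)$ over an elliptic curve with $\deg L<0$ and $B_0$ the negative section, one has $T_M|_{B_0}\simeq \sO_B\oplus L$, whose tautological class is effective (hence pseudoeffective) while $\deg(T_M|_{B_0})=\deg L<0$. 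Note also that your degree computation $(2-2g)+C^2<0$ would equally apply to an elliptic curve with $C^2<0$, a case the paper explicitly cannot rule out --- another sign that total degree alone cannot suffice.

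The fix is to use the full strength of the hypothesis $g(C)\geq 2$ and $C^2<0$: \emph{both} the sub $T_C$ (degree $2-2g<0$) and the quotient $N_{C/M}$ (degree $C^2<0$) in
$$
0 \to T_C \to T_M|_C \to N_{C/M} \to 0
$$
have negative degree, so every quotient line bundle of $\Omega_M|_C$ has positive degree and hence $\Omega_M|_C$ is ample. This contradicts Lemma \ref{lemmapseffample}, which says that pseudoeffectivity of $\zeta_{\PN(T_M|_C)}$ forbids the dual from being ample. That is exactly the paper's proof; your argument becomes correct once you replace the (false) statement ``pseudoeffective $\Rightarrow$ $\deg\geq 0$'' by ``pseudoeffective $\Rightarrow$ dual not ample'', i.e.\ $\mu_{\max}(T_M|_C)\geq 0$.
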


\begin{proof}
Assume that $Z_M$ is Stein.
By Lemma \ref{lemma-pseff} we know that the restriction $T_M\vert_{C}$ is pseudoeffective, so  by Lemma \ref{lemmapseffample}
its dual $\Omega_M \vert_{C}$ is not ample.

Yet since $g(C) \geq 2$ the normal bundle sequence 
$$   
0 \to T_{C} \to T_M \vert_{C} \to N_{C/M} \to 0 
$$
tells us that $\Omega_M \vert_{C}$ is ample, a contradiction.
\end{proof}

\subsection{Split tangent bundle} 

We have seen in Proposition \ref{proposition-gen-ample} that manifolds with split tangent bundle appear naturally in the analysis of the canonical extension. This situation can be studied through a technical lemma:

\begin{lemma} \label{lemma-split2}  
Let $M$ be a compact K\"ahler manifold such that $T_M \simeq \sF \oplus \sG$.
Let $\alpha $ be a K\"ahler class on $M$ and write 
$$ \alpha = \alpha_1 + \alpha_2 $$ 
with $\alpha_1 \in H^1(M,\sF^*) $ and $\alpha_2 \in H^1(M,\sG^*)$ according to the splitting. 
Let 
$$
0 \rightarrow \sO_M \rightarrow V_\sF \rightarrow \sF \rightarrow 0
$$
$$
0 \rightarrow \sO_M \rightarrow V_\sG \rightarrow \sG \rightarrow 0
$$
be the extensions defined by these cohomology classes, and set
$$
Z_\sF := \PP(V_\sF) \setminus \PP(\sF), \qquad
Z_\sG := \PP(V_\sG) \setminus \PP(\sG).
$$
Let $Z_M = Z_M^{\alpha}$ be the canonical extension. Then we have an isomorphism
$$
Z_M \simeq Z_\sF \times_M Z_\sG,
$$ 
so in fact 
$$
Z_M \simeq (\PP(V_\sF) \times_M \PP(V_\sG)) \setminus (\fibre{\tau_\sF}{\PP(\sF)} \cup \fibre{\tau_\sG}{\PP(\sG)})
$$
where $\tau_\sF$ and $\tau_\sG$ are the natural projections on the factors of the
fibre product $\PP(V_\sF) \times_M \PP(V_\sG)$.
\end{lemma}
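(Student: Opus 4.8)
The plan is to show that both sides are the \emph{same} affine bundle over $M$ — concretely, the same torsor under $\Omega_M$ — by identifying each with a bundle of ``retractions'' of the structure-sheaf sub-bundle appearing in the defining extension.

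First I would describe $Z_M \to M$ intrinsically. Composing $\sO_M \hookrightarrow V$ with the tautological quotient $\pi^* V \twoheadrightarrow \sO_{\PP(V)}(1)$ produces a section $s$ of $\sO_{\PP(V)}(1)$ whose zero divisor is exactly $\PP(T_M)$ (this is also the source of the relation $[\PP(T_M)] = c_1(\sO_{\PP(V)}(1))$). Hence $Z_M = \{ s \neq 0 \}$, and $s$ trivialises $\sO_{\PP(V)}(1)$ there. It follows that, for an open set $U \subseteq M$, a section of $Z_M \to M$ over $U$ is the same datum as a quotient line bundle $V|_U \twoheadrightarrow L$ for which the composite $\sO_U \to V|_U \to L$ is an isomorphism; using this composite to identify $L$ with $\sO_U$, such a datum becomes exactly a retraction $r \colon V|_U \to \sO_U$ of $\sO_U \hookrightarrow V|_U$. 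Since the extension $0 \to \sO_M \to V \to T_M \to 0$ is locally split, such retractions exist locally, and two of them over $U$ differ by an element of $\Hom_{\sO_U}(T_M|_U, \sO_U) = \Gamma(U, \Omega_M)$. Thus $Z_M \to M$ is a torsor under $\Omega_M$, and a \v{C}ech computation (on a cover trivialising the extension) shows that its class in $H^1(M, \Omega_M)$ is precisely the extension class of $V$ under $\Ext^1(T_M, \sO_M) \cong H^1(M, \Omega_M)$, i.e. $\alpha$.

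Next I would apply the same description to the two sub-extensions: $Z_\sF \to M$ is the $\sF^*$-torsor with class $\alpha_1 \in H^1(M, \sF^*)$, and $Z_\sG \to M$ the $\sG^*$-torsor with class $\alpha_2 \in H^1(M, \sG^*)$. Hence the fibre product $Z_\sF \times_M Z_\sG$ is the torsor under $\sF^* \oplus \sG^*$ with class $(\alpha_1, \alpha_2)$. Now the decomposition $T_M \simeq \sF \oplus \sG$ gives $\Omega_M \simeq \sF^* \oplus \sG^*$ and $H^1(M, \Omega_M) \simeq H^1(M, \sF^*) \oplus H^1(M, \sG^*)$, and by the very definition of $\alpha_1, \alpha_2$ this identification carries $\alpha$ to $(\alpha_1, \alpha_2)$. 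Two torsors under the same sheaf with the same class are canonically isomorphic, so $Z_M \simeq Z_\sF \times_M Z_\sG$ over $M$. The last displayed formula in the statement is then just the identity $(\PP(V_\sF) \setminus \PP(\sF)) \times_M (\PP(V_\sG) \setminus \PP(\sG)) = (\PP(V_\sF) \times_M \PP(V_\sG)) \setminus ( \fibre{\tau_\sF}{\PP(\sF)} \cup \fibre{\tau_\sG}{\PP(\sG)})$, valid for any fibre product of complements.

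The main obstacle is bookkeeping rather than conceptual: one has to pin down the $\PP(-)$ convention, verify that the torsor class of the ``retraction bundle'' is the extension class itself (and not its negative, or a class in a dual group), and check compatibility of extension classes with direct sums — i.e. that the isomorphism $\Ext^1(\sF \oplus \sG, \sO_M) \cong \Ext^1(\sF, \sO_M) \oplus \Ext^1(\sG, \sO_M)$ sends the class of the canonical extension $V$ to $(\alpha_1, \alpha_2)$. Once the torsor dictionary is in place, the rest is formal. Alternatively one can avoid the torsor language and run the argument on a common trivialising cover, writing $V$, $V_\sF$, $V_\sG$ via block upper-triangular transition matrices and matching the resulting affine gluing data directly; the content is identical.
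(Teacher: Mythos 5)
Your argument is correct, but it takes a genuinely different route from the paper's. You identify each canonical extension with the affine bundle (torsor under the dual of the quotient bundle) of retractions of the structure-sheaf sub-bundle, note that such torsors are classified by $H^1$ with class equal to the extension class, and conclude from $\alpha=\alpha_1+\alpha_2$ together with the fact that a fibre product of torsors is a torsor under the direct sum. The paper instead realises $V_M$ explicitly as the quotient of $V_\sF\oplus V_\sG$ by a diagonally embedded $\sO_M$ (a Baer-sum diagram), which gives an embedding $\PP(V_M)\subset\PP(V_\sF\oplus V_\sG)$, and then writes down the concrete map $(u,v)\mapsto[u\oplus v]$ from $Z_\sF\times_M Z_\sG$, checking fibrewise that it is an isomorphism onto $Z_M$. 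The two arguments encode the same fact --- Baer sum of extensions corresponds to addition of extension classes --- but yours trades the explicit morphism for the classification of affine bundles by $H^1(M,\Omega_M)$. This buys a cleaner treatment of the sign and compatibility bookkeeping that you rightly flag (and that is somewhat glossed over in the paper's diagram), at the price of producing an isomorphism that is canonical only up to translation by $H^0(M,\Omega_M)$, whereas the paper's $\Phi$ is a specific map; for the intended application (the Stein property of $Z_M$) this makes no difference. One minor quibble: ``two torsors with the same class are canonically isomorphic'' is a slight overstatement --- the isomorphism is unique only up to a global section of $\Omega_M$ --- but the lemma only asserts the existence of an isomorphism, so your proof is complete once the \v{C}ech verification you describe is carried out.
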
 

\begin{proof} We have an extension 
$$ 
0 \to \sO_M \to V_\sF \oplus V_\sG \to V_M \to  0
$$
as given in the following diagram 

\begin{equation} \label{big2}
\xymatrix{
0 \ar[r] & \sO_M \ar[d]^{=} \ar[r]^-{f \mapsto f \oplus f} &  \sO_M \oplus \sO_M  \ar[d]  \ar[r]^-{(\alpha, \beta) \mapsto \alpha - \beta}  & \sO_M \ar[d]  \ar[r] & 0
\\
0 \ar[r] & \sO_M  \ar[r] & V_\sF \oplus V_\sG \ar[r] \ar[d] & V_M \ar[r] \ar[d] & 0
\\
 & & \sF \oplus \sG \ar[r]^-{\simeq} & T_M  \ar[r] & 0
}
\end{equation}
Hence $$\mathbb P(V_M) \subset \mathbb P(V_\sF \oplus V_\sG).$$

Further, we have a morphism 
$$ \Phi: Z_\sF \times_M Z_\sG \to  \mathbb P(V_\sF \oplus V_\sG),$$
as follows.  Let $x \in M$, $u \in (Z_\sF)_x$ and $v \in (Z_\sG)_x$. 
Then 
$(u,v)$ is mapped to $[u \oplus v]$. 
Checking on the fibers over $x$, we see that 
$\Phi$ maps $Z_\sF \times_M Z_\sG$ isomorphically onto $Z_M$. 
\end{proof} 

For ruled surfaces this can be made more precise:

\begin{lemma} \label{lemma-split}  
Let $f: M = \mathbb P(\sE) \to B$ be a ruled surface where $\sE$ is a rank two vector bundle that is normalised in the sense of \cite[V, Ch.2]{Har77}. Assume that the tangent bundle splits such that $T_M \simeq T_{M/B} \oplus p^*(T_B)$. 
Let $\alpha $ be a K\"ahler class on $M$ and write 
$$ \alpha = \alpha_1 + \alpha_2 $$ 
with $\alpha_1 \in H^1(M,\Omega_{M/B}) $ and $\alpha_2 \in H^1(M,p^*(\Omega_B))$ according to the splitting. 
Then by Lemma \ref{lemma-split2} we have
$$ 
Z_M \simeq Z_{M/B} \times_B Z_B.
$$
and 
$$ 
Z_{M/B} \simeq \mathbb P(f^*\sE) \setminus \mathbb P (\zeta_M), 
$$
where $\zeta_M$ is the tautological bundle on $\PP(\sE)$.
\end{lemma}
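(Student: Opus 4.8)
The plan is to feed the splitting $T_M\simeq T_{M/B}\oplus f^*T_B$ (the statement's $p$ and $f$ denote the same ruling $M\to B$) into Lemma~\ref{lemma-split2} and then make the two factors explicit: the $f^*T_B$--factor by pushing its extension class down to $B$, and the $T_{M/B}$--factor by comparing its extension to the tautological sequence on $\PP(\sE)$.

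\emph{First isomorphism.} Lemma~\ref{lemma-split2} applied with $\sF=T_{M/B}$ and $\sG=f^*T_B$ gives $Z_M\simeq Z_\sF\times_M Z_\sG$, where $Z_\sG=\PP(V_\sG)\setminus\PP(f^*T_B)$ and $V_\sG$ is the extension of $f^*T_B$ by $\sO_M$ defined by $\alpha_2$. Since the fibres of $f$ are $\PP^1$'s we have $f_*\sO_M=\sO_B$ and $R^qf_*\sO_M=0$ for $q>0$, so by the projection formula $Rf_*(f^*\Omega_B)=\Omega_B$, and hence $f^*\colon H^1(B,\Omega_B)\to H^1(M,f^*\Omega_B)$ is an isomorphism; write $\alpha_2=f^*\bar\alpha_2$. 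As pullback of extensions corresponds to pullback of extension classes, $V_\sG\simeq f^*V_B$, where $V_B$ is the extension of $T_B$ by $\sO_B$ defined by $\bar\alpha_2$. Hence $Z_\sG\simeq M\times_B Z_B$ and $Z_M\simeq Z_\sF\times_M(M\times_B Z_B)=Z_\sF\times_B Z_B$, which is the claim with $Z_\sF=Z_{M/B}$.

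\emph{Second isomorphism.} The key point is that $H^1(M,\Omega_{M/B})$ is one-dimensional, so that the extension $V_\sF$ defined by $\alpha_1$ is determined up to isomorphism once $\alpha_1\neq0$. Indeed $f_*\Omega_{M/B}=0$ and $R^1f_*\Omega_{M/B}\simeq\sO_B$ by relative duality, so Leray gives $H^1(M,\Omega_{M/B})\simeq H^0(B,\sO_B)=\bC$; moreover its restriction to a fibre $F\simeq\PP^1$ is an isomorphism onto $H^1(F,\Omega_F)\simeq\bC$, and the restriction of the Kähler class $\alpha$ to $F$ is nonzero, so $\alpha_1\neq0$. On the other hand, twisting the tautological sequence $0\to S\to f^*\sE\to\zeta_M\to0$ on $M=\PP(\sE)$ by $S^{-1}$ gives the extension
\begin{equation*}
0\to\sO_M\to f^*\sE\otimes S^{-1}\to\zeta_M\otimes S^{-1}\to0,
\end{equation*}
whose quotient term $\zeta_M\otimes S^{-1}$ is identified with $T_{M/B}$ by the canonical bundle formula for $\PP(\sE)\to B$, and whose extension class $e\in H^1(M,\Omega_{M/B})$ is nonzero, for otherwise the tautological sequence would split and restrict on a fibre to a splitting $\sO_{\PP^1}^{\oplus 2}\simeq\sO_{\PP^1}(-1)\oplus\sO_{\PP^1}(1)$, which is absurd. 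Hence $\alpha_1$ and $e$ are proportional nonzero elements of the one-dimensional space $H^1(M,\Omega_{M/B})$, and rescaling the extension yields an isomorphism $V_\sF\simeq f^*\sE\otimes S^{-1}$ compatible with the quotient maps onto $T_{M/B}$. Projectivising over $M$ we get $\PP(V_\sF)\simeq\PP(f^*\sE\otimes S^{-1})=\PP(f^*\sE)$, and under this isomorphism the quotient section $\PP(T_{M/B})$ is carried onto the section $\PP(\zeta_M)$ defined by the tautological quotient $f^*\sE\twoheadrightarrow\zeta_M$ (twisting both sides of the tautological sequence by a line bundle does not move this section). Deleting the two sections gives $Z_{M/B}=\PP(V_\sF)\setminus\PP(T_{M/B})\simeq\PP(f^*\sE)\setminus\PP(\zeta_M)$.

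The only delicate step is the second isomorphism: one has to establish that $H^1(M,\Omega_{M/B})$ has dimension exactly one, so that being non-split already pins down the extension, and then keep track of the twists on $\PP(\sE)$ so that the quotient section of $\PP(V_\sF)$ is matched with $\PP(\zeta_M)$ and not with some other section of $\PP(f^*\sE)$. Everything else is formal once Lemma~\ref{lemma-split2} and the functoriality of extensions are invoked.
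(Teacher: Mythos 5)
Your proof is correct and follows essentially the same route as the paper: both arguments rest on the one-dimensionality of $\mathrm{Ext}^1(T_{M/B},\sO_M)\simeq H^1(M,K_{M/B})$ together with the non-splitness of the (twisted) relative Euler sequence to identify $V_{M/B}$ with $\zeta_M\otimes f^*(\sE^*)$ and hence $\PP(V_{M/B})$ with $\PP(f^*\sE)$, carrying $\PP(T_{M/B})$ onto $\PP(\zeta_M)$. Your write-up merely supplies details the paper leaves implicit (the verification that $\alpha_1\neq 0$ and the base change from $\times_M$ to $\times_B$ in the first isomorphism).
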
 

\begin{proof}  
The class $\alpha_1 \in H^1(M,\Omega_{M/B})$ defines an extension
$$
0 \rightarrow \sO_M \rightarrow V_{M/B} \rightarrow T_{M/B} \rightarrow 0.
$$
Since ${\rm Ext}^1(T_{M/B},\sO_M) = H^1(M,K_{M/B}) \simeq \mathbb C$, it follows from the relative Euler sequence that 
$$
V_{M/B} \simeq \zeta_M \otimes f^*(\sE^*).
$$
Thus, using $\sE^* \simeq \sE \otimes \det \sE^*$, 
we obtain  $\mathbb P(V_{M/B}) \simeq \mathbb P(f^*(\sE))$. 
Moreover since $T_{M/B} \simeq 2 \zeta_M \otimes f^* \det \sE^*$, this isomorphism
maps $\PP(T_{M/B})$ onto $\PP(\zeta_M)$.
\end{proof}

We can finally apply these argument to Serre's example \cite[\S 6.3]{Har70},\cite[\S 7]{Nee88}:

\begin{proposition} \label{prop:serre} Let $B$ be an elliptic curve and 
$$ 0 \to \sO_B \to \sE \to\sO_B \to 0 $$
a non split extension. Set $f: M := \mathbb P(\sE) \rightarrow B$ and choose any K\"ahler class on $M$. Then  $Z_M$ is Stein. 
\end{proposition}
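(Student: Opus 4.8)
The plan is to combine the product decomposition of Lemma \ref{lemma-split} with Serre's computation recorded in Example \ref{example2}. The first step is to check that $T_M$ splits. Since $B$ is elliptic we have $T_B\simeq\sO_B$, so the relative tangent sequence reads $0\to T_{M/B}\to T_M\to f^*\sO_B\to 0$; a splitting of this sequence is the same thing as a holomorphic distribution on $M$ transverse to the fibres of $f$, i.e. a holomorphic connection on the $\PP^1$-bundle $M\to B$. Such a connection exists because $\sE$ is indecomposable of degree zero and therefore admits a holomorphic connection, which induces one on $M=\PP(\sE)$. Thus $T_M\simeq T_{M/B}\oplus f^*T_B$, and $\sE$ is normalised in the sense of \cite[V, Ch.2]{Har77} since $H^0(\sE)=H^0(\sO_B)$ is one-dimensional. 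Moreover $\sO_B\subset\sE$ is the unique sub-line bundle preserved by the connection, so the section $C_0:=\PP(\sO_B)\subset M$ is a leaf of the horizontal foliation $\mathcal G\simeq f^*T_B$.

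Now apply Lemma \ref{lemma-split}. Writing the Kähler class as $\alpha=\alpha_1+\alpha_2$ with $\alpha_1\in H^1(M,\Omega_{M/B})$ and $\alpha_2\in H^1(M,f^*\Omega_B)$ along the splitting, we obtain $Z_M\simeq Z_{M/B}\times_B Z_B$, where $Z_{M/B}\simeq\PP(f^*\sE)\setminus\PP(\zeta_M)$ and $Z_B$ is the canonical extension of $B$ attached to the class $\alpha_2$. I claim $\alpha_2\neq 0$. In the dual decomposition $\Omega_M\simeq f^*\Omega_B\oplus\omega_{M/B}$ the summand $\omega_{M/B}$ consists exactly of the $1$-forms vanishing on the horizontal distribution $\mathcal G$; consequently every class in the subspace $H^1(M,\omega_{M/B})\subset H^1(M,\Omega_M)=H^{1,1}(M)$ restricts to zero on the horizontal leaf $C_0$. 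If $\alpha_2$ vanished, then $\alpha=\alpha_1$ would lie in this subspace, whence $\int_{C_0}\alpha=0$, contradicting that $\alpha$ is Kähler. Therefore $\alpha_2\neq 0$, so $Z_B$ is exactly Serre's example $\PP(\sF)\setminus\PP(\sO_B)$ of Example \ref{example2}, which is Stein.

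It remains to see that $Z_{M/B}\to B$ is an affine morphism: identifying $\PP(f^*\sE)$ with the fibre square $M\times_B M$ and $\PP(\zeta_M)$ with the relative diagonal $\Delta_{M/B}$, the line bundle $\sO(\Delta_{M/B})$ restricts to $\sO(1,1)$ on each fibre $M_b\times M_b\simeq\PP^1\times\PP^1$, hence is relatively ample over $B$, so $Z_{M/B}=\PP(f^*\sE)\setminus\PP(\zeta_M)$ is affine over $B$. Base-changing along $Z_B\to B$, the projection $Z_M\simeq Z_{M/B}\times_B Z_B\to Z_B$ is again an affine morphism, and since $Z_B$ is Stein so is $Z_M$. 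I expect the only point requiring real care to be the verification that $\alpha_2\neq 0$ for every Kähler class (equivalently, locating the horizontal leaf $C_0$ inside the Hodge decomposition of $M$); the splitting of $T_M$, the identification of $Z_{M/B}$ with a relatively ample complement, and the fact that an affine morphism over a Stein base has Stein total space are all routine once the setup is in place.
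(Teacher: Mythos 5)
Your argument is correct and follows the same skeleton as the paper's proof: split $T_M\simeq T_{M/B}\oplus f^*T_B$, invoke Lemma \ref{lemma-split} to write $Z_M\simeq Z_{M/B}\times_B Z_B$, and reduce everything to the Steinness of Serre's example $Z_B$ from Example \ref{example2}. The two endpoints differ, and the comparison is instructive. First, you check that the component $\alpha_2\in H^1(M,f^*\Omega_B)$ of the K\"ahler class is nonzero by integrating over a section that is a leaf of the horizontal foliation; the paper simply asserts that $Z_B$ is Stein because $g(B)=1$, which tacitly presupposes $\alpha_2\neq 0$ (if $\alpha_2=0$ then $Z_B\simeq B\times\mathbb C$ is not Stein), so your verification closes a small gap that the paper leaves implicit. (A convention quibble: in the Grothendieck convention used throughout the paper the invariant section is the one attached to the quotient $\sE\to\sE/\sO_B$ rather than to the sub $\sO_B\subset\sE$; this is immaterial, since all you need is \emph{some} section that is a leaf, and the unique section whose complement is the open orbit $\mathbb C^*\times\mathbb C^*$ does the job.) Second, for the fibre direction the paper uses $H^1(Z_B,\sO_{Z_B})=0$ to split $q^*\sE$ and exhibits $Z_M\simeq Q\times Z_B$ with $Q$ an affine quadric, a product of Stein spaces; you instead identify $Z_{M/B}$ with $(M\times_B M)\setminus\Delta_{M/B}$ and use relative ampleness of the diagonal. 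That is slightly more robust (no trivialisation needed), but your closing phrase ``an affine morphism over a Stein base has Stein total space'' needs care: for mere Stein morphisms (preimages of Stein opens are Stein) this is false, by the counterexamples to the Serre problem. What saves you is that your morphism is affine in the strong sense: over the Stein space $Z_B$, Cartan's Theorem A generates the relatively very ample sheaf by global sections, embedding the relatively projective family into $\PP^N\times Z_B$ with the relatively ample divisor a hyperplane section, so that $Z_M$ becomes a closed analytic subspace of $\mathbb C^N\times Z_B$ and hence Stein. With that point made explicit, your proof is complete.
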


\begin{proof}   
Since $M$ is almost homogeneous \cite{Pot69} the vector field on $B$ lifts to $M$,
so we have a splitting $T_M \simeq T_{M/B} \oplus \sO_M$.
Consider the commutative diagram
$$  \xymatrix{
 M \times_B Z_B \ar[r]^-{\rho}  \ar[d]^-{\sigma} & M \ar[d]^-{f} \\
Z_B \ar[r]^-{q} & B.}
$$
Then by  Lemma \ref{lemma-split}, we have
$$ 
Z_M \simeq \mathbb P(\rho^* f^*\sE) \setminus \mathbb P(\rho^*(\zeta_M)).
$$
Since $g(B) = 1$, the space $Z_B$ is Stein, so $H^1(Z_B,\sO_{Z_B}) = 0$. Thus
the exact sequence 
$$ 
0 \to \sO_{Z_B} \to q^*(\sE) \to \sO_{Z_B} \to 0 
$$
splits.
Hence
$$ M \times_B Z_B = \mathbb P(q^*(\sE)) \simeq \mathbb P_1 \times Z_B. $$
Further,
$$ 
\rho^* \pi^* (\sE) = \sigma^* q^*(\sE) \simeq \sO^{\oplus 2}_{M \times_B Z_B}
$$ 
so that 
$$ Z_M \simeq ( \mathbb P_1 \times \mathbb P_1 \times Z_B) \setminus \mathbb P(\rho^*(-K_M)).$$ 
Since $\rho^*(-K_M) = p_1^*(\sO_{\mathbb P_1})$, where $p_1: M \times_B Z_B = \mathbb P_1 \times Z_B$ is the projection, 
the divisor $\mathbb P(-K_M)$ cuts out in $(\mathbb P_1 \times \mathbb P_1) \times Z_B  \to Z_B$ in every fiber the same quadric 
hence $Z_M \simeq Q \times Z_B$ with an affine quadric $Q$. Thus $Z_M$ is Stein. 
\end{proof} 

\begin{corollary} Let $B$ be a elliptic curve, $\sE$ a semistable vector bundle of rank two on $B$ and $M = \PP(\sE)$.
Then $Z_M$ is Stein for any K\"ahler class on $M$.
\end{corollary}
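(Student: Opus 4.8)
The plan is to run through the classification of rank two semistable bundles on the elliptic curve $B$ and reduce every case to a product or to Proposition~\ref{prop:serre}. Since $\PP(\sE)\simeq\PP(\sE\otimes N)$ for every line bundle $N$, we may twist $\sE$ freely, and then by the Atiyah classification of vector bundles on an elliptic curve $\sE$ is, up to twist, of exactly one of three types: $(1)$ $\sE\simeq\sO_B\oplus L$ with $L\in\pic^0(B)$ (in particular $\sE$ may be trivial, so $M=B\times\PP^1$); $(2)$ $\sE\simeq F$, the unique non-split extension $0\to\sO_B\to F\to\sO_B\to 0$; $(3)$ $\sE$ stable. The trichotomy is quick: if $\sE$ is not stable it is strictly semistable, so it contains a saturated line subbundle $N$ with $\deg N=\deg\sE/2$; twisting by $N^{-1}$ gives $0\to\sO_B\to\sE\to L\to 0$ with $L\in\pic^0(B)$, and this splits unless $L\simeq\sO_B$ (as $\Ext^1(L,\sO_B)=H^1(B,L^{-1})=0$ when $L\not\simeq\sO_B$), the remaining case being $\sE\simeq F$.

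For types $(1)$ and $(2)$ I would argue exactly as in Proposition~\ref{prop:serre}: type $(2)$ \emph{is} that proposition, and for type $(1)$ the vector field on $B$ still lifts to $M$ since $t_a^*\sE\simeq\sE$ for every translation $t_a$ (using $t_a^*L\simeq L$ for $L\in\pic^0(B)$), so $T_M\simeq T_{M/B}\oplus\sO_M$ and Lemma~\ref{lemma-split} applies. The three inputs to the proof of Proposition~\ref{prop:serre} persist: $Z_B$ is Stein because $g(B)=1$; the pullback $q^*\sE\simeq q^*L\oplus\sO_{Z_B}$ has trivial first Chern class, hence is trivial since $Z_B$ is Stein; and $c_1(\det\sE)=c_1(L)=0$. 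So the same computation gives $Z_M\simeq Q\times Z_B$ with $Q$ an affine quadric, whence $Z_M$ is Stein.

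The case that needs a new idea, and which I expect to be the main obstacle, is type $(3)$: since $Z_B$ is homotopy equivalent to $B$, the first Chern class of $q^*\sE$ survives, so $q^*\sE$ is not trivial on $Z_B$ and the argument above breaks down. My plan is to remove the obstruction by an isogeny. Let $[2]\colon B\to B$ be multiplication by two, an \'etale Galois cover with group $B[2]$. Then $[2]^*\sE$ is semistable of even degree, hence not stable, so $M':=\PP([2]^*\sE)$ is a ruled surface over $B$ of type $(1)$ or $(2)$ and $Z_{M'}$ is Stein for every K\"ahler class on $M'$. Fix a K\"ahler class $\alpha$ on $M$ and let $g\colon M'\to M$ be the base change of $[2]$ along the ruling; it is finite \'etale Galois with group $B[2]$, and since $g$ is \'etale one has $g^*\Omega_M=\Omega_{M'}$ and $g^*V^\alpha$ is the extension defining the canonical extension for the K\"ahler class $g^*\alpha$. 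Hence $Z_{M'}^{g^*\alpha}\simeq Z_M^\alpha\times_M M'$ and $Z_{M'}^{g^*\alpha}\to Z_M^\alpha$ is a finite \'etale $B[2]$-cover; as $Z_{M'}^{g^*\alpha}$ is Stein, so is the quotient $Z_M^\alpha=Z_{M'}^{g^*\alpha}/B[2]$. The only things needing care are the compatibility of the canonical extension with \'etale pullback (immediate from functoriality of $\Ext^1$ and $g^*\Omega_M=\Omega_{M'}$) and the descent of the Stein property along a finite quotient (classical).
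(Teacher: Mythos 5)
Your proposal is correct and follows essentially the same route as the paper's proof: classify $\sE$ up to twist, use the splitting $T_M \simeq T_{M/B}\oplus \sO_M$ together with an \'etale isogeny base change (which destroys stability) to reduce everything to the computation of Proposition~\ref{prop:serre}, and descend the Stein property along the finite cover. If anything you are more careful than the paper, which in the degree-zero case only mentions the torsion and Serre subcases and passes over $\sE\simeq\sO_B\oplus L$ with $L\in\pic^0(B)$ non-torsion (where no \'etale cover yields a product); your direct rerun of the $Q\times Z_B$ computation, using that $q^*L$ trivialises on the Stein space $Z_B$, handles that case cleanly.
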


\begin{proof} Since $\sE$ is semistable, we may assume that $\sE$ fits into an exact sequence
$$ 0 \to \sO_B \to \sE \to \sL \to 0, $$
where $\sL$ has degree $0$ or $1$. If $\sL$ has degree $0$, then either $\sL$ is torsion, and we are in a product situation after finite \'etale cover, or we are in
the Serre example, which is settled by Proposition \ref{prop:serre}. If $\sL$ has degree one, we may perform an \'etale cover of degree two and land in the Serre example. 
Hence $Z_M$ is Stein in all cases. 

\end{proof}

\begin{proof}[Proof of Theorem \ref{thm:maintwo}]
Assume first that $Z_M$ is Stein. By Proposition \ref{proposition-step} we are left to discuss the case of ruled surfaces $f: M \simeq \PP(\sE) \rightarrow B$ over a curve of genus at least one.
If $g(B) \geq 2$ we know by Corollary \ref{corollary-negative} that $M$ does not contain a curve with negative self-intersection. Yet by \cite[V, Prop.2.20]{Har77}
this implies that the invariant $e$ of the normalised vector bundle $\sE$ is at most $0$.
But by definition of $e$, see \cite[V, Prop.2.8]{Har77}, this means that $\sE$ is semistable.

Assume now that $Z_M$ is even affine. Then the tangent bundle $T_M$ is big by \cite[Prop.4.20]{GW20}, so $M$ is obviously not an \'etale quotient of a torus. By the first part of the statement we are left to exclude the case of ruled surfaces 
$f: M \simeq \PP(\sE) \rightarrow B$ over a curve of genus at least one.
If $g(B) \geq 2$ we know that $\sE$ is semistable, so $T_{M/B}$ is nef, but not big. Since $f^* T_B$ has negative degree, the tangent sequence shows that
$$
H^0(M, S^l T_M) \simeq H^0(M, S^l T_{M/B})
$$
for every $l \in \N$. Thus $T_M$ is not big.
So suppose $g(B) = 1$. If $\sE$ is semistable, then $T_M$ is nef. If $T_M$ would be big, then $c_1^2(M) > c_2(M)$, which is absurd. If $\sE$ is unstable, i.e., its invariant $e > 0$, we have
$\sE \simeq \sO_B \oplus L$ with $\deg L<0$. Let $B_0 \subset M$ be the section corresponding
to the negative quotient $\PP(L)$. Then we have $T_M|_{B_0} \simeq \sO_B \oplus L$, so 
the $T_M|_{B_0}$ is not big. If $Z_M$ was affine, its closed subset $Z_M|_{B_0}$ would also be affine. Yet by \cite[Prop.4.2]{GW20} this implies that $T_M|_{B_0}$ is big, a contradiction.
\end{proof}

\begin{remarks} 
While Theorem \ref{thm:maintwo} gives the expected characterisation of affineness of the canonical extension, the picture is less complete for the Stein property. Let us explain why the remaining cases are more subtle than one might expect:

1) Let $B$ be an elliptic curve, and let $\sE \simeq \sO_B \oplus L$ be a rank two vector bundle with $\deg L<0$. Then the restriction $T_M|_C$ is nef unless $C$ is the section $B_0 \subset M$ corresponding to the negative quotient $\PP(L)$. Making an elementary transformation in the sense of \cite{Mar73} along $B_0$ one can construct a rank two subbundle $K \subset T_M$ that is nef and big. 

2) Let $B$ be a smooth curve of genus $g \geq 2$ and $\sE$ a normalized locally free sheaf of rank two given by the exact sequence
$$  
0 \to \sO_B  \to \sE \to \sL \to 0. 
$$
Assume that the invariant $e$ is strictly negative,  i.e., $\sE$ is stable.

Let $f: M := \PP(\sE) \rightarrow B$ be the ruled surface, and 
denote by $\eta$ the extension class corresponding to
$$
0 \rightarrow T_{M/B} \rightarrow T_M \rightarrow f^* T_B \rightarrow 0.
$$
Denoting by $\sF$ the sheaf of traceless endomorphisms,  the extension class $\eta$ 
lives in 
$$ 
H^1(X, T_{M/B} \otimes p^*K_B) = H^1(B, \sF \otimes K_B) = H^0(B,\sF)=0.
$$
hence $\eta = 0$. Thus we have $T_M \simeq T_{M/B} \oplus f^* T_B$, so this situation fits into the framework of Lemma \ref{lemma-split}. Thus it might be a surprise that we were not able to prove that $Z_M$ is not Stein. Note however that in this case the subsheaf
$T_{M/B} \subset T_M$ is nef, so again $T_M$ is rather positive.
\end{remarks}

\section{The algebraic approach} \label{section-algebraic}

We start giving an algebraic proof of the following result, which is weaker than Theorem \ref{thm:mainone}, but has a more elementary proof. 

\begin{theorem} \label{thm:gennef} Let $M$ be a projective manifold of dimension $n$. 
Assume that $Z_M$ is Stein for some K\"ahler class. Then $\zeta_M$ is generically nef:
$$\zeta_M \cdot H_1 \cdot \ldots \cdot H_{2n-2} \geq 0$$
for all ample divisors $H_j$ on $\mathbb P(T_M)$. 
\end{theorem}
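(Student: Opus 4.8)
The plan is to run essentially the same argument as in Theorem \ref{thm:mainone}, but with the transcendental input (Proposition \ref{prop:alg-an}, via Yang's theorem) replaced by the purely algebraic intersection-theoretic statement of Lemma \ref{lem1}. So first I would set $X = \PP(V) $, $Y = \PP(T_M)$, and recall from Setup \ref{Setup1} that $[Y] = \zeta_V$ and that the normal bundle $N_{Y/X}$ is precisely the tautological bundle $\sO_{\PP(T_M)}(1)$, i.e.\ $N_{Y/X}$ as a divisor class on $Y = \PP(T_M)$ is $\zeta_M$. Next I would invoke Lemma \ref{lem:curves}: since $Z_M = X \setminus Y$ is Stein, it contains no compact positive-dimensional subvariety, hence in particular is holomorphically convex, so Lemma \ref{lem1} applies directly.

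The key step is then to translate the conclusion $Y^2 \cdot H_1 \cdots H_{n'-2} \geq 0$ of Lemma \ref{lem1} (where $n' = \dim X = 2n-1$, and the $H_j$ are nef divisors on $X$) into the asserted inequality on $Y = \PP(T_M)$. By the adjunction/self-intersection formula, for any divisor class $D$ on $X$ one has $Y^2 \cdot D = (Y|_Y) \cdot (D|_Y) = \zeta_M \cdot (D|_Y)$ as an intersection number computed on $Y$. So Lemma \ref{lem1} gives
$$\zeta_M \cdot (H_1|_Y) \cdot \ldots \cdot (H_{2n-3}|_Y) \geq 0$$
for all nef divisors $H_j$ on $X$. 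It remains to check that one can realise an arbitrary collection of $2n-2$ ample divisors on $Y = \PP(T_M)$ (up to the positivity/closure needed for a non-strict inequality) as restrictions of nef divisors on $X$. This is where one uses that $N^1(X) \to N^1(Y)$ is surjective: indeed $\dim Y = 2n-2$ and $\dim X = 2n-1$, and the restriction map on Néron--Severi (or on $H^2$) is surjective here because $X = \PP(V) \to M$ and $Y = \PP(T_M) \to M$ are projective bundles over the same base with $\rho(X) = \rho(M)+1 = \rho(Y)$ and the generators (pullbacks from $M$ and the tautological classes) correspond under restriction — concretely $\zeta_V|_Y = \zeta_M$ and $\pi^* L|_Y = \pi_M^* L$. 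Hence the nef cone of $Y$ is the image of the nef cone of $X$, so given ample $A_1, \dots, A_{2n-2}$ on $Y$ one writes each $A_j = H_j|_Y$ for some nef (indeed, after a small perturbation, ample) $H_j$ on $X$; taking $2n-3$ of them as the $H_j$ in Lemma \ref{lem1} and the last factor $\zeta_M$ itself being $Y|_Y$ accounts for all $2n-2$ slots, and a limiting argument over ample classes gives the inequality for the full closed cone, i.e.\ $\zeta_M \cdot A_1 \cdots A_{2n-2} \geq 0$, which is exactly generic nefness of $\zeta_M$.

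The main obstacle I expect is the bookkeeping in that last translation: one has to be careful that Lemma \ref{lem1} as stated produces a product of $\dim X - 2 = 2n-3$ nef classes times $Y^2$, and to see that $Y^2 \cdot H_1 \cdots H_{2n-3}$, reinterpreted on $Y$, is $\zeta_M \cdot (H_1|_Y) \cdots (H_{2n-3}|_Y)$ — that is, one factor of $Y$ "becomes" the normal bundle $\zeta_M$ and the other "becomes" the restriction operation. So the count is $1 + (2n-3) = 2n-2$ classes on $Y$, matching the statement. Checking surjectivity of $N^1(X) \to N^1(Y)$ rigorously (rather than just for the obvious generators) and handling the ample-versus-nef closure to get the non-strict inequality for arbitrary ample $H_j$ on $\PP(T_M)$ are the only genuinely non-formal points; both are standard for projective bundles, so I would expect the proof to be short.
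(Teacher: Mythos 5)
Your setup and bookkeeping are fine: reducing to Corollary \ref{cor1} (equivalently, Lemma \ref{lem1} read on $Y=\PP(T_M)$ via $Y^2\cdot H_1\cdots H_{2n-3} = \zeta_M\cdot (H_1|_Y)\cdots(H_{2n-3}|_Y)$) is exactly the paper's strategy, and the count $1+(2n-3)=2n-2$ is correct. The genuine gap is the assertion that surjectivity of $N^1(X)\to N^1(Y)$ implies that ${\rm Nef}(Y)$ is the image of ${\rm Nef}(X)$. Restriction maps nef classes to nef classes, so the image of ${\rm Nef}(X)$ sits \emph{inside} ${\rm Nef}(Y)$, but there is no general reason for equality, and for a sub-projective-bundle $\PP(T_M)\subset\PP(V)$ this is not a formal fact about projective bundles: a class $a\zeta_M+\pi_M^*\mathcal A$ can be nef on $\PP(T_M)$ (i.e.\ $T_M\otimes\frac1a\mathcal A$ nef) without $a\zeta_V+\pi^*\mathcal A$ being nef on $\PP(V)$ unless one knows more. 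This surjectivity of nef cones is precisely the hypothesis of Corollary \ref{cor1}, i.e.\ it is the entire content of the theorem, and you have assumed it rather than proved it.

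The paper's proof spends all its effort on exactly this point. Given a nef class $\sL=a\zeta_M+\pi_M^*\mathcal A$ on $\PP(T_M)$ with $a>0$, it shows that $\mathcal A$ itself is nef; then $V\otimes\frac1a\mathcal A$ is an extension of the nef $\Q$-twisted bundle $T_M\otimes\frac1a\mathcal A$ by the nef class $\frac1a\mathcal A$, hence nef, so $a\zeta_V+\pi^*\mathcal A$ is a nef lift of $\sL$. The nefness of $\mathcal A$ is proved via the cone theorem, curve by curve: for $K_M\cdot C\geq 0$ one uses that $T_M|_C$ is either non-nef (then $\deg\mathcal A|_C>0$ by Hartshorne's criterion) or numerically flat; for extremal rational curves with $0<-K_M\cdot C\leq n$ one uses the trivial quotient of $\nu^*T_M$ coming from $\sO_{\PP^1}(2)\to\nu^*T_M$; and the case $-K_M\cdot C=n+1$ forces $M=\PP^n$, which is handled separately since $\zeta_V$ is then ample. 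None of this is visible in your proposal, so as written the argument does not close. (A minor additional remark: you cite Lemma \ref{lem:curves} to exclude compact subvarieties of $Z_M$, but that lemma concerns base curves; the absence of positive-dimensional compact subvarieties is automatic from Steinness, and all you need for Lemma \ref{lem1}/Corollary \ref{cor1} is holomorphic convexity.)
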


\begin{proof}[Proof of Theorem \ref{thm:gennef}]
We will use the terminology of $\mathbb Q$-twisted bundles as explained in \cite[Ch.6.2]{Laz04}.
By Corollary \ref{cor1}, it suffices to show that
$$ {\rm Nef}(\PN(V)) \to {\rm Nef}(\PNM) $$
is surjective. 
So let $\sL$ be a nef divisor class on $\PNM$, then we can write 
$$ 
\sL = a \zeta_M \otimes \pi_M^*(\mathcal A)
$$
with $a \in \N$ and $\mathcal A$ a divisor class on $M$. We may assume $a > 0$, otherwise there is nothing to prove. 

If $M=\PP^n$, the class $\zeta_V$ is ample, so we can assume $M \not\simeq \PP^n$.
We claim that $\mathcal A$ is nef. This implies the statement:
$a \zeta_M + \pi_M^*(\mathcal A)$ being nef is equivalent saying that the
$\mathbb Q$-bundle $T_M \otimes \frac{1}{a} \mathcal A$ is nef. 
Thus we have an extension of nef $\mathbb Q$-vector bundles
$$
0 \rightarrow \mathcal A \rightarrow V \otimes \frac{1}{a} \mathcal A \rightarrow T_M \otimes \frac{1}{a} \mathcal A \rightarrow 0,
$$
hence $V \otimes \frac{1}{a} \mathcal A$ is nef \cite[Thm.6.2.12,(ii),(v)]{Laz04}.

{\em Proof of the claim.}
 By the cone theorem, it suffices to show that $\mathcal A \cdot C \geq 0 $ for either $C$ a curve generating a $K_M$-negative extremal ray and for curves $C$ with $K_M \cdot C \geq 0$. 
\begin{itemize}
\item If $K_M \cdot C \geq 0$, the vector bundle $T_M \vert_C$ cannot be ample. 
If $T_M \vert_C$ is not nef, then $\deg \mathcal A|_C>0$ by Hartshorne's criterion \cite[Thm.6.4.15]{Laz04}.
If $T_M \vert_C$ is nef, then $c_1(T_M|_C) \leq 0$ implies that $T_M \vert_C$ is numerically flat.
Thus $\deg \mathcal A|_C \geq 0$.
\item If $C$ is an extremal rational curve with $0 < -K_M \cdot C \leq n$,
denote by $\holom{\nu}{\PP^1}{M}$ its normalisation. 
Then $\nu^* T_M$ has a trivial quotient since
it contains the image of $\sO_{\PP^1}(2) \rightarrow \nu^* T_M$. Thus again $\deg \nu^* \mathcal A = \deg
\mathcal A|_C \geq 0$. 
\item If $C$ is an extremal rational curve with $-K_M \cdot C = n+1$, then $M = \PN^n$ by \cite{CMS02}, which we excluded.
\end{itemize}
\end{proof} 

\begin{corollary} \label{cor2} Let $M$ be a projective manifold. 
Assume $Z_M$ Stein and that $\overline{\mathcal {CI}}(\PNM) = \overline{NM}(\PNM)$. Then $\zeta_M$ is pseudoeffective. 
\end{corollary}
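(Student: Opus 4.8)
The plan is to deduce the statement formally from Theorem~\ref{thm:gennef} combined with the duality between movable curves and pseudoeffective divisors of Boucksom--Demailly--P\u aun--Peternell \cite{BDPP13}. First I would invoke Theorem~\ref{thm:gennef}: since $Z_M$ is Stein for the given K\"ahler class, the tautological class $\zeta_M$ on $\PNM$ is generically nef, that is,
$$\zeta_M \cdot \gamma \geq 0 \qquad \text{for every } \gamma \in \overline{\mathcal{CI}}(\PNM).$$
By the standing hypothesis $\overline{\mathcal{CI}}(\PNM) = \overline{NM}(\PNM)$, this inequality then holds for every class $\gamma$ in the closed movable cone of $\PNM$.

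Next I would use that $\PNM$ is a smooth projective variety, being a projective bundle over the projective manifold $M$, so that the main theorem of \cite{BDPP13} applies to it: the pseudoeffective cone $\Pseff(\PNM)$ is exactly the dual cone of $\overline{NM}(\PNM)$. Equivalently, a divisor class on $\PNM$ is pseudoeffective precisely when it pairs non-negatively with every movable curve class. Applying this criterion to $\zeta_M$, which by the previous step is non-negative on all of $\overline{NM}(\PNM)$, yields that $\zeta_M$ is pseudoeffective, as desired.

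I do not expect a genuine obstacle here: the hypothesis $\overline{\mathcal{CI}}(\PNM) = \overline{NM}(\PNM)$ is exactly what is needed to upgrade the (a priori weak) generic nefness produced by Theorem~\ref{thm:gennef} to positivity against the full movable cone, after which the BDPP duality does the rest. The only point worth recording is that the argument relies on the purely algebraic input Theorem~\ref{thm:gennef}, proved via the cone theorem, rather than on the transcendental Theorem~\ref{thm:mainone}; this is why the corollary naturally belongs to the present ``algebraic approach'' section.
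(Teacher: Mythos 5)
Your proposal is correct and follows exactly the paper's own two-line argument: Theorem~\ref{thm:gennef} gives generic nefness, the hypothesis $\overline{\mathcal{CI}}(\PNM) = \overline{NM}(\PNM)$ upgrades this to non-negativity on the movable cone, and the duality of \cite{BDPP13} then yields pseudoeffectivity. No issues.
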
 

\begin{proof} By Theorem \ref{thm:gennef}, the class $\zeta_M$ is generically nef. By our assumption and \cite{BDPP13}, the class $\zeta_M$ is then pseudoeffective. 
\end{proof} 

We give some first evidence in favour of Conjecture \ref{conjecture-gen-nef}:

\begin{proposition} \label{prop:K3}  Let $M$ be a projective K3 surface with $\rho(M) = 1$ Then $\zeta_M$ is not generically nef.
\end{proposition}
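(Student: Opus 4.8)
The plan is to analyze the geometry of $\PP(T_M)$ for a K3 surface $M$ with $\rho(M)=1$ very explicitly, computing the relevant intersection numbers on the threefold $\PP(T_M)$. Since $\rho(M)=1$, we have $N^1(M)=\R H$ for an ample generator $H$, and $N^1(\PP(T_M))$ is two-dimensional, generated by $\zeta_M$ and $\pi_M^* H$. Thus $\overline{NE}(\PP(T_M))$ is a two-dimensional cone, and to show $\zeta_M$ is not generically nef it suffices to exhibit a curve class (or more precisely a class in $\overline{\mathcal{CI}}(\PP(T_M))$, i.e. a limit of complete-intersection curves of very ample divisors) on which $\zeta_M$ is negative. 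Equivalently, since the $\overline{\mathcal{CI}}$-cone on a threefold is cut out by the pseudoeffective divisors, it suffices to show $\zeta_M$ is \emph{not} pseudoeffective and then identify an appropriate movable/MR class detecting this; the cleanest route is to directly compute $\zeta_M \cdot H_1 \cdot H_2$ for suitable ample $H_i = a_i \zeta_M + b_i \pi_M^* H$ and show the product can be made negative.

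First I would set up the Chern class bookkeeping. On $S = \PP(T_M) \to M$ with $\zeta = \zeta_M$, the Grothendieck relation gives $\zeta^2 = \pi_M^*c_1(T_M)\cdot \zeta - \pi_M^* c_2(T_M)$; since $M$ is K3, $c_1(T_M)=0$ and $c_2(T_M)=24$ (that is, $\pi_M^* c_2 = 24\,[\text{fiber class in } H^4]$), so $\zeta^2 = -\pi_M^*c_2(T_M)$ and hence $\zeta^2 \cdot \pi_M^* H = -24 (\text{pt})$... more precisely $\zeta^2 \cdot \pi_M^*H = - c_2(T_M) \cdot H / (\text{normalization})$, which I would pin down as a negative multiple of a point class, while $\zeta \cdot (\pi_M^* H)^2 = H^2 = 2n_M > 0$ (with $H^2$ the self-intersection of the polarization, a positive even integer since $M$ is K3) and $(\pi_M^* H)^3 = 0$. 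The key numerical fact is $\zeta^3 = (\pi_M^* c_1)\zeta^2 - (\pi_M^* c_2)\zeta = -c_2(T_M)\cdot(\text{fiber}) $, again a strictly negative number. So in the basis $\{\zeta, \pi_M^* H\}$ the intersection form is entirely determined by: $(\pi_M^*H)^3 = 0$, $\zeta(\pi_M^*H)^2 = H^2 > 0$, $\zeta^2 \pi_M^* H < 0$, $\zeta^3 < 0$.

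Next I would produce the negative-on-$\zeta$ complete intersection class. For ample classes $H_i = \zeta + t_i \pi_M^* H$ with $t_i \gg 0$ (these are ample since $\zeta + t\pi_M^*H$ is ample for $t$ large: $T_M \otimes tH$ is ample for $t\gg 0$ on a K3), compute
$$
\zeta \cdot H_1 \cdot H_2 = \zeta^3 + (t_1+t_2)\,\zeta^2\cdot\pi_M^*H + t_1 t_2\,\zeta\cdot(\pi_M^*H)^2.
$$
The last term is $t_1 t_2 (H^2) > 0$, so for $t_i$ large this is positive — the naive choice fails. Instead one must choose ample classes close to the \emph{boundary} ray of the nef cone opposite to $\pi_M^* H$. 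That boundary ray is spanned by some $\zeta + t_0 \pi_M^* H$ with $t_0$ the critical value where ampleness of $T_M \otimes t H$ fails, i.e. the pseudoeffective threshold; for a general K3 with $\rho=1$ the bundle $T_M$ is stable with $\mu=0$, and the relevant extremal nef class $\xi := \zeta + t_0 \pi_M^* H$ satisfies $\xi^3 = 0$ (it lies on the boundary of the pseudoeffective cone of divisors and is the pullback-like extremal class). Picking $H_1, H_2$ both very close to $\xi$, one gets $\zeta \cdot H_1 \cdot H_2 \approx \zeta \cdot \xi^2$, and I would compute $\zeta \cdot \xi^2 = \zeta(\zeta + t_0 \pi_M^*H)^2 = \zeta^3 + 2 t_0 \zeta^2 \pi_M^*H + t_0^2 \zeta(\pi_M^*H)^2$; using $\xi^3 = \zeta^3 + 3t_0\zeta^2\pi_M^*H + 3t_0^2\zeta(\pi_M^*H)^2 + t_0^3(\pi_M^*H)^3 = 0$ to eliminate, one finds $\zeta\cdot\xi^2 = -\tfrac{1}{3}(2\zeta^3 + 3 t_0 \zeta^2\pi_M^*H)$ or similar, which I expect comes out strictly negative once the signs of $\zeta^3$ and $\zeta^2\pi_M^*H$ are plugged in. Since a small perturbation of $\xi$ into the ample cone keeps the triple product negative by continuity, and such complete intersections of very ample divisors give classes in $\overline{\mathcal{CI}}(\PP(T_M))$, this shows $\zeta_M$ is not generically nef.

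The main obstacle will be the precise determination of the extremal boundary ray $\xi$ of $\operatorname{Nef}(\PP(T_M))$ — equivalently, verifying that $T_M$ is stable (hence $\zeta - \mu \pi_M^*H$-type classes are nef, as in Proposition \ref{prop:curve}'s philosophy) and computing $t_0$ so that $\xi^3 = 0$; this is where the hypothesis $\rho(M)=1$ is essential, since it forces $T_M$ to be stable (any destabilizing subsheaf would give an extra divisor class on $M$) and makes the nef cone of $\PP(T_M)$ a simplicial cone whose two extremal rays we can identify explicitly. An alternative cleaner phrasing, which I might adopt to avoid the stability digression, is to invoke \cite[Thm.1.6]{HP19} that $\zeta_M$ is not pseudoeffective for such a K3, observe that on the threefold $\PP(T_M)$ the cone $\overline{\mathcal{CI}}$ is dual to the pseudoeffective cone of divisors (by \cite{BDPP13}, as $\overline{\mathcal{CI}} \subset \overline{NM}$ and in fact for a threefold with the above Picard rank these agree), and conclude directly that a non-pseudoeffective divisor class cannot be non-negative on all of $\overline{\mathcal{CI}}$ — i.e. $\zeta_M$ is not generically nef. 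The risk with that shortcut is the inclusion $\overline{\mathcal{CI}}(\PP(T_M)) = \overline{NM}(\PP(T_M))$, which is exactly the hypothesis of Corollary \ref{cor2} and is not automatic; so I would likely present the explicit intersection-number computation as the primary argument.
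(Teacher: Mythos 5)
Your overall strategy is the same as the paper's: reduce to computing $\zeta_M\cdot N_1\cdot N_2$ for nef classes $N_1,N_2$ on the boundary of the two-dimensional nef cone of $\PP(T_M)$. But the execution has a genuine gap at exactly the point where the real input is needed. First, a computational slip: from $\zeta^2=-\pi_M^*c_2(T_M)$ one gets $\zeta^2\cdot\pi_M^*H=-\pi_M^*(c_2\cdot H)=0$ (it is the pullback of a class in $H^6$ of a surface), not a negative multiple of a point; the only nonzero numbers are $\zeta^3=-24$ and $\zeta\cdot(\pi_M^*H)^2=d:=H^2$. Hence for $N_i=\zeta+t_i\pi_M^*H$ one simply has $\zeta\cdot N_1\cdot N_2=-24+t_1t_2d$, and the whole proposition reduces to showing $a^2d<24$, where $a$ is the nef threshold (the smallest $t$ with $\zeta+t\pi_M^*H$ nef). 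Your proposed way of pinning down this threshold --- the claim that the extremal nef class $\xi=\zeta+t_0\pi_M^*H$ satisfies $\xi^3=0$ --- is false: the boundary of the nef cone is not the boundary of the pseudoeffective (or big) cone, and a nef boundary class can perfectly well be big. Concretely, for $d=2$ one has $a=3$, so $\xi^3=-24+27\cdot 2=30\neq 0$; your equation $t_0^2d=8$ would give $t_0=2$, but $\zeta+2\pi_M^*H$ is not nef, so the classes $H_i$ near your $\xi$ are not ample and the perturbation argument collapses. The bound $a^2d<24$ is precisely the nontrivial content supplied in the paper by Gounelas--Ottem's computation of the nef cone of $\PP(\Omega_M)$ for K3 surfaces (\cite[Prop.3.2 and Sect.4.1]{GO20}); without citing or reproving such a bound your argument does not close.

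Your fallback via \cite[Thm.1.6]{HP19} does not work either, for the reason you yourself flag: non-pseudoeffectivity of $\zeta_M$ only gives negativity against some movable class, and $\overline{\mathcal{CI}}=\overline{NM}$ is exactly the unverified hypothesis of Corollary \ref{cor2}. Indeed the remark following Proposition \ref{prop:curve} exhibits a class that is generically nef but not pseudoeffective, so this implication cannot be formal.
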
 

By Theorem \ref{thm:gennef} this shows that $Z_M$ is never Stein for K3 surfaces $M$ with $\rho(M) = 1$. 

\begin{proof} 
Let $H$ be the ample generator the Picard group, and 
write $H^2 = d$. 
Let $a$ be the unique positive real number such that $\zeta_M + a \pi_M^* H$ is nef but not ample. Then $\zeta_M$ is generically nef precisely when
$$ \zeta_M \cdot (\zeta + a \pi_M^* H)^2 \geq 0.$$ 
Now $ \zeta_M \cdot (\zeta + a \pi_M^* H)^2 = - 24 + a^2 d$. If $d > 2$, then  $a^2 d < 24$ by \cite[Prop.3.2]{GO20}, 
thus 
$$ \zeta_M \cdot (\zeta + a \pi_M^* H)^2 < 0.$$ 
 In case $d = 2$, $a = 3$ by \cite[Sect. 4.1]{GO20},
so the inequality holds, too, and $\zeta_M$ is not generically nef. 

\end{proof} 

Finally, we present a completely different approach for K3-surfaces $M$ containing a nodal curve to show that $Z_M$ is not Stein.  We start with some preparations.

\begin{lemma} \label{lemmafinitecover}
Let $M$ be a smooth projective surface, and $T$ a rank two vector bundle on $M$. Let $C \subset \PP(T)$ be a smooth curve that is not a fibre of
$\holom{\pi}{\PP(T)}{M}$. Then there exists a smooth surface $S \subset \PP(T)$ such that $C \subset S$ and $\pi|_S$ is finite.  
\end{lemma}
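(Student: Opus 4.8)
The statement we must prove is Lemma \ref{lemmafinitecover}: given a smooth projective surface $M$, a rank two vector bundle $T$ on $M$, and a smooth curve $C \subset \PP(T)$ which is not contained in a fibre of $\holom{\pi}{\PP(T)}{M}$, there exists a smooth surface $S \subset \PP(T)$ containing $C$ with $\pi|_S$ finite. The threefold $\PP(T)$ is a $\PP^1$-bundle, so a natural candidate for $S$ is a divisor in a suitable very ample linear system passing through $C$; the constraint ``not a fibre'' guarantees that $\pi|_C$ is already finite (indeed generically injective onto its image would not be needed, only finiteness), so it is plausible that a general such divisor inherits finiteness of the projection.

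Here is how I would carry it out. First I would fix an embedding of $\PP(T)$ into some projective space via a very ample line bundle $\sL$ of the form $\zeta_{\PP(T)} + \pi^* A$ with $A$ sufficiently ample on $M$; twisting $T$ we may even assume $\sL = \zeta_{\PP(T)}$ is very ample. Second, I would consider the linear subsystem $|\sL \otimes \I_C|$ of divisors containing $C$. Since $C$ is a smooth curve in a smooth threefold, for $A$ sufficiently ample this system is basepoint-free away from $C$ and its general member $S$ is smooth: this is a standard Bertini argument, using that $\sL \otimes \I_C$ is globally generated away from $C$ and that along $C$ smoothness of $S$ is controlled by the fact that the linear system separates normal directions to $C$ (equivalently, the conormal sheaf of $C$ is generated by the restriction of sections, which again holds after twisting enough). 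Third, and this is the crucial point, I would show that $\pi|_S$ is finite. A morphism from a projective surface is finite if and only if it contracts no curve, equivalently it has no positive-dimensional fibre; the only curves contracted by $\pi$ are the fibres of the $\PP^1$-bundle. So $\pi|_S$ fails to be finite exactly when $S$ contains a full fibre $F \simeq \PP^1$ of $\pi$. Thus I must arrange that the general $S \in |\sL \otimes \I_C|$ contains no fibre of $\pi$. Since $C$ is not a fibre, $C$ meets a general fibre $F$ in only finitely many points (in fact $C \cdot F = \deg(\pi|_C) \geq 1$ is a fixed positive number), so $|\sL \otimes \I_C|$ restricted to such an $F \simeq \PP^1$ is a linear system of degree $\zeta_{\PP(T)} \cdot F = 1$ with a base condition of some points on $F$; as long as this restricted system is not identically zero and not forced to contain $F$, its general member misses $F$ entirely (does not contain it). One checks $\sL\cdot F = 1$, so $\sL|_F = \sO_{\PP^1}(1)$ and a nonzero section vanishing on the finite set $C \cap F$ would need $\deg \geq \#(C\cap F)$; the point is that we only need $S \not\supset F$, and a general section of $\sO_{\PP^1}(1)$ with prescribed zeros among finitely many points either vanishes at exactly one point or, if $\#(C\cap F) \geq 2$, the condition is that this restricted system is empty, which would mean every $S \in |\sL\otimes\I_C|$ has $S\cap F$ infinite — but that would force $F \subset S$ for all $S$, contradicting basepoint-freeness away from $C$ applied to a point of $F\setminus C$. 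So in all cases the general $S$ contains no fibre. Finally, finiteness follows: $\pi|_S$ is a proper morphism between surfaces with no contracted curve, hence finite.

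The main obstacle, and the step deserving the most care, is the finiteness argument — specifically making precise that after choosing $A$ large enough the general member of $|\sL \otimes \I_C|$ avoids every fibre of $\pi$ while still being smooth and containing $C$. One must handle the fibres $F$ with $F \cap C = \emptyset$ (there the restricted system is all of $|\sO_{\PP^1}(1)|$, base-point free, so a general $S$ meets $F$ in one point and does not contain it) and the finitely many fibres meeting $C$, using that $C$ is not a fibre so $C$ is transverse to the general fibre and the incidence of ``bad'' fibres is itself of dimension at most one, hence avoided by a general $S$ in the base locus analysis. The smoothness along $C$ is routine once $A$ is ample enough, by the standard argument that $\sL \otimes \I_C / \sL \otimes \I_C^2 \simeq N^*_{C/\PP(T)} \otimes \sL|_C$ is globally generated, which guarantees a general divisor through $C$ is smooth at every point of $C$; away from $C$ ordinary Bertini applies.
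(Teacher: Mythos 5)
Your overall strategy --- pass a general member of a suitable linear system through $C$, get smoothness by Bertini, then argue that the general member contains no fibre of $\pi$ --- is the same as the paper's. But your insistence on a polarisation $\sL = \zeta_{\PP(T)} + \pi^* A$ of relative degree one, $\sL \cdot F = 1$, is fatal, and the finiteness step (which you correctly single out as the crucial one) does not go through. The problem is a failed dimension count: for a fixed fibre $F$ the condition $F \subset S$ is cut out by the restriction map $H^0(\sI_C \otimes \sL) \to H^0(\sL|_F) \simeq H^0(\sO_{\PP^1}(1))$, hence has codimension at most two in the linear system, while the fibres move in a two-dimensional family; so the incidence variety of pairs $(S,F)$ with $F \subset S$ can dominate the linear system. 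Your argument only shows that for each \emph{fixed} $F$ the general $S$ avoids it, which is not enough. Concretely, take $M = \PP^2$ and $T = \sO \oplus \sO$, so that $\PP(T) = \PP^2 \times \PP^1$ and $\sL = \sO(a,1)$: a section is $s_0 y_0 + s_1 y_1$ with $s_0, s_1$ forms of degree $a$ on $\PP^2$, and the corresponding divisor contains the fibre of $\pi$ over every common zero of $s_0$ and $s_1$ --- which is nonempty by Bezout. So \emph{every} member of \emph{every} relative-degree-one system on this threefold contains a fibre, and no choice of $A$ or of the member can repair this. A second, related failure: if $\pi|_C$ is not injective (e.g.\ $C$ maps onto a nodal curve, which is precisely the case in which the paper applies the lemma), then any fibre $F$ meeting $C$ in at least two points satisfies $F \subset S$ for \emph{every} $S \in |\sI_C \otimes \sL|$, since a section of $\sO_{\PP^1}(1)$ with two zeros vanishes identically; in particular $\sI_C \otimes \sL$ is \emph{not} globally generated away from $C$ no matter how ample $A$ is, so the contradiction you invoke at that point is circular.

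The fix is to use a system of relative degree $t \geq 2$, which is what the paper does: twist $T$ to make it ample and take a general $S \in |\sI_C \otimes \sO_{\PP(T)}(t\zeta)|$ for $t \gg 0$. Smoothness is then \cite[Cor.1.7.5]{BS95}, and off $\pi(C)$ the direct image $\pi_*(\sI_C \otimes \sO_{\PP(T)}(t\zeta))$ is $S^t T$ twisted by a line bundle, a globally generated bundle of rank $t+1 \geq 3 > \dim M$, so a general section is nowhere zero by \cite[II, Ex.8.2]{Har77} --- exactly the codimension improvement your degree-one system lacks. A separate argument is still required over $\pi(C)$, where the paper analyses the two components of $S \cap \fibre{\pi}{\pi(C)}$ directly.
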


\begin{proof}
We can assume without loss of generality that $T$ is ample, and denote by
$\zeta$ the tautological class of $\PP(T)$. By \cite[Cor.1.7.5]{BS95} we know that for $t \gg 0$ a general element $S \in | \sI_C \otimes \sO_{\PP(T)}(t\zeta) |$
is smooth, so we are left to show that $\pi|_S$ is finite.
Using the fact the $t$-th relative Segre embedding embeds
the fibres of $\PP(T)$ as linearly non-degenerate curves in $\PP(S^t T)$, this is equivalent to showing that there exists a global section of
$\pi_*(\sI_C \otimes \sO_{\PP(T)}(t\zeta))$ that does not vanish in any point of $S$.

For $t \gg 0$ the sheaf $\sI_C \otimes \sO_{\PP(T)}(t\zeta)$ is globally generated,
so its direct image $\pi_*(\sI_C \otimes \sO_{\PP(T)}(t\zeta))$ is globally generated. Let $V$ be its space of global sections, then the morphism
$$
V \otimes \sO_{M \setminus \pi(C)} \rightarrow S^t T \otimes \sO_{M \setminus \pi(C)}
$$
is surjective. Since $t>1$ the vector bundle $S^t T$ has rank at least three, so by \cite[II, Ex.8.2]{Har77} a general element of $V$ defines a section
of $S^t T \otimes \sO_{M \setminus \pi(C)}$ that does not vanish on any point of $M \setminus \pi(C)$. Thus we are left to show the statement
over $\pi(C)$: 
denote by $\holom{\nu}{C}{S}$ the restriction of $\pi$ to $C$, and by
$\holom{\tilde \nu}{\PP(\nu^* T)}{\PP(T)}$ the natural map. Let $\tilde C \subset \PP(\nu^* T)$
be the natural section such that $\tilde \nu(\tilde C)=C$. Since $\zeta$ is ample we know that
$\tilde \nu_* (\sI_{\tilde C}) \otimes \sO_{\PP(T)}(t\zeta) \simeq \sI_C \otimes \sO_{\PP(T)}(t\zeta)$ is globally generated for $t \gg 0$, in particular 
$\sO_{\PP(\nu^* V)}(- \tilde C) \otimes \tilde \nu^* \sO_{\PP(T)}(t\zeta)$ is ample and generated by pull-backs
of global sections for $t \gg 0$. Thus a general element of the linear subsystem of
$|\tilde \nu^* t \zeta - \tilde C|$ generated by the pull-backs is a smooth and irreducible curve $R$.
Thus the intersection of $S$ with $\fibre{\pi}{\pi(C)}$ has exactly two irreducible components, 
the curve $C$ and $\tilde \nu(R)$, none of them is a fibre.
\end{proof}

\begin{lemma} \label{lemmalocalvectorbundles}
Let $U$ be a smooth analytic surface that contains a rational curve $\PP^1 \simeq C \subset U$
such that $C^2:=-k \leq -1$ and $U$ is a deformation retract of $C$\footnote{In particular one has $\pic(U) \simeq \pic(C) \simeq \Z$}.
For every $l \in \Z$, we denote by $\sO_U(l)$ the unique line bundle such that $\sO_U(l) \otimes \sO_C \simeq \sO_{\PP^1}(l)$. 
Then the following holds:
\begin{itemize}
\item One has an exact sequence
$$
0 \rightarrow \sO_U(-k) \rightarrow T_U \rightarrow \sO_U(2) \rightarrow 0.
$$
\item If $k \leq -3$, let $T$ be a vector bundle on $U$ such that $\det T \simeq \sO_U$ and 
$T \otimes \sO_C \simeq \sO_{\PP^1}(2) \oplus \sO_{\PP^1}(-2)$. Then one has
$$
T \simeq \sO_U(2) \oplus \sO_U(-2).
$$
\end{itemize}
\end{lemma}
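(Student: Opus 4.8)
The plan is to handle both assertions by the same mechanism: reduce everything to the rational curve $C$ and its infinitesimal neighbourhoods $C_j$, check that the relevant obstruction groups — all of them cohomology groups of line bundles on $\PP^1$ — vanish, and then pass from the formal neighbourhood of $C$ to an actual analytic neighbourhood by Grauert's formal principle, which applies because $C$, having negative self-intersection, is exceptional. One uses throughout that $N_{C/U} \simeq \sO_{\PP^1}(-k)$, so that the graded pieces of the $\sI_C$-adic filtration are $\sI_C^i/\sI_C^{i+1} \simeq S^i N^*_{C/U} \simeq \sO_{\PP^1}(ik)$, together with the isomorphism $\pic(U) \xrightarrow{\ \sim\ } \pic(C)$, which identifies $\sO_U(l)|_C$ with $\sO_{\PP^1}(l)$ and lets one recover a line bundle on $U$ from its restriction to $C$. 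Two preliminary computations are needed: adjunction gives $\omega_U|_C \simeq \omega_C \otimes N^*_{C/U} \simeq \sO_{\PP^1}(k-2)$, hence $\det T_U \simeq \omega_U^{-1} \simeq \sO_U(2-k)$; and the normal bundle sequence $0 \to \sO_{\PP^1}(2) \to T_U|_C \to \sO_{\PP^1}(-k) \to 0$ splits, its class lying in $\Ext^1(\sO_{\PP^1}(-k),\sO_{\PP^1}(2)) = H^1(\PP^1,\sO_{\PP^1}(k+2)) = 0$ (here $k \geq 1$), so that $T_U|_C \simeq \sO_{\PP^1}(2) \oplus \sO_{\PP^1}(-k)$.

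For the first statement I would produce a subbundle inclusion $\sO_U(-k) \hookrightarrow T_U$, equivalently a nowhere-vanishing section of $T_U(k) := T_U \otimes \sO_U(k)$, whose restriction to $C$ is $\sO_{\PP^1}(k+2) \oplus \sO_{\PP^1}$. Start on $C$ with the inclusion of the trivial summand; the obstruction to extending a section of $T_U(k)$ from $C_j$ to $C_{j+1}$ lies in $H^1\big(C, T_U(k)|_C \otimes \sI_C^j/\sI_C^{j+1}\big) = H^1(\PP^1,\sO_{\PP^1}(jk+k+2)) \oplus H^1(\PP^1,\sO_{\PP^1}(jk))$, which vanishes for every $j \geq 1$ because $k \geq 1$. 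Hence the section extends formally along $C$; by the formal principle, after replacing $U$ by a smaller tubular neighbourhood of $C$ (harmless here), it extends to a section $\sigma$ over $U$, and since $\sigma|_C$ is nowhere zero, so is $\sigma$ after one further shrinking. The cokernel of $\sigma \colon \sO_U(-k) \hookrightarrow T_U$ is then a line bundle $L = \det T_U \otimes \sO_U(k) = \sO_U(2-k) \otimes \sO_U(k) = \sO_U(2)$, giving the exact sequence $0 \to \sO_U(-k) \to T_U \to \sO_U(2) \to 0$. (Alternatively: the vanishing of the linearisation obstructions $H^1(\PP^1,\sO_{\PP^1}(jk+2))$, $j \geq 1$, identifies the germ of $U$ along $C$ with that of the total space of $\sO_{\PP^1}(-k)$, on which the sequence is the relative Euler sequence.)

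For the second statement, where $C^2 \leq -3$ (that is, $k \geq 3$ in the notation $C^2 = -k$), I would show that $T$ is formally isomorphic along $C$ to $T_0 := \sO_U(2) \oplus \sO_U(-2)$ and invoke the formal principle once more. We are given $T|_C \simeq \sO_{\PP^1}(2) \oplus \sO_{\PP^1}(-2) = T_0|_C$; fixing such an isomorphism and lifting it inductively, the obstruction to extending an isomorphism $T|_{C_j} \simeq T_0|_{C_j}$ to a homomorphism over $C_{j+1}$ lies in $H^1\big(C, \mathcal{H}om(T|_C, T_0|_C) \otimes \sI_C^j/\sI_C^{j+1}\big)$. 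Since $\mathcal{H}om\big(\sO_{\PP^1}(2) \oplus \sO_{\PP^1}(-2),\, \sO_{\PP^1}(2) \oplus \sO_{\PP^1}(-2)\big) \simeq \sO_{\PP^1}^{\oplus 2} \oplus \sO_{\PP^1}(4) \oplus \sO_{\PP^1}(-4)$, this group equals $H^1\big(\PP^1,\, \sO_{\PP^1}(jk)^{\oplus 2} \oplus \sO_{\PP^1}(jk+4) \oplus \sO_{\PP^1}(jk-4)\big)$, and it vanishes for every $j \geq 1$: the first three summands contribute nothing since $jk \geq 1$, while $H^1(\PP^1, \sO_{\PP^1}(jk-4)) = 0$ precisely because $k \geq 3$ forces $jk - 4 \geq -1$ already at $j = 1$ (and a fortiori for $j \geq 2$). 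Any such lift is automatically an isomorphism, being one over $C = (C_{j+1})_{\mathrm{red}}$ by Nakayama. Iterating yields a formal isomorphism $\widehat{T} \simeq \widehat{T_0}$, and the formal principle (after shrinking $U$) upgrades it to $T \simeq \sO_U(2) \oplus \sO_U(-2)$.

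The step I expect to be the main obstacle is the transition from the formal neighbourhood to an honest analytic neighbourhood of $C$ — i.e.\ the validity of Grauert's formal principle for the exceptional curve $C$ — which is also what forces the (here innocuous) shrinking of $U$; as an alternative one can argue that the pertinent restriction maps $H^0(U,-) \to H^0(C,-)$ are already surjective, by using the theorem on formal functions for a contraction $\mu\colon U \to U'$ of $C$ onto a Stein surface germ to establish the vanishing of $H^1(U, T_U(2k))$, respectively of $H^1(U, T(k-2))$. The other point to watch is the sharp role of the hypothesis $C^2 \leq -3$ in the second part: it is used only to kill the single group $H^1(\PP^1,\sO_{\PP^1}(k-4))$ arising from the first infinitesimal step.
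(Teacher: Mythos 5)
Your proposal is correct and runs on the same engine as the paper's proof: compute the obstruction groups $H^1\bigl(\PP^1,(\cdot)\otimes S^jN^*_{C/U}\bigr)$, observe they vanish (with $C^2\le -3$ needed exactly to kill $H^1(\PP^1,\sO_{\PP^1}(k-4))$ at the first infinitesimal step), and pass from the formal neighbourhood to an analytic one via the formal principle for exceptional curves (the paper cites \cite{Pe81} for this). The only cosmetic difference is in the first bullet, where the paper directly identifies $U$ with a neighbourhood of the negative section of $\mathbb{F}_k$ and restricts the tangent sequence of the ruling --- which is precisely the alternative you mention in parentheses --- rather than building the nowhere-vanishing section of $T_U\otimes\sO_U(k)$ by hand.
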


\begin{proof}
For the first statement we observe that $U$ is isomorphic to a neighbourhood of
the negative section in the Hirzebruch surface $\psi: \mathbb{F}_k \simeq \PP(\sO_{\PP^1} \oplus \sO_{\PP^1}(-k))
\rightarrow \PP^1$. The exact sequence is then obtained as the restriction of the 
tangent sequence
$$
0 \rightarrow T_{\mathbb{F}_k/\PP^1} \rightarrow T_{\mathbb{F}_k} \rightarrow \psi^* T_{\PP^1} \rightarrow 0
$$
to this neighbourhood.

For the second statement, let $C = \mathbb P_1$ and consider the $l-$th infinitesimal neighborhood $C_l$ of $C$ in $U$. Then the extensions of $T_{\vert C_l}$ to $C_{l+1}$ are parametrized by 
$$ H^1(C, (T_{\vert C}^* \otimes T_{\vert C} \otimes S^kN^*_{C/U}), $$ 
see e.g. \cite{Pe81}. 
This group equals 
$$
 H^1(C, (\sO_C(4) \otimes \sO_C^{\oplus 2} \otimes \sO_C(-4)) \otimes \sO_C(3k)) = 0.
$$
Hence $T_{\vert C}$ has a unique extension to all infinitesimal neighborhoods $C_k$, namely 
$\sO_{C_k}(2) \oplus \sO_{C_k}(-2)$. Thus 
by \cite{Pe81}, there exists an open neighborhhood $V \subset U$ of $C$ such that 
$$ T_{\vert V} \simeq \sO_V(2) \oplus \sO_V(-2).$$

\end{proof}

\begin{theorem} \label{theoremnodal}
Let $M$ be a projective K3-surface containing a nodal rational curve. 
Then $Z_M$ is not Stein.
\end{theorem}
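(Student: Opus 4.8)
The plan is to show directly that $Z_M$ fails to be holomorphically convex; since $Z_M$ contains no positive-dimensional compact analytic subset \cite[Prop.2.7]{GW20}, this is the same as showing it is not Stein. Concretely, I will produce a smooth projective surface $S \subset \PP(V)$ which is not contained in $\PP(T_M)$ and for which $\Gamma := S \cap \PP(T_M)$ is an irreducible curve with $\Gamma^2 < 0$ on $S$. Given such an $S$, the set $S \cap Z_M = S \setminus \Gamma$ is a closed analytic subspace of $Z_M$, and since $\Gamma$ is a negative curve it can be blown down by Grauert's criterion; as in the proof of Proposition \ref{prop:one}, every holomorphic function on $S \setminus \Gamma$ then extends across $\Gamma$, so $S \setminus \Gamma$ --- and hence $Z_M$ --- is not holomorphically convex.

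To find $S$ I would first move to a model adapted to the singularity of the nodal rational curve $C_0$. Let $\mu \colon \hat M \to M$ be the blow-up of the node(s) of $C_0$ and let $\hat C_0 \subset \hat M$ be the strict transform; then $\hat C_0 \simeq \PP^1$ is smooth, $\mu|_{\hat C_0} \colon \hat C_0 \to C_0$ is the normalisation, and $\hat C_0^2 = -k$ with $k \geq 3$ (a single node giving $k = 4$). Since $\PP^1 = \hat C_0 \to M$ is an immersion and $\det T_M = \sO_M$, the subbundle $d\mu(T_{\hat C_0}) \simeq \sO_{\PP^1}(2)$ forces $(\mu^* T_M)|_{\hat C_0} \simeq \sO_{\PP^1}(2) \oplus \sO_{\PP^1}(-2)$. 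Choosing a neighbourhood $U$ of $\hat C_0$ in $\hat M$ that deformation retracts onto $\hat C_0$, so that $\pic(U) \simeq \Z$ is generated by the line bundle $\sO_U(1)$ of Lemma \ref{lemmalocalvectorbundles}, and using $\det\bigl((\mu^* T_M)|_U\bigr) \simeq \sO_U$ together with $k \geq 3$, Lemma \ref{lemmalocalvectorbundles} gives the splitting
$$
(\mu^* T_M)|_U \simeq \sO_U(2) \oplus \sO_U(-2).
$$
Moreover, restricting the extension $0 \to \sO_{\hat M} \to \mu^* V \to \mu^* T_M \to 0$ to $U$ produces an extension whose class is nonzero along $\hat C_0$ (its restriction to $\hat C_0$ is the image of the K\"ahler class $\alpha$, which pairs positively with $C_0$), so that over $\hat C_0$ one has $(\mu^* V)|_{\hat C_0} \simeq \sO_{\PP^1}(1)^{\oplus 2} \oplus \sO_{\PP^1}(-2)$.

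Next I would build $S$ from this data. By the splitting, $\PP\bigl((\mu^* T_M)|_U\bigr) \simeq \PP(\sO_U(2) \oplus \sO_U(-2))$, and over the negative section $\hat C_0$ this is a Hirzebruch surface containing a section of self-intersection $-4$ --- a smooth rational curve $C_- \subset \PP(\mu^* T_M) \subset \PP(\mu^* V)$ which is not a fibre. Apply the finite-cover Lemma \ref{lemmafinitecover} to a sufficiently positive twist of $\mu^* T_M$ on the projective surface $\hat M$, with $C = C_-$, to obtain a smooth surface $S_0 \subset \PP(\mu^* T_M)$ containing $C_-$ and finite over $\hat M$. On $S_0$ one has the tautological quotient and the restricted extension $0 \to \sO_{S_0} \to \mathcal{V} \to \mathcal{T} \to 0$, where $\mathcal{V}$ and $\mathcal{T}$ are the pull-backs of $\mu^* V$ and $\mu^* T_M$; twisting the tautological quotient $\mathcal{T} \to \mathcal{L}$ by a suitable section of $\mathcal{L}$ gives a quotient $\mathcal{V} \to \mathcal{L}$ that does not vanish on $\sO_{S_0}$, hence a section of $\PP(\mathcal{V}) \to S_0$ not contained in $\PP(\mathcal{T})$. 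Its image under $\PP(\mathcal{V}) \to \PP(\mu^* V) \to \PP(V)$ is the desired surface $S$, meeting $\PP(T_M)$ precisely along the vanishing locus $\Gamma$ of the twisting section; choosing the finite cover of large enough degree and the section appropriately, so that $\Gamma$ carries the negativity coming from $C_-$, one gets $\Gamma^2 < 0$ on $S$, completing the contradiction.

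The heart of the argument, and its main obstacle, is this last construction: arranging $S$ to be smooth and to meet $\PP(T_M)$ transversally in an irreducible curve, and --- crucially --- controlling the sign $\Gamma^2 < 0$. This is where the nodal hypothesis is essential: it is used through $\hat C_0^2 \leq -3$, both to obtain the local splitting of $\mu^* T_M$ via Lemma \ref{lemmalocalvectorbundles} and to keep the constructed curve $\Gamma$ negative. A smooth rational curve (self-intersection $-2$) would not be enough, which is why K3 surfaces with $\rho(M) = 1$ and no nodal rational curve must be treated by the separate numerical argument of Proposition \ref{prop:K3}.
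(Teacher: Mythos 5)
Your reduction is clean and would work if the surface existed: a closed analytic subspace of a Stein space is Stein, and Proposition \ref{prop:one} rules out Steinness of $S \setminus \Gamma$ when $\Gamma$ is an irreducible curve with $\Gamma^2<0$ on a compact surface $S$. Your local analysis is also essentially correct and parallels the paper's (the $\sO(2)\oplus\sO(-2)$ splitting of $T_M$ along the normalised nodal curve, the use of Lemma \ref{lemmalocalvectorbundles}, the identification of $V$ restricted to the curve via the non-vanishing of $\alpha \cdot C_0$). The genuine gap is the construction of $S$, which you flag as ``the main obstacle'' but do not overcome, and which runs into concrete obstructions. A section of $\PP(\mathcal V) \to S_0$ not contained in $\PP(\mathcal T)$ is a surjection $q: \mathcal V \to \mathcal L$ with $q|_{\sO_{S_0}} \neq 0$; the exact sequence $\Hom(\mathcal T,\mathcal L) \to \Hom(\mathcal V,\mathcal L) \to H^0(S_0,\mathcal L) \to \Ext^1(\mathcal T,\mathcal L)$ shows that the section $s = q|_{\sO_{S_0}}$ must be killed by cup product with the pulled-back extension class --- exactly the class whose non-vanishing you invoked a few lines earlier --- and nothing forces this. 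Moreover $\Gamma$ is the zero divisor of $s$, so you need $H^0(S_0,\mathcal L) \neq 0$ for a line bundle $\mathcal L$ (a restriction of the tautological class of $T_M$) with $c_1(\mathcal L)^2<0$ and irreducible zero locus; since $H^0(M, S^m T_M)=0$ on a K3 surface such sections are scarce, and none of (effectivity, irreducibility, negativity, liftability) is established --- ``choosing the section appropriately so that $\Gamma$ carries the negativity coming from $C_-$'' is a hope, not an argument. Finally, $S_0$ is finite over $\hat M$ but not over $M$, so the map $\Sigma \to \PP(V)$ contracts the compact curves lying over the exceptional divisor of $\mu$; unless these are contained in $\Gamma$, the set $\Sigma \setminus \Gamma$ is neither a closed subspace of $Z_M$ nor finite over one, and its failure to be Stein gives no contradiction.

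For comparison, the paper's proof avoids constructing any compact surface in $\PP(V)$. It takes a surface $S \subset \PP(T_M)$ finite over $M$ through the embedded normalisation $C$ of the nodal curve (Lemma \ref{lemmafinitecover}), shows $C^2 \leq -2$ in $S$ by the ramification formula, contracts $C$ to a quotient singularity, and derives the contradiction from Hartogs extension on a quasi-\'etale local smoothing via Lemma \ref{lemmaextensionproperty}; the technical heart is then the verification of the ``extension property'' for the pushed-forward canonical extension, using the second part of Lemma \ref{lemmalocalvectorbundles} exactly where you use it. If you want to pursue your route, the missing piece is precisely a compact surface in $\PP(V^\alpha)$ meeting $\PP(T_M)$ in an irreducible negative curve, and the non-splitness of the canonical extension works against you at every step of producing one.
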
 

\begin{proof}
We argue by contradiction and assume that $Z_M$ is Stein.

Choose a nodal rational curve $f: \PP^1 \rightarrow M$. Since $f$ is immersive, we have a canonical surjection
$f^* \Omega_M \rightarrow \Omega_{\PP^1}$. Since the image of $f$ is nodal, the induced map $\tilde f: \PP^1 \rightarrow \PP(T_M)$ is embedding, and we denote by $C$ its image.
By Lemma \ref{lemmafinitecover} there exists a surface $S \subset \PP(T_M)$ be a surface 
containing $C$ such that the morphism $\holom{g:=\pi|_S}{S}{M}$ is finite. The pull-back of the canonical extension 
gives an exact sequence of vector bundles  
\begin{equation} \label{pullbackg}
0 \rightarrow \sO_S \rightarrow g^* V \rightarrow g^* T_M \rightarrow 0
\end{equation}
Since $g$ is finite and $\PP(V) \setminus \PP(T_M)$ is Stein, the space
$\PP(g^* V) \setminus \PP(g^* T_M)$ is also Stein \cite[V, \S 1, Thm.1 d)]{GR79}.

Since the restriction of $g$ to $C$ is the normalisation of $f(\PP^1)$
one has 
$$
g^* T_M \otimes \sO_C \simeq T_C \oplus \sO_C(-2) \simeq \sO_C(2) \oplus \sO_C(-2).
$$ 
By the ramification formula one has $K_S \cdot C \geq 0$, hence $-k:=C^2 \leq -2$ with
equality if and only if $g$ is \'etale in a neighbourhood of $C$.
Thus the smooth rational curve $C$ is contractible, and we denote by $\holom{\varphi}{S}{S'}$ its contraction. We claim that the exact sequence \eqref{pullbackg} has the extension property near $C$,
by Lemma \ref{lemmaextensionproperty} this gives the desired contradiction.

If $C^2=-2$ the tangent map $T_S \rightarrow g^* T_M$ is an isomorphism near 
$C$, so the exact sequence \eqref{pullbackg} identifies to the canonical extension of $S$
induces by the K\"ahler class $g^* \alpha$. In this case the extension property follows as in
the proof of the last case of Theorem \ref{theorem-contractions}, so for the rest of the proof we focus on the (more difficult) case $k \leq -3$.

Let $U \subset S$ be an analytic neighbourhood of $C$ that is a deformation retract of $C$.
Then using the notation of Lemma \ref{lemmalocalvectorbundles} one has $g^* T_M \simeq \sO_U(2) \oplus \sO_U(-2)$. Observe that
$$
\mbox{Ext}^1(\sO_U(2) \oplus \sO_U(-2), \sO_U) 
\simeq
H^1(U, \sO_U(-2) \oplus \sO_U(+2))
\simeq 
H^1(\PP^1, \omega_{\PP^1})
$$
where in the last step we used that the inclusion $\sO_U(2) \rightarrow g^* T_M$
restricts to the natural inclusion $T_{\PP^1} \rightarrow f^* T_M$ on $C$.
Thus the restriction of \eqref{pullbackg} to $U$ is isomorphic to an exact sequence
\begin{equation} \label{exactsequenceU}
0 \rightarrow \sO_U \rightarrow \mathcal E_U \oplus  \sO_U(-2)  \rightarrow
\sO_U(2) \oplus \sO_U(-2) \rightarrow 0
\end{equation}
where the extension 
\begin{equation} \label{EulerU}
0 \rightarrow \sO_U \rightarrow \mathcal E_U \rightarrow
\sO_U(2) \rightarrow 0
\end{equation}
restricts to the Euler sequence on $C$. 

Denote by $\holom{\varphi_U}{U}{U'}$ the restriction of the contraction to $U$.
Since $U \rightarrow U'$ contracts a smooth rational curve onto a point, the surface $U'$ has a Hirzebruch-Jung singularity in $\varphi(C)$, in particular it is a quotient singularity.

Since the line bundle $\sO_U(-2)$ has negative degree on the exceptional divisor $C$,
we have a chain of inclusion $\sO_U(C) \simeq \sO_{U}(-k) \rightarrow \sO_U(-2) \rightarrow \sO_U$.
Pushing forward to $U'$ we obtain $\sO_{U'} \rightarrow (\varphi_U)_* \sO_U(-2) \rightarrow \sO_{U'}$, so $(\varphi_U)_* \sO_U(-2) \simeq \sO_{U'}$. Since $R^1 (\varphi_U)_* \sO_U=0$
The push-forward of the exact sequence \eqref{exactsequenceU} is the exact sequence
$$
0 \rightarrow \sO_{U'} \rightarrow (\varphi_U)_* \mathcal E_U \oplus \sO_{U'} 
\rightarrow  (\varphi_U)_* \sO_U(2)  \oplus \sO_{U'}  \rightarrow 0.
$$
Thus it is sufficient to check the extension property in Lemma \ref{lemmaextensionproperty}
for the sequence \eqref{EulerU}. The idea is now to reduce to the case of a canonical extension on $U$:
by Lemma \ref{lemmalocalvectorbundles} we have an exact sequence
$$
0 \rightarrow \sO_U(-k) \rightarrow T_U \rightarrow \sO_U(2) \rightarrow 0.
$$
Since $H^1(U, \sO_U(k))=0$ the long exact sequence in cohomology shows that
$$
\mbox{Ext}^1(T_U, \sO_U) \simeq \mbox{Ext}^1(\sO_U(2), \sO_U) \simeq H^1(\PP^1, \omega_{\PP^1}).
$$
Thus if   
\begin{equation} \label{canonicalU}
0 \rightarrow \sO_U \rightarrow V_U \rightarrow T_U \rightarrow 0
\end{equation}
is  the canonical extension defined by $g^* \alpha|_U$, the morphism 
$\sO_U(-k) \rightarrow T_U$
lifts to a morphism $\beta_U : \sO_U(-k) \rightarrow V_U$ such that the exact sequence 
$$
0 \rightarrow \sO_U \rightarrow V_U/\sO_U(-k) \rightarrow T_U/\sO_U(-k) \simeq \sO_U(2) \rightarrow 0
$$
is isomorphic to $\eqref{EulerU}$.

Let now $\holom{p}{\tilde U}{U'}$ be the quasi-\'etale cover as in Lemma \ref{lemmaextensionproperty}.
Pushing down to $U'$, and taking the reflexive pull-back to $\tilde U$ we obtain a commutative diagram:
\begin{equation} \label{bigdiagram}
\xymatrix{
& & & 0 \ar[d] & 
\\
& & & \sO_{\tilde U} \ar[ld]_j \ar[d]
\\
0 \ar[r] & \sO_{\tilde U} \ar[d] \ar[r]^-{i_{V_U}} & p^{[*]} (\varphi_U)_* V_U \ar[r] \ar[d] & p^{[*]} (\varphi_U)_* T_U \simeq T_{\tilde U} \ar[r] \ar[d] & 0
\\
0 \ar[r] & \sO_{\tilde U} \ar[r]^-{i_{\mathcal E_U}} & p^{[*]} (\varphi_U)_* \mathcal E_U \ar[r] & p^{[*]} (\varphi_U)_* \sO_U(2) \ar[r] & 0
}
\end{equation}
Since the extension property holds for \eqref{canonicalU}, we already know that the morphism $i_{V_U}$ is injective in every point, and we want to see that the same holds for $i_{\mathcal E_U}$. Now observe that since $U \rightarrow U'$ is a functorial resolution of $U'$, one has $(\varphi_U)_* T_U \simeq T_{U'}$. In particular the exact sequence 
$$
0 \rightarrow \sO_U \rightarrow (\varphi_U)_* V_U \rightarrow (\varphi_U)_* T_U \rightarrow 0
$$
is locally split and the image of $\sO_{U'} \simeq (\varphi_U)_* \sO_U(-k) \rightarrow  (\varphi_U)_* V_U$ is contained in the image of the splitting map. This property is preserved under the reflexive pull-back to $\tilde U$, i.e. for every point of $\tilde U$ the image of the injection $i_{V_U}$
is not contained the image of $j$. Since $i_{\mathcal E_U}$ is obtained from $i_{V_U}$ by taking the quotient by $j_* \sO_{\tilde U}$, it is injective in every point.
\end{proof}

\begin{remark*}
Let $M$ be a projective K3 surface with Picard number $\rho(M) \geq 5$. Then $M$ 
admits an elliptic fibration \cite[Ch.11, Prop.1.3]{Huy16}. If $Z_M$ is Stein, Theorem \ref{theoremnodal} in connection with the non-existence of $(-2)$-curves shows that all the singular fibres are cuspidal elliptic curves.
Looking at the Weierstra\ss\ model of the associated Jacobian fibration and using 
\cite[p.40]{Mir89} one can then show
that the elliptic fibration is isotrivial with general fibre the elliptic curve with $J$-invariant $0$. Thus Theorem \ref{theoremnodal} covers almost all the K3 surfaces with
$\rho(M) \geq 5$. 
\end{remark*}



\providecommand{\bysame}{\leavevmode\hbox to3em{\hrulefill}\thinspace}
\providecommand{\MR}{\relax\ifhmode\unskip\space\fi MR }
\providecommand{\MRhref}[2]{%
  \href{http://www.ams.org/mathscinet-getitem?mr=#1}{#2}
}
\providecommand{\href}[2]{#2}

\end{document}